\newcommand{\fr}{\mathfrak}
\newcommand{\cal}{\mathscr}
\newcommand{\op}{\operatorname}
\newcommand{\dR}{\mathrm{dR}}
\newcommand{\et}{\textnormal{\'et}}
\newcommand{\Sch}{\textbf{Sch}}
\newcommand{\Mod}{\textnormal{-}\textbf{Mod}}
\newtheorem{thm}{Theorem}[section]
\newtheorem{prop}[thm]{Proposition}
\newtheorem{lem}[thm]{Lemma}
\newtheorem{cor}[thm]{Corollary}
\theoremstyle{definition}
\newtheorem{claim}[thm]{Claim}
\newtheorem{eg}[thm]{Example}
\newtheorem{eg-eg}[thm]{Example of Example}
\newtheorem{nota}[thm]{Notation}
\newtheorem{rem}[thm]{Remark}
\newmdenv[
  topline=false,
  bottomline=false,
  rightline=false,
  skipabove=\topsep,
  skipbelow=\topsep
]{siderules}
\numberwithin{equation}{section}
\title{Quantum parameters of the geometric Langlands theory}
\author{Yifei Zhao}
\date{Fall, 2016}
\email{yifei@math.harvard.edu}
\begin{document}

\begin{abstract}
Fix a smooth, complete algebraic curve $X$ over an algebraically closed field $k$ of characteristic zero. To a reductive group $G$ over $k$, we associate an algebraic stack $\op{Par}_G$ of quantum parameters for the geometric Langlands theory. Then we construct a family of (quasi-)twistings parametrized by $\op{Par}_G$, whose module categories give rise to twisted $\cal D$-modules on $\op{Bun}_G$ as well as quasi-coherent sheaves on the DG stack $\op{LocSys}_G$.
\end{abstract}

\maketitle

\setcounter{tocdepth}{1}
\tableofcontents

\section{Introduction}

\subsection{The geometric Langlands conjecture} The goal of the Langlands program can be broadly described as establishing a bijective correspondence between automorphic forms attached to a reductive group $G$ and Galois representations valued in the Langlands dual group $\check G$.

\subsubsection{}
In the (global, unramified) geometric theory, we fix a smooth, connected, projective curve $X$ over a field $k$. Automorphic functions correspond to certain sheaves on the stack $\op{Bun}_G$ parametrizing $G$-bundles over $X$, and the role of Galois representations is played by $\check G$-local systems on $X$. If we further specialize to the case where $k$ is algebraically closed of characteristic zero, then $\check G$-local systems also form a moduli stack, denoted by $\op{LocSys}_{\check G}$.

\subsubsection{}
Unlike $\op{Bun}_G$, the stack $\op{LocSys}_{\check G}$ is not smooth; furthermore, it is a DG algebraic stack in general and the correct formulation of the geometric Langlands conjecture has to take into account its DG nature.

After Arinkin and Gaitsgory \cite{AG15}, one conjectures an equivalence of DG categories:
\begin{equation}
\label{eq-arinkin-gaitsgory}
\mathbb L_G : \cal D\Mod(\op{Bun}_G) \xrightarrow{\sim} \op{IndCoh}_{\op{Nilp}}(\op{LocSys}_{\check G}).
\end{equation}
Here, the right-hand-side is the DG category of ind-coherent sheaves on $\op{LocSys}_{\check G}$ whose singular support is contained in the global nilpotent cone. This DG category is an enlargement of $\op{QCoh}(\op{LocSys}_{\check G})$, and the appearance of singular support is the geometric incarnation of Arthur parameters.

\subsection{What do we mean by ``quantum''?}
\label{sec-quantum} The quantum geometric Langlands theory seeks to simultaneously deform both sides of \eqref{eq-arinkin-gaitsgory} in a way to make them look more symmetric. The main idea, due to Drinfeld and expounded on by Stoyanovsky \cite{St06} and Gaitsgory \cite{Ga16}, is to consider the DG category of \emph{twisted} $\cal D$-modules on $\op{Bun}_G$.

\subsubsection{}
To explain this approach, let us assume $G$ is simple, and let $\cal L_{G,\det}$ be the determinant line bundle over $\op{Bun}_G$. To every value $c\in k$ one can associate the DG category $\cal D\Mod^c(\op{Bun}_G)$ of $\cal L_{G,\det}^{\frac{c-h^{\vee}}{2h^{\vee}}}$-twisted $\cal D$-modules over $\op{Bun}_G$, where $h^{\vee}$ denotes the dual Coxeter number of $G$.

Let $r=1,2$, or $3$ be the maximal multiplicity of arrows in the Dynkin diagram of $G$. One expects an equivalence of DG categories:
\begin{equation}
\label{eq-quantum-langlands}
\mathbb L_G^{(c)} : \cal D\Mod^c(\op{Bun}_G) \xrightarrow{\sim} \cal D\Mod^{-\frac{1}{rc}}(\op{Bun}_{\check G})
\end{equation}
The equivalence $\mathbb L_G^{(c)}$ should vary continuously in $c$, and degenerate to \eqref{eq-arinkin-gaitsgory} as $c$ tends to zero.\footnote{Indeed, the left-hand-side of \eqref{eq-arinkin-gaitsgory} should more naturally be the DG category of $\cal L_{G,\det}^{-\frac{1}{2}}$-twisted $\cal D$-modules, otherwise known as $\cal D$-modules at the \emph{critical level}. The two DG categories are equivalent by the existence of the Pfaffian.} For a survey on the conjecture \eqref{eq-quantum-langlands}, see \cite{Sc14}.

\subsubsection{}
\label{sec-two-steps}
However, \eqref{eq-quantum-langlands} is conjectured prior to the formulation of \eqref{eq-arinkin-gaitsgory}. For the correct degeneration to $\op{IndCoh}_{\op{Nilp}}(\op{LocSys}_{\check G})$ to take place, one has to renormalize the DG category $\cal D\Mod^c(\op{Bun}_{\check G})$.

The renormalized DG category $\cal D\Mod^c_{\op{ren}}(\op{Bun}_G)$ has apparently different nature depending on the rationality and positivity of $c$, so fitting them in a family is not a trivial matter. Yet, the author expects that they do, and the construction of this family of categories should follow a two-step procedure:
\begin{enumerate}[(a)]
	\item Construct a family of non-commutative algebras $\underline{\cal A}$ over $\op{Bun}_G$, whose generic fiber (at $c<\infty$) is a TDO on $\op{Bun}_G$ and whose special fiber (at $c=\infty$) is $\cal O_{\op{LocSys}_G}$;
	\item Choose certain objects in module category of $\underline{\cal A}$ and let them generate the ``renormalized'' module category that should appear in the quantum geometric Langlands conjecture.
\end{enumerate}
In the present article, we fulfill step (a).

\subsection{What's in this article?} Let us acknowledge right away that for a simple group $G$, the space of quantum parameters is just a copy of $\mathbb P^1$, and when the genus of the curve $X$ is at least 2, the stack $\op{LocSys}_G$ is classical. In this case, the $\mathbb P^1$-family of step (a) has already been constructed by Stoyanovsky \cite{St06}, making use of the line bundle $\cal L_{G,\det}$.

\subsubsection{} Nonetheless, the approach taken in the present article is independent of \cite{St06}. It is motivated by the following considerations for a general reductive group $G$:
\begin{itemize}
	\item In order to treat geometric Eisenstein series and constant term functors, we need to introduce additional quantum parameters to account for \emph{anomalies}; these parameters give rise to TDOs that do not arise from the determinant line bundle.
	
	\item The DG nature of $\op{LocSys}_G$ requires us to consider (generalizations of) TDO's whose underlying $\cal O$-modules are chain complexes; as complexes interact poorly with explicit formulas, we are forced to make a geometric construction, inspired by the theory of \emph{twistings} developed in \cite{GR14}.
\end{itemize}

The result is a construction of $\underline{\cal A}$ that completely dispenses of the line bundle $\cal L_{G,\det}$ and contains more information as soon as the center $Z(G)$ is nontrivial. The key steps in this construction are summarized by the following chart:\footnote{For objects that depend on $\fr g^{\kappa}$ (resp.~$(\fr g^{\kappa}, E)$), we only retain the character $\kappa$ (resp.~$(\kappa,E)$) in the notation.}
\begin{align*}
\left\{
\begin{array}{c}
\text{quantum} \\
\text{parameter $(\fr g^{\kappa},E)$} \end{array}
\right\} \leadsto &
\left\{
\begin{array}{c}
\text{Lie-$*$ algebra} \\
\text{$\widehat{\fr g}_{\cal D}^{(\kappa, E)}$ over $X$} \end{array}
\right\}  \\
& \leadsto
\left\{
\begin{array}{c}
\text{classical quasi-twisting} \\
\text{$\tilde{\cal T}_G^{(\kappa, E)}$ over $\op{Bun}_{G,\infty x}$} \end{array}
\right\} \leadsto
\left\{
\begin{array}{c}
\text{quasi-twisting} \\
\text{$\cal T_G^{(\kappa,E)}$ over $\op{Bun}_G$} \end{array}
\right\}.
\end{align*}
The family of algebras $\underline{\cal A}$ ultimately arises as the universal enveloping algebra of $\cal T_G^{(\kappa, E)}$, when we vary the quantum parameter. From our point-of-view, the family of quasi-twistings $\cal T_G^{(\kappa, E)}$ is a more fundamental object than $\underline{\cal A}$.

\subsection{Organization of this article}

We now give a more detailed summary of the content in each section. In particular, we will explain what quasi-twistings are and how they enter naturally into the picture.

\subsubsection{} We start in \S\ref{sec-quantum-parameter} with the definition of $\op{Par}_G$, the space of quantum parameters. It is a fiber bundle over a compactification of $\op{Sym}^2(\fr g^*)^G$, with fibers being linear stacks describing the ``additional parameters.''

The aforementioned compactification of $\op{Sym}^2(\fr g^*)^G$ is simply the space of $G$-invariant Lagrangian subspaces of $\fr g\oplus\fr g^*$, where a $G$-invariant symmetric bilinear form embeds as its graph. The level ``at $\infty$'' is understood as the Lagrangian subspace $\fr g^{\infty}:=0\oplus\fr g^*$.

\subsubsection{The main idea}
Let us take a $k$-point in $\op{Par}_G$, which is a Lagrangian subspace $\fr g^{\kappa}\subset\fr g\oplus\fr g^*$ together with an additional parameter $E$. Using the theory of Lie-$*$ algebras developed in \cite{BD04}, we construct a central extension
\begin{equation}
\label{eq-qtw-with-full-level-structure}
0\rightarrow\cal O_{\op{Bun}_{G,\infty x}} \rightarrow \widehat{\cal L}^{(\kappa, E)} \rightarrow \cal L^{\kappa} \rightarrow 0
\end{equation}
of Lie algebroids over the scheme $\op{Bun}_{G,\infty x}$ parametrizing $G$-bundles trivialized over the formal neighborhood $D_x$ of a fixed closed point $x\in X$.  We refer to central extensions of Lie algebroids as \emph{classical quasi-twistings}.

For $\fr g^{\kappa}$ arising from a symmetric bilinear form, the reduced universal envelope of \eqref{eq-qtw-with-full-level-structure}:
$$
\op U_{\op{red}}(\widehat{\cal L}^{(\kappa, E)}) := \op U(\widehat{\cal L}^{(\kappa, E)})/(1-\mathbf 1)
$$
defines a TDO over $\op{Bun}_{G,\infty x}$. At $(\fr g^{\kappa},E)=(\infty,0)$, the algebra $\op U_{\op{red}}(\widehat{\cal L}^{(\infty, 0)})$ becomes commutative, and identifies with the ring of functions on the ind-scheme $\op{LocSys}_{G,\infty x}(X-\{x\})$ parametrizing a point $(\cal P_T,\eta)\in\op{Bun}_{G,\infty x}$ together with a connection $\nabla$ over $\cal P_T|_{X-\{x\}}$.

To obtain a central extension of Lie algebroids over $\op{Bun}_G$, we ``descend'' \eqref{eq-qtw-with-full-level-structure} along the torsor $\op{Bun}_{G,\infty x}\rightarrow\op{Bun}_G$, and the algebra $\underline{\cal A}^{(\kappa,E)}$ is set to be its universal envelop. The family of algebras $\underline{\cal A}$ is obtained by letting the point $(\fr g^{\kappa}, E)$ in $\op{Par}_G$ vary.

\subsubsection{The main challenge} There is, however, a caveat in what it means to ``descend'' the classical quasi-twisting \eqref{eq-qtw-with-full-level-structure}. We need a procedure that simultaneously does the following:
\begin{itemize}
	\item For $\fr g^{\kappa}$ arising from a symmetric bilinear form, it performs the strong quotient of a TDO, in the sense of \cite{BB93};
	\item For $\fr g^{\kappa}=\fr g^{\infty}$, it transforms (the ring of functions over) $\op{LocSys}_{G,\infty x}(X-\{x\})$ into the DG stack $\op{LocSys}_G$, a procedure usually understood as symplectic reduction.
\end{itemize}

It turns out that one needs to form what we call the \emph{quotient} of a classical quasi-twisting. In general (and in the way we will apply it), this notion belongs to the DG world, i.e., the quotient of a classical quasi-twisting may cease to be classical.

\subsubsection{}
A (non-classical) \emph{quasi-twisting} over a finite type scheme $Y$ is defined as a $\widehat{\mathbb G}_m$-gerbe in the $\infty$-category of formal moduli problems under $Y$. They make up the geometric theory of central extensions of Lie algebroids over $Y$, and are studied in \S\ref{sec-qtw}. The theory of quasi-twistings is made possible by the machinery of formal groupoids and formal moduli problems, as developed in \cite{GR16}.

The quotient of quasi-twistings fits into the general paradigm of taking the quotient of an inf-scheme by a group inf-scheme. The latter procedure is rather elaborate, as it mixes prestack quotient with formal groupoid quotient. This is the content of \S\ref{sec-quotient}.

\subsubsection{} Finally, we need to deal with the technical annoyance that the theory of \cite{GR16} is built for prestacks locally (almost) of finite type, whereas $\op{Bun}_{G,\infty x}$ is of infinite type. Hence the actual quotient process has to be performed in two steps, one classical and one geometric, along the torsors:
$$
\op{Bun}_{G,\infty x}^{(\le\theta)}\rightarrow \op{Bun}_{G,nx}^{(\le\theta)}\rightarrow\op{Bun}_G^{(\le\theta)},
$$
where $\op{Bun}_G^{(\le\theta)}$ is a Harder-Narasimhan truncation of $\op{Bun}_G$ and $n$ is sufficiently large so that $\op{Bun}_{G,nx}^{(\le\theta)}$ is a scheme (of finite type.) For this reason, we need to prove a number of results communicating between the classical and derived worlds in \S\ref{sec-qtw} and \S\ref{sec-quotient}. It is the author's hope that an extension of \cite{GR16} to $\infty$-dimensional algebraic geometry will render this trick obsolete.

\subsubsection{The main results}
In \S\ref{sec-construction}, we perform the main construction of the quasi-twisting $\cal T_G^{(\kappa,E)}$ over $\op{Bun}_G$ and check that it gives rise to the expected TDOs when $\fr g^{\kappa}$ is the graph of a bilinear form and $E=0$.

In \S\ref{sec-infty}, we show that the DG category of modules over $\cal T_G^{(\infty, 0)}$ recovers $\op{QCoh}(\op{LocSys}_G)$; in doing so, we also obtain a description of the underlying quasi-coherent sheaf of the TDO at an arbitrary level. We end the article with remarks on the ``meaning'' of certain additional parameters at level $\infty$.

\subsection{Quantum vs.~metaplectic parameters}
\label{quantum-v-metaplectic} There is another approach of deforming the DG category $\cal D\Mod(\op{Bun}_G)$\footnote{or in the context of curves over $\mathbb F_q$, the category of $l$-adic sheaves on $\op{Bun}_G$.} that undergoes the name ``metaplectic geometric Langlands program'' (see \cite{GL16}, for example.) We briefly explain the relation between metaplectic and quantum parameters.

For simplicity, let us focus on the points $(\fr g^{\kappa}, E)$ of $\op{Par}_G$ where $\fr g^{\kappa}$ arises from a symmetric bilinear form. Such quantum parameters form an open substack isomorphic to $\op{Sym}^2(\fr g^*)^G\times\mathbf{Ext}^1(\fr z_G\otimes\cal O_X, \omega_X)$, and the quasi-twistings on $\op{Bun}_G$ they produce are in fact twistings.

\subsubsection{} 
Metaplectic parameters give rise to \emph{gerbes}, as opposed to twistings, on $\op{Bun}_G$. In the context of $\cal D$-modules, a \emph{gerbe} on a prestack $\cal Y$ refers to a map from $\cal Y_{\dR}$ to $\op B^2\mathbb G_m$. Note that a gerbe on $\op{Bun}_G$ is sufficient to form the DG category of twisted $\cal D$-modules, but the additional data of a twisting equip this DG category with a forgetful functor to $\op{QCoh}(\op{Bun}_G)$.\footnote{By analogy with the $l$-adic context, gerbes are supposed to be ``topological'' gadgets. However, the existence of the exponential local system on $\mathbb A^1$ shows that the above definition of a gerbe is too na\"ive. In order to retain only topological information, we ought to adjust the definition of a gerbe slightly, as a (2-)torsor over the groupoid of \emph{regular singular} local systems. We will ignore this subtlety for now.}

\subsubsection{}
Let $\op{Gr}_G$ denote the affine Grassmannian associated to $G$, regarded as a factorization prestack over the Ran space of $X$. Conjecturally, the spaces of quantum, respectively metaplectic, parameters have the following intrinsic meanings: they are the spaces of factorization twistings, respectively gerbes, on $\op{Gr}_G$. The corresponding objects on $\op{Bun}_G$ then arise from descent along $\op{Gr}_G\rightarrow\op{Bun}_G$.

Furthermore, there should be a fiber sequence of $k$-linear groupoids:
\begin{equation}
\label{eq-parameter-fiber-sequence}
\mathbf{Pic}^{\op{fact}}(\op{Gr}_G) \rightarrow \mathbf{Tw}^{\op{fact}}(\op{Gr}_G) \rightarrow \mathbf{Ge}^{\op{fact}}(\op{Gr}_G),
\end{equation}
relating three concepts of different origins:
\begin{center}
\renewcommand{\arraystretch}{1.5}
\begin{tabular}{ c | c | c }
  algebro-geometric & differential-geometric & topological \\
   \hline
  $\mathbf{Pic}^{\op{fact}}(\op{Gr}_G)$ & $\mathbf{Tw}^{\op{fact}}(\op{Gr}_G)$ & $\mathbf{Ge}^{\op{fact}}(\op{Gr}_G)$ \\
  Brylinski-Deligne extensions & quantum parameters & metaplectic parameters
\end{tabular}
\end{center}

\subsection{Acknowledgement} The author is deeply indebted to his Ph.D.~advisor Dennis Gaitsgory. Many ideas here arose during conversations with him---in fact, the idea of using quotient by group inf-schemes is essentially his. The author also thanks Justin Campbell for many helpful discussions.

\medskip

\section{The space of quantum parameters}
\label{sec-quantum-parameter}

Throughout the text, we work over an algebraically closed ground field $k$ of characteristic zero. We write $X$ for a smooth, connected, projective curve and $G$ a connected, reductive group.

In this section, we define the smooth algebraic stack $\op{Par}_G$ of quantum parameters for the geometric Langlands theory. We will define a natural isomorphism $\op{Par}_G \xrightarrow{\sim} \op{Par}_{\check G}$, and explain how $\op{Par}_G$ behaves when we change $G$ into the Levi quotient $M$ of a parabolic of $G$.

\subsection{The space $\op{Par}_G$} Let $\fr g$ denote the Lie algebra of $G$.

\subsubsection{}

Consider the symplectic form on $\fr g\oplus\fr g^*$ defined by the pairing:
\begin{equation}
\label{eq-symplectic-form}
\langle\xi\oplus\varphi, \xi'\oplus\varphi'\rangle := \varphi(\xi')-\varphi'(\xi).
\end{equation}
Let $\op{Gr}_{\op{Lag}}^G(\fr g\oplus\fr g^*)$ denote the smooth, projective variety parametrizing $G$-invariant Lagrangian subspaces of $\fr g\oplus\fr g^*$. We will denote an $S$-point of $\op{Gr}_{\op{Lag}}^G(\fr g\oplus\fr g^*)$ by $\fr g^{\kappa}$, regarded as a Lagrangian subbundle of $(\fr g\oplus\fr g^*)\otimes\cal O_S$ stable under the ($\cal O_S$-linear) $G$-action.

\subsubsection{} Clearly, the space $\op{Sym}^2(\fr g^*)^G$ of $G$-invariant symmetric bilinear forms on $\fr g$ embeds into $\op{Gr}_{\op{Lag}}^G(\fr g\oplus\fr g^*)$, where a form $\kappa$, regarded as a linear map $\fr g\rightarrow\fr g^*$, is sent to its graph $\fr g^{\kappa}$. We will use the following notations:
\begin{itemize}
	\item $\fr g^{\infty}$ denotes the $k$-point $\fr g^*$ of $\op{Gr}_{\op{Lag}}^G(\fr g\oplus\fr g^*)$;
	\item $\fr g^{\op{crit}}$ is the graph of the \emph{critical} form $\op{crit}:=-\frac{1}{2}\op{Kil}$, where $\op{Kil}$ is the Killing form of $\fr g$.
	\item for every $S$-point $\fr g^{\kappa}$, the notation $\fr g^{\kappa-\op{crit}}$ denotes the Lagrangian subbundle of $(\fr g\oplus\fr g^*)\otimes\cal O_S$ defined by the property:
	$$
	\xi\oplus\varphi \in \fr g^{\kappa} \iff \xi\oplus(\varphi - \op{crit}(\xi)) \in \fr g^{\kappa-\op{crit}}.
	$$
\end{itemize}

\begin{rem}
Note that if $\kappa\in\op{Sym}^2(\fr g^*)^G$, then $\fr g^{\kappa-\op{crit}}$ is the graph of $\kappa-\op{crit}$, so the above notation is unambiguous; we also have $\fr g^{\infty-\op{crit}} = \fr g^{\infty}$.
\end{rem}

\begin{rem}
More generally, one may replace $\fr g^{\kappa-\op{crit}}$ in the above construction by $\fr g^{\kappa + \kappa_0}$ for any $\kappa_0\in\op{Sym}^2(\fr g^*)^G$. This construction defines an action of $\op{Sym}^2(\fr g^*)^G$ on $\op{Gr}_{\op{Lag}}^G(\fr g\oplus\fr g^*)$ that extends addition on $\op{Sym}^2(\fr g^*)^G$.
\end{rem}

\subsubsection{} We study the $k$-points of $\op{Par}_G$ a bit more closely. Let $\fr g=\fr z\oplus\sum_i\fr g_i$ be the decomposition of $\fr g$ into its center $\fr z$ and simple factors $\fr g_i$.

\begin{lem}
\label{lem-subspace-decomposition}
Any Lagrangian, $G$-invariant subspace $L\hookrightarrow\fr g\oplus\fr g^*$ takes the form $L=L_{\fr z}\oplus\sum_iL_i$ where:
\begin{itemize}
	\item $L_{\fr z}$ is a Lagrangian subspace of $\fr z\oplus\fr z^*$;
	\item each $L_i$ is a Lagrangian, $G$-invariant subspace of $\fr g_i\oplus\fr g_i^*$.
\end{itemize}
\end{lem}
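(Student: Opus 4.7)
The plan is to exploit the fact that $G$ is reductive, so its representations are completely reducible, and reduce the statement to the $G$-isotypic decomposition of $\fr g \oplus \fr g^*$. Specifically, I would first identify the $G$-isotypic components: $G$ acts trivially on $\fr z$ (and, dually, on $\fr z^*$), while it acts on each simple factor $\fr g_i$ through the adjoint representation, which is irreducible. The Killing form on $\fr g_i$ identifies $\fr g_i \cong \fr g_i^*$ as $G$-representations, and for $i \neq j$ the irreducibles $\fr g_i$ and $\fr g_j$ are non-isomorphic (different highest weights). Hence the $G$-isotypic decomposition of $\fr g \oplus \fr g^*$ is exactly
\[
(\fr z \oplus \fr z^*) \oplus \bigoplus_i (\fr g_i \oplus \fr g_i^*),
\]
with the first summand being the trivial isotypic component and each $\fr g_i \oplus \fr g_i^*$ being two copies of the irreducible $\fr g_i$.

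Next, since $L$ is a $G$-subrepresentation and the $G$-Hom space between distinct isotypic components vanishes, $L$ automatically decomposes as
\[
L = L_{\fr z} \oplus \bigoplus_i L_i, \qquad L_{\fr z} := L \cap (\fr z \oplus \fr z^*), \qquad L_i := L \cap (\fr g_i \oplus \fr g_i^*).
\]
Each $L_i$ is manifestly $G$-invariant, and $L_{\fr z}$ requires no $G$-invariance condition as the $G$-action on $\fr z \oplus \fr z^*$ is trivial.

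To finish, I would check that each piece is Lagrangian in its respective symplectic subspace. The dual decomposition $\fr g^* = \fr z^* \oplus \bigoplus_i \fr g_i^*$ places $\fr g_i^*$ as the annihilator of $\fr z \oplus \bigoplus_{j \neq i} \fr g_j$ (and similarly for $\fr z^*$), so the symplectic form \eqref{eq-symplectic-form} is the orthogonal direct sum of the corresponding symplectic forms on $\fr z \oplus \fr z^*$ and each $\fr g_i \oplus \fr g_i^*$. Thus $L$ isotropic forces each $L_{\fr z}, L_i$ to be isotropic in its summand, giving the upper bounds $\dim L_{\fr z} \leq \dim \fr z$ and $\dim L_i \leq \dim \fr g_i$. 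But $\dim L = \dim \fr g = \dim \fr z + \sum_i \dim \fr g_i$, so all inequalities must be equalities, and each summand is Lagrangian.

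I expect no real obstacle here: the only non-formal input is the identification of the isotypic components, which is a standard fact about the adjoint representation of a reductive group. Everything else is complete reducibility plus a dimension count.
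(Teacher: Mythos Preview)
Your proof is correct and follows essentially the same strategy as the paper's---decompose $L$ as a $G$-representation, match pieces to the summands $\fr z\oplus\fr z^*$ and $\fr g_i\oplus\fr g_i^*$, then run a dimension count to upgrade isotropic to Lagrangian. The difference is one of packaging: you invoke the $G$-isotypic decomposition directly, using that the adjoint representations on distinct simple factors $\fr g_i$ are pairwise non-isomorphic as $G$-modules (they factor through different simple quotients of $G/Z(G)$, hence have different kernels---your ``different highest weights'' is correct but this is the cleaner justification). The paper instead decomposes $L$ into irreducibles $L_j$ and argues by hand that each $L_j$ lands in a single $\fr g_i\oplus\fr g_i^*$, then that the assignment $j\mapsto i$ is bijective. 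Your route is shorter and more transparent; the paper's is a hands-on unwinding of the same isotypic principle.
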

\begin{proof}
The decomposition of $\fr g$ induces a decomposition $\fr g\oplus\fr g^* = (\fr z\oplus\fr z^*)\oplus\sum_i(\fr g_i\oplus\fr g_i^*)$ where the summands are mutually orthogonal with respect to the symplectic form \eqref{eq-symplectic-form}. We may also decompose $L=L_{\fr z}\oplus\sum_j L_j$, where $L_{\fr z}$ is the $G$-fixed subspace and each $L_j$ is irreducible. Obviously, the embedding $L\hookrightarrow\fr g\oplus\fr g^*$ sends $L_{\fr z}$ into $\fr z\oplus\fr z^*$ as an isotropic subspace.

We \emph{claim} that each embedding $L_j\hookrightarrow\fr g\oplus\fr g^*$ factors through $\fr g_i\oplus\fr g_i^*$ for a unique $i$. In other words, the composition $L_j \hookrightarrow \fr g\oplus\fr g^* \twoheadrightarrow \fr g_i\oplus\fr g_i^*$ must vanish for all but one $i$. Suppose, to the contrary, we have $i\neq i'$ such that both
$$
L_j \rightarrow \fr g_i\oplus \fr g_i^*,\quad\text{and}\quad L_j \rightarrow \fr g_{i'}\oplus\fr g_{i'}^*
$$
are nonzero. Without loss of generality, we may assume that the projections onto the first factors $L_j\rightarrow\fr g_i$, $L_j\rightarrow\fr g_{i'}$ are nonzero. Hence we have
\begin{itemize}
	\item $L_j\cong\fr g_i\cong\fr g_{i'}$ as $G$-representations; and
	\item the image of $L_j$ under the projection $\fr g\oplus\fr g^*\twoheadrightarrow \fr g_i\oplus\fr g_{i'}$ is a $G$-invariant subspace with nonzero projection onto both factors.
\end{itemize}
The second statement implies that this image is the entire space $\fr g_i\oplus\fr g_{i'}$, contradicting the equality $\dim(L_j) = \dim(\fr g_i)$ from the first statement. This prove the claim.

Now, suppose $j\neq j'$ and both embeddings $L_j,L_{j'}\hookrightarrow\fr g\oplus\fr g^*$ factor through the same $\fr g_i\oplus\fr g_i^*$. This is obviously impossible since $L_j\oplus L_{j'}\hookrightarrow\fr g\oplus\fr g^*$ would factor through an isomorphism $L_j\oplus L_{j'}\xrightarrow{\sim}\fr g_i\oplus\fr g_i^*$, so it is \emph{not} isotropic. We conclude that there is a bijection between the sets $\{L_j\}$ and $\{\fr g_i\oplus\fr g_i^*\}$ such that each $L_j\hookrightarrow \fr g\oplus\fr g^*$ factors through the corresponding item $\fr g_i\oplus\fr g_i^*$.

Finally, since each $L_j$ is an isotropic subspace of $\fr g_i\oplus\fr g_i^*$, we have:
$$
\dim(\fr g)=\dim(L_{\fr z}) + \sum_j\dim(L_j) \le \dim(\fr z) + \sum_i\dim(\fr g_i) =\dim(\fr g).
$$
Hence the equality is achieved, and each $L_j$ (resp.~$L_{\fr z}$) is a Lagrangian subspace of $\fr g_i\oplus\fr g_i^*$ (resp.~$\fr z\oplus\fr z^*$).
\end{proof}

\begin{cor}
\label{cor-subspace-representation}
Let $L$ be a Lagrangian, $G$-invariant subspace of $\fr g\oplus\fr g^*$. Then there is a (non-canonical) isomorphism $L\xrightarrow{\sim}\fr g$ of $G$-representations. \qed
\end{cor}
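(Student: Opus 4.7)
The plan is to immediately reduce to the previous lemma: by Lemma \ref{lem-subspace-decomposition}, we may write $L = L_{\fr z} \oplus \bigoplus_i L_i$ where $L_{\fr z} \subset \fr z \oplus \fr z^*$ and $L_i \subset \fr g_i \oplus \fr g_i^*$ are Lagrangian, $G$-invariant subspaces. Since $\fr g = \fr z \oplus \bigoplus_i \fr g_i$ as $G$-representations, it suffices to produce isomorphisms $L_{\fr z} \xrightarrow{\sim} \fr z$ and $L_i \xrightarrow{\sim} \fr g_i$ in each factor separately.

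The central piece is trivial: $G$ acts trivially on both $\fr z$ and $\fr z^*$, hence on $L_{\fr z}$, and $\dim L_{\fr z} = \dim \fr z$ since $L_{\fr z}$ is Lagrangian in a symplectic vector space of dimension $2\dim \fr z$. Any $k$-linear isomorphism $L_{\fr z}\xrightarrow{\sim}\fr z$ is automatically $G$-equivariant.

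For each simple factor, I would first invoke the Killing form of $\fr g_i$ to produce a $G$-equivariant isomorphism $\fr g_i \xrightarrow{\sim} \fr g_i^*$ (non-degeneracy uses semisimplicity and characteristic zero). Consequently $\fr g_i \oplus \fr g_i^*$ is $\fr g_i$-isotypic as a $G$-representation. Since $\fr g_i$ is simple and $k$ is algebraically closed of characteristic zero, the adjoint representation is absolutely irreducible, so by Schur's lemma $\op{Hom}_G(\fr g_i, \fr g_i \oplus \fr g_i^*) \cong k^2$; it follows that every $G$-invariant subspace of $\fr g_i \oplus \fr g_i^*$ is a direct sum of at most two copies of $\fr g_i$. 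But $L_i$ is Lagrangian, so $\dim L_i = \dim \fr g_i$, forcing $L_i \cong \fr g_i$ as $G$-representations.

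The only real content is the second part, and the only thing to be careful about is that one does not try to make the isomorphism canonical: the choice corresponds to a point of $\mathbb P(\op{Hom}_G(\fr g_i, \fr g_i \oplus \fr g_i^*)) \cong \mathbb P^1(k)$, which matches the appearance of the Lagrangian Grassmannian $\op{Gr}_{\op{Lag}}^G(\fr g_i \oplus \fr g_i^*) \cong \mathbb P^1$ for each simple factor. Assembling the isomorphisms over the summands yields the desired $L \xrightarrow{\sim} \fr g$.
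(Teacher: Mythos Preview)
Your proposal is correct and is essentially the argument the paper has in mind: the paper records the corollary with a bare \qed immediately after Lemma~\ref{lem-subspace-decomposition}, and your write-up simply spells out the details---decompose via the lemma, handle $L_{\fr z}$ by triviality and dimension, and handle each $L_i$ by noting $\fr g_i\oplus\fr g_i^*$ is $\fr g_i$-isotypic (via the Killing form) so a Lagrangian subrepresentation must be a single copy of $\fr g_i$. Your remark tying the non-canonicity to $\op{Gr}_{\op{Lag}}^G(\fr g_i\oplus\fr g_i^*)\cong\mathbb P^1$ is a nice bonus that anticipates the discussion after Corollary~\ref{cor-structure-of-par-base}.
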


Note that we have an obvious morphism:
\begin{equation}
\label{eq-structure-of-par-base}
\op{Gr}_{\op{Lag}}(\fr z\oplus\fr z^*) \times \prod_i\op{Gr}^G_{\op{Lag}}(\fr g_i\oplus\fr g_i^*) \rightarrow \op{Gr}_{\op{Lag}}^G(\fr g\oplus\fr g^*)
\end{equation}
sending a series of vector bundles $\fr z^{\kappa}, \{\fr g_i^{\kappa}\}$ over $S$ to their direct sum $\fr z^{\kappa}\oplus\sum_i\fr g_i^{\kappa}$, which is a subbundle of $(\fr g\oplus\fr g^*)\otimes\cal O_S$.

\begin{cor}
\label{cor-structure-of-par-base}
The morphism \eqref{eq-structure-of-par-base} is an isomorphism.
\end{cor}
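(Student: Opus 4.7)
The plan is to construct an explicit inverse to~\eqref{eq-structure-of-par-base} by decomposing a $G$-stable Lagrangian subbundle into its $G$-isotypic components. Lemma~\ref{lem-subspace-decomposition} performs exactly this decomposition pointwise; what remains is to carry it out in families and verify that each piece is a Lagrangian subbundle of the corresponding factor.

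The key observation is that the splitting $\fr g\oplus\fr g^* = (\fr z\oplus\fr z^*) \oplus \bigoplus_i (\fr g_i\oplus\fr g_i^*)$ is precisely the isotypic decomposition of $\fr g\oplus\fr g^*$ as a $G$-representation: $\fr z\oplus\fr z^*$ is the trivial isotype, while the $\fr g_i$ (and hence $\fr g_i^*$) are pairwise non-isomorphic irreducible $G$-representations, since only the $i$-th simple factor of $G$ acts nontrivially on $\fr g_i$. Let $p_{\fr z}$ and $\{p_i\}$ be the $G$-equivariant idempotent endomorphisms of $\fr g\oplus\fr g^*$ cut out by this decomposition. For an $S$-point $\fr g^{\kappa}\subset(\fr g\oplus\fr g^*)\otimes\cal O_S$, the $p_{\tau}$ are $\cal O_S$-linear and preserve $\fr g^{\kappa}$ (being $G$-equivariant $k$-linear endomorphisms of the ambient that stabilize any $G$-invariant subsheaf), so they restrict to a system of orthogonal idempotents on $\fr g^{\kappa}$ summing to the identity. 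Setting $\fr z^{\kappa} := p_{\fr z}(\fr g^{\kappa})$ and $\fr g_i^{\kappa} := p_i(\fr g^{\kappa})$ yields a direct sum decomposition $\fr g^{\kappa} = \fr z^{\kappa}\oplus\bigoplus_i\fr g_i^{\kappa}$ as $\cal O_S$-modules, with each summand landing in the corresponding isotypic piece of the ambient.

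To upgrade each summand to a Lagrangian subbundle, I would proceed in three steps: (i) as a direct summand of the locally free sheaf $\fr g^{\kappa}$, each of $\fr z^{\kappa}, \fr g_i^{\kappa}$ is itself locally free; (ii) by the pointwise statement of Lemma~\ref{lem-subspace-decomposition}, its fiberwise rank is constant and equal to $\dim\fr z$ (resp.~$\dim\fr g_i$), which is the Lagrangian rank in the corresponding isotypic factor; (iii) isotropy with respect to \eqref{eq-symplectic-form} is a fiberwise condition on an equidimensional subbundle, hence inherited from the lemma. This yields the candidate inverse morphism, and mutual inverseness is tautological: starting from $(\fr z^{\kappa}, \{\fr g_i^{\kappa}\})$ and forming the direct sum places each piece inside its own isotypic component, so re-applying the projections recovers the original data. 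The one point that requires care is the very existence of the isotypic projectors $p_{\tau}$ as $k$-linear morphisms of $G$-representations, and the fact that they remain the correct projectors after base change to $\cal O_S$; this is where reductivity of $G$ and $\op{char}(k)=0$ intervene, and once these formal properties are in place the rest of the verification is routine.
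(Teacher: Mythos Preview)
Your argument is correct and takes a genuinely different route from the paper's. The paper gives a two-line soft argument: \eqref{eq-structure-of-par-base} is a proper morphism between smooth projective varieties; Lemma~\ref{lem-subspace-decomposition} makes it bijective on $k$-points, hence quasi-finite, hence finite by properness; and a finite degree-$1$ morphism between smooth varieties is an isomorphism. You instead build the inverse explicitly via the isotypic decomposition. This is more work but yields a concrete description of the inverse on $S$-points---which is in fact exactly what the paper spells out anyway in \S\ref{sec-reduction-to-center} for the center component, so your approach dovetails with how the corollary is actually used. One small point worth tightening: the parenthetical justification that the $p_\tau$ ``stabilize any $G$-invariant subsheaf'' is not a feature of arbitrary $G$-equivariant endomorphisms (the swap on $W\oplus W$ is $G$-equivariant but does not fix the first summand); what makes the isotypic projectors special is that they lie in the image of the center of $U(\fr g)$---for instance as polynomials in the Casimir---and therefore act on every $G$-stable $\cal O_S$-submodule. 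That is the precise place where reductivity and $\op{char}(k)=0$ enter, as you correctly flag at the end.
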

\begin{proof}
Indeed, \eqref{eq-structure-of-par-base} is a proper morphism between smooth schemes. Lemma \ref{lem-subspace-decomposition} shows that it is bijective on $k$-points, so in particular quasi-finite, and therefore finite (by properness). A finite morphism of degree $1$ between smooth schemes is an isomorphism.
\end{proof}

Furthermore, any $G$-invariant symmetric bilinear form on $\fr g_i$ fixes an isomorphism $\op{Gr}_{\op{Lag}}^G(\fr g_i\oplus\fr g_i^*)\xrightarrow{\sim}\mathbb P^1$. Therefore $\op{Gr}_{\op{Lag}}^G(\fr g\oplus\fr g^*)$ is \emph{non-canonically} isomorphic to the product of a Lagrangian Grassmannian together with finitely many copies of $\mathbb P^1$.

\subsubsection{}
\label{sec-reduction-to-center}
A particular consequence of Corollary \ref{cor-structure-of-par-base} is that we have a morphism given by projection:
\begin{equation}
\label{eq-reduction-to-center}
\op{Gr}_{\op{Lag}}^G(\fr g\oplus\fr g^*) \rightarrow \op{Gr}_{\op{Lag}}(\fr z\oplus\fr z^*)
\end{equation}
Note that $\fr z$ identifies with the subspace of $G$-invariants of $\fr g$. Although $\fr z^*$ is more naturally the space of $G$-coinvariants of $\fr g^*$, we will identify it with the invariants $(\fr g^*)^G$ via the isomorphism $(\fr g^*)^G\hookrightarrow\fr g^*\twoheadrightarrow\fr z^*$.

More intrinsically, the morphism \eqref{eq-reduction-to-center} is defined on $S$-points by:
$$
\fr g^{\kappa} \leadsto (\fr g^{\kappa})^G:= \fr g^{\kappa} \cap ((\fr z\oplus \fr z^*)\otimes\cal O_S).
$$
where $(\fr z\oplus\fr z^*)\otimes\cal O_S$ is regarded as a submodule of $(\fr g\oplus\fr g^*)\otimes\cal O_S$.

\begin{rem}
We refer to $(\fr g^{\kappa})^G$ as the \emph{$G$-invariants} of $\fr g^{\kappa}$. The same terminology is used in the sequel when we replace $G$ by a different group $H$ and $\fr g^{\kappa}$ by an $H$-invariant subspace of $V\oplus V^*$, where $V$ is any $H$-representation for which the composition $(V^*)^H\hookrightarrow V^*\twoheadrightarrow(V^H)^*$ is an isomorphism.
\end{rem}

\begin{rem}
Note that $(\fr g^{\kappa-\op{crit}})^G\cong(\fr g^{\kappa})^G$, since $\op{crit}$ vanishes on $\fr z$.
\end{rem}

Since the embedding $\fr z\hookrightarrow\fr g$ canonically splits with kernel $\fr g_{\op{s.s.}}:=[\fr g,\fr g]$, there is a surjection $(\fr g\oplus\fr g^*)\otimes\cal O_S \twoheadrightarrow (\fr z\oplus\fr z^*)\otimes\cal O_S$. Note that the image of $\fr g^{\kappa}$ identifies with $(\fr g^{\kappa})^G$, and the composition $(\fr g^{\kappa})^G\hookrightarrow\fr g^{\kappa}\twoheadrightarrow(\fr g^{\kappa})^G$ is the identity. In other words,

\begin{lem}
\label{lem-center-splits}
The morphism $(\fr g^{\kappa})^G \hookrightarrow \fr g^{\kappa}$ canonically splits.\qed
\end{lem}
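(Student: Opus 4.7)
The plan is to produce the splitting by hand using the canonical decomposition $\fr g = \fr z \oplus \fr g_{\op{s.s.}}$, where $\fr g_{\op{s.s.}} = [\fr g, \fr g]$. This decomposition of $G$-representations dualizes to $\fr g^* = \fr z^* \oplus \fr g_{\op{s.s.}}^*$, and together these give a canonical direct-sum decomposition
\[
(\fr g \oplus \fr g^*)\otimes\cal O_S = \bigl((\fr z \oplus \fr z^*)\otimes\cal O_S\bigr) \oplus \bigl((\fr g_{\op{s.s.}} \oplus \fr g_{\op{s.s.}}^*)\otimes\cal O_S\bigr),
\]
with corresponding projection $\pi\colon (\fr g\oplus\fr g^*)\otimes\cal O_S \twoheadrightarrow (\fr z\oplus\fr z^*)\otimes\cal O_S$. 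The candidate splitting is then $\pi|_{\fr g^{\kappa}}\colon \fr g^{\kappa} \to (\fr g^{\kappa})^G$.

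To show that $\pi$ restricted to $\fr g^{\kappa}$ actually lands in $(\fr g^{\kappa})^G$ and retracts the inclusion, I would invoke Corollary \ref{cor-structure-of-par-base}: the Lagrangian subbundle $\fr g^{\kappa}$ splits canonically as $\fr z^{\kappa} \oplus \sum_i \fr g_i^{\kappa}$ along the decomposition of $\fr g$ into its center and simple factors. Under $\pi$, each summand $\fr g_i^{\kappa} \subset (\fr g_i\oplus\fr g_i^*)\otimes\cal O_S$ is killed, and $\fr z^{\kappa}$ is mapped identically into $(\fr z\oplus\fr z^*)\otimes\cal O_S$. Intersecting with $(\fr z\oplus\fr z^*)\otimes\cal O_S$ picks out exactly $\fr z^{\kappa}$, so $(\fr g^{\kappa})^G = \fr z^{\kappa}$, and $\pi|_{\fr g^{\kappa}}$ is the projection $\fr z^{\kappa} \oplus \sum_i \fr g_i^{\kappa} \twoheadrightarrow \fr z^{\kappa}$; the composition $(\fr g^{\kappa})^G \hookrightarrow \fr g^{\kappa} \xrightarrow{\pi} (\fr g^{\kappa})^G$ is then tautologically the identity.

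There is essentially no obstacle here: the splitting already exists at the level of the ambient bundle, purely because $\fr z \hookrightarrow \fr g$ has a canonical $G$-equivariant retraction. The only content is the compatibility of this retraction with the subbundle $\fr g^{\kappa}$, which is precisely what Corollary \ref{cor-structure-of-par-base} provides. In particular, the construction is manifestly functorial in $S$, so the splitting is canonical in the strong sense required for later use in \S\ref{sec-reduction-to-center}.
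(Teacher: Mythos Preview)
Your proof is correct and follows the same approach as the paper: both use the canonical projection $(\fr g\oplus\fr g^*)\otimes\cal O_S \twoheadrightarrow (\fr z\oplus\fr z^*)\otimes\cal O_S$ arising from the splitting $\fr g = \fr z \oplus \fr g_{\op{s.s.}}$, and observe that its restriction to $\fr g^{\kappa}$ is a retraction onto $(\fr g^{\kappa})^G$. Your explicit appeal to Corollary~\ref{cor-structure-of-par-base} to justify that $\pi(\fr g^{\kappa}) \subset \fr g^{\kappa}$ is exactly the content behind the paper's terse assertion that ``the image of $\fr g^{\kappa}$ identifies with $(\fr g^{\kappa})^G$.''
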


\noindent
We denote the complement of $(\fr g^{\kappa})^G$ in $\fr g^{\kappa}$ by $\fr g_{\op{s.s.}}^{\kappa}$; it corresponds to the semisimple part of the Lie algebra $\fr g$.

\subsubsection{} We define the stack $\op{Par}_G$ as follows: $\op{Maps}(S,\op{Par}_G)$ is the groupoid of pairs $(\fr g^{\kappa}, E)$, where $\fr g^{\kappa}$ is an $S$-point of $\op{Gr}_{\op{Lag}}^G(\fr g\oplus\fr g^*)$, and $E$ is an extension of $\cal O_{\cal X}$-modules:
\begin{equation}
\label{eq-additional-parameter}
0 \rightarrow \omega_{\cal X/S} \rightarrow E \rightarrow (\fr g^{\kappa})^G\boxtimes\cal O_X \rightarrow 0.
\end{equation}
where $\cal X:=S\times X$, and $\omega_{\cal X/S}\cong\cal O_S\boxtimes\omega_X$ is the relative dualizing sheaf.

In other words, $\op{Par}_G$ is a fiber bundle over $\op{Gr}_{\op{Lag}}^G(\fr g\oplus\fr g^*)$, whose fiber at a $k$-point $\fr g^{\kappa}$ is the linear stack $\mathbf{Ext}((\fr g^{\kappa})^G\boxtimes\cal O_X, \omega_X)$ of extensions over $X$. We think of $\fr g^{\kappa}$ as a generalized symmetric bilinear form on $\fr g$ and $E$ as an \emph{additional} parameter.

\begin{rem}
The substack of $\op{Par}_G$ corresponding to the points $(\fr g^{\kappa}, E)$ where $\fr g^{\kappa}$ arises from a bilinear form conjecturally parametrizes \emph{factorization twistings} on the affine Grassmannian $\op{Gr}_G$, subject to a certain regularity condition (see \S\ref{quantum-v-metaplectic}). Hence, one may view $\op{Par}_G$ as a (partial) compactification of the stack of factorization twistings. We hope to address this conjecture in a forthcoming work.
\end{rem}

\subsection{Langlands duality of $\op{Par}_G$} We now fix a maximal torus $T\hookrightarrow G$. The Langlands dual of $(G,T)$ consists of a reductive group $\check G$ together with a maximal torus $\check T\hookrightarrow\check G$.

\subsubsection{} Let $W:=N_G(T)/T$ denote the Weyl group of $T$. It acts on $\fr t\oplus\fr t^*$ in the standard way. There is a symplectic isomorphism:
\begin{equation}
\label{eq-self-duality-torus}
\fr t\oplus\fr t^* \xrightarrow{\sim} \check{\fr t}\oplus\check{\fr t}^*,\quad \xi\oplus\varphi \leadsto \varphi\oplus(-\xi)
\end{equation}
defined using the canonical identifications $\fr t^*\xrightarrow{\sim}\check{\fr t}$ and $\fr t\xrightarrow{\sim}\check{\fr t}^*$. Furthermore, \eqref{eq-self-duality-torus} intertwines the $W$ and $\check W$ actions (again, under the identification $W\xrightarrow{\sim}\check W$).

\begin{rem}
The sign in \eqref{eq-self-duality-torus} is present not just for matching up the symplectic forms; it is a feature of the Langlands theory.
\end{rem}

Let $\op{Gr}_{\op{Lag}}^W(\fr t\oplus\fr t^*)$ denote the smooth, projective variety parametrizing $W$-invariant, Lagrangian subspaces of $\fr t\oplus\fr t^*$. The isomorphism \eqref{eq-self-duality-torus} induces an isomorphism:
\begin{equation}
\label{eq-self-duality-torus-gr}
\op{Gr}_{\op{Lag}}^W(\fr t\oplus\fr t^*) \xrightarrow{\sim} \op{Gr}_{\op{Lag}}^{\check W}(\check{\fr t}\oplus\check{\fr t}^*).
\end{equation}
We denote the image of $\fr t^{\kappa}$ under \eqref{eq-self-duality-torus-gr} by $\check{\fr t}^{\check{\kappa}}$, and view it as the graph associated to the ``dual'' form.

\subsubsection{} We define a morphism
\begin{equation}
\label{eq-reduction-to-torus}
\op{Gr}_{\op{Lag}}^G(\fr g\oplus\fr g^*) \rightarrow \op{Gr}_{\op{Lag}}^W(\fr t\oplus\fr t^*)
\end{equation}
by sending an $S$-point $\fr g^{\kappa}$ to $(\fr g^{\kappa})^T$, the $T$-invariants of $\fr g^{\kappa}$. An argument similar to the one in \S\ref{sec-reduction-to-center} shows that we have a well-defined map $\op{Gr}_{\op{Lag}}^G(\fr g\oplus\fr g^*) \rightarrow \op{Gr}_{\op{Lag}}(\fr t\oplus\fr t^*)$; it is clear that the image lies in the $W$-fixed locus.

\begin{lem}
\label{lem-reduction-to-torus}
The morphism \eqref{eq-reduction-to-torus} is an isomorphism.
\end{lem}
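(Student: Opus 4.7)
The plan is to reduce to the case where $G$ is a torus or simple, and then use a proper-bijective-smooth argument as in the proof of Corollary \ref{cor-structure-of-par-base}.

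First I would establish a decomposition of the target mirroring Corollary \ref{cor-structure-of-par-base}. Write $\fr t=\fr z\oplus\sum_i\fr t_i$ with $\fr t_i\subset\fr g_i$ the Cartan of each simple factor, and $W=\prod_i W_i$ accordingly. Repeating the argument of Lemma \ref{lem-subspace-decomposition} verbatim, with $(G,\fr g)$ replaced by $(T,\fr t)$ and $G$-invariance by $W$-invariance (the $\fr t_i$ remain irreducible and pairwise non-isomorphic as $W_i$-representations), yields
$$\op{Gr}_{\op{Lag}}^W(\fr t\oplus\fr t^*)\;\cong\;\op{Gr}_{\op{Lag}}(\fr z\oplus\fr z^*)\times\prod_i\op{Gr}_{\op{Lag}}^{W_i}(\fr t_i\oplus\fr t_i^*).$$
The morphism \eqref{eq-reduction-to-torus} is compatible with this and with the decomposition of the source: on the central factor it is the identity, and on the $i$-th factor it is the analogous morphism attached to the simple group $G_i$. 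Hence it suffices to treat the case $G$ simple.

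When $G$ is simple, both source and target are non-canonically isomorphic to $\mathbb P^1$ --- the source by the remark after Corollary \ref{cor-structure-of-par-base}, and the target for the same reason, since $\op{Sym}^2(\fr t^*)^W$ is one-dimensional by classical Chevalley restriction. The morphism is proper between smooth projective varieties, so following the argument of Corollary \ref{cor-structure-of-par-base} it is enough to check bijectivity on $k$-points. On the affine open $\op{Sym}^2(\fr g^*)^G\subset\op{Gr}_{\op{Lag}}^G(\fr g\oplus\fr g^*)$, the morphism sends the graph of $\kappa:\fr g\to\fr g^*$ to the graph of $\kappa|_{\fr t}:\fr t\to\fr t^*$, so this portion is precisely the Chevalley isomorphism $\op{Sym}^2(\fr g^*)^G\xrightarrow{\sim}\op{Sym}^2(\fr t^*)^W$. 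The unique remaining $k$-point is $\fr g^\infty=0\oplus\fr g^*$, and $(0\oplus\fr g^*)^T=0\oplus(\fr g^*)^T=\fr t^\infty$ under the identification $(\fr g^*)^T\cong\fr t^*$, matching the unique point at infinity on the target.

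The subtle step to watch is the well-definedness of \eqref{eq-reduction-to-torus} itself --- that $(\fr g^\kappa)^T:=\fr g^\kappa\cap((\fr t\oplus\fr t^*)\otimes\cal O_S)$ really is a Lagrangian $W$-invariant subbundle of $(\fr t\oplus\fr t^*)\otimes\cal O_S$. This mirrors \S\ref{sec-reduction-to-center} with $H=T$ acting on $V=\fr g$: the isomorphism $(\fr g^*)^T\xrightarrow{\sim}\fr t^*$ comes from $\fr t$ being the zero-weight summand of $\fr g$; a dimension count using $\fr g^\kappa\cong\fr g$ as $T$-representations (Corollary \ref{cor-subspace-representation}) gives $\dim(\fr g^\kappa)^T=\dim\fr t$; and isotropicity transfers from $\fr g\oplus\fr g^*$ because the embedding $\fr t\oplus\fr t^*\hookrightarrow\fr g\oplus\fr g^*$ is symplectic. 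Beyond this verification I do not expect a serious obstacle: bijectivity together with the smoothness-and-properness argument closes the proof exactly as in Corollary \ref{cor-structure-of-par-base}.
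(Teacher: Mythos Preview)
Your proposal is correct and follows essentially the same approach as the paper: decompose both source and target along the simple factors (via the analogue of Lemma~\ref{lem-subspace-decomposition} and Corollary~\ref{cor-structure-of-par-base} for $W$ acting on $\fr t$), reduce to the simple case, and identify both sides with $\mathbb P^1$ via a choice of invariant form. Your explicit bijectivity check (Chevalley restriction on the affine chart, plus the point at $\infty$) spells out what the paper leaves implicit in the phrase ``identifies both $\op{Gr}_{\op{Lag}}^G(\fr g_i\oplus\fr g_i^*)$ and $\op{Gr}_{\op{Lag}}^W(\fr t_i\oplus\fr t_i^*)$ with $\mathbb P^1$,'' but the substance is the same.
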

\begin{proof}
Indeed, a decomposition of $\fr g=\fr z\oplus\sum_i\fr g_i$ into simple factors induces a decomposition $\fr t=\fr z\oplus\sum_i\fr t_i$, where each $\fr t_i$ is the maximal torus of the factor $\fr g_i$. Note that $\fr t_i$ is irreducible as a $W$-representation. An analogue of Corollary \ref{cor-structure-of-par-base} asserts an isomorphism $\op{Gr}_{\op{Lag}}^W(\fr t\oplus\fr t^*)\xrightarrow{\sim} \op{Gr}_{\op{Lag}}(\fr z\oplus\fr z^*)\times\prod_i\op{Gr}^W_{\op{Lag}}(\fr t_i\oplus\fr t_i^*)$, making the following diagram commute:
$$
\xymatrix@C=1.5em@R=1.5em{
	\op{Gr}_{\op{Lag}}^G(\fr g\oplus\fr g^*) \ar[r]^{\eqref{eq-reduction-to-torus}} \ar[d]^{\rotatebox{90}{$\sim$}} & \op{Gr}_{\op{Lag}}^W(\fr t\oplus\fr t^*) \ar[d]^{\rotatebox{90}{$\sim$}} \\
	\op{Gr}_{\op{Lag}}(\fr z\oplus\fr z^*)\times\prod_i\op{Gr}^G_{\op{Lag}}(\fr g_i\oplus\fr g_i^*) \ar[r] & \op{Gr}_{\op{Lag}}(\fr z\oplus\fr z^*)\times\prod_i\op{Gr}^W_{\op{Lag}}(\fr t_i\oplus\fr t_i^*).
}
$$
Note that the bottom arrow is an isomorphism since the choice of a $G$-invariant, symmetric bilinear form on $\fr g_i$ (hence a $W$-invariant form on $\fr t_i$) identifies both $\op{Gr}_{\op{Lag}}^G(\fr g_i\oplus\fr g_i^*)$ and $\op{Gr}_{\op{Lag}}^W(\fr t_i\oplus\fr t_i^*)$ with $\mathbb P^1$.
\end{proof}

\begin{rem}
Using $T$, we may also rewrite \eqref{eq-reduction-to-center} as the two-step procedure of first taking $T$-invariants and then taking $W$-invariants:
$$
(\fr g^{\kappa})^G \xrightarrow{\sim} ((\fr g^{\kappa})^T)^W.
$$
This isomorphism again follows from the description of fibers of $\fr g^{\kappa}$ in Lemma \ref{lem-subspace-decomposition}.
\end{rem}

\subsubsection{} We will consider a slight variant of the isomorphism \eqref{eq-reduction-to-torus} which takes into account the critical shift:
\begin{equation}
\label{eq-reduction-to-torus-true}
\op{Gr}_{\op{Lag}}^G(\fr g\oplus\fr g^*) \xrightarrow{\sim} \op{Gr}_{\op{Lag}}^W(\fr t\oplus\fr t^*),\quad \fr g^{\kappa} \leadsto (\fr g^{\kappa-\op{crit}})^T.
\end{equation}
There is an isomorphism between $\op{Gr}_{\op{Lag}}^G(\fr g\oplus\fr g^*)$ and the corresponding space for $\check G$, making the following diagram commute:
$$
\xymatrix@C=1.5em@R=1.5em{
\op{Gr}_{\op{Lag}}^G(\fr g\oplus\fr g^*) \ar[r]^-{\sim}\ar[d]^{\eqref{eq-reduction-to-torus-true}} & \op{Gr}_{\op{Lag}}^{\check G}(\check{\fr g}\oplus\check{\fr g}^*) \ar[d]^{\eqref{eq-reduction-to-torus-true}\text{ for }\check G} \\
\op{Gr}_{\op{Lag}}^W(\fr t\oplus\fr t^*) \ar[r]^-{\eqref{eq-self-duality-torus-gr}}_-{\sim} & \op{Gr}_{\op{Lag}}^{\check W}(\check{\fr t}\oplus\check{\fr t}^*)
}
$$
We denote the image of $\fr g^{\kappa}$ in $\op{Gr}_{\op{Lag}}^{\check G}(\check{\fr g}\oplus\check{\fr g}^*)$ by $\check{\fr g}^{\check\kappa}$.

Since $(\fr g^{\kappa-\op{crit}})^G \cong (\fr g^{\kappa})^G$, there is an isomorphism
\begin{equation}
\label{eq-duality-for-par}
\op{Par}_G \xrightarrow{\sim} \op{Par}_{\check G},\quad (\fr g^{\kappa}, E) \leadsto (\fr g^{\check{\kappa}}, \check E)
\end{equation}
where $\check E$ is the extension of $(\check{\fr g}^{\check{\kappa}})^{\check G}\boxtimes\cal O_X$ induced from $E$ via the identification of $\cal O_{S\times X}$-modules:
$$
(\fr g^{\kappa})^G \xrightarrow{\sim} (\fr g^{\kappa-\op{crit}})^G \cong (\check{\fr g}^{\check{\kappa}-\op{crit}})^{\check G} \xleftarrow{\sim} (\check{\fr g}^{\check{\kappa}})^{\check G}
$$
where the middle isomorphism comes from the identification of $(\fr g^{\kappa-\op{crit}})^T$ and $(\check{\fr g}^{\check{\kappa}-\op{crit}})^{\check T}$ under \eqref{eq-self-duality-torus-gr}. We refer to \eqref{eq-duality-for-par} as the \emph{Langlands duality} for the parameter space $\op{Par}_G$.

\begin{eg}
\label{eg-dual-coxeter-number}
Suppose $G$ is simple, and we fix a $k$-valued parameter $(\fr g^{\kappa}, 0)$ of $\op{Par}_G$ corresponding to some bilinear form $\kappa$ on $\fr g$. Then $\kappa=\lambda\cdot\op{Kil}_G$ for some $\lambda\in k$. Write $\lambda = (c- h^{\vee})/2h^{\vee}$ for some $c\in k$, where $h^{\vee}$ denotes the dual Coxeter number of $G$.

Assume $c\neq 0$. Then under the isomorphism \eqref{eq-duality-for-par}:
$$
\op{Par}_G\xrightarrow{\sim}\op{Par}_{\check G},\quad (\fr g^{\kappa}, 0)\leadsto (\check{\fr g}^{\check{\kappa}}, 0),
$$
we \emph{claim} that $\check{\fr g}^{\check{\kappa}}$ also arises from a bilinear form $\check{\kappa}$, defined by the formulae:
$$
\check{\kappa} = \check{\lambda}\cdot\op{Kil}_{\check G}, \quad \check{\lambda} = (-\frac{1}{rc} - h)/2h,
$$
where $r=1,2$ or $3$ denotes the maximal multiplicity of arrows in the Dynkin diagram of $G$. Indeed, one sees this from the fact that $(1/2h^{\vee})\cdot\op{Kil}_G$ is the minimal bilinear form and $r$ is the ratio of the square lengths of long and short roots of $G$.
\end{eg}

\subsection{Parabolics and anomaly} We now explain how to incorporate, via an additional parameter, the anomaly term that appears in the study of constant term functors (see \cite[\S 3.3]{Ga15}). This discussion requires further fixing:
\begin{itemize}
	\item a Borel subgroup $B$ containing $T$;
	\item a \emph{theta characteristic} on the curve $X$, i.e., a line bundle $\theta$ together with an isomorphism $\theta^{\otimes 2}\xrightarrow{\sim}\omega_X$.
\end{itemize}
A \emph{standard} parabolic is a parabolic subgroup of $G$ containing $B$.

\subsubsection{} Let $P$ be a standard parabolic with Levi quotient $M$. Then we may regard $T$ as a maximal torus of $M$ via the composition $T\hookrightarrow B\hookrightarrow P \twoheadrightarrow M$. Note that the Weyl group $W_M$ of $T\hookrightarrow M$ is naturally a subgroup of $W$.

The embedding
$$
\fr z\xrightarrow{\sim}\fr t^W\hookrightarrow\fr t^{W_M}\xrightarrow{\sim}\fr z_M
$$
is canonically split; this is because the composition $Z_0(G)\hookrightarrow G\twoheadrightarrow G/[G,G]$ is an isogeny, giving rise to the projection $\fr z_M\rightarrow\fr z$. It follows that we have a canonical map:
\begin{equation}
\label{eq-reduction-on-centers}
\fr z_M\oplus\fr z_M^* \rightarrow \fr z_G\oplus\fr z_G^*
\end{equation}
from the $W_M$-invariants of $\fr t\oplus\fr t^*$ to its $W$-invariants. In particular, given any Lagrangian, $W$-invariant subbundle $\fr t^{\kappa}\hookrightarrow (\fr t\oplus\fr t^*)\otimes\cal O_S$, we have a morphism
\begin{equation}
\label{eq-reduction-subspace-on-centers}
(\fr t^{\kappa})^{W_M}\rightarrow (\fr t^{\kappa})^W
\end{equation}
compatible with \eqref{eq-reduction-on-centers}.

\subsubsection{}
There is a \emph{reduction} morphism
\begin{equation}
\label{eq-reduction-gr}
\op{Gr}_{\op{Lag}}^G(\fr g\oplus\fr g^*) \rightarrow \op{Gr}_{\op{Lag}}^M(\fr m\oplus\fr m^*)
\end{equation}
given by the composition
$$
\op{Gr}_{\op{Lag}}^G(\fr g\oplus\fr g^*)\xrightarrow{\sim} \op{Gr}_{\op{Lag}}^W(\fr t\oplus\fr t^*) \hookrightarrow \op{Gr}_{\op{Lag}}^{W_M}(\fr t\oplus\fr t^*) \xleftarrow{\sim} \op{Gr}_{\op{Lag}}^M(\fr m\oplus\fr m^*)
$$
where the isomorphisms are supplied by \eqref{eq-reduction-to-torus-true} for $G$, respectively $M$. In other words, the image of $\fr g^{\kappa}$ under \eqref{eq-reduction-gr} is an $S$-point $\fr m^{\kappa}$ such that $(\fr m^{\kappa-\op{crit}})^T$ and $(\fr g^{\kappa-\op{crit}})^T$ are canonically isomorphic as subbundles of $(\fr t\oplus\fr t^*)\otimes\cal O_S$.

\subsubsection{} Let $Z_0(M)$ denote the neutral component of the center of $M$. Write $2\check{\rho}_M$ for the character of $Z_0(M)$ determined by the representation $\det(\fr n_P)$, where $\fr n_P$ is the Lie algebra of the unipotent part of $P$.

Let $\check Z_0(M)$ denote the Langlands dual torus of $Z_0(M)$. We use $\omega_X^{\check{\rho}_M}$ to denote the $\check Z_0(M)$-bundle on $X$ induced from $\theta$ under $2\check{\rho}_M$ (regarded as a cocharacter of $\check Z_0(M)$). Then the Atiyah bundle of $\omega_X^{\check{\rho}_M}$ fits into an exact sequence:
$$
0 \rightarrow \fr z_M^*\otimes\cal O_X \rightarrow\op{At}(\omega_X^{\check{\rho}_M}) \rightarrow \cal T_X \rightarrow 0
$$
Its monoidal dual gives rise to an extension of $\cal O_{\cal X}$-modules for every $S$ (recall the notation $\cal X:=S\times X$):
\begin{equation}
\label{eq-anomaly-untwisted}
0 \rightarrow \omega_{\cal X/S} \rightarrow \cal O_S\boxtimes\op{At}(\omega_X^{\check{\rho}_M})^* \rightarrow (\fr z_M\otimes\cal O_S)\boxtimes\cal O_X \rightarrow 0
\end{equation}

For each $S$-point $\fr m^{\kappa}$ of $\op{Gr}_{\op{Lag}}^{M}(\fr m\oplus\fr m^*)$, let $E_{G\rightarrow M}^+$ denote the extension of $(\fr m^{\kappa})^M$ induced from \eqref{eq-anomaly-untwisted} along the canonical map
$$
(\fr m^{\kappa})^M \hookrightarrow (\fr z_M\oplus\fr z_M^*)\otimes\cal O_S\twoheadrightarrow \fr z_M\otimes\cal O_S.
$$
The additional parameter $E_{G\rightarrow M}^+$ is the \emph{anomaly term} at level $\fr m^{\kappa}$.

\subsubsection{} The reduction morphism for quantum parameters is defined by
\begin{equation}
\label{eq-reduction-par}
\op{Par}_G \rightarrow \op{Par}_M,\quad (\fr g^{\kappa}, E) \leadsto (\fr m^{\kappa}, E_{G\rightarrow M})
\end{equation}
where $\fr m^{\kappa}$ is the image of $\fr g^{\kappa}$ under \eqref{eq-reduction-gr}, and $E_{G\rightarrow M}$ is the Baer sum of the following two extensions of $(\fr m^{\kappa})^M$:
\begin{itemize}
	\item an extension induced from $E$ (which is an extension of $(\fr g^{\kappa})^G$) via the map:
	$$
	(\fr m^{\kappa})^M \xrightarrow{\sim} (\fr m^{\kappa-\op{crit}})^M\rightarrow(\fr g^{\kappa-\op{crit}})^G \xrightarrow{\sim}(\fr g^{\kappa})^G,
	$$
	where the map in the middle comes from \eqref{eq-reduction-subspace-on-centers} for $\fr t^{\kappa}:=(\fr m^{\kappa-\op{crit}})^T\cong(\fr g^{\kappa-\op{crit}})^T$;
	\item the anomaly term $E_{G\rightarrow M}^+$ at level $\fr m^{\kappa}$.
\end{itemize}

\begin{rem}
The image of $(\fr g^{\infty}, E)$ under \eqref{eq-reduction-par} is simply the unadjusted one $(\fr m^{\infty}, E)$. In particular, we see that \eqref{eq-reduction-par} is \emph{incompatible} with Langlands duality for quantum parameters, i.e., if we let $\check M$ be the group dual to $M$, the following diagram does \emph{not} commute:
$$
\xymatrix@C=1.5em@R=1.5em{
\op{Par}_G \ar[r]^-{\eqref{eq-duality-for-par}}\ar[d]_{\eqref{eq-reduction-par}} & \op{Par}_{\check G} \ar[d]^{\eqref{eq-reduction-par}} \\
\op{Par}_M \ar[r]^-{\eqref{eq-duality-for-par}} & \op{Par}_{\check M}.
}
$$
\end{rem}

\begin{rem}
For $P=B$ and $M=T$, the character $2\check{\rho}$ is the sum of positive roots, and splittings of \eqref{eq-anomaly-untwisted} form a $\fr t^*\otimes\omega_X$-torsor $\op{Conn}(\omega_X^{\check{\rho}})$ commonly known as the \emph{Miura opers}.
\end{rem}

\subsection{Structures on $\fr g^{\kappa}$} We now note some structures on an $S$-point $\fr g^{\kappa}$ of $\op{Gr}_{\op{Lag}}^G(\fr g\oplus\fr g^*)$ that will be used later. These structures are functorial in $S$.

\subsubsection{} There is an $\cal O_S$-bilinear Lie bracket:
\begin{equation}
\label{eq-structure-lie}
[-,-] : \fr g^{\kappa}\underset{\cal O_S}{\otimes}\fr g^{\kappa} \rightarrow \fr g^{\kappa}
\end{equation}
defined by the formula (on the ambient bundle $(\fr g\oplus\fr g^*)\otimes\cal O_S$):
$$
[(\xi\oplus\varphi)\otimes\mathbf 1, (\xi'\oplus\varphi')\otimes\mathbf 1] := ([\xi, \xi']\oplus\op{Coad}_{\xi}(\varphi'))\otimes\mathbf 1.
$$
One checks immediately that the image lies in $\fr g^{\kappa}$ and the required identities hold. Note that \eqref{eq-structure-lie} factors through the embedding $\fr g_{\op{s.s.}}^{\kappa} \hookrightarrow \fr g^{\kappa}$.

\subsubsection{} There is an $\cal O_S$-bilinear symmetric pairing:
\begin{equation}
\label{eq-structure-form}
(-,-) : \fr g^{\kappa}\underset{\cal O_S}{\otimes}\fr g^{\kappa} \rightarrow \cal O_S
\end{equation}
defined by the formula:
$$
((\xi\oplus\varphi)\otimes\mathbf 1, (\xi'\oplus\varphi')\otimes\mathbf 1) := \varphi'(\xi)\cdot\mathbf 1.
$$

\begin{rem}
The pairing \eqref{eq-structure-form} gives rise to a canonical central extension of the loop algebra $\fr g^{\kappa}(\!(t)\!)$:
$$
0 \rightarrow \cal O_S \rightarrow \widehat{\fr g}^{\kappa} \rightarrow \fr g^{\kappa}(\!(t)\!)\rightarrow 0
$$
whose cocycle is given by the residue pairing $\op{Res}(-,d-)$. This is the prototype of a \emph{generalized} Kac-Moody extension. We will return to it in \S\ref{sec-construction} (although in the $\cal D$-module setting).
\end{rem}

\subsubsection{} Fixing an $S$-point $(\fr g^{\kappa}, E)$ of $\op{Par}_G$, there is an extension of $\cal O_{\cal X}$-modules:
\begin{equation}
\label{eq-canonical-module-ext}
0 \rightarrow \omega_{\cal X/S} \rightarrow \widehat{\fr g}^{(\kappa,E)} \rightarrow\fr g^{\kappa}\boxtimes\cal O_X \rightarrow 0.
\end{equation}
induced from \eqref{eq-additional-parameter} along $\fr g^{\kappa}\otimes\cal O_S\rightarrow (\fr g^{\kappa})^G\otimes\cal O_X$. In other words, $\widehat{\fr g}^{(\kappa,E)}$ is the direct sum of $E$ and $\fr g_{\op{s.s.}}^{\kappa}\boxtimes\cal O_X$, corresponding to the decomposition $\fr g^{\kappa}\xrightarrow{\sim}\widehat{\fr g}^{\kappa}\oplus\fr g_{\op{s.s.}}^{\kappa}$.

\medskip

\part*{Quasi-twistings and their quotients}

\section{Quasi-twistings}
\label{sec-qtw}

In this section, we make sense of a central extension of Lie algebroids in the DG setting; such objects are called \emph{quasi-twistings}. A dynamic theory of Lie algebroids in such generality has been built by Gaitsgory and Rozenblyum \cite{GR16}, and our results in \S\ref{sec-qtw} and \S\ref{sec-quotient} are no more than a modest extension of their theory.

We work over a fixed base scheme $S$ locally of finite type over $k$.

\subsection{The classical notion} Let $Y\in\mathbf{Sch}_{/S}$ be a scheme over $S$.

\subsubsection{}
\label{sec-liealgd}
A \emph{Lie algebroid} over $Y$ (relative to $S$) is an $\cal O_Y$-module $\cal L$ together with an $\cal O_S$-linear Lie bracket $[-,-]$ and an $\cal O_Y$-module map $\sigma:\cal L\rightarrow\cal T_{Y/S}$ such that the following properties are satisfied:
\begin{itemize}
	\item $[l_1, f\cdot l_2] = \sigma(l_1)(f)\cdot l_2 + f[l_1,l_2]$;
	\item $\sigma$ intertwines $[-,-]$ with the canonical Lie bracket on $\cal T_{Y/S}$.
\end{itemize}
The morphism $\sigma$ is called the \emph{anchor map} of $\cal L$. The category of Lie algebroids over $Y$ is denoted by $\mathbf{LieAlgd}_{/S}(Y)$.

A \emph{Picard algebroid} is a central extension of the tangent Lie algebroid $\cal T_{Y/S}$ by $\cal O_Y$; they are equivalent to twisted differential operators (TDO) over $Y$ (see \cite{BB93}).

\subsubsection{} A \emph{classical quasi-twisting} $\cal T^{\op{cl}}$ over $Y$ (relative to $S$) is a central extension
\begin{equation}
\label{eq-classical-qtw}
0\rightarrow\cal O_Y \rightarrow \widehat{\cal L} \rightarrow \cal L \rightarrow 0
\end{equation}
of Lie algebroids. We say that $\cal T^{\op{cl}}$ is \emph{based} at the Lie algebroid $\cal L$. Classical quasi-twistings with a fixed base $\cal L$ form a $k$-linear category, denoted by $\mathbf{QTw}^{\op{cl}}_{/S}(Y/\cal L)$. The following is obvious:

\begin{lem}
A classical quasi-twisting $\cal T^{\op{cl}}$ is a Picard algebroid if and only if the anchor map of $\cal L$ is an isomorphism.\qed
\end{lem}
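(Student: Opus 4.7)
The plan is to verify each implication by directly unpacking the definitions of a Picard algebroid and of a classical quasi-twisting recalled in \S\ref{sec-liealgd}; this is essentially a tautology, so my role is to make sure that the anchor data are correctly tracked.

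First, suppose $\cal T^{\op{cl}}$ is a Picard algebroid. By definition this means the central extension $0\to\cal O_Y\to\widehat{\cal L}\to\cal L\to 0$ is a central extension of $\cal T_{Y/S}$ by $\cal O_Y$, so in particular $\cal L=\cal T_{Y/S}$ as a Lie algebroid, and hence its anchor $\sigma$ is the identity, which is an isomorphism.

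For the converse, assume $\sigma:\cal L\xrightarrow{\sim}\cal T_{Y/S}$ is an isomorphism. I would use $\sigma$ to identify $\cal L$ with $\cal T_{Y/S}$ so that the sequence takes the form $0\to\cal O_Y\to\widehat{\cal L}\to\cal T_{Y/S}\to 0$. To conclude that this is a Picard algebroid, one must check that the anchor $\widehat\sigma$ of $\widehat{\cal L}$ matches the projection $\pi:\widehat{\cal L}\to\cal T_{Y/S}$. This is forced by the requirement that the arrows in the extension live in $\mathbf{LieAlgd}_{/S}(Y)$: the inclusion $\cal O_Y\hookrightarrow\widehat{\cal L}$ is a morphism of Lie algebroids, so $\widehat\sigma|_{\cal O_Y}=0$, and the projection $\widehat{\cal L}\twoheadrightarrow\cal L$ is a morphism of Lie algebroids, so $\widehat\sigma=\sigma\circ\pi$. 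Composing with the identification $\cal L\cong\cal T_{Y/S}$ then gives $\widehat\sigma=\pi$, as required.

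The only point that deserves any care is this automatic determination of $\widehat\sigma$ in the converse direction; beyond it, there is no substantive obstacle, since the result is essentially a recapitulation of the two definitions in parallel.
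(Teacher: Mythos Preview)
Your proposal is correct and is exactly the routine unwinding of definitions that the paper has in mind; the paper itself gives no argument (the lemma is marked with an immediate \qed), so your write-up is simply the expected justification, with the only nontrivial observation being that the anchor of $\widehat{\cal L}$ is forced to agree with the projection once the maps in the extension are morphisms of Lie algebroids.
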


The \emph{(reduced) universal envelop} of $\cal T^{\op{cl}}$ is the $\cal O_Y$-algebra
$$
\op U(\cal T^{\op{cl}}) := \op U(\widehat{\cal L})/(1-\mathbf 1),
$$
where $\op U(\widehat{\cal L})$ is the universal enveloping algebra of $\widehat{\cal L}$, and $\mathbf 1$ denotes the image of the unit in $\cal O_Y$. A \emph{module} over $\cal T^{\op{cl}}$ is a $\op U(\cal T^{\op{cl}})$-module, or equivalently, a module over the Lie algebroid $\widehat{\cal L}$ on which $\mathbf 1$ acts by the identity.

\subsection{Some $\infty$-dimensional geometry}
\label{sec-elephant}
When $Y$ is not locally of finite type over $S$, the above notion of Lie algebroids is not very amenable to study. We will occasionally encounter some $\infty$-type schemes, for which we need the notion of a Lie algebroid ``on Tate module''.

\subsubsection{} Let $R$ be a (discrete) ring over $k$. The notion of Tate $R$-modules is developed in \cite{Dr06}. We briefly recall the definitions.

An \emph{elementary Tate $R$-module} is a topological $R$-module isomorphic to $P\oplus Q^*$, where $P$ and $Q$ are discrete, projective $R$-modules.\footnote{The topology on $Q^*$ is generated by opens of the form $U^{\perp}$ where $U$ is a finite generated $R$-submodule of $Q$.} A \emph{Tate $R$-module} is topological $R$-module isomorphic to a direct summand of some elementary Tate $R$-module.

There are two important types of submodules of a Tate $R$-module $M$:
\begin{itemize}
	\item a \emph{lattice} is an open submodule $L^+$ with the property that $L^+/U$ is finitely generated for all open submodule $U\hookrightarrow L^+$.
	\item a \emph{co-lattice} is a submodule $L^-$ such that for some lattice $L^+$, both $L^+\cap L^-$ and $M/(L^+ + L^-)$ are finitely generated.
\end{itemize}

\begin{eg}
Clearly, every profinite $R$-module is an elementary Tate $R$-module. The Laurent series ring $R(\!(t)\!)$ is also an elementary Tate module (but \emph{not} profinite).
\end{eg}

\subsubsection{} Given a map of (discrete) rings $R\rightarrow R'$, the \emph{pullback} of a Tate $R$-module $M$ is defined by
$$
M\underset{R}{\widehat{\otimes}} R' := \underset{\longleftarrow}{\op{lim}}\, (M/U)\underset{R}{\otimes} R'
$$
where $U$ ranges over open submodules of $M$.

Tate $R$-modules are local objects for the flat topology (\cite[Theorem 3.3]{Dr06}.) In particular, we may define a Tate $\cal O_Y$-module $\cal F$ over a scheme $Y$ (or more generally, an algebraic stack) as a compatible system of Tate $\cal O_Z$-modules $\cal F\big|_Z$ for every affine scheme $Z$ mapping to $Y$.

\subsubsection{} Let $Y$ be a scheme over $S$. Then $Y$ is \emph{placid} if locally there is a presentation $Y \xrightarrow{\sim} \underset{\longleftarrow}{\op{lim}}\,Y_i$, where each $Y_i$ is a scheme of finite type, and the connecting morphisms $Y_j\rightarrow Y_i$ are smooth surjections. We call a placid scheme $Y$ \emph{pro-smooth}, if we can furthermore choose each $Y_i$ to be smooth.

If $Y$ is a pro-smooth placid scheme, then the tangent sheaf $\cal T_{Y/S}$ is naturally a Tate $\cal O_Y$-module. Indeed, locally on $Y$ there is an isomorphism:
$$
\cal T_{Y/S} \xrightarrow{\sim} \underset{\longleftarrow}{\op{lim}}\,\pi_i^*\cal T_{Y_i/S},
$$
where $\pi_i : Y\rightarrow Y_i$ is the canonical map.

\subsubsection{}
\label{sec-tate-liealgd} Suppose $Y$ is a pro-smooth placid scheme. We define a \emph{Lie algebroid on Tate module} over $Y$ as a Tate $\cal O_Y$-module $\cal L$ together with a \emph{continuous} $\cal O_Y$-linear map $\sigma : \cal L\rightarrow\cal T_{Y/S}$, such that as a \emph{plain} $\cal O_Y$-module, $\cal L$ has the structure of a Lie algebroid with $\sigma$ as its anchor map.

\begin{eg}
The tangent sheaf $\cal T_{Y/S}$ has the structure of a Lie algebroid on Tate module.
\end{eg}

A \emph{classical quasi-twisting on Tate modules} $\cal T^{\op{cl}}$ over $Y$ is a central extension \eqref{eq-classical-qtw} of Lie algebroids on Tate modules where all the morphisms are continuous.

\begin{rem}
The above notion is very na\"ive, as it does not indicate how the Lie bracket interacts with the topology on $\cal L$. However, it suffices for our purpose since in the construction of $\cal T^{(\kappa,E)}_G$ in \S\ref{sec-construction}, the first quotient step will reduce the classical quasi-twisting on Tate modules $\tilde{\cal T}_G^{(\kappa,E)}$ into a discrete, classical quasi-twisting over $\op{Bun}_{G,nx}^{(\le\theta)}$.
\end{rem}

\begin{rem}
We will frequently refer to a classical quasi-twisting on Tate modules simply as a classical quasi-twisting, as the Tate structures should be clear from the context.
\end{rem}

\subsection{Some infinitesimal geometry}
\label{sec-ant}
All materials here are taken from \cite{GR16}. We use the language of $\infty$-categories as developed in \cite{Lu09}, \cite{Lu12}; a \emph{DG category} means a module object over the monoidal $\infty$-category $\mathbf{Vect}$ of complexes of $k$-vector spaces. We use $\mathbf{DGCat}_{\op{cont}}$ to denote the $\infty$-category of all DG categories with continuous functors between them. It admits a symmetric monoidal structure given by the Lurie tensor product.

Suppose $\cal Y\in\mathbf{PStk}_{\op{laft-def}/S}$, where the latter notation means the $\infty$-category of prestacks locally almost of finite type (\emph{laft}) over $S$ which admit deformation theory (see \cite[III.1]{GR16}).

\subsubsection{}
\label{sec-formal-groupoid}
A \emph{groupoid} over $\cal Y$ (relative to $S$) is a simplicial object $\cal R^{\bullet}\in\mathbf{PStk}_{\op{laft-def}/S}$ together with an isomorphism $\cal R^0\xrightarrow{\sim}\cal Y$ such that the following conditions are satisfied:
\begin{itemize}
	\item for every $n\ge 2$, the map
	$$
	\cal R^n \rightarrow \cal R^1\underset{\cal Y}{\times}\cdots\underset{\cal Y}{\times}\cal R^1
	$$
	induced by products of the maps $[1]\rightarrow[n]$ sending $0\leadsto i$, $1\leadsto i+1$, is an isomorphism;
	\item the map $\cal R^2\rightarrow\cal R^1\underset{\cal Y}{\times}\cal R^1$ induced by the product of the maps $[1]\rightarrow[2]$ sending
	$$
	0\leadsto 0,\; 1\leadsto 1\text{ and }0\leadsto 0,\;1\leadsto 2
	$$
	is an isomorphism.
\end{itemize}

A groupoid object $\cal R^{\bullet}$ is a \emph{formal groupoid} if all morphisms in $\cal R^{\bullet}$ are \emph{nil-isomorphisms}, i.e., inducing isomorphism on the reduced prestacks. We denote the $\infty$-category of formal groupoids (relative to $S$) by $\mathbf{FrmGpd}_{/S}$; this is a full subcategory of that of simplicial objects $\cal R^{\bullet}$ in $\mathbf{PStk}_{\op{laft-def}/S}$. We have the functor
\begin{equation}
\label{eq-frmgpd-to-pstk}
\mathbf{FrmGpd}_{/S} \rightarrow \mathbf{PStk}_{\op{laft-def}/S},\quad\cal R^{\bullet}\leadsto\cal R^0,
\end{equation}
whose fiber at $\cal Y\in\mathbf{PStk}_{\op{laft-def}/S}$ is by definition the $\infty$-category of formal groupoids \emph{acting} on $\cal Y$, and is denoted by $\mathbf{FrmGrp}_{/S}(\cal Y)$.

\begin{eg}[Infinitesimal groupoid]
\label{eg-infinitesimal-groupoid}
Completion along the main diagonals $\cal Y\rightarrow\cal Y\underset{S}{\times}\cdots\underset{S}{\times}\cal Y$ organizes into a formal groupoid $\cal R^{\bullet}:=(\cal Y^{\bullet})_{\widehat{\cal Y}}$ acting on $\cal Y$. This is the final object of $\mathbf{FrmGpd}_{/S}(\cal Y)$.
\end{eg}

Furthermore, \eqref{eq-frmgpd-to-pstk} is a Cartesian fibration of $\infty$-categories. The Cartesian arrows in $\mathbf{FrmGrp}_{/S}$ are maps $\cal R^{\bullet} \rightarrow \cal T^{\bullet}$ such that the induced morphism
$$
\cal R^{\bullet} \rightarrow \cal T^{\bullet}\underset{(\cal Z^{\bullet})_{\widehat{\cal Z}}}{\times}(\cal Y^{\bullet})_{\widehat{\cal Y}},\quad\text{where $\cal Y:=\cal R^0$ and $\cal Z:=\cal T^0$}
$$
is an isomorphism.

\subsubsection{}
\label{sec-frmmod} Let $\mathbf{FrmMod}_{/S}$ denote the $\infty$-category of nil-isomorphisms $\cal Y\rightarrow\cal Y^{\flat}$ in $\mathbf{PStk}_{\op{laft-def}/S}$. In particular, $\mathbf{FrmMod}_{/S}$ is a full subcategory of the functor category $\op{Fun}(\Delta^1, \mathbf{PStk}_{\op{laft-def}/S})$. We have a functor
\begin{equation}
\label{eq-frmmod-to-pstk}
\mathbf{FrmMod}_{/S} \rightarrow \mathbf{PStk}_{\op{laft-def}/S},\quad (\cal Y\rightarrow\cal Y^{\flat}) \leadsto \cal Y,
\end{equation}
whose fiber at $\cal Y\in\mathbf{PStk}_{\op{laft-def}/S}$ is by definition the $\infty$-category of formal moduli problems \emph{under} $\cal Y$, and is denoted by $\mathbf{FrmMod}_{/S}(\cal Y)$.

The functor \eqref{eq-frmmod-to-pstk} is also a Cartesian fibration of $\infty$-categories, whose Cartesian arrows are commutative diagrams on the left whose induced square on the right is Cartesian:
$$
\xymatrix@C=1.5em@R=1.5em{
	\cal Y \ar[r]\ar[d] & \cal Z \ar[d] \\
	\cal Y^{\flat} \ar[r] & \cal Z^{\flat}
}\quad
\xymatrix@C=1.5em@R=1.5em{
	\cal Y^{\flat} \ar[r]\ar[d] & \cal Z^{\flat} \ar[d] \\
	\cal Y_{\dR} \ar[r] & \cal Z_{\dR}
}
$$

\begin{rem}
Analogously, one may consider the Cartesian fibration $\mathbf{FrmMod}_{/S}\rightarrow\mathbf{PStk}_{\op{laft-def}/S}$ sending $\cal Y^{\sharp}\rightarrow\cal Y$ to $\cal Y$, whose fiber is called the $\infty$-category of formal moduli problems \emph{over} $\cal Y$. Such objects are also studied in \cite[\S IV.1]{GR16}, but we will not need them for this paper.
\end{rem}

Straightening \eqref{eq-frmmod-to-pstk} gives rise to a \emph{pullback} functor for every morphism $f : \cal Y\rightarrow\cal Z$ in $\mathbf{PStk}_{\op{laft-def}/S}$:
$$
f^!_{\mathbf{FrmMod}} : \mathbf{FrmMod}_{/S}(\cal Z) \rightarrow \mathbf{FrmMod}_{/S}(\cal Y),\quad f^!_{\mathbf{FrmMod}}\cal Z^{\flat}\xrightarrow{\sim} \cal Z^{\flat}\underset{\cal Z_{\dR}}{\times} \cal Y_{\dR}.
$$

\subsubsection{}
\label{sec-delooping-functor} The \v Cech nerve construction defines a functor $\Omega : \mathbf{FrmMod}_{/S} \rightarrow \mathbf{FrmGpd}_{/S}$ of $\infty$-categories over $\mathbf{PStk}_{\op{laft-def}/S}$. The main result in \cite[\S IV.1]{GR16} (which has its origin in Lurie's theory of formal moduli problems) can be summarized as follows:

\begin{thm}[Lurie-Gaitsgory-Rozenblyum]
\label{thm-frmmod-to-frmgpd}
The functor $\Omega$ is an equivalence. \qed
\end{thm}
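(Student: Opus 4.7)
The plan is to construct an inverse functor $B : \mathbf{FrmGpd}_{/S} \to \mathbf{FrmMod}_{/S}$ to $\Omega$ by the formula $B(\cal R^\bullet) := (\cal R^0 \to |\cal R^\bullet|)$, where the geometric realization is computed in $\mathbf{PStk}_{\op{laft-def}/S}$. The proof then splits into: (a) checking well-definedness of $B$; (b) verifying that $\Omega \circ B$ and $B \circ \Omega$ are each equivalent to the identity; and (c) reducing both equivalences to a descent statement along nil-isomorphisms, and ultimately to a matching of deformation-theoretic data.

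For well-definedness of $B$, the key points are that $|\cal R^\bullet|$ lies in $\mathbf{PStk}_{\op{laft-def}/S}$ and that the canonical map $\cal R^0 \to |\cal R^\bullet|$ is a nil-isomorphism. The latter claim is straightforward: since each face map of $\cal R^\bullet$ is a nil-isomorphism by hypothesis, applying the reduction functor collapses $\cal R^\bullet$ to the constant simplicial prestack at $(\cal R^0)_{\op{red}}$, whose geometric realization is again $(\cal R^0)_{\op{red}}$. Existence of deformation theory for $|\cal R^\bullet|$ requires showing that its pro-cotangent complex assembles from those of the terms $\cal R^n$ by totalization, using the formalism of \cite[III.1]{GR16}.

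The composite $\Omega \circ B$ sends $\cal R^\bullet$ to the \v Cech nerve of $\cal R^0 \to |\cal R^\bullet|$, which one wants to identify with $\cal R^\bullet$; dually, $B \circ \Omega$ sends $(\cal Y \to \cal Y^\flat)$ to the realization of its \v Cech nerve, which one wants to identify with $\cal Y^\flat$. Both are effective descent statements for nil-isomorphisms in $\mathbf{PStk}_{\op{laft-def}/S}$. The strategy is to filter $\cal Y^\flat$ by its successive nilpotent thickenings $\cal Y = \cal Y^{\flat,\le 0} \hookrightarrow \cal Y^{\flat,\le 1} \hookrightarrow \cdots$, and analogously to filter $\cal R^\bullet$ by the order of its infinitesimal neighborhood structure. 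The equivalence then reduces to an inductive step comparing the passage between consecutive orders on each side.

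The main obstacle is therefore this inductive comparison between $n$-th order square-zero thickenings of $\cal Y$ inside $\cal Y^\flat$ and $n$-th order neighborhoods of the identity in formal groupoids acting on $\cal Y$. The base case $n=1$ is essentially tautological: both are classified by the same deformation datum, namely a Lie algebroid structure on a suitable $\cal O_{\cal Y}$-module. The inductive step extends this to higher orders using that obstructions to lifting across a square-zero thickening of $\cal Y^{\flat,\le n-1}$ and obstructions to extending the $n$-th order neighborhood of the identity in $\cal R^\bullet$ are governed by the same relative pro-cotangent complex, which agrees on both sides by the $n=1$ comparison. This is where the full machinery of \cite[III.1, IV.1]{GR16} genuinely enters, and where the bulk of the technical work lies; once it is established, assembling the inductive equivalences into one equivalence of $\infty$-categories is a standard colimit argument.
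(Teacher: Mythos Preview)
The paper does not actually prove this theorem from scratch; it cites \cite[\S IV.1, Theorem 2.3.2]{GR16} for the statement that $\Omega$ restricted to each fiber $\mathbf{FrmMod}_{/S}(\cal Y)\to\mathbf{FrmGpd}_{/S}(\cal Y)$ is an equivalence, and then deduces the global equivalence by observing that $\Omega$ preserves Cartesian arrows and invoking \cite[Corollary 2.4.4.4]{Lu09} on maps of Cartesian fibrations. Your proposal, by contrast, attempts a direct construction of the inverse and a self-contained verification.

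There is a genuine gap in your approach. You define $B(\cal R^\bullet)$ as the geometric realization $|\cal R^\bullet|$ \emph{computed in} $\mathbf{PStk}_{\op{laft-def}/S}$, but you have not established that this colimit exists in that category. The inclusion $\mathbf{PStk}_{\op{laft-def}/S}\hookrightarrow\mathbf{PStk}_{/S}$ does not preserve colimits in general (the paper gives an explicit counterexample immediately after this theorem, and Lemma~\ref{lem-smooth-comparison-of-quotients} isolates a special case where it does). So you cannot compute $|\cal R^\bullet|$ as a prestack colimit and then check after the fact that it admits deformation theory; rather, the existence of the correct object in $\mathbf{PStk}_{\op{laft-def}/S}$ is essentially the content of the theorem. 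Indeed, the paper's Corollary~\ref{cor-structure-of-par-base} following the theorem shows that $\op B_{\cal Y}(\cal R^\bullet)$ \emph{is} the colimit in $\mathbf{PStk}_{\op{laft-def}/S}$---but only as a \emph{consequence} of the theorem, not an input to it.

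Your inductive argument through successive square-zero thickenings is also too schematic to carry weight. The claim that the $n=1$ case is ``essentially tautological'' because ``both are classified by a Lie algebroid structure'' is not correct: the equivalence between first-order formal moduli problems under $\cal Y$ and Lie algebroids (or more precisely, the relevant tangent-level data) is already a substantial theorem in \cite{GR16}, not a tautology. Likewise, making precise what the ``$n$-th order neighborhood of the identity in a formal groupoid'' means, and setting up a filtration on $\cal Y^\flat$ that matches it term by term, requires exactly the kind of structural results that the cited chapters of \cite{GR16} develop at length. Your sketch gestures at the right shape of argument but does not supply the mechanism that makes it go through.
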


\noindent
Indeed, \cite[\S IV.1, Theorem 2.3.2]{GR16} shows that $\Omega$ is an equivalence when restricted to the fiber at each $\cal Y\in\mathbf{PStk}_{\op{laft-def}/S}$. The above formulation follows because $\Omega$ also preserves Cartesian arrows (and we appeal to \cite[Corollary 2.4.4.4]{Lu09}).

We denote the functor inverse to $\Omega$ by $\op B : \mathbf{FrmGpd}_{/S} \rightarrow \mathbf{FrmMod}_{/S}$. Their restrictions to the fiber at $\cal Y\in\mathbf{PStk}_{\op{laft-def}/S}$ are denoted by $\Omega_{\cal Y}$ and $\op B_{\cal Y}$.

\begin{eg}[de Rham prestack]
\label{eg-de-rham-prestack}
Let $\cal Y_{\dR/S}$ denote the fiber product $\cal Y_{\dR}\underset{S_{\dR}}{\times}S$ which is the terminal object of $\mathbf{FrmMod}_{/S}(\cal Y)$. Then $\cal Y_{\dR/S}$ corresponds to the infinitesimal groupoid $(\cal Y)^{\bullet}_{\widehat{\cal Y}}$ (Example \ref{eg-infinitesimal-groupoid}) under the equivalence $\mathbf{FrmGpd}_{/S}(\cal Y) \xrightarrow{\sim} \mathbf{FrmMod}_{/S}(\cal Y)$.

In particular, given any group object $H\in\mathbf{PStk}_{\op{laft-def}/S}$, there is a canonical short exact sequence of group prestacks:
\begin{equation}
\label{eq-group-de-rham}
1 \rightarrow H_{\{\widehat{1}\}} \rightarrow H \rightarrow H_{\dR/S} \rightarrow 1
\end{equation}
\end{eg}

Here is a simple Corollary of Theorem \ref{thm-frmmod-to-frmgpd}:

\begin{cor}
The prestack $\op B_{\cal Y}(\cal R^{\bullet})$ identifies with the quotient of $\cal R^{\bullet}$ in $\mathbf{PStk}_{\op{laft-def}/S}$.
\end{cor}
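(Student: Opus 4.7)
The plan is to identify $\op B_{\cal Y}(\cal R^{\bullet})$ and the colimit $|\cal R^{\bullet}|$ of $\cal R^{\bullet}$ in $\mathbf{PStk}_{\op{laft-def}/S}$ via a matching of universal properties, bootstrapping from Theorem \ref{thm-frmmod-to-frmgpd}. By the theorem, $\cal R^{\bullet}$ is (tautologically) the \v Cech nerve of the canonical nil-isomorphism $\cal Y\rightarrow\op B_{\cal Y}(\cal R^{\bullet})$; the resulting augmented simplicial diagram provides a comparison morphism
$$
\phi : |\cal R^{\bullet}| \rightarrow \op B_{\cal Y}(\cal R^{\bullet}),
$$
and the aim is to show $\phi$ is an equivalence.

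To verify this, I would test against an arbitrary $\cal Z\in\mathbf{PStk}_{\op{laft-def}/S}$ and compare mapping spaces under $\cal Y$. By the universal property of the colimit, $\op{Maps}_{\cal Y/}(|\cal R^{\bullet}|,\cal Z) \xrightarrow{\sim} \op{Tot}\,\op{Maps}_{\cal Y/}(\cal R^{\bullet},\cal Z)$. On the other side, the Cartesian fibration structure of \eqref{eq-frmmod-to-pstk} recalled in \S\ref{sec-frmmod} lets me replace $\cal Z$ by its associated formal moduli problem under $\cal Y$, namely $\cal Z\underset{\cal Z_{\dR}}{\times}\cal Y_{\dR}$, without changing the mapping space under $\cal Y$. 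Inside $\mathbf{FrmMod}_{/S}(\cal Y)$, Theorem \ref{thm-frmmod-to-frmgpd} translates maps out of $\op B_{\cal Y}(\cal R^{\bullet})$ into maps of formal groupoids out of $\cal R^{\bullet}$, which by a direct calculation (using that $\Omega_{\cal Y}$ is given by iterated fibered products) unfolds into exactly $\op{Tot}\,\op{Maps}_{\cal Y/}(\cal R^{\bullet},\cal Z)$. Matching the two universal properties gives the desired equivalence.

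The main obstacle I anticipate is the purely geometric point of confirming that $|\cal R^{\bullet}|$ exists in $\mathbf{PStk}_{\op{laft-def}/S}$ (and is not merely a prestack). This is where the formal-groupoid hypothesis pulls its weight: since every face and degeneracy map in $\cal R^{\bullet}$ is a nil-isomorphism, the colimit taken in the bigger $\infty$-category $\mathbf{PStk}_{/S}$ has reduced part $\cal Y_{\op{red}}$, so it is automatically locally almost of finite type, and the nil-isomorphism $\cal Y\rightarrow|\cal R^{\bullet}|$ transports deformation theory from $\cal Y$ (combined with the Lie-algebroid infinitesimal data encoded in $\cal R^{\bullet}$) via the formalism of \cite{GR16}. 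Once laft-def is secured, the identification above pins down the colimit as $\op B_{\cal Y}(\cal R^{\bullet})$.

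An alternative, more economical route avoids the existence question altogether: it suffices to verify that $\op B_{\cal Y}(\cal R^{\bullet})$ already satisfies the colimit universal property in $\mathbf{PStk}_{\op{laft-def}/S}$, using the mapping-space computation above directly. This is essentially formal and reduces the statement to effectiveness of the groupoid $\cal R^{\bullet}$ within the fibration \eqref{eq-frmmod-to-pstk}, which is precisely Theorem \ref{thm-frmmod-to-frmgpd}.
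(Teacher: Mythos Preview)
Your ``alternative, more economical route'' at the end is exactly the paper's argument: one verifies directly that $\op B_{\cal Y}(\cal R^{\bullet})$ corepresents the colimit by computing, for any $\cal Z\in\mathbf{PStk}_{\op{laft-def}/S}$,
\[
\op{Maps}(\op B_{\cal Y}(\cal R^{\bullet}),\cal Z)\;\simeq\;\op{Maps}_{\mathbf{FrmMod}_{/S}}\big((\cal Y\to\op B_{\cal Y}(\cal R^{\bullet})),\,(\cal Z\xrightarrow{\op{id}}\cal Z)\big)\;\simeq\;\op{Maps}_{\mathbf{FrmGpd}_{/S}}(\cal R^{\bullet},\cal Z),
\]
the last step being Theorem~\ref{thm-frmmod-to-frmgpd} applied to the constant groupoid $\cal Z$. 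No separate existence argument is needed. (The paper uses the identity arrow $\cal Z\to\cal Z$ rather than your Cartesian pullback $\cal Z\times_{\cal Z_{\dR}}\cal Y_{\dR}$; this is slightly cleaner since it avoids working in the under-category and fibering over $\op{Maps}(\cal Y,\cal Z)$.)

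However, the main body of your proposal contains a genuine error in the existence step. You argue that the colimit of $\cal R^{\bullet}$ taken in $\mathbf{PStk}_{/S}$ has reduced part $\cal Y_{\op{red}}$ and therefore lies in $\mathbf{PStk}_{\op{laft-def}/S}$. This is false in general: the paper exhibits, immediately after this corollary, the \v Cech nerve of $\op{pt}\to\mathbb A^1_{\{\widehat 0\}}$ as a formal groupoid whose prestack colimit does \emph{not} agree with $\op B_{\cal Y}(\cal R^{\bullet})=\mathbb A^1_{\{\widehat 0\}}$. If your argument were correct, the prestack colimit would land in $\mathbf{PStk}_{\op{laft-def}/S}$ and hence (by full faithfulness of the inclusion) would automatically be the laft-def colimit, i.e.\ $\op B_{\cal Y}(\cal R^{\bullet})$---contradicting that example. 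Having reduced part $\cal Y_{\op{red}}$ is far from sufficient for laft-def; the prestack colimit typically fails to admit deformation theory. So you should discard that paragraph and keep only the direct verification of the universal property.
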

\begin{proof}
We need to show that $\op B_{\cal Y}(\cal R^{\bullet})$ identifies with $\underset{\Delta^{\op{op}}}{\op{colim}}\,\cal R^{\bullet}$, where the colimit is taken in $\mathbf{PStk}_{\op{laft-def}/S}$. This follows from the fact that $\op{Maps}(\op B_{\cal Y}(\cal R^{\bullet}), \cal Z)$ identifies with the mapping space from $\cal Y\rightarrow\op B_{\cal Y}(\cal R^{\bullet})$ to $\cal Z\rightarrow\cal Z$ in $\mathbf{FrmMod}_{/S}$, which by Theorem \ref{thm-frmmod-to-frmgpd} identifies with $\op{Maps}(\cal R^{\bullet}, \cal Z)$.
\end{proof}

\subsubsection{} However, we point out that the quotient of $\cal R^{\bullet}$ in $\mathbf{PStk}_{\op{laft-def}/S}$ may not agree with that in $\mathbf{PStk}_{/S}$, which is one of the main technical complications for us.

\begin{eg}
Let $S=\op{pt}$ for simplicity. The \v Cech nerve of the object $\op{pt} \rightarrow \mathbb A^1_{\{\widehat{0}\}}$ in $\mathbf{FrmMod}$ is the formal groupoid $\cal R^{\bullet}:=\op{pt}\underset{\mathbb A^1}{\times}\cdots\underset{\mathbb A^1}{\times}\op{pt}$. The quotient $\underset{\Delta^{\op{op}}}{\op{colim}}\,\cal R^{\bullet}$ taken in $\mathbf{PStk}_{/S}$ does \emph{not} agree with $\mathbb A^1_{\{\widehat{0}\}}$, since there are equivalences:
$$
\op{Maps}(\op{Spec}(k[\varepsilon]/(\varepsilon^2)), \underset{\Delta^{\op{op}}}{\op{colim}}\,\cal R^{\bullet}) \xleftarrow{\sim}\underset{\Delta^{\op{op}}}{\op{colim}}\,\op{Maps}(\op{Spec}(k[\varepsilon]/(\varepsilon^2)), \cal R^{\bullet}) \xrightarrow{\sim}\op{pt},
$$
but $\mathbb A^1_{\{\widehat{0}\}}$ receives nontrivial maps from $\op{Spec}(k[\varepsilon]/(\varepsilon^2))$.
\end{eg}

\subsubsection{}
\label{sec-smooth-comparison-of-quotients} We comment on one case where $\op B_{\cal Y}(\cal R^{\bullet})$ agrees with the quotient in $\mathbf{PStk}_{/S}$.

\begin{lem}
\label{lem-smooth-comparison-of-quotients}
Suppose the morphisms $\xymatrix@C=1.5em{\cal R^1 \ar@<0.5ex>[r]\ar@<-0.5ex>[r] & \cal Y}$ are formally smooth. Then the canonical map $\underset{\Delta^{\op{op}}}{\op{colim}}\,\cal R^{\bullet} \rightarrow \op B_{\cal Y}(\cal R^{\bullet})$, where the colimit is taken in $\mathbf{PStk}_{/S}$, is an isomorphism.
\end{lem}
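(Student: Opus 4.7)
My plan is to test the canonical map against arbitrary affine test schemes $Z$ over $S$, reducing the statement to showing that
\[
\bigl|\op{Maps}_{/S}(Z,\cal R^\bullet)\bigr| \xrightarrow{\sim} \op{Maps}_{/S}(Z,\op B_{\cal Y}(\cal R^\bullet))
\]
is an equivalence of spaces, where $|\cdot|$ denotes geometric realization. Since $\cal R^\bullet$ satisfies the Segal conditions from \S\ref{sec-formal-groupoid}, the simplicial space $\op{Maps}_{/S}(Z,\cal R^\bullet)$ is a groupoid object, and its geometric realization is just its quotient in spaces.

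For essential surjectivity on $\pi_0$, given a map $f\colon Z\to\op B_{\cal Y}(\cal R^\bullet)$ I would produce (possibly after an \'etale cover of $Z$) a lift $\tilde f\colon Z\to\cal Y$. Since $\cal Y\to\op B_{\cal Y}(\cal R^\bullet)$ is a nil-isomorphism, the reduction $f_{\op{red}}\colon Z_{\op{red}}\to\cal Y_{\op{red}}\hookrightarrow\cal Y$ gives a starting lift, and I would extend it along the tower of infinitesimal thickenings of $Z_{\op{red}}$ inside $Z$. The obstruction at each step is governed by the pro-cotangent complex of $\cal Y$ relative to $\op B_{\cal Y}(\cal R^\bullet)$, which under the equivalence $\Omega$ of Theorem~\ref{thm-frmmod-to-frmgpd} is computed by the pro-cotangent complex of the source map $s\colon\cal R^1\to\cal Y$ restricted along the unit $e\colon\cal Y\to\cal R^1$. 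Formal smoothness of $s$ forces this complex to be connective and projective, so the obstructions vanish and lifts exist (uniquely up to contractible choice at each stage).

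For full faithfulness, by the Segal conditions on both $\op{Maps}_{/S}(Z,\cal R^\bullet)$ and the \v Cech nerve of $Z\to\op B_{\cal Y}(\cal R^\bullet)$, it suffices to show that the square
\[
\xymatrix@C=1.5em@R=1.5em{
\cal R^1 \ar[r]\ar[d] & \cal Y \ar[d] \\
\cal Y \ar[r] & \op B_{\cal Y}(\cal R^\bullet)
}
\]
is Cartesian in $\mathbf{PStk}_{/S}$. By construction of $\op B_{\cal Y}$ via the inverse of $\Omega$, this square is Cartesian in $\mathbf{PStk}_{\op{laft-def}/S}$; the same obstruction-theoretic argument applied to test schemes over the fiber product $\cal Y\times_{\op B_{\cal Y}(\cal R^\bullet)}\cal Y$, using formal smoothness of both the source and the target of $\cal R^1\rightrightarrows\cal Y$, then promotes this identification to an isomorphism in $\mathbf{PStk}_{/S}$.

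The main obstacle is the obstruction-theoretic analysis in the first step: one must pin down precisely how the pro-cotangent complex of the formal moduli quotient $\op B_{\cal Y}(\cal R^\bullet)$ under $\cal Y$ is computed from the data of $\cal R^\bullet$, and then verify that formal smoothness of the source and target translates into the required vanishing. This amounts to infinitesimal descent for formally smooth formal groupoids; in the finite-type setting it specializes to the classical fact that a smooth groupoid presentation of an Artin stack exhibits the stack as its own naive prestack quotient.
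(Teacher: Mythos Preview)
Your approach is essentially the same as the paper's: both prove the map is an isomorphism on affine test schemes by using formal smoothness to lift a section from the reduced subscheme, with the key computation being the identification of the relative cotangent complex of $\cal Y \to \op B_{\cal Y}(\cal R^\bullet)$ with that of $\cal R^1 \to \cal Y$ along the unit. The paper packages this slightly differently, invoking the explicit description from \cite{GR16} of $\op{Maps}(U,\op B_{\cal Y}(\cal R^\bullet))$ as the data of a formal moduli problem $\widetilde U \to U$ together with a Cartesian map of its \v Cech nerve to $\cal R^\bullet$, and observing that $\op{Maps}(U,\op{colim}\,\cal R^\bullet)$ is the same data subject to the condition that $\widetilde U \to U$ admits a section; the proof then shows $\widetilde U \to U$ is formally smooth via exactly the cotangent-complex identification you describe (in its base-changed form).

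Two minor corrections to your write-up. First, drop the hedge ``possibly after an \'etale cover'': you want a prestack isomorphism, and \'etale localization plays no role --- the lift exists on the nose because the map is a nil-isomorphism and formally smooth. Second, your full-faithfulness step needs no separate ``promotion'' argument: $\mathbf{PStk}_{\op{laft-def}/S}$ is a full subcategory of $\mathbf{PStk}_{/S}$ closed under fiber products, so the identification $\cal R^1 \simeq \cal Y \times_{\op B_{\cal Y}(\cal R^\bullet)} \cal Y$ coming from $\Omega$ already holds in $\mathbf{PStk}_{/S}$.
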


Recall that a morphism $\cal X\rightarrow\cal Y$ of prestacks is called \emph{formally smooth} if for every affine DG scheme $T$ over $\cal Y$, and a nilpotent embedding $T\hookrightarrow T'$, the map
$$
\op{Maps}(T',\cal Y) \rightarrow \op{Maps}(T, \cal Y)
$$
is surjective on $\pi_0$ (see \cite[III.1, \S7.3]{GR16}.)

Let $\cal T_{\cal X/\cal Y}^*\big|_x$ denote the cotangent complex at a $T$-point $x: T\rightarrow\cal X$. It is proved in \emph{loc.cit.~}that if $\cal X\rightarrow\cal Y$ admits (relative) deformation theory, then formal smoothness is equivalent to
\begin{equation}
\label{eq-formal-smoothness-criterion}
\op{Maps}(\cal T_{\cal X/\cal Y}^*\big|_x, \cal F)\in\mathbf{Vect}^{\le 0},
\end{equation}
where $\cal F\in\op{QCoh}(T)^{\heartsuit}$ and $T$ is any \emph{affine} DG scheme with a morphism $x : T\rightarrow\cal X$.

\begin{proof}[Proof of Lemma \ref{lem-smooth-comparison-of-quotients}]
The authors of \cite{GR16} give the following explicit description of $\op B_{\cal Y}(\cal R^{\bullet})$: let $U$ be an affine DG scheme, then $\op{Maps}(U,\op B_{\cal Y}(\cal R^{\bullet}))$ identifies with the space of:
\begin{itemize}
	\item a formal moduli problem $\widetilde U$ \emph{over} $U$;
	\item a morphism from the \v Cech nerve of $\widetilde U\rightarrow U$ to $\cal R^{\bullet}$, such that the following diagram is Cartesian for each of the vertical arrows:
	$$
	\xymatrix@C=1.5em@R=1.5em{
		\widetilde{U} \underset{U}{\times} \widetilde{U} \ar[r]\ar@<0.5ex>[d]\ar@<-0.5ex>[d] & \cal R^1 \ar@<0.5ex>[d]\ar@<-0.5ex>[d] \\
		\widetilde U \ar[r] & \cal R^0
	}
	$$
\end{itemize}
On the other hand, $\op{Maps}(U, \underset{\Delta^{\op{op}}}{\op{colim}}\,\cal R^{\bullet})$ classifies the above data satisfying the \emph{condition} that $\widetilde{U}\rightarrow U$ admits a section. Now, since $\widetilde{U}\rightarrow U$ is a nil-isomorphism, we obtain a section over $U^{\op{red}}$. A lift of this section to $U$ exists if the morphism $\widetilde U\rightarrow U$ is formally smooth.

Now, let $T$ be affine DG scheme equipped with a map $\tilde u : T\rightarrow\widetilde U$. The Cartesian diagrams:
$$
\xymatrix@C=1.5em@R=1.5em{
	\widetilde{U} \underset{U}{\times} \widetilde{U} \ar[r]\ar[d] & \widetilde U \ar[d] \\
	\widetilde U \ar[r] & U
}\quad
\xymatrix@C=1.5em@R=1.5em{
	\widetilde{U} \underset{U}{\times} \widetilde{U} \ar[r]\ar[d] & \cal R^1 \ar[d] \\
	\widetilde U \ar[r] & \cal Y
}
$$
show that $\cal T_{\widetilde U/U}^*\big|_{\tilde u}$ is isomorphic to $\cal T^*_{\widetilde{U} \underset{U}{\times} \widetilde{U}/\widetilde U}\big|_{(\tilde u,\tilde u)}$, which is in turn isomorphic to $\cal T^*_{\cal R^1/\cal Y}\big|_{r^1}$ where $r^1$ is the composition $T\xrightarrow{(\tilde u,\tilde u)}\widetilde{U} \underset{U}{\times} \widetilde{U} \rightarrow\cal R^1$. Hence the formal smoothness of $\cal R^1$ over $\cal Y$ implies that of $\widetilde U$ over $U$.
\end{proof}

In particular, let $\fr h$ be a (classical) Lie algebra over $\cal O_S$, such that $\exp(\fr h)$ acts on some $\cal Y\in\mathbf{PStk}_{\op{laft-def}/S}$. Then the groupoid $\xymatrix@C=1.5em{\cal Y\underset{S}{\times}\exp(\fr h) \ar@<0.5ex>[r]\ar@<-0.5ex>[r] & \cal Y}$ is formally smooth, so its quotient may be formed in $\mathbf{PStk}_{/S}$. We have two particular instances of this example:
\begin{itemize}
	\item Taking $\cal Y=\op{pt}$, we see that $\op B\exp(\fr h)$ is the prestack quotient $\op{pt}/\exp(\fr h)$;
	\item Let $H$ be a group scheme. Then the prestack quotient $H/\exp(\fr h)$ identifies with $H_{\dR/S}$.
\end{itemize}

\subsection{More on $\mathbf{FrmMod}_{/S}$} We now recall the notion of modules over a formal moduli problem, and the comparison between formal moduli problems and Lie algebroids. These materials are taken from \cite[\S IV.4]{GR16}.

\subsubsection{Ind-coherent sheaves}
Recall that for an affine DG scheme $Y$ almost of finite type over $S$, the DG category $\op{IndCoh}(Y)$ is the ind-completion of the full subcategory $\op{Coh}(Y)\hookrightarrow\op{QCoh}(Y)$.\footnote{We use the notation $\op{QCoh}(Y)$ to denote the DG category of complexes of $\cal O_Y$-modules. In contrast, the abelian category of $\cal O_Y$-modules is denoted by $\op{QCoh}(Y)^{\heartsuit}$, understood as the heart of a natural $t$-structure on $\op{QCoh}(Y)$. The same principle applies to variants of this notation.} There is a symmetric monoidal functor:
$$
\Upsilon_Y : \op{QCoh}(Y) \rightarrow \op{IndCoh}(Y),\quad \cal F\leadsto \cal F\otimes\omega_Y,
$$
which is an equivalence of DG categories if $Y$ is smooth over $k$ (\cite[II.3]{GR16}). More generally, we may regard $\op{IndCoh}(-)$ as a functor
$$
\op{IndCoh} : \mathbf{PStk}_{\op{laft}/S}\rightarrow\mathbf{DGCat}_{\op{cont}},
$$
where a morphism $f : \cal X\rightarrow\cal Y$ of laft prestacks gives rise to the functor of $!$-pullback: $f^! : \op{IndCoh}(\cal Y)\rightarrow\op{IndCoh}(\cal X)$. 

Note that if $f : \cal X\rightarrow\cal Y$ is an inf-schematic nil-isomorphism, then the functor $f^!$ is conservative (\cite[III.3, Proposition 3.1.2]{GR16}). It furthermore has a left adjoint $f_*^{\op{IndCoh}}$ and the pair $(f_*^{\op{IndCoh}}, f^!)$ is monadic. One deduces from this a descent property:

\begin{prop}
\label{prop-indcoh-descent}
Let $\cal X^{\bullet}_{\cal Y}$ be the \v{C}ech nerve of an inf-schematic nil-isomorphism $f : \cal X\rightarrow\cal Y$. Then the canonical functor:
\begin{equation}
\label{eq-indcoh-descent}
\op{IndCoh}(\cal Y) \rightarrow\op{Tot}(\op{IndCoh}(\cal X_{\cal Y}^{\bullet}))
\end{equation}
is an equivalence.
\end{prop}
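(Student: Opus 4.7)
The plan is to deduce the proposition from the Barr--Beck--Lurie monadicity theorem, combined with base change for $\op{IndCoh}$ along inf-schematic nil-isomorphisms. The recollection immediately preceding the proposition supplies an adjunction $(f_*^{\op{IndCoh}}, f^!)$ which is monadic; hence there is a canonical equivalence
$$\op{IndCoh}(\cal Y) \xrightarrow{\sim} T\Mod(\op{IndCoh}(\cal X)), \qquad T := f^! \circ f_*^{\op{IndCoh}},$$
identifying the target of descent with modules over the monad $T$ on $\op{IndCoh}(\cal X)$.

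The next step is to match $T\Mod(\op{IndCoh}(\cal X))$ with $\op{Tot}(\op{IndCoh}(\cal X_{\cal Y}^{\bullet}))$. Consider the Cartesian square
$$\xymatrix@C=1.5em@R=1.5em{\cal X_{\cal Y}^{1} \ar[r]^-{\op{pr}_2}\ar[d]_-{\op{pr}_1} & \cal X \ar[d]^-{f} \\ \cal X \ar[r]^-{f} & \cal Y.}$$
Since a base change of an inf-schematic nil-isomorphism is again an inf-schematic nil-isomorphism, all the face maps in the \v Cech nerve $\cal X_{\cal Y}^{\bullet}$ lie in the class for which $(-)_*^{\op{IndCoh}}$ is defined and is left adjoint to $(-)^!$. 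The base change package of \cite{GR16}, III.3, then yields a canonical equivalence $T \xrightarrow{\sim} (\op{pr}_1)_*^{\op{IndCoh}} \circ (\op{pr}_2)^!$; iterating this comparison level by level along the \v Cech nerve, the cobar construction on $T$ is identified with the cosimplicial DG category obtained by applying $\op{IndCoh}$ to $\cal X_{\cal Y}^{\bullet}$.

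Because $T\Mod(\op{IndCoh}(\cal X))$ is tautologically the totalization of the cobar construction on $T$ in $\mathbf{DGCat}_{\op{cont}}$, this identification transports it to $\op{Tot}(\op{IndCoh}(\cal X_{\cal Y}^{\bullet}))$, which together with the monadicity equivalence above yields \eqref{eq-indcoh-descent}.

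The main obstacle is the base change step at the level of full $\infty$-categorical coherence: one must verify not merely an isomorphism of endofunctors $T \simeq (\op{pr}_1)_*^{\op{IndCoh}} \circ (\op{pr}_2)^!$, but the compatibility of the monad multiplication on $T$ with the convolution structure arising from the composition map $\cal X_{\cal Y}^{2} \to \cal X_{\cal Y}^{1}$ and higher simplices. This coherence is precisely what the base change formalism of \cite{GR16} is engineered to supply, so the remaining verifications are formal.
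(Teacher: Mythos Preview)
Your argument is correct: monadicity of $(f_*^{\op{IndCoh}}, f^!)$ together with base change for inf-schematic nil-isomorphisms is exactly the standard route to this descent statement, and the coherence concern you flag is indeed handled by the formalism of \cite{GR16}. The paper itself does not give an argument at all---it simply cites \cite[III.3, Proposition 3.3.3]{GR16}---so what you have written is essentially an unpacking of the proof behind that citation rather than a different approach.
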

\begin{proof}
This is Proposition 3.3.3 in \emph{loc.cit.}.
\end{proof}

\subsubsection{}
The DG category of \emph{modules} over an object $\cal Y^{\flat}\in\mathbf{FrmMod}_{/S}(\cal Y)$ is defined as $\op{IndCoh}(\cal Y^{\flat})$. Note that $\op{IndCoh}(\cal Y^{\flat})$ is tensored over $\op{QCoh}(S)$. By the above discussion, there is a conservative functor $\mathbf{oblv} : \op{IndCoh}(\cal Y^{\flat})\rightarrow\op{IndCoh}(\cal Y)$ given by $!$-pullback along $\cal Y\rightarrow\cal Y^{\flat}$. Furthermore, Proposition \ref{prop-indcoh-descent} provides an equivalence of categories:
\begin{equation}
\label{eq-module-cat-descent}
\op{IndCoh}(\cal Y^{\flat}) \xrightarrow{\sim} \op{Tot}(\op{IndCoh}(\cal R^{\bullet})).
\end{equation}
whenever $\cal Y^{\flat}=\op B_{\cal Y}(\cal R^{\bullet})$.

\subsubsection{}
\label{sec-liealgd-to-frmmod}
Given $\cal Y^{\flat} \in (\mathbf{PStk}_{\op{laft-def}})_{\cal Y//S}$, we can associate the relative tangent complex $\mathbb T_{\cal Y/\cal Y^{\flat}}$ which is in general an object of $\op{IndCoh}(\cal Y)$. The following result is \cite[IV.4, Theorem 9.1.5]{GR16}:

\begin{thm}
\label{thm-liealgd-derived}
If $Y\in\mathbf{Sch}^{\op{ft}}_{/S}$,\footnote{The notation $\Sch^{\op{ft}}_{/S}$ (resp.~$\Sch^{\op{lft}}_{/S}$) means classical scheme (locally) of finite type over $S$.} then we have a fully faithful functor:
\begin{equation}
\label{eq-liealgd-derived}
\mathbf{LieAlgd}_{/S}(Y) \hookrightarrow \mathbf{FrmGrp}_{/S}(Y)
\end{equation}
whose essential image consists of those formal groupoids $\cal R^{\bullet}$ such that $\mathbb T_{Y/\op B_{Y}(\cal R^{\bullet})}$ lies in the essential image of $\op{QCoh}(Y)^{\heartsuit}$ under $\Upsilon_Y$.\qed
\end{thm}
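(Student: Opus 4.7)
The plan is to import the general DG version of this equivalence from [GR16, IV.2] and then identify the classical Lie algebroids of \S\ref{sec-liealgd} as a full subcategory cut out by the heart condition on the tangent complex. In the DG setting one defines a Lie algebroid over $Y$ as an object $\cal L^{\op{DG}}\in\op{IndCoh}(Y)$ equipped with an anchor map to $\mathbb T_{Y/S}$ and a bracket satisfying the Jacobi and Leibniz identities up to coherent homotopy. The main theorem of loc.~cit.~produces, via the functor $\op B_Y$ from Theorem \ref{thm-frmmod-to-frmgpd}, an equivalence between DG Lie algebroids and $\mathbf{FrmGrp}_{/S}(Y)$, under which a formal groupoid $\cal R^{\bullet}$ corresponds to a DG Lie algebroid whose underlying ind-coherent sheaf is exactly $\mathbb T_{Y/\op B_Y(\cal R^{\bullet})}$. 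Granting this, what remains is to identify classical Lie algebroids with those DG Lie algebroids whose underlying object of $\op{IndCoh}(Y)$ lies in the essential image of $\op{QCoh}(Y)^{\heartsuit}$ under $\Upsilon_Y$.

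For the forward direction, given a classical Lie algebroid $(\cal L,[-,-],\sigma)$ one transports the anchor and bracket along the symmetric monoidal functor $\Upsilon_Y : \op{QCoh}(Y)\rightarrow\op{IndCoh}(Y)$; since $\cal L\in\op{QCoh}(Y)^{\heartsuit}$ and $\cal T_{Y/S}\in\op{QCoh}(Y)^{\heartsuit}$, the resulting data on $\Upsilon_Y(\cal L)$ define a DG Lie algebroid and the classical Jacobi and Leibniz axioms automatically upgrade to the full coherent homotopy data, because mapping spaces between discrete sheaves are discrete. For the reverse direction, suppose a DG Lie algebroid $\cal L^{\op{DG}}$ is isomorphic to $\Upsilon_Y(\cal F)$ for some $\cal F\in\op{QCoh}(Y)^{\heartsuit}$. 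One needs to know that $\Upsilon_Y$ is fully faithful on the heart (which holds for $Y\in\mathbf{Sch}^{\op{ft}}_{/S}$) and that it intertwines the relevant $!$-tensor product with the ordinary tensor product on discrete sheaves; granting this, the anchor and bracket of $\cal L^{\op{DG}}$ descend to an $\cal O_Y$-module map $\cal F\rightarrow\cal T_{Y/S}$ and an $\cal O_S$-bilinear map $\cal F\otimes_{\cal O_S}\cal F\rightarrow\cal F$ in the heart, and the higher coherences collapse to strict identities for degree reasons. The construction in the two directions is manifestly inverse to the underlying-sheaf functor $\cal L^{\op{DG}}\leadsto\mathbb T_{Y/\op B_Y(\cal R^{\bullet})}$.

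The real obstacle is of course the general DG equivalence itself, i.e.~the statement that DG Lie algebroids over $Y$ are equivalent to formal groupoids acting on $Y$. This is a substantial piece of Lurie-Gaitsgory-Rozenblyum deformation theory and is taken as a black box from [GR16]. Beyond that, the only delicate point in the argument above is the compatibility of the monoidal structures under $\Upsilon_Y$ on the heart (so that the bracket can indeed be formulated in $\op{QCoh}(Y)^{\heartsuit}$) and the unwinding of the homotopy-coherent Jacobi identity into its strict classical counterpart; both are routine once one knows $\Upsilon_Y$ is fully faithful on bounded-below complexes on a finite type scheme.
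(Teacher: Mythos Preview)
The paper does not prove this theorem at all: it is quoted verbatim from \cite[IV.4, Theorem 9.1.5]{GR16}, as indicated by the \qed\ immediately following the statement and the preceding sentence ``The following result is \cite[IV.4, Theorem 9.1.5]{GR16}.'' So there is no ``paper's own proof'' to compare against.

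Your proposal is a reasonable sketch of how the statement is obtained inside \cite{GR16}: one first establishes the equivalence between DG Lie algebroids and formal groupoids (this is the content of \cite[IV.4]{GR16}, not IV.2), and then identifies classical Lie algebroids as the full subcategory cut out by the heart condition on the underlying ind-coherent sheaf. The strategy is correct, and you rightly flag the DG equivalence as the substantive input. One caution: your claim that $\Upsilon_Y$ is fully faithful on the heart, and that the monoidal structures match there, is not automatic for an arbitrary $Y\in\mathbf{Sch}^{\op{ft}}_{/S}$ (only for smooth $Y$ is $\Upsilon_Y$ an equivalence); the actual argument in \cite{GR16} proceeds somewhat differently and does not rely on this. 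So while your outline captures the right shape, the technical route in \cite{GR16} is more delicate than ``transport along $\Upsilon_Y$ and observe discreteness,'' and the paper under review simply defers to that reference rather than reproducing any of it.
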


Composing \eqref{eq-liealgd-derived} with $\op B_Y$, we obtain a fully faithful functor
\begin{equation}
\label{eq-liealgd-to-frmmod}
\mathbf{LieAlgd}_{/S}(Y)\hookrightarrow\mathbf{FrmMod}_{/S}(Y).
\end{equation}
whose essential image consists of those formal moduli problems $\cal Y^{\flat}\in\mathbf{FrmMod}_{/S}(Y)$ such that $\mathbb T_{Y/\cal Y^{\flat}}$ lies in $\Upsilon_Y(\op{QCoh}(Y)^{\heartsuit})$. Furthermore, given a smooth morphism $\pi : Y'\rightarrow Y$ in $\mathbf{Sch}^{\op{ft}}_{/S}$, the following diagram commutes:
\begin{equation}
\label{eq-liealgd-to-frmmod-diag}
\xymatrix@C=3em{
	\mathbf{LieAlgd}_{/S}(Y) \ar[r]^-{\pi^!_{\mathbf{LieAlgd}}} \ar[d]^{\eqref{eq-liealgd-to-frmmod}} & \mathbf{LieAlgd}_{/S}(Y') \ar[d]^{\eqref{eq-liealgd-to-frmmod}} \\
	\mathbf{FrmMod}_{/S}(Y) \ar[r]^-{\pi^!_{\mathbf{FrmMod}}} & \mathbf{FrmMod}_{/S}(Y')
}
\end{equation}
where $\pi^!_{\mathbf{LieAlgd}}$ is the pullback of Lie algebroids (as described in \cite{BB93}), and $\pi^!_{\mathbf{FrmMod}}$ is the functor described in \S\ref{sec-frmmod}.

\begin{rem}
In what follows, we will frequently use the fact that $\pi^!_{\mathbf{LieAlgd}}(\cal L)$ has underlying $\cal O_{Y'}$-module given by $\pi^*\cal L\underset{\pi^*\cal T_{Y/S}}{\times}\cal T_{Y'/S}$.
\end{rem}

\begin{nota}
We shall refer to the image $\cal Y^{\flat}$ of a Lie algebroid $\cal L$ under \eqref{eq-liealgd-to-frmmod} as the formal moduli problem \emph{associated} to $\cal L$, and denote it by $\cal Y^{\flat}:=\cal L_{\mathbf F}$.
\end{nota}

Note that when $Y$ is smooth, $\op{IndCoh}(\cal Y^{\flat})$ identifies with the DG category of complexes of (quasi-coherent) $\cal L$-modules.

\subsection{Quasi-twistings} Let $\cal Y\in\mathbf{PStk}_{\op{laft-def}/S}$. We use $\widehat{\mathbb G}_m$ to denote the formal completion of $\mathbb G_m$ at identity. It is a group formal scheme.

\subsubsection{} A \emph{quasi-twisting} $\cal T$ over $\cal Y$ consists of the following data:
\begin{itemize}
	\item an object $\cal Y^{\flat}\in\mathbf{FrmMod}_{/S}(\cal Y)$;
	\item a $\widehat{\mathbb G}_m$-gerbe $\widehat{\cal Y}^{\flat}$ over $\cal Y^{\flat}$;\footnote{Abuse of notation: we should really be thinking about $S\times\widehat{\mathbb G}_m$ as a group formal scheme over $S$.}
	\item a trivialization of the pullback of $\widehat{\cal Y}^{\flat}$ along $\cal Y\rightarrow\cal Y^{\flat}$.
\end{itemize}
We say that $\cal T$ is \emph{based} at the formal moduli problem $\cal Y^{\flat}$.

\begin{rem}
For an abelian group prestack $A$ over $S$, the notion of an $A$-gerbe here is taken in the na\"ive sense: the prestack $\op B^2A$ classifies $A$-gerbes (on an affine $S$-scheme) that are globally nonempty, and an $A$-gerbe on a prestack $\cal Y$ is an object of
$$
\op{Ge}_{A}(\cal Y):=\underset{T\rightarrow\cal Y}{\op{lim}}\,\op{Maps}(T, \op B^2A),
$$
where $T$ ranges through affine $S$-schemes mapping to $\cal Y$. We will later show that using \'etale locally trivial $\widehat{\mathbb G}_m$-gerbes in the definition of a quasi-twisting produces the same class of objects.
\end{rem}

\begin{rem}
Alternatively, one can think of a quasi-twisting $\cal T$ as consisting of two formal moduli problems $\widehat{\cal Y}^{\flat}\rightarrow\cal Y^{\flat}$ under $\cal Y$, equipped with the structure of a $\widehat{\mathbb G}_m$-gerbe.
\end{rem}

The $\infty$-groupoid of quasi-twistings $\cal T$ based at $\cal Y^{\flat}$ can be defined as a fiber of $\infty$-groupoids:
$$
\mathbf{QTw}_{/S}(\cal Y/\cal Y^{\flat}) := \op{Fib}(\op{Ge}_{\widehat{\mathbb G}_m}(\cal Y^{\flat}) \rightarrow \op{Ge}_{\widehat{\mathbb G}_m}(\cal Y)).
$$
More generally, we use $\mathbf{QTw}_{/S}^A(\cal Y/\cal Y^{\flat})$ to denote an analogously defined category, with the abelian group prestack $A$ acting as the structure group instead of $\widehat{\mathbb G}_m$.

\subsubsection{} We now show that quasi-twistings can be defined using different structure groups. The same results about twistings are obtained in \cite{GR14}.

\begin{lem}
\label{lem-qtw-group-change}
The functor of inducing an $A$-gerbe from an $A_{\{\widehat 1\}}$-gerbe gives rise to an equivalence of categories $\mathbf{QTw}_{/S}^{A_{\{\widehat 1\}}}(\cal Y/\cal Y^{\flat})\xrightarrow{\sim} \mathbf{QTw}_{/S}^A(\cal Y/\cal Y^{\flat})$. 
\end{lem}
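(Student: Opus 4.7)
The plan is to reduce the equivalence to two ingredients: the short exact sequence
\begin{equation*}
1 \rightarrow A_{\{\widehat 1\}} \rightarrow A \rightarrow A_{\dR/S} \rightarrow 1
\end{equation*}
of abelian group prestacks supplied by Example \ref{eg-de-rham-prestack}, together with the observation that $A_{\dR/S}$-gerbes cannot distinguish a prestack from its de Rham prestack.

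First, I would deloop the sequence twice to produce a fiber sequence $\op B^2 A_{\{\widehat 1\}} \rightarrow \op B^2 A \rightarrow \op B^2 A_{\dR/S}$ of pointed prestacks. Mapping any $\cal Z \in \mathbf{PStk}_{/S}$ into it then yields a functorial fiber sequence of $\infty$-groupoids
\begin{equation*}
\op{Ge}_{A_{\{\widehat 1\}}}(\cal Z) \rightarrow \op{Ge}_A(\cal Z) \rightarrow \op{Ge}_{A_{\dR/S}}(\cal Z),
\end{equation*}
in which the first arrow is precisely the induction functor appearing in the statement of the lemma.

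Next, I would verify that $\op{Ge}_{A_{\dR/S}}(\cal Z)$ depends on $\cal Z$ only through $\cal Z_{\dR}$. This follows from the description $A_{\dR/S}\cong A_{\dR}\underset{S_{\dR}}{\times}S$ and the universal property of the de Rham construction: maps over $S$ from $\cal Z$ to $A_{\dR/S}$ correspond to maps over $S_{\dR}$ from $\cal Z_{\dR}$ to $A_{\dR}$, and the same reasoning carries over one classifying level up to $\op B^2 A_{\dR/S}$. Since $\cal Y\rightarrow\cal Y^{\flat}$ is by assumption a nil-isomorphism, it induces $\cal Y_{\dR}\xrightarrow{\sim}\cal Y^{\flat}_{\dR}$, and hence $\op{Ge}_{A_{\dR/S}}(\cal Y^{\flat}) \xrightarrow{\sim} \op{Ge}_{A_{\dR/S}}(\cal Y)$.

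Finally, applying the above horizontal fiber sequence to both $\cal Y^{\flat}$ and $\cal Y$ produces a map of fiber sequences whose right-hand vertical arrow is an equivalence. Taking the fibers of the vertical maps then identifies
\begin{equation*}
\mathbf{QTw}_{/S}^{A_{\{\widehat 1\}}}(\cal Y/\cal Y^{\flat}) \xrightarrow{\sim} \mathbf{QTw}_{/S}^A(\cal Y/\cal Y^{\flat}),
\end{equation*}
as claimed. The only point requiring genuine verification is the assertion that $\op B^2 A_{\dR/S}$ is itself a relative de Rham prestack, but this should follow formally from the fact that the full subcategory of de Rham prestacks relative to $S$ is closed under the limits used to form $\op B^n$.
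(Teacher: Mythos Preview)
Your argument is correct and rests on the same core idea as the paper's proof: both invoke the exact sequence $1 \to A_{\{\widehat 1\}} \to A \to A_{\dR/S} \to 1$ and reduce to showing that the induced $A_{\dR/S}$-gerbe contributes nothing. The paper carries this out by an explicit construction---it identifies the induced $A_{\dR/S}$-gerbe on $\cal Y^{\flat}$ with $(\widehat{\cal Y}^{\flat}_A)_{\dR/S}\underset{\cal Y_{\dR/S}}{\times}\cal Y^{\flat}$ and then writes down a section by hand, using the given map $\cal Y\to\widehat{\cal Y}^{\flat}_A$ to produce $\cal Y^{\flat}\to\cal Y_{\dR/S}\to(\widehat{\cal Y}^{\flat}_A)_{\dR/S}$. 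You instead argue more abstractly that $\op{Ge}_{A_{\dR/S}}(\cal Y^{\flat})\xrightarrow{\sim}\op{Ge}_{A_{\dR/S}}(\cal Y)$ because $\op B^2 A_{\dR/S}$ only sees the de Rham prestack of its source, and then conclude by comparing vertical fibers in a map of fiber sequences. Your packaging is arguably cleaner and avoids unwinding what the induced gerbe looks like, while the paper's hands-on approach makes the inverse functor visible; the underlying content is the same.

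One slip to correct: the classifying prestacks $\op B^n$ are formed via \emph{colimits} (iterated geometric realizations of bar constructions), not limits. Your conclusion survives, since the condition $\cal W(T)\simeq\cal W(T^{\op{red}})$ is preserved under colimits of prestacks as well---these being computed pointwise---so $\op B^2 A_{\dR/S}$ inherits the relative de Rham property directly from $A_{\dR/S}$.
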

\begin{proof}
In light of the exact sequence \eqref{eq-group-de-rham}, an inverse functor exists if the induced $A_{\dR/S}$-gerbe of any object in $\mathbf{QTw}_{/S}^A(\cal Y/\cal Y^{\flat})$ is canonically trivialized. Indeed, let $\widehat{\cal Y}^{\flat}_{A_{\dR/S}}$ be the $A_{\dR/S}$-gerbe over $\cal Y^{\flat}$ induced from some $A$-gerbe $\widehat{\cal Y}^{\flat}_A$. Clearly, there is an identification between $\widehat{\cal Y}^{\flat}_{A_{\dR/S}}$ and the formal completion of $\widehat{\cal Y}^{\flat}_A$ inside $\cal Y^{\flat}$, i.e., $\widehat{\cal Y}^{\flat}_{A_{\dR}}\xrightarrow{\sim} (\widehat{\cal Y}^{\flat}_A)_{\dR/S}\underset{\cal Y_{\dR/S}}{\times}\cal Y^{\flat}$ (c.f.~Example \ref{eg-de-rham-prestack}).

Therefore, a section of the $A_{\dR/S}$-gerbe $\widehat{\cal Y}^{\flat}_{A_{\dR}/S}$ amounts to filling in the dotted arrow
$$
\xymatrix@R=1.5em@C=1.5em{
 \widehat{\cal Y}^{\flat}_{A_{\dR}/S} \ar[r]\ar[d] & (\widehat{\cal Y}^{\flat}_A)_{\dR/S} \ar[d] \\
 \cal Y^{\flat} \ar[r]\ar@{.>}[ur] & \cal Y_{\dR/S}
}
$$
making the lower-right triangle commute. However, the structure of a quasi-twisting on $\widehat{\cal Y}^{\flat}_A$ supplies a section $\cal Y\rightarrow\widehat{\cal Y}^{\flat}_A$ over $\cal Y^{\flat}$. Hence we obtain a map $\cal Y^{\flat}\rightarrow\cal Y_{\dR/S}\rightarrow (\widehat{\cal Y}^{\flat}_A)_{\dR/S}$ over $\cal Y_{\dR/S}$.
\end{proof}

It follows from Lemma \ref{lem-qtw-group-change} that the following functors are equivalences:
\begin{equation}
\label{eq-qtw-group-change}
\mathbf{QTw}^{\mathbb G_m}_{/S}(\cal Y/\cal Y^{\flat}) \xleftarrow{\sim} \mathbf{QTw}_{/S}(\cal Y/\cal Y^{\flat}) \xrightarrow{\sim} \mathbf{QTw}^{\widehat{\mathbb G}_a}_{/S}(\cal Y/\cal Y^{\flat}) \xrightarrow{\sim} \mathbf{QTw}^{\mathbb G_a}_{/S}(\cal Y/\cal Y^{\flat}).
\end{equation}

\noindent
Suppose we let $\mathbf{QTw}_{/S}^{\et}(\cal Y/\cal Y^{\flat})$ denote the $\infty$-groupoid of \'etale locally trivial $\widehat{\mathbb G}_m$-gerbes over $\cal Y^{\flat}$, equipped with a section over $\cal Y$.

\begin{cor}
\label{cor-qtw-local-triviality}
The tautological functor $\mathbf{QTw}_{/S}(\cal Y/\cal Y^{\flat})\rightarrow\mathbf{QTw}^{\et}_{/S}(\cal Y/\cal Y^{\flat})$ is an equivalence.
\end{cor}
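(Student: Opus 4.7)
The plan is to invoke the chain of equivalences \eqref{eq-qtw-group-change}, which replaces the structure group $\widehat{\mathbb G}_m$ by $\mathbb G_a$, and then to argue that the corresponding statement for $\mathbb G_a$-quasi-twistings is immediate from the vanishing of higher quasi-coherent cohomology on an affine DG scheme.

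First I would upgrade Lemma \ref{lem-qtw-group-change} (and by extension the whole chain \eqref{eq-qtw-group-change}) to the étale-locally trivial setting. The argument given there only uses two ingredients: (a) the short exact sequence \eqref{eq-group-de-rham}; and (b) the identification of the $A_{\dR/S}$-gerbe induced from $\widehat{\cal Y}^{\flat}_A$ with the de Rham completion $(\widehat{\cal Y}^{\flat}_A)_{\dR/S}\underset{\cal Y_{\dR/S}}{\times}\cal Y^{\flat}$, which, given a section of $\widehat{\cal Y}^{\flat}_A$ over $\cal Y$, supplies a canonical trivialization. Neither ingredient depends on Zariski-local triviality of the ambient gerbe, so the same argument yields equivalences between the analogously defined étale-locally trivial versions of $\mathbf{QTw}$ for structure groups $\mathbb G_m$, $\widehat{\mathbb G}_m$, $\widehat{\mathbb G}_a$, and $\mathbb G_a$. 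The problem is thus reduced to showing that the tautological functor
$$
\mathbf{QTw}^{\mathbb G_a}_{/S}(\cal Y/\cal Y^{\flat}) \longrightarrow \mathbf{QTw}^{\et,\mathbb G_a}_{/S}(\cal Y/\cal Y^{\flat})
$$
is an equivalence of $\infty$-groupoids.

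For this, observe that for any affine DG scheme $T$ almost of finite type over $S$, the mapping space $\op{Maps}(T, \op B^2\mathbb G_a)$ identifies with $\tau^{\le 0}(\mathbb R\Gamma(T,\cal O_T)[2])$, whose $\pi_0$ equals $H^2(T, \cal O_T) = 0$. Thus every $\mathbb G_a$-gerbe on an affine DG $T$ is (essentially uniquely) trivializable, which is of course far stronger than étale-local triviality. Assembling over all affine $T\to\cal Y^{\flat}$ yields the desired equivalence. The main obstacle in this route is the first step: verifying that the structure-group reductions of \eqref{eq-qtw-group-change} are compatible with étale sheafification, so that one may freely transport the problem between structure groups in the étale-local setting. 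Once this compatibility is in hand, the conclusion is a direct consequence of the vanishing of quasi-coherent cohomology on affine DG schemes.
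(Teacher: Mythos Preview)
Your approach is essentially the same as the paper's: reduce to the structure group $\mathbb G_a$ via \eqref{eq-qtw-group-change} (and its \'etale analogue), then use the vanishing of quasi-coherent cohomology on affines to identify $\op B^2\mathbb G_a$ with $\op B^2_{\et}\mathbb G_a$. The paper states this in one line, citing $\op H^1_{\et}(T,\mathbb G_a)=0$ and $\op H^2_{\et}(T,\mathbb G_a)=0$; you should likewise invoke the $\op H^1_{\et}$ vanishing explicitly, since showing $\pi_0$ is trivial is not by itself enough to match the full mapping spaces $\op{Maps}(T,\op B^2\mathbb G_a)$ and $\op{Maps}(T,\op B^2_{\et}\mathbb G_a)$.
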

\begin{proof}
We use the $\mathbb G_a$-incarnation of quasi-twistings, as well as their counterparts defined by \'etale locally trivial gerbes. For an affine $S$-scheme $T$, there holds
$$
\op H^1_{\et}(T, \mathbb G_a)=0,\quad\op H^2_{\et}(T, \mathbb G_a) = 0,
$$
so the prestacks $\op B^2\mathbb G_a$ and $\op B^2_{\et}\mathbb G_a$ (classifying \'etale locally trivial $\mathbb G_a$-gerbes on an affine $S$-scheme) are equivalent. It follows that the corresponding notions of quasi-twistings are also equivalent.
\end{proof}

\subsection{Modules over a quasi-twisting} We continue to assume $\cal Y\in\mathbf{PStk}_{\op{laft-def}/S}$ and $\cal T$ is a quasi-twisting over $\cal Y$. Our goal now is to define $\cal T\Mod$ as a DG category tensored over $\op{QCoh}(S)$.

\subsubsection{} We first proceed more generally and define ind-coherent sheaves ``twisted'' by a $\widehat{\mathbb G}_m$-gerbe.

Let $\cal Z\in\mathbf{PStk}_{\op{laft-def}/S}$, and $\widehat{\cal Z}$ be a $\widehat{\mathbb G}_m$-gerbe over $\cal Z$. Consider the canonical action of $\op B\mathbb G_m$ on $\mathbf{Vect}$, which induces an action of $\op B\widehat{\mathbb G}_m$. More formally, $\mathbf{Vect}$ can be regarded as a co-module object in $\mathbf{DGCat}_{\op{cont}}$ over the co-algebra $(\op{IndCoh}(\op B\widehat{\mathbb G}_m), m^!)$, where $m$ is the multiplication map on $\op B\widehat{\mathbb G}_m$.  The co-action
$$
\mathbf{Vect} \rightarrow \mathbf{Vect}\otimes\op{IndCoh}(\op B\widehat{\mathbb G}_m) \xrightarrow{\sim} \op{IndCoh}(\op B\widehat{\mathbb G}_m)
$$
is specified by $\chi\in\op{IndCoh}(\op B\widehat{\mathbb G}_m)$, the character sheaf induced from the map $\op B\widehat{\mathbb G}_m\rightarrow\op B\mathbb G_m$. (See \cite[\S1-2]{Be13} for notions pertaining to group actions on DG categories.)

Note that $\op{IndCoh}(\widehat{\cal Z})$ admits a $\op B\widehat{\mathbb G}_m$-action, so the product $\op{IndCoh}(\widehat{\cal Z})\otimes\mathbf{Vect}$ is again acted on by $\op B\widehat{\mathbb G}_m$. The corresponding co-simplicial system $\{\op{IndCoh}(\widehat{\cal Z}\times\op B\widehat{\mathbb G}_m^{\times n})\}_{[n]\in\Delta}$ has the following first few terms:
\begin{equation}
\label{eq-twisted-indcoh-system}
\xymatrix{
	\cdots  & \op{IndCoh}(\widehat{\cal Z}\times\op B\widehat{\mathbb G}_m^{\times 2}) \ar@<1.5ex>[l]\ar@<0.5ex>[l]\ar@<-0.5ex>[l]\ar@<-1.5ex>[l] & \op{IndCoh}(\widehat{\cal Z}\times\op B\widehat{\mathbb G}_m) \ar@<-2.5ex>[l]_-{(\op{act}\times 1)^!}\ar@<-1.5ex>[l]^-{(1\times m)^!}\ar@<1ex>[l]^-{\op{pr}_{12}^!\otimes\chi} & \op{IndCoh}(\widehat{\cal Z}). \ar@<-0.5ex>[l]_-{\op{act}^!}\ar@<0.5ex>[l]^-{\op{pr}_1^!\otimes\chi}
}
\end{equation}
We define the DG category $\op{IndCoh}(\cal Z)_{\widehat{\cal Z}}$ of \emph{$\widehat{\cal Z}$-twisted} ind-coherent sheaves on $\cal Z$ by the totalization of the above co-simplicial system. One sees immediately that $\op{IndCoh}(\cal Z)_{\widehat{\cal Z}}$ is tensored over $\op{QCoh}(S)$.

Since the functors associated to each face map $[n]\rightarrow[m]$ all admit left adjoints, we obtain:
$$
\op{IndCoh}(\cal Z)_{\widehat{\cal Z}} = \underset{[n]\in\Delta}{\lim}\,\op{IndCoh}(\widehat{\cal Z}\times\op B\widehat{\mathbb G}_m^{\times n}) \xrightarrow{\sim} \underset{[n]\in\Delta^{\op{op}}}{\op{colim}}\,\op{IndCoh}(\widehat{\cal Z}\times\op B\widehat{\mathbb G}_m^{\times n}),
$$
where we use the left adjoints to form the colimit.

\begin{rem}
The above colimit is taken in $\mathbf{DGCat}_{\op{cont}}$, and the forgetful functor from $\mathbf{DGCat}_{\op{cont}}$ to plain $\infty$-categories does not commute with colimits.
\end{rem}

\begin{rem}
\label{rem-trivial-gerbe-equiv}
Note that any (global) trivialization of the gerbe $\widehat{\cal Z}\rightarrow\cal Z$ gives rise to an equivalence $\op{IndCoh}(\cal Z)_{\widehat{\cal Z}} \xrightarrow{\sim} \op{IndCoh}(\cal Z)$.
\end{rem}

\begin{rem}
In \cite[\S1.7]{GL16}, a definition of a twisted presheaf of DG categories is given. We relate their definition to ours. For the presheaf over $\cal Z$:
$$
\op{IndCoh}_{/\cal Z} : (\mathbf{DGSch}^{\op{aff}}_{/\cal Z})^{\op{op}} \ni S \leadsto \op{IndCoh}(S)
$$
and a $\widehat{\mathbb G}_m$-gerbe $\widehat{\cal Z}$, the \emph{twisted sheaf of DG categories} $(\op{IndCoh}_{/\cal Z})_{\widehat{\cal Z}}$ is defined by
\begin{itemize}
	\item specifying its values on the category $\op{Split}(\widehat{\cal Z})$ of affine DG schemes $S\rightarrow\cal Z$ equipped with a lift to $\widehat{\cal Z}$, using the canonical $\op{Maps}(S, \op B\widehat{\mathbb G}_m)$-action on $\op{IndCoh}(S)$; and then
	\item applying $h$-descent\footnote{The authors of \cite{GL16} work with the \'etale topology instead.} along the basis $\op{Split}(\widehat{\cal Z})\rightarrow \mathbf{DGSch}^{\op{aff}}_{/\cal Z}$ to obtain a sheaf (in the $h$-topology) over $\mathbf{DGSch}^{\op{aff}}_{/\cal Z}$, denoted by $(\op{IndCoh}_{/\cal Z})_{\widehat{\cal Z}}$.
\end{itemize}
Thus we may calculate the global section $\mathbf{\Gamma}(\cal Z, (\op{IndCoh}_{/\cal Z})_{\widehat{\cal Z}})$ by the covering $\widehat{\cal Z}\rightarrow\cal Z$. The resulting co-simplicial system identifies with \eqref{eq-twisted-indcoh-system}. Hence the definition of $\widehat{\cal Z}$-twisted ind-coherent sheaves in \cite[\S1.7]{GL16} (adjusted to the $h$-topology) agrees with ours.
\end{rem}

\subsubsection{} Let $\cal T$ be a quasi-twisting over $\cal Y$, represented by the $\widehat{\mathbb G}_m$-gerbe $\widehat{\cal Y}^{\flat}\rightarrow\cal Y^{\flat}$. We denote by $\widehat{\cal Y}$ the $\widehat{\mathbb G}_m$-gerbe over $\cal Y$ pulled back along $\cal Y\rightarrow\cal Y^{\flat}$; it is equipped with a canonical trivialization.

We define the DG category of $\cal T$-modules by: $\cal T\Mod:= \op{IndCoh}(\cal Y^{\flat})_{\widehat{\cal Y}^{\flat}}$. There is a canonical functor:
$$
\mathbf{oblv}_{\cal T} : \cal T\Mod \rightarrow \op{IndCoh}(\cal Y)_{\widehat{\cal Y}} \xrightarrow{\sim} \op{IndCoh}(\cal Y),
$$
since $\widehat{\cal Y}^{\flat}$ is trivialized over $\cal Y$, and Remark \ref{rem-trivial-gerbe-equiv} identifies the corresponding twisted category with $\op{IndCoh}(\cal Y)$.

\begin{prop}
The functor $\mathbf{oblv}_{\cal T}$ admits a left adjoint $\mathbf{ind}_{\cal T}$, and the pair $(\mathbf{ind}_{\cal T}, \mathbf{oblv}_{\cal T})$ is monadic.
\end{prop}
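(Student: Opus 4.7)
The plan is to reduce the statement to the monadicity of Proposition \ref{prop-indcoh-descent} by exploiting the cosimplicial presentation of $\cal T\Mod$ together with the canonical trivialization of $\widehat{\cal Y}$ over $\cal Y$. The distinguished trivialization supplies a section $s : \cal Y \to \widehat{\cal Y}^{\flat}$ lifting $\cal Y \to \cal Y^{\flat}$. Since $\cal Y \to \cal Y^{\flat}$ is an inf-schematic nil-isomorphism (by definition of a formal moduli problem under $\cal Y$) and $\widehat{\cal Y}^{\flat} \to \cal Y^{\flat}$ is a $\widehat{\mathbb G}_m$-gerbe (hence inf-schematic and a nil-isomorphism, as $\widehat{\mathbb G}_m$ has trivial reduced part), the section $s$ is also an inf-schematic nil-isomorphism.

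The next step is to promote the cosimplicial system \eqref{eq-twisted-indcoh-system} defining $\cal T\Mod$ to a map of cosimplicial systems $\op{IndCoh}(\widehat{\cal Y}^{\flat} \times \op B\widehat{\mathbb G}_m^{\times \bullet}) \to \op{IndCoh}(\cal Y \times \op B\widehat{\mathbb G}_m^{\times \bullet})$ whose totalization recovers $\mathbf{oblv}_{\cal T}$ (after identifying the target totalization with $\op{IndCoh}(\cal Y)$ via Remark \ref{rem-trivial-gerbe-equiv}). At each cosimplicial degree $n$, the map is $!$-pullback along $s \times \op{id} : \cal Y \times \op B\widehat{\mathbb G}_m^{\times n} \to \widehat{\cal Y}^{\flat} \times \op B\widehat{\mathbb G}_m^{\times n}$, which is again an inf-schematic nil-isomorphism. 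By Proposition \ref{prop-indcoh-descent}, each termwise $!$-pullback admits a left adjoint $(s_n)^{\op{IndCoh}}_*$ and the resulting adjunction is monadic.

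These termwise left adjoints assemble into a left adjoint $\mathbf{ind}_{\cal T}$ of $\mathbf{oblv}_{\cal T}$, provided one verifies that they commute with the cosimplicial face maps of \eqref{eq-twisted-indcoh-system}. This is a Beck--Chevalley condition: the face maps are themselves $!$-pullbacks along maps of the form $\op{id} \times m$, $\op{id} \times \op{pr}$, tensored with the character $\chi$; the relevant squares are Cartesian by construction, and base change for $!$-pullback and ind-coherent pushforward along inf-schematic nil-isomorphisms is built into the monadicity of Proposition \ref{prop-indcoh-descent}. With Beck--Chevalley in hand, $\mathbf{ind}_{\cal T}$ is produced as the geometric realization (in $\mathbf{DGCat}_{\op{cont}}$, using left adjoints) of the pushforward system.

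For the monadicity of $(\mathbf{ind}_{\cal T}, \mathbf{oblv}_{\cal T})$, I appeal to Barr--Beck--Lurie: conservativity of $\mathbf{oblv}_{\cal T}$ and preservation of $\mathbf{oblv}_{\cal T}$-split totalizations both follow from their degree-zero counterparts (monadicity of $s^!$), since $\mathbf{oblv}_{\cal T}$ is compatible via step two with the termwise conservative, colimit-preserving functors. The main obstacle is the Beck--Chevalley verification in the third step: one must coherently lift the termwise adjunction to a morphism of simplicial objects in $\mathbf{DGCat}_{\op{cont}}$. This is technical but standard in the style of \cite{GR16}, and ultimately reduces to base change for ind-coherent sheaves along inf-schematic nil-isomorphisms, which underlies Proposition \ref{prop-indcoh-descent} itself.
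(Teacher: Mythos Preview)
Your overall strategy matches the paper's: realize $\mathbf{oblv}_{\cal T}$ as the totalization of termwise $!$-pullbacks along inf-schematic nil-isomorphisms, use the termwise left adjoints (ind-coherent pushforward), verify a Beck--Chevalley condition so that these assemble to a left adjoint of the totalization, and conclude monadicity via Barr--Beck--Lurie together with conservativity checked at cosimplicial degree zero. That part is fine.

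The gap is in the specific choice of termwise maps. You use the section $s:\cal Y\to\widehat{\cal Y}^{\flat}$ and form $s\times\op{id}:\cal Y\times\op B\widehat{\mathbb G}_m^{\times n}\to\widehat{\cal Y}^{\flat}\times\op B\widehat{\mathbb G}_m^{\times n}$. But $s$ is \emph{not} $\op B\widehat{\mathbb G}_m$-equivariant: the action on $\widehat{\cal Y}^{\flat}$ is free (it is a gerbe), so no section from a space with trivial action can be equivariant. Concretely, for the face map $\op{act}:\widehat{\cal Y}^{\flat}\times\op B\widehat{\mathbb G}_m\to\widehat{\cal Y}^{\flat}$ the square with $s\times\op{id}$ on top and $s$ on the bottom does not commute, let alone is Cartesian; your claim ``the relevant squares are Cartesian by construction'' overlooks exactly this $\op{act}$-face. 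Consequently the $(s\times\op{id})^!$ do not form a map of cosimplicial diagrams, and there is no cosimplicial structure on $\op{IndCoh}(\cal Y\times\op B\widehat{\mathbb G}_m^{\times\bullet})$ that both receives such a map and totalizes to $\op{IndCoh}(\cal Y)$; Remark~\ref{rem-trivial-gerbe-equiv} does not furnish one.

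The fix, which is what the paper does, is to work with the pulled-back gerbe $\widehat{\cal Y}:=\widehat{\cal Y}^{\flat}\times_{\cal Y^{\flat}}\cal Y$ rather than with $\cal Y$ itself at the cosimplicial level. The map $\pi:\widehat{\cal Y}\to\widehat{\cal Y}^{\flat}$ \emph{is} $\op B\widehat{\mathbb G}_m$-equivariant (it is a map of gerbes), so $\pi^{(n)}:=\pi\times\op{id}$ gives an honest map of cosimplicial diagrams and the relevant squares are Cartesian, yielding Beck--Chevalley. One totalizes to obtain $\mathbf{oblv}_{\cal T}:\op{IndCoh}(\cal Y^{\flat})_{\widehat{\cal Y}^{\flat}}\to\op{IndCoh}(\cal Y)_{\widehat{\cal Y}}$ and only \emph{afterwards} uses the trivialization of $\widehat{\cal Y}$ to identify the target with $\op{IndCoh}(\cal Y)$. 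With that correction your argument goes through and is essentially the paper's proof.
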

\begin{proof}
The functor $\mathbf{oblv}_{\cal T}$ is by definition the totalization of the $!$-pullback functors:
$$
(\pi^{(n)})^!: \op{IndCoh}(\widehat{\cal Y}^{\flat} \times \op B\widehat{\mathbb G}_m^{\times n}) \rightarrow \op{IndCoh}(\widehat{\cal Y} \times \op B\widehat{\mathbb G}_m^{\times n}),\text{ where }\pi^{(n)} : \widehat{\cal Y}\times\op B\widehat{\mathbb G}_m^{\times n} \rightarrow \widehat{\cal Y}^{\flat}\times\op B\widehat{\mathbb G}_m^{\times n}.
$$
Each $(\pi^{(n)})^!$ admits a left adjoint $\pi^{(n)}_{*,\op{IndCoh}}$. Furthermore, the diagram induced from an arbitrary face map:
$$
\xymatrix@C=1.5em@R=1.5em{
\op{IndCoh}(\widehat{\cal Y}\times\op B\widehat{\mathbb G}_m^{\times n}) \ar[r]\ar[d]^{\pi_{*,\op{IndCoh}}^{(n)}} & \op{IndCoh}(\widehat{\cal Y}\times\op B\widehat{\mathbb G}_m^{\times m}) \ar[d]^{\pi_{*,\op{IndCoh}}^{(m)}} \\
\op{IndCoh}(\widehat{\cal Y}^{\flat}\times\op B\widehat{\mathbb G}_m^{\times n}) \ar[r] & \op{IndCoh}(\widehat{\cal Y}^{\flat}\times\op B\widehat{\mathbb G}_m^{\times m})
}
$$
which \emph{a priori} commutes up to a natural transformation, actually commutes. Hence $\mathbf{oblv}_{\cal T}$ admits a left adjoint $\mathbf{ind}_{\cal T}:=\op{Tot}(\pi_{*,\op{IndCoh}}^{(n)})$. We now prove:
\begin{itemize}
	\item $\mathbf{oblv}_{\cal T}$ is conservative; this is because all other arrows in the following commutative diagram:
	$$
	\xymatrix@C=1.5em@R=1.5em{
	\op{IndCoh}(\cal Y^{\flat})_{\widehat{\cal Y}^{\flat}} \ar[d]^{\op{ev}^0} \ar[r]^-{\mathbf{oblv}_{\cal T}} & \op{IndCoh}(\cal Y)_{\widehat{\cal Y}} \ar[d]^{\op{ev}^0} \\
	\op{IndCoh}(\cal Y^{\flat}) \ar[r]^{(\pi^{(0)})^!} & \op{IndCoh}(\cal Y)
	}
	$$
	are conservative, hence so is $\mathbf{oblv}_{\cal T}$.
	
	\item $\mathbf{oblv}_{\cal T}$ preserves colimits; this is obvious as we work in $\mathbf{DGCat}_{\op{cont}}$.
\end{itemize}
It follows that that the pair $(\mathbf{ind}_{\cal T}, \mathbf{oblv}_{\cal T})$ is monadic, by the Barr-Beck-Lurie theorem.
\end{proof}

We may regard $\op U(\cal T):=\mathbf{oblv}_{\cal T}\circ\mathbf{ind}_{\cal T}$ as an algebra object in $\op{End}(\op{IndCoh}(\cal Y))$, and the DG category $\cal T\Mod$ identifies with that of $\op U(\cal T)$-module objects in $\op{IndCoh}(\cal Y)$. We call $\op U(\cal T)$ the \emph{universal envelope} of $\cal T$.

\subsection{Comparison with the classical notion} Suppose $Y\in\mathbf{Sch}_{/S}^{\op{ft}}$ is classical. Let $\cal L$ be a classical Lie algebroid over $Y$ and $\cal Y^{\flat}\in\mathbf{FrmMod}_{/S}(Y)$ be the formal moduli problem associated to $\cal L$, under the embedding \eqref{eq-liealgd-to-frmmod}.

\subsubsection{} Given a formal moduli problem $\widehat{\cal Y}^{\flat} \rightarrow \cal Y^{\flat}$ such that $\mathbb T_{Y/\widehat{\cal Y}^{\flat}} \in \Upsilon_Y(\op{QCoh}(Y)^{\heartsuit})$, one can functorially assign a classical Lie algebroid $\widehat{\cal L}$ equipped with a map $\widehat{\cal L}\rightarrow\cal L$. Furthermore, a morphism $\widehat{\cal Y}^{\flat}\times\op B\widehat{\mathbb G}_m \rightarrow \widehat{\cal Y}^{\flat}$ in $\mathbf{FrmMod}_{/S}(Y)$ induces a map
\begin{equation}
\label{eq-qtw-induce-cl}
\widehat{\cal L}\oplus\cal O_Y \rightarrow \widehat{\cal L},\quad (l,f)\leadsto l+f\mathbf 1
\end{equation}
where $\mathbf 1$ is the image of $(0,f)$ in $\widehat{\cal L}$. If the morphism $\widehat{\cal Y}^{\flat}\times\op B\widehat{\mathbb G}_m \rightarrow \widehat{\cal Y}^{\flat}$ realizes $\widehat{\cal Y}^{\flat}$ as a $\widehat{\mathbb G}_m$-gerbe over $\cal Y^{\flat}$, then we see that $\cal O_Y\rightarrow\widehat{\cal L}$, $f\leadsto f\mathbf 1$ is the kernel of the canonical map $\widehat{\cal L}\rightarrow\cal L$. The fact that \eqref{eq-qtw-induce-cl} preserves Lie bracket then implies $\cal O_Y$ is central inside $\widehat{\cal L}$. In other words, the map $\widehat{\cal L}\rightarrow\cal L$ is a central extension of classical Lie algebroids.

\subsubsection{} Now, given any object in $\mathbf{QTw}_{/S}(Y/\cal Y^{\flat})$, we claim that the corresponding formal moduli problem $\widehat{\cal Y}^{\flat}$ satisfies the property that $\mathbb T_{Y/\widehat{\cal Y}^{\flat}}$ lies in $\Upsilon_Y(\op{QCoh}(Y)^{\heartsuit})$. Indeed, we have a canonical triangle in $\op{IndCoh}(Y)$:
$$
\omega_Y\cong\mathbb T_{\widehat{\cal Y}^{\flat}/\cal Y^{\flat}}\big|_Y \rightarrow \mathbb T_{Y/\widehat{\cal Y}^{\flat}} \rightarrow \mathbb T_{Y/\cal Y^{\flat}}
$$
and the outer terms lie in the essential image of $\op{QCoh}(Y)^{\heartsuit}$. Hence the previous discussion shows that we have a functor:
\begin{equation}
\label{eq-qtw-comparison}
\mathbf{QTw}_{/S}(Y/\cal Y^{\flat}) \rightarrow \mathbf{QTw}_{/S}^{\op{cl}}(Y/\cal L).
\end{equation}

\begin{prop}
\label{prop-qtw-comparison}
The functor \eqref{eq-qtw-comparison} is an equivalence of categories.
\end{prop}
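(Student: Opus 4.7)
The strategy is to construct an explicit inverse to \eqref{eq-qtw-comparison} by running Theorem \ref{thm-liealgd-derived} in reverse, after reducing the structure group to the additive formal group. By Lemma \ref{lem-qtw-group-change} and \eqref{eq-qtw-group-change}, we may model a quasi-twisting based at $\cal Y^{\flat}$ as a $\widehat{\mathbb G}_a$-gerbe $\widehat{\cal Y}^{\flat}\to\cal Y^{\flat}$ in $\mathbf{FrmMod}_{/S}(Y)$ together with a trivialization of its pullback to $Y$. Since the source (resp.~target) of \eqref{eq-qtw-comparison} is recovered from the data of $\widehat{\cal Y}^{\flat}\to\cal Y^{\flat}$ (resp.~$\widehat{\cal L}\to\cal L$) by adding a compatible $\widehat{\mathbb G}_a$- (resp.~central $\cal O_Y$-)action, it suffices to prove that the embedding \eqref{eq-liealgd-to-frmmod} intertwines these two pieces of extra structure under the comparison described before the statement.

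The construction of the inverse proceeds as follows. Start with a classical quasi-twisting $0\to\cal O_Y\to\widehat{\cal L}\to\cal L\to 0$. Applied to $\widehat{\cal L}$ and $\cal L$, the embedding \eqref{eq-liealgd-to-frmmod} produces a morphism $\widehat{\cal Y}^{\flat}\to\cal Y^{\flat}$ in $\mathbf{FrmMod}_{/S}(Y)$. The abelian Lie algebroid $\cal O_Y$ (with vanishing anchor) corresponds, under \eqref{eq-liealgd-to-frmmod}, to the formal moduli problem $Y\underset{S}{\times}\op B\widehat{\mathbb G}_a$ under $Y$; here we use that the exponential of the trivial Lie algebra $k$ on a point is exactly $\op B\widehat{\mathbb G}_a$ (cf.~the discussion at the end of \S\ref{sec-smooth-comparison-of-quotients}). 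The inclusion $\cal O_Y\hookrightarrow\widehat{\cal L}$, combined with the centrality of $\cal O_Y$ in $\widehat{\cal L}$ (which is a Lie-algebroid-level way of saying that the morphism $\cal O_Y\to\widehat{\cal L}$ commutes with the Lie algebroid structure on $\widehat{\cal L}$), yields an action
$$
\widehat{\cal Y}^{\flat}\underset{Y}{\times}(Y\underset{S}{\times}\op B\widehat{\mathbb G}_a)\rightarrow\widehat{\cal Y}^{\flat}
$$
over $\cal Y^{\flat}$. The short exactness $0\to\cal O_Y\to\widehat{\cal L}\to\cal L\to 0$, transported through the fully faithful \eqref{eq-liealgd-to-frmmod} (whose essential image is closed under taking fibers with well-defined tangent complex), guarantees that this action realizes $\widehat{\cal Y}^{\flat}\to\cal Y^{\flat}$ as a $\op B\widehat{\mathbb G}_a$-torsor, i.e., a $\widehat{\mathbb G}_a$-gerbe. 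The tautological splitting $Y\to\widehat{\cal Y}^{\flat}$ provides the required trivialization over $Y$. Functoriality in morphisms of classical quasi-twistings is immediate.

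It remains to verify that the two constructions are mutually inverse. In one direction, starting from a quasi-twisting $\cal T$ and passing to $\widehat{\cal L}\to\cal L$, the tangent triangle $\omega_Y\cong\mathbb T_{\widehat{\cal Y}^{\flat}/\cal Y^{\flat}}|_Y\to\mathbb T_{Y/\widehat{\cal Y}^{\flat}}\to\mathbb T_{Y/\cal Y^{\flat}}$ exhibited in the paper shows that $\widehat{\cal Y}^{\flat}$ lies in the essential image of \eqref{eq-liealgd-to-frmmod}, so the reconstruction $\widehat{\cal L}\leadsto\widehat{\cal Y}^{\flat}$ recovers the original formal moduli problem; the gerbe structure is recovered via \eqref{eq-qtw-induce-cl} applied in reverse, using that the multiplication $\widehat{\cal Y}^{\flat}\times\op B\widehat{\mathbb G}_a\to\widehat{\cal Y}^{\flat}$ translates precisely to the sum map $\widehat{\cal L}\oplus\cal O_Y\to\widehat{\cal L}$. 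In the other direction, starting from a classical quasi-twisting, running through \eqref{eq-liealgd-to-frmmod} and then extracting the Lie algebroid of the resulting gerbe gives back the original central extension, again by the compatibility of \eqref{eq-liealgd-to-frmmod} with kernels of Lie algebroid maps. The main technical obstacle to flesh out is the second step of the inverse construction: namely, to check that the $\op B\widehat{\mathbb G}_a$-action built from the central extension structure genuinely makes $\widehat{\cal Y}^{\flat}\to\cal Y^{\flat}$ into a torsor. This is a local statement on $\cal Y^{\flat}$ and ultimately reduces, via the dictionary of \eqref{eq-liealgd-to-frmmod}, to the elementary Lie-theoretic fact that $0\to\cal O_Y\to\widehat{\cal L}\to\cal L\to 0$ is locally split as a sequence of $\cal O_Y$-modules.
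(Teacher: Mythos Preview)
Your overall strategy matches the paper's: construct an explicit inverse by sending a central extension $0\to\cal O_Y\to\widehat{\cal L}\to\cal L\to 0$ to the induced map $\widehat{\cal Y}^{\flat}\to\cal Y^{\flat}$ of formal moduli problems via \eqref{eq-liealgd-to-frmmod}, and then equip $\widehat{\cal Y}^{\flat}$ with a gerbe structure over $\cal Y^{\flat}$ using the map of Lie algebroids $\widehat{\cal L}\oplus\cal O_Y\to\widehat{\cal L}$. You also correctly observe that the simply-transitive condition (i.e.~that $(\op{act},\op{pr}):\widehat{\cal Y}^{\flat}\times\op B\widehat{\mathbb G}_a\to\widehat{\cal Y}^{\flat}\underset{\cal Y^{\flat}}{\times}\widehat{\cal Y}^{\flat}$ is an isomorphism) follows from the corresponding isomorphism of classical Lie algebroids, exactly as the paper argues.

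The gap is in your final reduction. Recall that in this paper a ``gerbe'' means a map to $\op B^2\widehat{\mathbb G}_m$ (or $\op B^2\widehat{\mathbb G}_a$) in the na\"ive sense: it must admit a section over \emph{every} affine DG scheme $T\to\cal Y^{\flat}$. Your proposed reduction to ``$0\to\cal O_Y\to\widehat{\cal L}\to\cal L\to 0$ is locally split as a sequence of $\cal O_Y$-modules'' does not furnish such sections. A Zariski- or \'etale-local $\cal O_Y$-module splitting lives over $Y$, whereas what is needed is a section of $\widehat{\cal Y}^{\flat}\to\cal Y^{\flat}$ over an arbitrary affine nil-thickening $T$ of a subscheme of $Y$; the Lie algebroid dictionary \eqref{eq-liealgd-to-frmmod} does not convert $\cal O_Y$-module splittings into such sections, since the formal moduli problem records the full Lie algebroid structure, not just the underlying module. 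The paper closes this gap by proving that $\widehat{\cal Y}^{\flat}\to\cal Y^{\flat}$ is \emph{formally smooth}: one computes $\cal T^*_{\widehat{\cal Y}^{\flat}/\cal Y^{\flat}}\big|_{\widehat y}\simeq\cal O_T[-1]$ via the Cartesian square identifying the relative cotangent with that of $\op B\widehat{\mathbb G}_m$, and then lifts the tautological section over $T^{\op{red}}$ (coming from $Y\to\widehat{\cal Y}^{\flat}$) along the nilpotent embedding $T^{\op{red}}\hookrightarrow T$. This formal smoothness argument is the substantive missing ingredient in your proposal.
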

\begin{proof}
We explicitly construct the functor inverse to \eqref{eq-qtw-comparison}. Namely, given a central extension $\widehat{\cal L}$ of $\cal L$, we need to equip its corresponding formal moduli problem $\widehat{\cal Y}^{\flat}$ with the structure of a $\widehat{\mathbb G}_m$-gerbe over $\cal Y^{\flat}$. As before, the action map $\widehat{\cal Y}^{\flat}\times\op B\widehat{\mathbb G}_m\rightarrow\widehat{\cal Y}^{\flat}$ arises from the morphism of classical Lie algebroids over $Y$:
$$
\widehat{\cal L} \oplus \cal O_Y \rightarrow \widehat{\cal L} ,\quad (l,f)\leadsto l + f\mathbf 1.
$$
The morphism induced by action and projection $\widehat{\cal Y}^{\flat}\times\op B\widehat{\mathbb G}_m\rightarrow\widehat{\cal Y}^{\flat}\underset{\cal Y^{\flat}}{\times}\widehat{\cal Y}^{\flat}$ is an isomorphism since the same holds for the corresponding map of classical Lie algebroids:
$$
\widehat{\cal L}\oplus\cal O_Y \rightarrow \widehat{\cal L}\underset{\cal L}{\times}\widehat{\cal L},\quad (l,f)\leadsto (l+f\mathbf 1, l).
$$

It remains to show that $\widehat{\cal Y}^{\flat}\rightarrow\cal Y^{\flat}$ admits a section over any affine DG scheme $T$ mapping to $\cal Y^{\flat}$. We shall deduce the existence of this section from the following claim:

\begin{claim}
The morphism $\widehat{\cal Y}^{\flat}\rightarrow\cal Y^{\flat}$ is formally smooth.
\end{claim}

\noindent
Indeed, let $T$ be any affine DG scheme with a morphism $\widehat y : T\rightarrow\widehat{\cal Y}^{\flat}$. By the criterion of formal smoothness \eqref{eq-formal-smoothness-criterion}, we ought to show $\op{Maps}(\cal T^*_{\widehat{\cal Y}^{\flat}/\cal Y^{\flat}}\big|_{\widehat y}, \cal F)\in\mathbf{Vect}^{\le 0}$ for all $\cal F\in\op{QCoh}(T)^{\heartsuit}$. The Cartesian square:
$$
\xymatrix@C=1.5em@R=1.5em{
T \ar[r]^-{(\widehat y, \widehat y)} & \widehat{\cal Y}^{\flat}\underset{\cal Y^{\flat}}{\times}\widehat{\cal Y}^{\flat} \ar[d]\ar[r] & \widehat{\cal Y}^{\flat} \ar[d] \\
& \widehat{\cal Y}^{\flat} \ar[r] & \cal Y^{\flat}
}
$$
together with the isomorphism above gives: 
$$
\cal T^*_{\widehat{\cal Y}^{\flat}/\cal Y^{\flat}}\big|_{\widehat y}\xrightarrow{\sim} \cal T^*_{\widehat{\cal Y}^{\flat}\underset{\cal Y^{\flat}}{\times}\widehat{\cal Y}^{\flat}/\widehat{\cal Y}^{\flat}}\big|_{(\widehat y,\widehat y)} \xrightarrow{\sim} \cal T_{\widehat{\cal Y}^{\flat}\times\op B\widehat{\mathbb G}_m/\widehat{\cal Y^{\flat}}}^*\big|_{(\widehat y, 1)} \xrightarrow{\sim} \cal O_T[-1].
$$
One deduces from this the required degree estimate.

Using the claim, we will construct a section of $\widehat{\cal Y}^{\flat} \rightarrow\cal Y^{\flat}$ over $T\rightarrow\cal Y^{\flat}$ as follows. First consider the fiber product $T\underset{\cal Y^{\flat}}{\times}\cal Y$, which is equipped with a nil-isomorphism to $T$. We obtain a solid commutative diagram:
$$
\xymatrix@C=1.5em@R=1.5em{
T^{\op{red}} \ar[r]\ar[dr] & T\underset{\cal Y^{\flat}}{\times}\cal Y \ar[d] \ar[r] & \cal Y \ar[r]\ar[d] & \widehat{\cal Y}^{\flat} \ar[dl] \\
& T \ar[r]\ar@{.>}[urr] & \cal Y^{\flat}
}
$$
Formal smoothness now implies the existence of the dotted arrow.
\end{proof}

In particular, the $\infty$-category $\mathbf{QTw}_{/S}(Y/\cal Y^{\flat})$ is an ordinary category.

\begin{rem}
By letting $\cal L = \cal T_{Y/S}$ be the tangent Lie algebroid, we obtain from Proposition \ref{prop-qtw-comparison} the fact that Picard algebroids identify with twistings on classical schemes locally of finite type. The same result is established in \cite[\S6.5]{GR14} using a computation involving de Rham cohomology.
\end{rem}

\medskip

\section{How to take quotient of a Lie algebroid?}
\label{sec-quotient}

This section is devoted to the study of quotients of Lie algebroids, in both classical and DG settings. The set-up involves an $H$-torsor $Y\rightarrow Z$ and a Lie algebroid $\cal L$ over $Y$. With additional data on $\cal L$, there exists a \emph{quotient Lie algebroid} over $Z$. The quotient procedure we shall describe take as input a map $\eta : \fr k\otimes\cal O_Y\rightarrow\cal L$, where $\fr k$ is an arbitrary Lie algebra. It generalizes two existing notions---\emph{weak} and \emph{strong} quotients---both considered by Beilinson and Bernstein \cite{BB93}.

For technical reasons involving $\infty$-type schemes, we shall construct two quotient functors:
\begin{itemize}
	\item $\mathbf Q^{(\fr k, H)}_{\op{inj}}$, which is a classical procedure that works in the case where $\eta$ is injective;
	\item $\mathbf Q^{(H,H^{\flat})}$, which is its geometric counterpart for $Y$ locally of finite type,
\end{itemize}
and we check that they agree in overlapping cases. A geometric procedure that works in full generality should exist as soon as the theory in \cite{GR16} is extended to $\infty$-type situations.

\subsection{$(\fr k,H)$-Lie algebroids} We continue to work over a base (classical) scheme $S\in\Sch^{\op{ft}}_{/k}$.

\subsubsection{}
\label{sec-action-pair}
A \emph{classical action pair} $(\fr k, H)$ consists of an affine group scheme $H$ over $S$, an $\cal O_S$-linear Lie algebra $\fr k$ acted on by $H$, as well as a morphism of Lie algebras:
\begin{equation}
\label{eq-map-in-pair}
\fr k\rightarrow\fr h:=\op{Lie}(H)
\end{equation}
with the following properties:
\begin{itemize}
	\item \eqref{eq-map-in-pair} is $H$-equivariant, where $\fr h$ is equipped with the adjoint $H$-action;
	\item the $\fr k$-action on itself induced from \eqref{eq-map-in-pair} is the adjoint action.
\end{itemize}

\begin{rem}
This datum is superficially similar to that of a Harish-Chandra pair, but they serve very different purposes.
\end{rem}

\begin{eg}
Fix an $S$-point $\fr g^{\kappa}$ of $\op{Gr}_{\op{Lag}}^G(\fr g\oplus\fr g^*)$ (see \S\ref{sec-quantum-parameter}). Then we have a classical action pair $(\fr g^{\kappa}[\![t]\!], S\times G[\![t]\!])$, where the morphism \eqref{eq-map-in-pair} is induced from the projection $\fr g^{\kappa}\rightarrow\fr g\otimes\cal O_S$. All classical action pairs considered in this paper are variants of $(\fr g^{\kappa}[\![t]\!], S\times G[\![t]\!])$. Note that the group scheme $S\times G[\![t]\!]$ is \emph{not} of finite type.
\end{eg}

\subsubsection{}
\label{sec-normal-subpair}
The notion of a morphism $(\fr k^0, H^0)\rightarrow(\fr k, H)$ of classical action pairs is obvious. We say that $(\fr k^0, H^0)$ is a \emph{normal} subpair if $\fr k^0\hookrightarrow\fr k$ is an ideal, $H^0\hookrightarrow H$ is a normal subgroup, the $H$-action stabilizes $\fr k^0$, and $H^0$ acts trivially on $\fr k/\fr k^0$. This definition means precisely that a normal subpair fits into an \emph{exact sequence} (in the obvious sense):
\begin{equation}
\label{eq-normal-subpair}
1 \rightarrow (\fr k^0, H^0)\rightarrow(\fr k, H) \rightarrow (\fr k_0, H_0) \rightarrow 1.
\end{equation}

\subsubsection{} Let $Y\in\mathbf{Sch}_{/S}$ be acted on by $H$. Recall that every $H$-equivariant $\cal O_Y$-module $\cal F$ admits an $\fr h$-action by derivations. Specializing to $\cal O_Y$ itself, we obtain a canonical map:
\begin{equation}
\label{eq-lie-algebra-maps-to-tangent}
\fr h\otimes\cal O_Y \rightarrow \cal T_{Y/S}.
\end{equation}
On the other hand, the $\cal O_Y$-module $\cal T_{Y/S}$ admits a canonical $H$-equivariance structure, given by pushforward of tangent vectors.

\subsubsection{}
\label{sec-kh-lie-algebroid} A \emph{$(\fr k, H)$-Lie algebroid} on $Y$ consists of a Lie algebroid $\cal L\in\mathbf{LieAlgd}_{/S}(Y)$, an $H$-equivariance structure on the underlying $\cal O_Y$-module of $\cal L$, and a morphism $\eta : \fr k\otimes\cal O_Y\rightarrow\cal L$ of $H$-equivariant $\cal O_Y$-modules, subject to the following conditions:
\begin{itemize}
	\item the $H$-equivariance structure on $\cal L$ is compatible with its Lie bracket;
	\item the anchor map $\sigma$ of $\cal L$ intertwines the $H$-equivariance structures on $\cal L$ and $\cal T_{Y/S}$;
	\item the following diagram is commutative:
	\begin{equation}
	\label{eq-algebroid-diagram}
	\xymatrix@C=1em@R=1em{
		& \cal L \ar[dr]^{\sigma} & \\
	\fr k\otimes\cal O_Y \ar[ur]^{\eta}\ar[dr]_{\eqref{eq-map-in-pair}} & & \cal T_{Y/S} \\
		& \fr h\otimes\cal O_Y \ar[ur]_{\eqref{eq-lie-algebra-maps-to-tangent}}
	}
	\end{equation}
	\item $\eta$ is compatible with the Lie bracket on $\cal L$ in the following sense: given $\xi\in\fr k\otimes\cal O_Y$ and $l\in \cal L$, there holds:
	\begin{equation}
	\label{eq-algebroid-identity}
	[\eta(\xi), l] = \xi_{\fr h}\cdot l \in \cal L
	\end{equation}
	where $\xi_{\fr h}$ is the image of $\xi$ in $\fr h\otimes\cal O_Y$ along \eqref{eq-map-in-pair}, and $\xi_{\fr h}\cdot l$ denotes the action of $\xi_{\fr h}$ on $l$ coming from the equivariance structure.
\end{itemize}

\noindent
We will frequently write a $(\fr k, H)$-Lie algebroid as $(\cal L,\eta)$, in order to emphasize the dependence on $\eta$. The category of $(\fr k, H)$-Lie algebroids on $Y$ is denoted by $\mathbf{LieAlgd}^{(\fr k, H)}_{/S}(Y)$.

Given another scheme $Y'\in\mathbf{Sch}_{/S}$ acted on by $H$ and an $H$-equivariant morphism $Y'\rightarrow Y$, one can form the pullback of a $(\fr k, H)$-Lie algebroid in a way compatible with the forgetful functor to plain Lie algebroids.

\subsection{Quotient I} We describe how to form the \emph{quotient} of a $(\fr k, H)$-Lie algebroid when the morphism $\eta$ is \emph{injective}. Denote the category of such $(\fr k, H)$-Lie algebroid by $\mathbf{LieAlgd}^{(\fr k, H)}_{\op{inj}/S}(Y)$.

\subsubsection{} Suppose $Z\in\mathbf{Sch}_{/S}$ and $Y$ is an $H$-torsor over $Z$. Since $H$ is affine, the projection $\pi:Y\rightarrow Z$ is an affine, faithfully flat cover (in particular, fpqc). We will define a \emph{quotient} functor:
\begin{equation}
\label{eq-quotient-inj}
\mathbf{Q}_{\op{inj}}^{(\fr k, H)} : \mathbf{LieAlgd}_{\op{inj}/S}^{(\fr k, H)}(Y) \rightarrow\mathbf{LieAlgd}_{/S}(Z) 
\end{equation}
on each $(\cal L, \eta) \in \mathbf{LieAlgd}_{\op{inj}}^{(\fr k, H)}(Y/S)$ by the following procedure:
\begin{itemize}
	\item (\emph{$\cal O_Z$-module and anchor map}) We have a morphism of $H$-equivariant $\cal O_Y$-modules:
	$$
	\cal L/(\fr k\otimes\cal O_Y) \rightarrow \cal T_{Y/S}/(\fr h\otimes\cal O_Y)\xrightarrow{\sim}\pi^*\cal T_{Z/S}
	$$
	by \eqref{eq-algebroid-diagram}. Let $\cal L_0$ denote the fpqc descent of $\cal L/(\fr k\otimes\cal O_Y)$ to $Z$, so we obtain a map of $\cal O_Z$-modules $\sigma_0 : \cal L_0 \rightarrow \cal T_{Z/S}$. The image of $(\cal L, \eta)$ under $\mathbf{Q}_{\op{inj}}^{(\fr k, H)}$ is supposed to have underlying $\cal O_Z$-module $\cal L_0$ and anchor map $\sigma_0$.
	\item (\emph{Lie bracket}) Since $\pi$ is affine, it suffices to define an $\cal O_S$-linear Lie bracket on $\pi^{-1}\cal L_0$. Consider the embedding:
	$$
	\pi^{-1}\cal L_0\hookrightarrow \pi^*\cal L_0 \xrightarrow{\sim} \cal L/(\fr k\otimes\cal O_Y).
	$$
	The Lie bracket on $\cal L$ will induce one on $\pi^{-1}\cal L_0$ if $[\fr k\otimes\cal O_Y, \pi^{-1}\cal L_0]=0$ in $\cal L$. The latter identity is guaranteed by \eqref{eq-algebroid-identity}.
\end{itemize}
We omit checking that this procedure gives rise to a well-defined functor $\mathbf{Q}_{\op{inj}}^{(\fr k, H)}$.

Given a \emph{flat} morphism of schemes $f:Z'\rightarrow Z$, we set $Y':=Z'\underset{Z}{\times}Y$ which is an $H$-torsor over $Z'$. The map $\tilde f : Y'\rightarrow Y$ is $H$-equivariant, and the pullback of $(\cal L,\eta)\in\mathbf{LieAlgd}_{\op{inj}/S}^{(\fr k, H)}(Y)$ along $\tilde f$ lies in $\mathbf{LieAlgd}_{\op{inj}/S}^{(\fr k, H)}(Y')$. Furthermore, $\mathbf{Q}_{\op{inj}}^{(\fr k, H)}$ is compatible with pullbacks along $f$ and $\tilde f$.

\begin{rem}
Since Lie algebroids are smooth local objects (see \cite{BB93}) and $\mathbf{Q}_{\op{inj}}^{(\fr k, H)}$ is compatible with flat pullbacks, we may generalize $\mathbf{Q}_{\op{inj}}^{(\fr k, H)}$ to the case where $\cal Z:=Y/H$ is representable by an algebraic stack (i.e., smooth locally a scheme).
\end{rem}

\begin{rem}
The special case where the classical action pair is given by $(\fr h, H)$ with \eqref{eq-map-in-pair} being the identity map, has been studied in \cite{BB93} under the name \emph{strong quotient}. Note that when $H$ acts freely on $Y$, the map $\eta$ is automatically injective.
\end{rem}

\begin{eg}
\label{eg-weak-quotient-1}
Another instance of the functor \eqref{eq-quotient-inj} is the \emph{weak quotient}. This is the case where $\fr k=0$. The only data needed in defining a $(0, H)$-Lie algebroid are a Lie algebroid $\cal L\in\mathbf{LieAlgd}_{/S}(Y)$, together with an $H$-equivariance structure on the underlying $\cal O_Y$-module of $\cal L$, subject to the first two conditions in \S\ref{sec-kh-lie-algebroid}.

Suppose $Y/H$ is representable by an algebraic stack. Then the resulting quotient $\mathbf Q_{\op{inj}}^{(0,H)}(\cal L)$ has underlying $\cal O_{Y/H}$-module the descent of (the $\cal O_Y$-module) $\cal L$ along $Y\rightarrow Y/H$.
\end{eg}

\subsubsection{}
\label{sec-quotient-classical-univ} We now characterize the object $\mathbf Q_{\op{inj}}^{(\fr k, H)}(\cal L) \in \mathbf{LieAlgd}_{/S}(Z)$ by a universal property. Consider an arbitrary Lie algebroid $\cal M\in\mathbf{LieAlgd}_{/S}(Z)$. We can equip $\pi^!_{\mathbf{LieAlgd}}\cal M$ with the structure of a $(\fr k, H)$-Lie algebroid as follows:
\begin{itemize}
	\item regarding $\pi^!_{\mathbf{LieAlgd}}\cal M$ as the $\cal O_Y$-module $\pi^*\cal M\underset{\pi^*\cal T_{Z/S}}{\times}\cal T_{Y/S}$, the $H$-equivariance structure is a combination of the natural $H$-equivariance structures on $\pi^*\cal M$ and $\cal T_{Y/S}$;
	\item the morphism $\eta : \fr k\otimes\cal O_Y \rightarrow \pi^!_{\mathbf{LieAlgd}}(\cal M)$ is a combination of the \emph{zero} map $\fr k\otimes\cal O_Y\rightarrow\pi^*\cal M$ and the composition $\fr k\otimes\cal O_Y \rightarrow \fr h\otimes\cal O_Y \rightarrow\cal T_{Y/S}$.
\end{itemize}
Note that $\pi^!_{\mathbf{LieAlgd}}\cal M \in \mathbf{LieAlgd}_{/S}^{(\fr k, H)}(Y)$ does \emph{not} belong to $\mathbf{LieAlgd}_{\op{inj}/S}^{(\fr k,H)}(Y)$ in general.

\begin{prop}
\label{prop-quotient-classical-univ}
There is a natural bijection:
\begin{equation}
\label{eq-quotient-classical-univ}
\op{Maps}_{\mathbf{LieAlgd}_{/S}(Z)}(\mathbf Q_{\op{inj}}^{(\fr k, H)}(\cal L), \cal M) \xrightarrow{\sim} \op{Maps}_{\mathbf{LieAlgd}_{/S}^{(\fr k,H)}(Y)}(\cal L, \pi^!_{\mathbf{LieAlgd}}\cal M)
\end{equation}
\end{prop}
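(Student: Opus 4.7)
The plan is to exhibit the bijection by constructing an explicit unit map $u_{\cal L} : \cal L \to \pi^!_{\mathbf{LieAlgd}}\,\mathbf{Q}_{\op{inj}}^{(\fr k, H)}(\cal L)$ in $\mathbf{LieAlgd}^{(\fr k,H)}_{/S}(Y)$ and checking it enjoys the expected universal property. Writing $\cal L_0 := \mathbf{Q}_{\op{inj}}^{(\fr k,H)}(\cal L)$, so that $\pi^*\cal L_0 \cong \cal L/(\fr k\otimes\cal O_Y)$ as $H$-equivariant $\cal O_Y$-modules, the underlying module of $\pi^!_{\mathbf{LieAlgd}}\cal L_0$ is $\pi^*\cal L_0\times_{\pi^*\cal T_{Z/S}}\cal T_{Y/S}$. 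I would define $u_{\cal L}$ by pairing the quotient map $\cal L \twoheadrightarrow \cal L/(\fr k\otimes\cal O_Y) = \pi^*\cal L_0$ with the anchor $\sigma : \cal L \to \cal T_{Y/S}$; these two maps have equal images in $\pi^*\cal T_{Z/S}$ because $\sigma_0$ was constructed precisely as the descent of $\sigma$ modulo $\fr h\otimes\cal O_Y$.

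Next I would verify that $u_{\cal L}$ is a morphism of $(\fr k,H)$-Lie algebroids. The $H$-equivariance is built into the construction. Compatibility with $\eta$ is automatic, because $\eta_{\pi^!_{\mathbf{LieAlgd}}\cal L_0}$ vanishes on the $\pi^*\cal L_0$-component (by the recipe of \S\ref{sec-quotient-classical-univ}) and the $\cal T_{Y/S}$-component matches by \eqref{eq-algebroid-diagram}. Compatibility with Lie brackets reduces, since $\pi$ is affine, to checking that the induced bracket on $\pi^{-1}\cal L_0 \hookrightarrow \cal L/(\fr k\otimes\cal O_Y)$ coincides with the one defined in \S4.2.1, which is exactly how that bracket was constructed using \eqref{eq-algebroid-identity}.

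For the universal property, given $\phi : \cal L_0 \to \cal M$ in $\mathbf{LieAlgd}_{/S}(Z)$, composing with $u_{\cal L}$ and the functorial $\pi^!_{\mathbf{LieAlgd}}\phi$ yields a map $\cal L \to \pi^!_{\mathbf{LieAlgd}}\cal M$ in $\mathbf{LieAlgd}^{(\fr k,H)}_{/S}(Y)$. Conversely, given $\psi : \cal L \to \pi^!_{\mathbf{LieAlgd}}\cal M$, I would first-project to $\pi^*\cal M$; since $\psi$ intertwines the two $\eta$'s and $\eta_{\pi^!_{\mathbf{LieAlgd}}\cal M}$ has vanishing component in $\pi^*\cal M$, this projection kills $\fr k\otimes\cal O_Y$ and hence factors through a map $\pi^*\cal L_0 \to \pi^*\cal M$. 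This factored map is $H$-equivariant (as $\psi$ is), so fpqc descent along $\pi$ produces a unique $\cal O_Z$-linear $\phi : \cal L_0 \to \cal M$.

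The main obstacle will be verifying that this $\phi$ is a map of Lie algebroids, and that the two assignments $\phi \leftrightarrow \psi$ are mutually inverse. For the anchor, the $\cal T_{Y/S}$-component of $\psi$ descends to $\sigma_0$ (forced by $\psi$ being a map of plain Lie algebroids over $Y$), showing $\sigma_{\cal M}\circ\phi = \sigma_0$ after descent. For the bracket, I would use that the first projection $\pi^!_{\mathbf{LieAlgd}}\cal M \to \pi^*\cal M$ preserves brackets on the sub-sheaf whose image lies in the kernel of $\pi^!_{\mathbf{LieAlgd}}\cal M \to \fr h\otimes\cal O_Y$ (modulo $\pi^*\cal M$), which precisely corresponds to $\pi^{-1}\cal L_0$; then apply the Lie algebra property of $\psi$ and descend. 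The two constructions are inverse by inspection on the defining data: starting from $\phi$, the $\pi^*\cal M$-component of $\pi^!_{\mathbf{LieAlgd}}\phi \circ u_{\cal L}$ is exactly $\pi^*\phi$ composed with the quotient, whose descent recovers $\phi$; and starting from $\psi$, both components are uniquely determined by $\phi$ and the anchor of $\cal L$, recovering $\psi$.
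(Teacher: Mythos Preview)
Your proposal is correct and follows essentially the same approach as the paper's proof. The paper argues directly that a morphism $\mathbf Q_{\op{inj}}^{(\fr k,H)}(\cal L)\to\cal M$ amounts to an $H$-equivariant map $\phi:\cal L/(\fr k\otimes\cal O_Y)\to\pi^*\cal M$ preserving brackets on $H$-invariant sections, and that this in turn is equivalent to a $(\fr k,H)$-Lie algebroid map $\widetilde{\phi}:\cal L\to\pi^!_{\mathbf{LieAlgd}}\cal M$ determined by exactly the two components you name (quotient map and anchor); your packaging via an explicit unit $u_{\cal L}$ is a mild stylistic variant of the same content. One small point: your phrase about ``the kernel of $\pi^!_{\mathbf{LieAlgd}}\cal M\to\fr h\otimes\cal O_Y$'' is not quite well-formed as stated---the paper's cleaner formulation is that $\cal L$ is generated over $\cal O_Y$ by $H$-invariant sections, on which the bracket factors through $\cal L/(\fr k\otimes\cal O_Y)$, and this (together with the Leibniz rule and anchor compatibility) is what propagates bracket preservation to all of $\cal L$.
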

\begin{proof}
A morphism $\mathbf Q^{(\fr k, H)}_{\op{inj}}(\cal L) \rightarrow \cal M$ is equivalent to an $H$-equivariant map $\phi : \cal L/\fr k\otimes\cal O_Y \rightarrow \pi^*\cal M$ preserving the Lie bracket on $H$-invariant sections. We \emph{claim} that such datum is equivalent to a morphism $\widetilde{\phi} : \cal L\rightarrow\pi^!_{\mathbf{LieAlgd}}\cal M$ of $(\fr k, H)$-Lie algebroids.

Indeed, given $\phi$, the map $\widetilde{\phi}$ is uniquely determined by the properties that the following diagrams commute:
$$
\xymatrix@C=1.5em@R=1.5em{
	\cal L \ar[r]^-{\widetilde{\phi}}\ar[d] & \pi^!_{\mathbf{LieAlgd}}\cal M \ar[d] \\
	\cal L/\fr k\otimes\cal O_Y \ar[r]^-{\phi} & \pi^*\cal M
}
\quad\quad
\xymatrix@C=1.5em@R=1.5em{
	\cal L \ar[r]^-{\widetilde{\phi}}\ar[rd]_-{\sigma} & \pi^!_{\mathbf{LieAlgd}}\cal M \ar[d] \\
	 & \cal T_{Y/S}.
}
$$
Furthermore, $\widetilde{\phi}$ preserves the Lie bracket on $\cal L$, because $\cal L$ is generated over $\cal O_Y$ by $H$-invariant sections and on such sections, the Lie bracket factors through $\cal L/\fr k\otimes\cal O_Y$ and is preserved by $\phi$. Conversely, given $\widetilde{\phi}$, the map $\phi$ is uniquely determined by the first commutative diagram above.
\end{proof}

\subsubsection{} Suppose we are given an exact sequence \eqref{eq-normal-subpair} of classical action pairs, and an object $(\cal L,\eta)\in\mathbf{LieAlgd}_{\op{inj}/S}^{(\fr k, H)}(Y)$. Assume also that $Y/H$ is representable by an algebraic stack. Note that:
\begin{itemize}
	\item $Y/H^0$ admits an $H_0$-action, realizing it as an $H_0$-torsor over $Y/H$ (in particular, $Y/H^0$ is also representable by an algebraic stack);
	\item there is an induced $(\fr k_0, H_0)$-Lie algebroid structure on $\mathbf{Q}_{\op{inj}}^{(\fr k^0, H^0)}(\cal L)$, for which the structure map
	$$
	\eta_0 : \fr k_0\otimes\cal O_{Y/H^0} \rightarrow \mathbf{Q}_{\op{inj}}^{(\fr k^0, H^0)}(\cal L)
	$$
	is again injective, i.e., $(\mathbf{Q}_{\op{inj}}^{(\fr k^0, H^0)}(\cal L), \eta_0)\in \mathbf{LieAlgd}_{\op{inj}/S}^{(\fr k_0, H_0)}(Y/H^0)$.
\end{itemize}
We have a version of the second isomorphism theorem:

\begin{prop}
\label{prop-normal-subpair-cl}
There is a natural isomorphism:
$$
\mathbf Q_{\op{inj}}^{(\fr k_0, H_0)}\circ\mathbf Q_{\op{inj}}^{(\fr k^0, H^0)}(\cal L) \xrightarrow{\sim} \mathbf Q_{\op{inj}}^{(\fr k, H)}(\cal L).
$$
\end{prop}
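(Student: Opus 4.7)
The plan is to identify both Lie algebroids explicitly by unraveling the iterated descent-and-quotient procedure on one side, using transitivity of fpqc descent together with the short exact sequence \eqref{eq-normal-subpair}. Write $\pi^0 : Y \to Y/H^0$, $\pi_0 : Y/H^0 \to Y/H = Z$, and $\pi = \pi_0 \circ \pi^0$. By construction, the underlying $\cal O_{Y/H^0}$-module of $\mathbf Q^{(\fr k^0, H^0)}_{\op{inj}}(\cal L)$ is the $H^0$-equivariant fpqc descent of $\cal L/(\fr k^0\otimes\cal O_Y)$ along $\pi^0$, and the residual action of $(\fr k_0, H_0)$ arises precisely because $(\fr k^0, H^0)$ is normal in $(\fr k, H)$. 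Applying $\mathbf Q^{(\fr k_0, H_0)}_{\op{inj}}$ then produces the $H_0$-descent along $\pi_0$ of a further quotient, whose pullback to $Y$ is canonically $\cal L/(\fr k\otimes\cal O_Y)$: indeed, $\fr k/\fr k^0 \xrightarrow{\sim}\fr k_0$, and taking successive quotients first by $\fr k^0\otimes\cal O_Y$ and then by $(\fr k/\fr k^0)\otimes\cal O_Y$ coincides with the single quotient by $\fr k\otimes\cal O_Y$.

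The key step is the observation that $H^0$-descent along $\pi^0$ followed by $H_0$-descent along $\pi_0$ is equivalent to $H$-descent along $\pi$: this is fpqc descent in the presence of a normal subgroup, and amounts to saying that a pair of compatible $H^0$- and $H_0$-descent data assemble into an $H$-descent datum by virtue of $1\to H^0\to H\to H_0\to 1$. Combined with the identification of the pulled-back module, this gives a canonical isomorphism between the $\cal O_Z$-modules underlying both sides. The anchor maps into $\cal T_{Z/S}$ match because each is characterized after pullback to $Y$ by the composition $\cal L\to\cal T_{Y/S}\twoheadrightarrow\pi^*\cal T_{Z/S}$. The Lie brackets also agree: in both constructions the bracket on the $\pi$-preimage is induced from that on $\cal L$, the descent being well-defined thanks to the identity \eqref{eq-algebroid-identity}.

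The main obstacle will be the bookkeeping required to verify that the residual $(\fr k_0, H_0)$-structure on $\mathbf Q^{(\fr k^0, H^0)}_{\op{inj}}(\cal L)$ is indeed well-defined, satisfies the compatibility \eqref{eq-algebroid-identity}, and has an \emph{injective} structure map $\eta_0$---only then may $\mathbf Q^{(\fr k_0, H_0)}_{\op{inj}}$ be legitimately applied. Injectivity of $\eta_0$ follows from that of $\eta$ together with the fact that $\fr k_0\otimes\cal O_{Y/H^0}$ pulls back faithfully to $(\fr k/\fr k^0)\otimes\cal O_Y$ and lands in $\cal L/(\fr k^0\otimes\cal O_Y)$ with zero intersection; the compatibility \eqref{eq-algebroid-identity} is inherited from that for $(\fr k, H)$. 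Naturality in $\cal L$ is then evident from the functoriality of descent and quotient at each stage. As an alternative route, one could instead verify that $\mathbf Q^{(\fr k_0, H_0)}_{\op{inj}}\circ\mathbf Q^{(\fr k^0, H^0)}_{\op{inj}}$ satisfies the universal property of Proposition \ref{prop-quotient-classical-univ} for $(\fr k, H)$-Lie algebroids, by chaining the two adjunctions and checking that $(\fr k_0, H_0)$-compatibility on the intermediate stage corresponds exactly to extending $(\fr k^0, H^0)$-equivariance to full $(\fr k, H)$-equivariance along \eqref{eq-normal-subpair}.
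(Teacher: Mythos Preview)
Your proposal is correct and follows essentially the same approach as the paper: identify the underlying $\cal O_{Y/H}$-modules via transitivity of fpqc descent together with the isomorphism of successive quotients $\cal L/(\fr k^0\otimes\cal O_Y)\big/(\fr k_0\otimes\cal O_{Y/H^0})\cong \cal L/(\fr k\otimes\cal O_Y)$, then observe that the anchor maps and Lie brackets match. The paper's proof is a two-sentence compression of exactly this argument; your additional discussion of the well-definedness and injectivity of $\eta_0$ addresses points the paper simply asserts in the setup preceding the proposition.
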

\begin{proof}
As $\cal O_{Y/H^0}$-modules, the cokernel of $\eta_0$ identifies with the descent of $\cal L/\fr k\otimes\cal O_Y$ along $Y\rightarrow Y/H^0$ since the latter map is faithfully flat. Hence the underlying $\cal O_{Y/H}$-module of $\mathbf Q_{\op{inj}}^{(\fr k_0, H_0)}\circ\mathbf Q_{\op{inj}}^{(\fr k^0, H^0)}(\cal L)$ agrees with that of $\mathbf Q_{\op{inj}}^{(\fr k, H)}(\cal L)$. Identifications of the anchor maps and the Lie brackets are immediate.
\end{proof}

\subsubsection{} Suppose we have a classical quasi-twisting \eqref{eq-classical-qtw} over $Y$, where both Lie algebroids $\widehat{\cal L}$ and $\cal L$ have the structure of $(\fr k, H)$-algebroids, and $\widehat{\cal L}\rightarrow\cal L$ is a morphism of such. In particular, the structure map $\widehat{\eta} : \fr k\otimes\cal O_Y\rightarrow\widehat{\cal L}$ is a lift of $\eta$. Hence, if $(\cal L, \eta)\in\mathbf{LieAlgd}^{(\fr k, H)}_{\op{inj}/S}(Y)$, then so does $(\widehat{\cal L}, \widehat{\eta})$. For fixed $(\cal L,\eta)$, we denote the category of classical quasi-twistings with this additional structure by $\mathbf{QTw}^{(\fr k,H)}_{/S}(Y/\cal L)$.

Assuming that $\cal Z:=Y/H$ is represented by an algebraic stack. Then the quotient Lie algebroids again form a central extension:
$$
0 \rightarrow \cal O_{Y/H} \rightarrow \mathbf Q_{\op{inj}}^{(\fr k, H)}(\widehat{\cal L}) \rightarrow \mathbf Q_{\op{inj}}^{(\fr k, H)}(\cal L) \rightarrow 0.
$$
Therefore, we may regard $\mathbf Q_{\op{inj}}^{(\fr k, H)}$ as a functor from $\mathbf{QTw}_{/S}^{(\fr k,H)}(Y/\cal L)$ to $\mathbf{QTw}_{/S}(\cal Z/\mathbf Q_{\op{inj}}^{(\fr k, H)}(\cal L))$.

\begin{rem}
When $Y$ is placid and $\fr k$ is a topological Lie algebra over $\cal O_S$, we can adapt the above definitions to make sense of a Tate $(\fr k, H)$-Lie algebroid $\cal L$ (c.f.~\S\ref{sec-tate-liealgd}). In particular, $\eta$ will be a map out of the completed tensor product $\fr k\widehat{\otimes}\cal O_Y\rightarrow\cal L$.

We do not discuss how to keep track of the topology in the (analogously defined) quotient $\mathbf Q_{\op{inj}}^{(\fr k,H)}(\cal L)$, since all quotients considered in this paper have the properties that $Y/H$ is locally of finite type and $\mathbf Q_{\op{inj}}^{(\fr k, H)}(\cal L)$ should be discrete.
\end{rem}

\subsection{$(H,H^{\flat})$-formal moduli problems}
We now study the geometric version of quotient of Lie algebroids. Recall the $\infty$-category $\mathbf{FrmMod}_{/S}$ of \S\ref{sec-ant}.

\subsubsection{}
\label{sec-geom-action-pair} We call a group object $(H, H^{\flat})$ in $\mathbf{FrmMod}_{/S}$ a \emph{geometric action pair} if $H$ is a group \emph{scheme} locally of finite type. Explicitly, a geometric action pair consists of a group scheme $H$, a group prestack $H^{\flat}\in\mathbf{PStk}_{\op{laft-def}/S}$, and a nil-isomorphism $H\rightarrow H^{\flat}$ that is a group homomorphism.

\subsubsection{}
\label{sec-classical-to-geom-action-pair}
We will functorially construct a geometric action pair from any classical action pair $(\fr k, H)$, where $H$ is locally of finite type.

Indeed, there is a morphism $\exp(\fr k)\rightarrow H$ coming from the composition $\exp(\fr k)\rightarrow\exp(\fr h)\rightarrow H$. Furthermore, the $H$-action on $\exp(\fr k)$ equips the prestack quotient $H^{\flat} := H/\exp(\fr k)$ with a group structure, such that $H\rightarrow H^{\flat}$ is a group morphism. Note that Lemma \ref{lem-smooth-comparison-of-quotients} identifies $H^{\flat}$ with $\op B_H(H\times\exp(\fr k)^{\bullet})$; in particular, $H^{\flat}\in\mathbf{PStk}_{\op{laft-def}/S}$, so $(H,H^{\flat})$ is a geometric action pair.

\begin{lem}
The category of classical action pairs is the full subcategory of geometric action pairs $(H,H^{\flat})$, for which $\mathbb T_{H/H^{\flat}}$ belongs to $\Upsilon_H(\op{QCoh}(H)^{\heartsuit})$.
\end{lem}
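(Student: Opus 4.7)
The plan is to construct an inverse functor on the claimed subcategory and check naturality; this reduces the lemma to combining Theorems \ref{thm-frmmod-to-frmgpd} and \ref{thm-liealgd-derived} with the observation that a group-equivariant structure on the underlying Lie algebroid cuts out precisely the data of a classical action pair.

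Given a geometric action pair $(H, H^{\flat})$ with $\mathbb T_{H/H^{\flat}} \in \Upsilon_H(\op{QCoh}(H)^{\heartsuit})$, I would first apply Theorem \ref{thm-frmmod-to-frmgpd} to the nil-isomorphism $H \to H^{\flat}$ to obtain a formal groupoid $\cal R^{\bullet} = \Omega_H(H^{\flat})$ on $H$; the tangent condition together with Theorem \ref{thm-liealgd-derived} upgrades this to a classical Lie algebroid $\cal L$ on $H$ satisfying $\cal L_{\mathbf F} = H^{\flat}$. The group homomorphism structure on $H \to H^{\flat}$ forces $\cal R^{\bullet}$ to be equivariant for the right (equivalently, left) translation action of $H$ on itself; equivalently, $\cal R^{\bullet}$ is the action groupoid of a formal group $K := \{1\} \times_{H^{\flat}} H$ acting on $H$ by translation. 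Correspondingly, $\cal L$ is translation-equivariant, and its fiber at the identity is a Lie algebra $\fr k := \op{Lie}(K)$, equipped with an anchor map $\fr k \to \fr h$ arising from the inclusion $K \hookrightarrow H$. The $H$-action on $K$ by conjugation gives the $H$-action on $\fr k$, making the anchor map $H$-equivariant; compatibility of the Lie bracket on $\cal L$ with this equivariance then forces the induced $\fr k$-action on itself to coincide with the adjoint action. This produces the data of a classical action pair $(\fr k, H)$.

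In the opposite direction, every classical action pair $(\fr k, H)$ gives rise to a geometric action pair satisfying the tangent condition, as is essentially done in \S\ref{sec-classical-to-geom-action-pair}: the translation-invariant Lie algebroid $\fr k \otimes \cal O_H$ with anchor obtained by translating \eqref{eq-map-in-pair} has associated formal moduli problem exactly $H/\exp(\fr k)$ by Theorem \ref{thm-liealgd-derived}, and its relative tangent complex visibly lies in $\Upsilon_H(\op{QCoh}(H)^{\heartsuit})$. Fully faithfulness on morphisms follows from the corresponding fully faithfulness in Theorem \ref{thm-liealgd-derived}, combined with the observation that group homomorphisms between geometric action pairs intertwine the translation-invariance structures and hence descend to morphisms of the underlying Lie algebras over $H$-equivariant Lie algebra maps.

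The main obstacle will be verifying that the group structure on $(H, H^{\flat})$ really does cut out a translation-invariant Lie algebroid, and that the two conditions in the definition of a classical action pair correspond precisely to the conditions needed for the translation-equivariant structure on $\cal L$ to be compatible with its bracket and anchor. Both reduce to unpacking what it means for $\cal R^{\bullet}$ to be a group object in formal groupoids acting on $H$; this is most cleanly carried out in the target of $\Omega_H$ and then transferred back across the equivalences of Theorems \ref{thm-frmmod-to-frmgpd} and \ref{thm-liealgd-derived}.
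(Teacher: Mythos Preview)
Your proposal is correct and follows essentially the same approach as the paper: both construct the inverse functor by passing from $(H,H^{\flat})$ to a classical Lie algebroid $\cal L$ on $H$ via Theorem~\ref{thm-liealgd-derived}, then use the group structure to trivialize $\cal L$ as $\fr k\otimes\cal O_H$. The paper phrases the trivialization step via explicit Cartesian diagrams exhibiting left and right $H$-equivariance of $\cal L$, whereas you phrase it in terms of the kernel formal group $K=\{1\}\times_{H^{\flat}}H$ and its Lie algebra; these are equivalent packagings of the same translation-invariance argument, and the paper, like you, leaves the verification of the two axioms of a classical action pair to the reader.
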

\begin{proof}
We explicitly construct the inverse functor. Given a geometric action pair $(H,H^{\flat})$ for which $\mathbb T_{H/H^{\flat}}\in\Upsilon_H(\op{QCoh}(H)^{\heartsuit})$, we can functorially associate a classical Lie algebroid $\cal L$ over $H$. The following Cartesian diagrams:
$$
\xymatrix@C=1.5em@R=1.5em{
H \underset{S}{\times} H \ar[r]\ar[d]^m & H^{\flat}\underset{S}{\times} H \ar[d]^{\op{act}} \\
H \ar[r] & H^{\flat}
}\quad
\xymatrix@C=1.5em@R=1.5em{
H \underset{S}{\times} H \ar[r]\ar[d]^m & H\underset{S}{\times} H^{\flat} \ar[d]^{\op{act}} \\
H \ar[r] & H^{\flat}
}
$$
equip the underlying $\cal O_H$-module of $\cal L$ with right, respectively left, $H$-equivariance structures. Hence we may realize $\cal L$ as $\fr k\otimes\cal O_H$ where $\fr k$ is an $\cal O_S$-module equipped with an $H$-action. The Lie bracket on $\fr k$ comes from the Lie algebroid bracket on $\cal L$. We omit checking that these data make $(\fr k, H)$ into a classical action pair.
\end{proof}

\subsubsection{}
For a geometric action pair $(H,H^{\flat})$, we define $\mathbf{FrmMod}_{/S}^{(H,H^{\flat})}$ to be the $\infty$-category of objects in $\mathbf{FrmMod}_{/S}$ equipped with an $(H,H^{\flat})$-action. Explicitly, an object of $\mathbf{FrmMod}_{/S}^{(H,H^{\flat})}$ consists of the following data:
\begin{itemize}
	\item $\cal Y,\cal Y^{\flat}\in\mathbf{PStk}_{\op{laft-def}/S}$ together with a nil-isomorphism $\cal Y\rightarrow\cal Y^{\flat}$;
	\item an $H$-action on $\cal Y$, and an $H^{\flat}$-action on $\cal Y^{\flat}$, such that the morphism $\cal Y\rightarrow\cal Y^{\flat}$ intertwines them.
\end{itemize}
Note that there is a functor
\begin{equation}
\label{eq-frmmod-action-to-pstk-action}
\mathbf{FrmMod}_{/S}^{(H,H^{\flat})}\rightarrow\mathbf{PStk}_{\op{laft-def}/S}^H,\quad (\cal Y,\cal Y^{\flat})\leadsto\cal Y
\end{equation}
where $\mathbf{PStk}_{\op{laft-def}/S}^H$ denotes the $\infty$-category of objects in $\mathbf{PStk}_{\op{laft-def}/S}$ equipped with an $H$-action. The fiber of \eqref{eq-frmmod-action-to-pstk-action} at $\cal Y$ is denoted by $\mathbf{FrmMod}_{/S}^{(H,H^{\flat})}(\cal Y)$. Informally, $\mathbf{FrmMod}_{/S}^{(H,H^{\flat})}(\cal Y)$ is the $\infty$-category of formal moduli problems $\cal Y^{\flat}$ equipped with an $H^{\flat}$-action that extends the $H$-action on $\cal Y$.

\subsubsection{}
\label{sec-classical-to-geom-action} Suppose $(\fr k,H)$ and $(H,H^{\flat})$ are as in \S\ref{sec-classical-to-geom-action-pair}, and let $Y\in\mathbf{Sch}^{\op{lft}}_{/S}$ be acted on by $H$.

We will now construct a functor:
\begin{equation}
\label{eq-classical-to-geom-action}
\mathbf{LieAlgd}_{/S}^{(\fr k,H)}(Y) \rightarrow \mathbf{FrmMod}_{/S}^{(H,H^{\flat})}(Y)
\end{equation}
which enhances the association of formal moduli problems to Lie algebroids, in the sense that the following diagram commutes:
$$
\xymatrix@C=1.5em@R=1.5em{
	\mathbf{LieAlgd}_{/S}^{(\fr k,H)}(Y) \ar[r]^-{\eqref{eq-classical-to-geom-action}}\ar[d]^-{\op{oblv}} & \mathbf{FrmMod}_{/S}^{(H,H^{\flat})}(Y)\ar[d]^-{\op{oblv}} \\
	\mathbf{LieAlgd}_{/S}(Y) \ar[r]^-{\eqref{eq-liealgd-to-frmmod}} & \mathbf{FrmMod}_{/S}(Y)
}
$$
To proceed, let us be given $(\cal L,\eta)\in\mathbf{LieAlgd}_{/S}^{(\fr k,H)}(Y)$. We need to construct an $H^{\flat}$-action on the formal moduli problem $\cal Y^{\flat}$ corresponding to $\cal L$, expressed by some groupoid $\xymatrix{\cal Y^{\flat}\underset{S}{\times} H^{\flat} \ar@<0.5ex>[r]^-{\op{act}^{\flat}}\ar@<-0.5ex>[r]_-{\op{pr}_1} & \cal Y^{\flat}}$, together with a map of simplicial prestacks:
\begin{equation}
\label{eq-enhance-action-map}
\xymatrix{
	\cdots \ar@<1.5ex>[r]\ar@<0.5ex>[r]\ar@<-0.5ex>[r]\ar@<-1.5ex>[r] &  Y\underset{S}{\times} H\underset{S}{\times} H \ar@<2.5ex>[r]^-{\op{act}\times 1}\ar@<1.5ex>[r]_-{1\times m}\ar@<-1ex>[r]_-{\op{pr}_{12}} \ar[d] &  Y\underset{S}{\times} H \ar@<0.5ex>[r]^-{\op{act}}\ar@<-0.5ex>[r]_-{\op{pr}_1}\ar[d] &  Y \ar[d]. \\
	\cdots \ar@<1.5ex>[r]\ar@<0.5ex>[r]\ar@<-0.5ex>[r]\ar@<-1.5ex>[r] & \cal Y^{\flat}\underset{S}{\times} H^{\flat}\underset{S}{\times} H^{\flat} \ar@<2.5ex>[r]^-{\op{act}^{\flat}\times 1}\ar@<1.5ex>[r]_-{1\times m}\ar@<-1ex>[r]_-{\op{pr}_{12}} & \cal Y^{\flat}\underset{S}{\times} H^{\flat} \ar@<0.5ex>[r]^-{\op{act}^{\flat}}\ar@<-0.5ex>[r]_-{\op{pr}_1} & \cal Y^{\flat}.
}
\end{equation}
Since each formal moduli problem $\cal Y^{\flat}\underset{S}{\times} (H^{\flat})^{\bullet}$ arises from the Lie algebroid $\op{pr}_Y^*\cal L \oplus \op{pr}_H^*(\fr k\otimes\cal O_H)^{\oplus\bullet}$ over $Y\underset{S}{\times} H^{\bullet}$, we only need to
\begin{itemize}
	\item produce a morphism
	\begin{equation}
	\label{eq-liealgd-action-map}
	\alpha : \op{pr}_Y^*\cal L \oplus \op{pr}_H^*(\fr k\otimes\cal O_H) \rightarrow \op{act}^!_{\mathbf{LieAlgd}}\cal L
	\end{equation}
	between Lie algebroids over $Y\underset{S}{\times}H$ (which would rise to $\op{act}^{\flat}$, in a way compatible with the morphism $\op{act}$)
	\item check that the following diagram:
	\begin{equation}
\label{eq-liealgd-action-cocycle}
\xymatrix@R=1.5em@C=5em{
	\op{pr}_Y^*\cal L \oplus \op{pr}_H^*(\fr k\otimes\cal O_H)^{\oplus 2} \ar[r]^-{\op{can}}\ar[d]^{\op{act}^!_{\mathbf{LieAlgd}}(\alpha)\times 1} & (1\times m)^!_{\mathbf{LieAlgd}}(\op{pr}_Y^*\cal L \oplus \op{pr}_H^*(\fr k\otimes\cal O_H)) \ar[d]^{(1\times m)^*_{\mathbf{LieAlgd}}(\alpha)} \\
	\op{act}^!_{\mathbf{LieAlgd}}(\cal L)\oplus \op{pr}_H^*(\fr k\otimes\cal O_H) \ar[d]^{\rotatebox{90}{$\sim$}} & (1\times m)_{\mathbf{LieAlgd}}^!\op{act}^!_{\mathbf{LieAlgd}}(\cal L)\ar[d]^{\rotatebox{90}{$\sim$}} \\
	(\op{act}\times 1)^!_{\mathbf{LieAlgd}}(\op{pr}_Y^*\cal L\oplus \op{pr}_H^*(\fr k\otimes\cal O_H)) \ar[r]^-{(\op{act}\times 1)^!_{\mathbf{LieAlgd}}(\alpha)} & (\op{act}\times 1)^!_{\mathbf{LieAlgd}}\op{act}^!_{\mathbf{LieAlgd}}(\cal L)
}
\end{equation}
	of Lie algebroids over $Y\underset{S}{\times}H\underset{S}{\times}H$ is commutative (which would affirm the commutativity of \eqref{eq-enhance-action-map} up to $2$-simplices, but the higher commutativity constraints are satisfied automatically since the corresponding $\infty$-categories are classical.)
\end{itemize}

\subsubsection{} Note that as an $\cal O_{Y\underset{S}{\times}H}$-module, we have an isomorphism $\op{act}^!_{\mathbf{LieAlgd}}(\cal L)\xrightarrow{\sim}\op{act}^*\cal L\underset{\op{act}^*\cal T_{Y/S}}{\times}\cal T_{Y\underset{S}{\times} H/S}$ by definition. The required map $\alpha$ is the sum of the following components:
\begin{itemize}
	\item the map $\op{pr}_Y^*\cal L \rightarrow \op{act}^!_{\mathbf{LieAlgd}}(\cal L)$ induced from the $H$-equivariance structure on $\cal L$ and the composition
	$$
	\op{pr}_Y^*\cal L \xrightarrow{\op{pr}_Y^*\sigma} \op{pr}_Y^*\cal T_{Y/S}\hookrightarrow \cal T_{Y\underset{S}{\times} H/S},
	$$
	where $\sigma$ is the anchor map of $\cal L$;
	\item the map $\fr k\otimes\cal O_H\rightarrow\op{act}^!_{\mathbf{LieAlgd}}(\cal L)$ induced from
	$$
	\fr k \xrightarrow{\eta} \op H^0(Y, \cal L) \xrightarrow{\op{act}^*} \op H^0(Y\underset{S}{\times} H, \op{act}^*\cal L),
	$$
	and the composition
	\begin{equation}
	\label{eq-anchor-for-k}
	\fr k\otimes\cal O_H \rightarrow \fr h\otimes\cal O_H \hookrightarrow \cal T_{Y\underset{S}{\times} H/S}.
	\end{equation}
\end{itemize}

The following Lemma shows that the functor \eqref{eq-classical-to-geom-action} is well-defined.

\begin{lem}
The map $\alpha$ is a morphism of Lie algebroids, and the diagram \eqref{eq-liealgd-action-cocycle} commutes.
\end{lem}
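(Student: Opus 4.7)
The plan is to verify both assertions by translating, via the universal property of the pullback Lie algebroid $\op{act}^!_{\mathbf{LieAlgd}}\cal L$, into checks that reduce to the defining axioms of a $(\fr k,H)$-Lie algebroid listed in \S\ref{sec-kh-lie-algebroid}. By construction $\alpha$ is $\cal O_{Y\underset{S}{\times} H}$-linear, and its composition with the projection $\op{act}^!_{\mathbf{LieAlgd}}(\cal L)\twoheadrightarrow\cal T_{Y\underset{S}{\times} H/S}$ recovers the anchor map of the source summand by summand: the $\op{pr}_Y^*\cal L$-part via the anchor $\sigma$ of $\cal L$ together with the second bullet of \S\ref{sec-kh-lie-algebroid}, and the $\op{pr}_H^*(\fr k\otimes\cal O_H)$-part via the very composition \eqref{eq-anchor-for-k}.

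The content is preservation of the Lie bracket. By the universal property of $\op{act}^!_{\mathbf{LieAlgd}}$, we may translate $\alpha$ into a morphism of mixed Lie algebroids over $\op{act}$,
$$\beta:\op{pr}_Y^*\cal L\oplus\op{pr}_H^*(\fr k\otimes\cal O_H)\rightarrow\cal L,$$
sending $\op{pr}_Y^*l$ to the $H$-equivariance lift of $l$ and $\xi\otimes 1$ to $\eta(\xi)$, and check that $\beta$ preserves anchors and brackets. By the Leibniz rule, bracket preservation is verified on local generators of the two distinguished forms. Three cases arise: on a pair of $\op{pr}_Y^*\cal L$-generators the requirement reduces to the $H$-equivariance of the Lie bracket on $\cal L$ (first bullet of \S\ref{sec-kh-lie-algebroid}); on a mixed pair $(\op{pr}_Y^*l,\xi\otimes 1)$ it reduces to identity \eqref{eq-algebroid-identity} $[\eta(\xi),l]=\xi_{\fr h}\cdot l$, combined with the commutativity of \eqref{eq-algebroid-diagram} to match the two incoming routes on anchors; on a pair of $\fr k$-generators it reduces to the compatibility of $\eta$ with Lie brackets, deduced from \eqref{eq-algebroid-identity} applied to $l=\eta(\xi_2)$ together with the $H$-equivariance of $\eta$ and the second bullet of \S\ref{sec-action-pair} asserting that the $\fr k$-action on itself is adjoint.

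For the commutativity of \eqref{eq-liealgd-action-cocycle}, the same translation renders both paths as morphisms of mixed Lie algebroids from $\op{pr}_Y^*\cal L\oplus\op{pr}_H^*(\fr k\otimes\cal O_H)^{\oplus 2}$ into $\cal L$ over $\op{act}\circ(\op{act}\times 1)=\op{act}\circ(1\times m)$, and the reduction to generators applies again. On $\op{pr}_Y^*\cal L$-generators the equality is the cocycle condition for the $H$-equivariance structure on $\cal L$---that is, the axiom that it defines an action of $H$ as a group. On $\fr k$-generators sitting at either copy of $H$, the equality reduces to the $H$-equivariance of \eqref{eq-map-in-pair} and of $\eta$, together with the compatibility of these maps with the group structure on $H$. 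The higher simplicial coherences needed to make \eqref{eq-enhance-action-map} an actual map of simplicial prestacks are automatic because the relevant objects in $\mathbf{LieAlgd}_{/S}$ form an ordinary $1$-category.

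The main obstacle I anticipate is bookkeeping: the pullback Lie algebroid has the presentation $\op{act}^*\cal L\underset{\op{act}^*\cal T_{Y/S}}{\times}\cal T_{Y\underset{S}{\times} H/S}$, so every section of $\op{act}^!_{\mathbf{LieAlgd}}\cal L$ carries a two-component description, and tracking which $H$-equivariance structure acts on which component---while keeping straight the distinction between $\fr k$-sections entering through $\eta$ and through \eqref{eq-anchor-for-k}---requires care. Beyond this, no new inputs are required: every required identity follows formally from \eqref{eq-algebroid-diagram}, \eqref{eq-algebroid-identity}, and the axioms of a classical action pair.
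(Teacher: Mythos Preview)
Your proposal is correct and follows essentially the same route as the paper: verify anchor-compatibility by inspection, then check bracket preservation on the three types of generators using precisely the axioms of \S\ref{sec-kh-lie-algebroid} and \S\ref{sec-action-pair}. Your translation via the universal property of $\op{act}^!_{\mathbf{LieAlgd}}$ is a cosmetic repackaging rather than a different argument.

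One point of comparison: in the mixed case you say the verification ``reduces to identity \eqref{eq-algebroid-identity}'', but the paper's computation shows there is a residual step after applying it. Writing $\theta(l)=\sum_i f_i\otimes l_i$, the $\op{act}^*\cal L$-component of $[\alpha(l),\alpha(\xi)]$ becomes $-\sum_i\bigl(f_i\otimes(\xi_{\fr h}\cdot l_i)+(\xi_{\fr h}\cdot f_i)\otimes l_i\bigr)$, which one must then recognize as $-\xi_{\fr h}\cdot\theta(l)$ for the right-$H$-equivariance structure on $\op{act}^*\cal L$ induced by $\theta$ itself, and conclude it vanishes because $\theta$ is equivariant and $\xi_{\fr h}$ annihilates $l\in\op{pr}_Y^{-1}\cal L$. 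This is exactly the bookkeeping you anticipate in your last paragraph, so you have not missed it, but it is the substantive content of that case rather than a triviality. On the cocycle diagram \eqref{eq-liealgd-action-cocycle} you are actually more explicit than the paper, which leaves it to the reader.
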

\begin{proof}
It is obvious that $\alpha$ is compatible with the anchor maps. To show that $\alpha$ preserves the Lie bracket, we check it for sections of $\op{pr}_Y^*\cal L \oplus \op{pr}_H^*(\fr k\otimes\cal O_H)$ of the following types:
\begin{itemize}
	\item $l_1, l_2\in\op{pr}_Y^{-1}\cal L$; this follows from the assumptions that the equivariance structure $\theta : \op{pr}_Y^*\cal L\rightarrow\op{act}^*\cal L$ is compatible with the Lie bracket, and $\sigma$ is a map of $H$-equivariant sheaves;
	\item $\xi_1,\xi_2\in\fr k$; this is clear;
	\item $l\in\op{pr}_Y^{-1}\cal L$ and $\xi\in\fr k$; this is a slightly more involved calculation, which we now perform.
\end{itemize}

Write $\theta(l) = \sum_i f_i\otimes l_i$, where $f_i\in\cal O_{Y\times H}$ and $l_i\in\op{act}^{-1}\cal L$. We need to show the vanishing of the following element in $\op{act}^*\cal L\underset{\op{act}^*\cal T_{Y/S}}{\times}\cal T_{Y\underset{S}{\times} H/S}$:
	\begin{equation}
	\label{eq-vanishing-mixed-terms}
		[\alpha(l), \alpha(\xi)] = [\sum_i(f_i\otimes l_i)\times\sigma(l), (1\otimes\eta(\xi))\times \sigma'(\xi)]
	\end{equation}
	where $\sigma'$ denotes the composition \eqref{eq-anchor-for-k}. Note that the $\cal T_{Y\underset{S}{\times} H/S}$-component of \eqref{eq-vanishing-mixed-terms} vanishes tautologically, so we just need to show the vanishing of its $\op{act}^*\cal L$-component. The latter is given (using \eqref{eq-algebroid-identity}) by
	\begin{equation}
	\label{eq-vanishing-mixed-terms-2}
	\sum_i f_i\otimes [l_i, \eta(\xi)] - \sum_i \sigma'(\xi)(f_i)\otimes l_i = -\sum_i (f_i\otimes (\xi_{\fr h}\cdot l_i) + (\xi_{\fr h}\cdot f_i)\otimes l_i)
	\end{equation}
	where in the second summand, $\xi_{\fr h}$ acts on $f_i\in\cal O_{{Y\underset{S}{\times} H/S}}$ by derivation on the \emph{$\cal O_H$-component}.
	
	Consider the right $H$-action on $Y\underset{S}{\times} H$, given by $(y,h),h'\leadsto (y,hh')$; if we equip $\op{act}^*\cal L$ with the following $H$-equivariance structure:
	$$
	\op{act}^*\cal L\big|_{(y,h)} \xrightarrow{\sim} \cal L\big|_{yh} \xrightarrow{\theta_{(yh, h')}} \cal L\big|_{yhh'} \xrightarrow{\sim} \op{act}^*\cal L\big|_{(y,hh')},
	$$
	then \eqref{eq-vanishing-mixed-terms-2} is the (negative of the) induced action of $\xi_{\fr h}$ on the section $\sum_i f_i\otimes l_i = \theta(l)$ in $\op{act}^*\cal L$. Note that $\op{pr}_Y^*\cal L$ can also be endowed with an $H$-equivariance structure:
	$$
	\op{pr}^*\cal L\big|_{(y,h)} \xrightarrow{\sim} \cal L\big|_y \xrightarrow{\sim} \op{pr}^*\cal L\big|_{(y,hh')}
	$$
	such that $\theta$ is a map of $H$-equivariant $\cal O_{Y\underset{S}{\times} H}$-modules. Hence the element $\xi_{\fr h}\cdot\theta(l)$ identifies with $\theta(\xi_{\fr h}\cdot l)$. On the other hand, $l\in\op{pr}^{-1}\cal L$ so $\xi_{\fr h}\cdot l=0$, from which we deduce the required vanishing of \eqref{eq-vanishing-mixed-terms-2}.
	
	Checking the commutativity of \eqref{eq-liealgd-action-cocycle} is not difficult, and we leave it to the reader.
\end{proof}

\subsubsection{} We now characterize the image of the functor \eqref{eq-classical-to-geom-action}.

\begin{prop}
\label{prop-classical-to-geom-action-image}
The functor \eqref{eq-classical-to-geom-action} is an equivalence onto the full subcategory:
$$
\mathbf{FrmMod}_{/S}^{(H,H^{\flat})}(Y)^{\op{cl}} \hookrightarrow \mathbf{FrmMod}_{/S}^{(H,H^{\flat})}(Y)
$$
that consists of objects $\cal Y^{\flat}$ such that $\mathbb T_{Y/\cal Y^{\flat}}$ lies in $\Upsilon_Y(\op{QCoh}(Y)^{\heartsuit})$.
\end{prop}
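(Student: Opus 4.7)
The plan is to reduce to Theorem \ref{thm-liealgd-derived} by unpacking both sides level-wise. An $(H, H^{\flat})$-action on $\cal Y^{\flat}$ extending the $H$-action on $Y$ is equivalent to specifying the action simplicial prestack $[n] \leadsto \cal Y^{\flat} \underset{S}{\times} (H^{\flat})^n$ as a simplicial formal moduli problem under $[n]\leadsto Y \underset{S}{\times} H^n$, together with the usual face/degeneracy maps. I would first observe that the hypothesis $\mathbb T_{Y/\cal Y^{\flat}} \in \Upsilon_Y(\op{QCoh}(Y)^{\heartsuit})$ propagates to every level: at level $n$, the relative tangent splits as a sum of $\op{pr}_Y^!\mathbb T_{Y/\cal Y^{\flat}}$ and $\op{pr}_{H^n}^!\mathbb T_{H^n/(H^{\flat})^n}$, and the second summand lies in $\Upsilon_{H^n}(\op{QCoh}(H^n)^{\heartsuit})$ by the very definition of $H^{\flat}$ in \S\ref{sec-classical-to-geom-action-pair} (which identifies $H \to H^{\flat}$ with the formal moduli problem attached to the Lie algebroid $\fr k \otimes \cal O_H$).

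\textbf{Applying Theorem \ref{thm-liealgd-derived} level-wise.} Given the above tangent condition, and the mild extension of \eqref{eq-liealgd-to-frmmod} from finite-type to locally-of-finite-type schemes via smooth descent of Lie algebroids, the simplicial formal moduli problem converts into a simplicial classical Lie algebroid on $Y \underset{S}{\times} H^{\bullet}$. I would then identify this simplicial Lie algebroid with the datum of a $(\fr k, H)$-Lie algebroid on $Y$ as follows. Level $0$ provides $\cal L\in\mathbf{LieAlgd}_{/S}(Y)$. The level-$1$ object, equipped with two coherent face maps to $\op{pr}_1^!_{\mathbf{LieAlgd}}\cal L$ and $\op{act}^!_{\mathbf{LieAlgd}}\cal L$, encodes an $H$-equivariance structure on $\cal L$ (from the first face, as in the classical passage between actions and equivariant structures) together with the morphism $\eta : \fr k \otimes \cal O_Y \to \cal L$, recovered from the ``second factor'' in the fiber-product description of $\op{act}^!_{\mathbf{LieAlgd}}\cal L$ recalled in \S\ref{sec-liealgd-to-frmmod}. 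The level-$2$ simplicial identities then supply the diagrams \eqref{eq-algebroid-diagram}, the identity \eqref{eq-algebroid-identity}, and the Lie-homomorphism property of $\eta$; higher levels impose no further conditions since $\mathbf{LieAlgd}_{/S}(-)$ is an ordinary $1$-category. One checks directly that this construction inverts \eqref{eq-classical-to-geom-action}, yielding both fully faithfulness and essential surjectivity onto the claimed subcategory.

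\textbf{Main obstacle.} The most delicate point is the level-$2$ bookkeeping: checking that the simplicial cocycle at level $2$ is equivalent to the two structural conditions \eqref{eq-algebroid-identity} and the Lie-algebra-homomorphism property of $\eta$. This is essentially the verification of \eqref{eq-liealgd-action-cocycle} run in reverse, and should reduce to a direct computation analogous to \eqref{eq-vanishing-mixed-terms}--\eqref{eq-vanishing-mixed-terms-2}, using the explicit fiber-product description of $\pi^!_{\mathbf{LieAlgd}}$. A secondary, purely technical matter is the extension of Theorem \ref{thm-liealgd-derived} from $\op{ft}$ to $\op{lft}$ schemes invoked above; this is handled by Zariski descent together with the fact that both categories of interest are local for the smooth topology (cf.~the commutative diagram \eqref{eq-liealgd-to-frmmod-diag}).
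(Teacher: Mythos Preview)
Your proposal is correct and follows the same underlying strategy as the paper: construct an explicit inverse to \eqref{eq-classical-to-geom-action} by using Theorem~\ref{thm-liealgd-derived} to recover the Lie algebroid $\cal L$ from $\cal Y^{\flat}$, and then extract the $H$-equivariance $\theta$ and the map $\eta$ from the $(H,H^{\flat})$-action. The difference is purely organizational. You apply Theorem~\ref{thm-liealgd-derived} level-wise to the entire action simplicial object and then read off $\theta$ and $\eta$ from the resulting Lie-algebroid map $\alpha:\op{pr}_Y^*\cal L\oplus\op{pr}_H^*(\fr k\otimes\cal O_H)\to\op{act}^!_{\mathbf{LieAlgd}}\cal L$ by decomposing the domain. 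The paper instead isolates $\theta$ and $\eta$ separately, by restricting the action map $\op{act}^{\flat}:\cal Y^{\flat}\underset{S}{\times}H^{\flat}\to\cal Y^{\flat}$ along the two intermediate objects $\cal Y^{\flat}\underset{S}{\times}H$ and $Y\underset{S}{\times}H^{\flat}$ (the diagrams~\eqref{eq-recover-eta-diag}) and computing the induced maps on relative tangent complexes. The paper's route is terser and pinpoints the two pieces of structure cleanly; your simplicial packaging makes it more transparent where each axiom in \S\ref{sec-kh-lie-algebroid} arises (your ``level-$2$ bookkeeping''), which the paper leaves entirely implicit.
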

\begin{proof}
Indeed, such a formal moduli problems $\cal Y^{\flat}$ arises from some Lie algebroid $\cal L$ via the functor \eqref{eq-liealgd-to-frmmod}. Given the additional data of an $(H,H^{\flat})$-action, we consider the following commutative diagrams:
\begin{equation}
\label{eq-recover-eta-diag}
\xymatrix@C=1.5em@R=1.5em{
	Y\underset{S}{\times} H \ar[rr]^-{\op{act}}\ar[d] & & Y \ar[d] \\
	\cal Y^{\flat}\underset{S}{\times} H \ar[r]^-{i} & \cal Y^{\flat}\underset{S}{\times} H^{\flat} \ar[r]^-{\op{act}^{\flat}} & \cal Y^{\flat}
}
\quad\quad
\xymatrix@C=1.5em@R=1.5em{
	Y\underset{S}{\times} H \ar[rr]^-{\op{act}}\ar[d] & & Y \ar[d] \\
	Y\underset{S}{\times} H^{\flat} \ar[r]^-{j} & \cal Y^{\flat}\underset{S}{\times} H^{\flat} \ar[r]^-{\op{act}^{\flat}} & \cal Y^{\flat}
}
\end{equation}
From these diagrams, we obtain two maps between tangent complexes:
$$
\mathbb T_{Y\underset{S}{\times} H/\cal Y^{\flat}\underset{S}{\times} H} \xrightarrow{\op{act}^{\flat}_*\circ i_*} \mathbb T_{Y\underset{S}{\times} H/\cal Y^{\flat}} \rightarrow\mathbb T_{Y/\cal Y^{\flat}}\big|_{Y\underset{S}{\times} H},
$$
which gives rise to a morphism $\theta : \op{pr}_Y^*\cal L \rightarrow \op{act}^*\cal L$; and
\begin{equation}
\label{eq-recover-eta}
\mathbb T_{Y\underset{S}{\times} H/Y\underset{S}{\times} H^{\flat}} \xrightarrow{\op{act}_*^{\flat}\circ j_*}\mathbb T_{Y\underset{S}{\times} H/\cal Y^{\flat}}\rightarrow\mathbb T_{Y/\cal Y^{\flat}}\big|_{Y\underset{S}{\times} H},
\end{equation}
which gives rise to a map $\widetilde{\eta} : \op{pr}_H^*(\fr k\otimes\cal O_H) \rightarrow \op{act}^*\cal L$; restricting to $Y\underset{S}{\times}\{1\}$, we obtain a map $\eta: \fr k\otimes\cal O_Y \rightarrow \cal L$.

The functor $\mathbf{FrmMod}_{/S}^{(H,H^{\flat})}(Y)^{\op{cl}} \rightarrow \mathbf{LieAlgd}^{(\fr k,H)}_{/S}(Y)$ inverse to \eqref{eq-classical-to-geom-action} is defined by sending $\cal Y^{\flat}$ to the Lie algebroid $\cal L$, equipped with the $(\fr k,H)$-structure specified by the above maps $\theta$ and $\eta$.
\end{proof}

\subsubsection{} We give an alternative description of the map $\alpha$ that will be used in the proof of Proposition \ref{prop-quotient-compare}. Consider the commutative diagram:
\begin{equation}
\label{eq-recover-eta-diag-alt}
\xymatrix@C=1.5em@R=1.5em{
	Y \ar[rr]^-{\op{can}}\ar[d] & & Y/H \ar[d] \\
	Y\underset{S}{\times} (H^{\flat}/H) \ar[r]^-{\widetilde j} & \cal Y^{\flat}\underset{S}{\times} (H^{\flat}/H) \ar[r]^-{\widetilde{\op{act}}^{\flat}} & \cal Y^{\flat}/H
}
\end{equation}
which is the ``quotient'' by $H$ of the right diagram in \eqref{eq-recover-eta-diag}. It produces the following map between tangent complexes:
\begin{equation}
\label{eq-recover-eta-alt}
\mathbb T_{Y/(Y\underset{S}{\times}(H^{\flat}/H))} \xrightarrow{\widetilde{\op{act}}_*^{\flat} \circ\widetilde{j}_*} \mathbb T_{Y/(\cal Y^{\flat}/H)} \rightarrow \mathbb T_{(Y/H)/(\cal Y^{\flat}/H)}\big|_Y \xrightarrow{\sim} \mathbb T_{Y/\cal Y^{\flat}}.
\end{equation}
We \emph{claim} that \eqref{eq-recover-eta-alt} identifies with the restriction of \eqref{eq-recover-eta} to $Y\underset{S}{\times}\{1\}$. Indeed, this follows from the fact that \eqref{eq-recover-eta} is the pullback of \eqref{eq-recover-eta-alt} along $\op{pr}_Y : Y\underset{S}{\times} H\rightarrow Y$, and the composition $Y\underset{S}{\times}\{1\} \hookrightarrow Y\underset{S}{\times} H \xrightarrow{\op{pr}_Y} Y$ is the identity.

\subsection{Quotient II} Suppose $(H,H^{\flat})$ is a geometric action pair (see \S\ref{sec-geom-action-pair} for the definition). Let $(\cal Y, \cal Y^{\flat})\in\mathbf{FrmMod}_{/S}^{(H,H^{\flat})}$.

\subsubsection{}
\label{sec-quotient-2-exists} The \emph{quotient} of $(\cal Y, \cal Y^{\flat})$ by $(H,H^{\flat})$ is defined as the quotient in the $\infty$-category $\mathbf{FrmMod}_{/S}$. In other words, it is the geometric realization of the simplicial object $(\cal Y, \cal Y^{\flat})\times(H,H^{\flat})^{\bullet}$ in $\mathbf{FrmMod}_{/S}^{(H,H^{\flat})}$ characterizing the $(H,H^{\flat})$-action on $(\cal Y, \cal Y^{\flat})$.

\begin{prop}
\label{prop-quotient-2}
The quotient of $(\cal Y,\cal Y^{\flat})$ by $(H, H^{\flat})$ exists.
\end{prop}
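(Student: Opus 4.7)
The plan is to construct the quotient as a pointwise colimit and then verify the defining nil-isomorphism condition. Since $\mathbf{FrmMod}_{/S}$ is a full subcategory of $\op{Fun}(\Delta^1, \mathbf{PStk}_{\op{laft-def}/S})$ cut out by the condition that the arrow be a nil-isomorphism, and since colimits in the functor $\infty$-category are computed pointwise, I would define
$$
\cal Z := \underset{\Delta^{\op{op}}}{\op{colim}}\,\cal Y\underset{S}{\times} H^{\bullet}, \qquad \cal Z^{\flat} := \underset{\Delta^{\op{op}}}{\op{colim}}\,\cal Y^{\flat}\underset{S}{\times}(H^{\flat})^{\bullet},
$$
both colimits being taken in $\mathbf{PStk}_{\op{laft-def}/S}$. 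The quotient in $\mathbf{FrmMod}_{/S}$ will then exist and be given by $(\cal Z, \cal Z^{\flat})$ provided the canonical map $\cal Z \to \cal Z^{\flat}$ induced from the nil-isomorphisms at each simplicial level is itself a nil-isomorphism.

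To verify the nil-isomorphism criterion, I would apply the de Rham functor $(-)_{\dR/S}$. This functor is defined by pre-composition with reduction on affine test schemes, and consequently preserves all small colimits of prestacks (pullback along $S\rightarrow S_{\dR}$ being colimit-preserving in the $\infty$-topos of prestacks). Hence
$$
\cal Z_{\dR/S} \xrightarrow{\sim} \underset{\Delta^{\op{op}}}{\op{colim}}\, (\cal Y \underset{S}{\times} H^{\bullet})_{\dR/S},\qquad \cal Z^{\flat}_{\dR/S} \xrightarrow{\sim} \underset{\Delta^{\op{op}}}{\op{colim}}\, (\cal Y^{\flat} \underset{S}{\times}(H^{\flat})^{\bullet})_{\dR/S}.
$$
Because $\cal Y\to\cal Y^{\flat}$ and $H\to H^{\flat}$ are nil-isomorphisms, they become isomorphisms after $(-)_{\dR/S}$, and so each term-wise comparison arrow on the right is an isomorphism. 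This forces $\cal Z_{\dR/S} \to \cal Z^{\flat}_{\dR/S}$ to be an isomorphism, proving that $\cal Z \to \cal Z^{\flat}$ is a nil-isomorphism.

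The main obstacle I anticipate is showing that $\cal Z$ and $\cal Z^{\flat}$ genuinely lie in $\mathbf{PStk}_{\op{laft-def}/S}$---the laft and deformation-theoretic conditions must be preserved by the colimits. The laft half is routine, laft prestacks forming a reflective subcategory of all prestacks. The delicate part is deformation theory: one must produce a pro-cotangent complex for the quotient at any affine $T$-point and verify compatibility with nil-thickenings. Since $H$ is a group scheme locally of finite type and $H^{\flat}$ already admits deformation theory by hypothesis, the pro-cotangent complex of $\cal Y^{\flat}/H^{\flat}$ at a $T$-point factoring through $\cal Y^{\flat}$ should be obtainable as the derived coinvariants of $\mathbb T_{\cal Y^{\flat}/S}|_T$ for the induced $\fr h^{\flat}$-action. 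Alternatively, one can bypass this verification by transferring the problem to $\mathbf{FrmGpd}_{/S}$ via the equivalence of Theorem \ref{thm-frmmod-to-frmgpd}, combining the existing formal groupoid $\cal R^{\bullet}_{\cal Y}$ on $\cal Y$ with the compatible $(H, H^{\flat})$-action to build a larger formal groupoid on the prestack quotient $\cal Y/H$ (whose laft-def nature is known), and then applying $\op B$ to obtain $\cal Z^{\flat}$ directly as an object of $\mathbf{FrmMod}_{/S}$.
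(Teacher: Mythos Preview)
Your overall architecture---compute the colimit pointwise in $\op{Fun}(\Delta^1,\mathbf{PStk}_{\op{laft-def}/S})$ and then check the resulting arrow is a nil-isomorphism---matches the paper exactly. The gap is precisely where you anticipate it: the existence of $\cal Z^{\flat}=\underset{\Delta^{\op{op}}}{\op{colim}}\,\cal Y^{\flat}\underset{S}{\times}(H^{\flat})^{\bullet}$ in $\mathbf{PStk}_{\op{laft-def}/S}$. Your first two suggestions (laft reflection, pro-cotangent complex by hand) do not settle this, and your de Rham argument for the nil-isomorphism is circular in the same way: $(-)_{\dR/S}$ commutes with colimits in $\mathbf{PStk}_{/S}$, but the colimit you need is in $\mathbf{PStk}_{\op{laft-def}/S}$, and the paper's own Example following Lemma~\ref{lem-smooth-comparison-of-quotients} shows these disagree in general. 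So until $\cal Z^{\flat}$ is actually constructed as a laft-def prestack, neither its existence nor the nil-isomorphism is established.

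The paper's proof is essentially your ``alternative'' paragraph, carried out in detail. At vertex $[0]$ the prestack quotient $\cal Y/H$ is laft-def because $H$ is a group \emph{scheme} locally of finite type. At vertex $[1]$ one first takes the prestack quotient $\cal Y^{\flat}/H$ (same reason), and then observes that the residual $H^{\flat}$-action organizes into a Hecke groupoid $\cal Y^{\flat}\underset{S}{\overset{H}{\times}}H^{\flat}/H$ acting on $\cal Y^{\flat}/H$; this groupoid is formal, so one may apply $\op B_{\cal Y^{\flat}/H}$ from Theorem~\ref{thm-frmmod-to-frmgpd} to obtain $\cal Z^{\flat}$ directly as an object of $\mathbf{PStk}_{\op{laft-def}/S}$. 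The nil-isomorphism then falls out as the composite $\cal Y/H\to\cal Y^{\flat}/H\to\op B_{\cal Y^{\flat}/H}(\cdots)$ of two evident nil-isomorphisms, with no appeal to de Rham. Note the formal groupoid lives over $\cal Y^{\flat}/H$, not over $\cal Y/H$ as you wrote; this is what makes the two-step decomposition (quotient by $H$, then by the formal part $H^{\flat}/H$) work cleanly.
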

\begin{proof}
We construct the quotient in the $\infty$-category $\op{Fun}(\Delta^1, \mathbf{PStk}_{\op{laft-def}/S})$, and then check that the result belongs to the full subcategory $\mathbf{FrmMod}_{/S}$. Quotient in the above functor category is computed pointwise as follows:
\begin{itemize}
	\item at the vertex $[0]$, we have the prestack quotient $\cal Y/H$; it is an object of $\mathbf{PStk}_{\op{laft-def}/S}$ because $H$ is a group \emph{scheme} locally of finite type;
	\item at the vertex $[1]$, we assert that the quotient of $\cal Y^{\flat}$ by $H^{\flat}$ exists in $\mathbf{PStk}_{\op{laft-def}/S}$; indeed, it is given by $\op B_{\cal Y^{\flat}/H}(\cal Y^{\flat}\underset{S}{\overset{H}{\times}} H^{\flat}/H)$ where $\cal Y^{\flat}\underset{S}{\overset{H}{\times}} H^{\flat}/H$ denotes the Hecke groupoid\footnote{Suppose $\cal C$ is an $\infty$-category with finite products. Let $H\rightarrow K$ be a map of group objects in $\cal C$. Suppose any object in $\cal C$ with an $H$-action admits a quotient. Then given an object $Y\in\cal C$ with a $K$-action, there exists a \emph{Hecke groupoid} $Y\overset{H}{\times}K/H$ acting on $Y/H$ whose quotient, if exists, agrees with $Y/K$.} acting on the prestack quotient $\cal Y^{\flat}/H$:
	$$
	\xymatrix{
	\cdots \ar@<1.5ex>[r]\ar@<0.5ex>[r]\ar@<-0.5ex>[r]\ar@<-1.5ex>[r] & \cal Y^{\flat}\underset{S}{\overset{H}{\times}} H^{\flat}\underset{S}{\overset{H}{\times}} H^{\flat}/H \ar@<2.5ex>[r]^-{\op{act}^{\flat}\times 1}\ar@<1.5ex>[r]_-{1\times m}\ar@<-1ex>[r]_-{\op{pr}_{12}} & \cal Y^{\flat}\underset{S}{\overset{H}{\times}} H^{\flat}/H \ar@<0.5ex>[r]^-{\op{act}^{\flat}}\ar@<-0.5ex>[r]_-{\op{pr}_1} & \cal Y^{\flat}/H,
	}
	$$
	and $\op B_{\cal Y^{\flat}/H}$ is the functor from \S\ref{sec-delooping-functor}.
\end{itemize}
Finally, the morphism $\cal Y/H \rightarrow \op B_{\cal Y^{\flat}/H}(\cal Y^{\flat}\underset{S}{\overset{H}{\times}} H^{\flat}/H)$ is a nil-isomorphism since it is the composition of nil-isomorphisms $\cal Y/H \rightarrow \cal Y^{\flat}/H \rightarrow\op B_{\cal Y^{\flat}/H}(\cal Y^{\flat}\underset{S}{\overset{H}{\times}} H^{\flat}/H)$.
\end{proof}

Regarding $\cal Y$ as a fixed prestack acted on by $H$, we denote the resulting quotient functor by
\begin{equation}
\label{eq-quotient-geom}
\mathbf Q^{(H,H^{\flat})} : \mathbf{FrmMod}_{/S}^{(H,H^{\flat})}(\cal Y) \rightarrow \mathbf{FrmMod}_{/S}(\cal Y/H),\quad \cal Y^{\flat}\leadsto \op B_{\cal Y^{\flat}/H}(\cal Y^{\flat}\underset{S}{\overset{H}{\times}} H^{\flat}/H).
\end{equation}

\subsubsection{} Tautologically, the quotient $(\cal Y/H, \op B_{\cal Y^{\flat}/H}(\cal Y^{\flat}\underset{S}{\overset{H}{\times}} H^{\flat}/H))$, equipped with the map from $(\cal Y, \cal Y^{\flat})$, satisfies the universal property:
$$
\op{Maps}_{\mathbf{FrmMod}_{/S}}((\cal Y/H, \op B_{\cal Y^{\flat}/H}(\cal Y^{\flat}\underset{S}{\overset{H}{\times}} H^{\flat}/H)), (\cal Z,\cal Z^{\flat}))\xrightarrow{\sim}
\op{Maps}_{\mathbf{FrmMod}_{/S}^{(H,H^{\flat})}}((\cal Y, \cal Y^{\flat}), (\cal Z,\cal Z^{\flat}))
$$
where in the second expression, $(\cal Z, \cal Z^{\flat})$ is equipped with the trivial $(H,H^{\flat})$-action.

Specializing to $\cal Z=\cal Y/H$, we see that the object $\mathbf Q^{(H,H^{\flat})}(\cal Y^{\flat}) \in \mathbf{FrmMod}_{/S}(\cal Y/H)$ is characterized by the universal property:
\begin{equation}
\label{eq-quotient-geom-univ}
\op{Maps}_{\mathbf{FrmMod}_{/S}(\cal Y/H)}(\mathbf Q^{(H,H^{\flat})}(\cal Y^{\flat}), \cal Z^{\flat}) \xrightarrow{\sim} \op{Maps}_{\mathbf{FrmMod}_{/S}^{(H,H^{\flat})}(\cal Y)}(\cal Y^{\flat}, \pi^!_{\mathbf{FrmMod}}(\cal Z^{\flat}))
\end{equation}
where in the second expression, $\pi^!_{\mathbf{FrmMod}}\cal Z^{\flat}\cong \cal Z^{\flat}\underset{(\cal Y/H)_{\dR}}{\times} \cal Y_{\dR}$ is acted on by $H^{\flat}$ through the canonical homomorphism $H^{\flat}\rightarrow H_{\dR}$ on the $\cal Y_{\dR}$ factor.

\begin{rem}
Recall from \S\ref{sec-quotient-classical-univ} the $(\fr k, H)$-Lie algebroid structure on $\pi^!_{\mathbf{LieAlgd}}(\cal M)$, where $(\fr k,H)$ is any classical action pair and $\cal M$ is a Lie algebroid on the quotient $Y/H$. If $H^{\flat}=H/\exp(\fr k)$ as in \S\ref{sec-classical-to-geom-action-pair}, then the $(H,H^{\flat})$-formal moduli problem $\pi^!_{\mathbf{FrmMod}}(\cal Z^{\flat})$ is precisely the one associated to $\pi^!_{\mathbf{LieAlgd}}(\cal M)$ under the functor \eqref{eq-classical-to-geom-action}.
\end{rem}

\subsubsection{}
\label{sec-normal-subpair-geom} Let $(H^0, (H^0)^{\flat})\rightarrow(H, H^{\flat})$ be a morphism of geometric action pairs. We say that $(H^0, (H^0)^{\flat})$ is a \emph{normal subpair} of $(H, H^{\flat})$ if there is a morphism $(H, H^{\flat}) \rightarrow (H_0, (H_0)^{\flat})$ of geometric action pairs whose kernel identifies with $(H^0, (H^0)^{\flat})$. In particular, the $(H,H^{\flat})$-action on itself extends to $(H^0, (H^0)^{\flat})$.

Given a normal subpair $(H^0, (H^0)^{\flat})$ of $(H, H^{\flat})$, we recover $(H_0, (H_0)^{\flat})$ by the isomorphisms:
$$
H_0 \xrightarrow{\sim} H/H^0, \quad H_0^{\flat} \xrightarrow{\sim} \mathbf Q^{(H^0, (H^0)^{\flat})}(H^{\flat}).
$$
Let $\cal Y^{\flat}\in\mathbf{FrmMod}_{/S}^{(H, H^{\flat})}(\cal Y)$. Then the prestack $\mathbf Q^{(H^0, (H^0)^{\flat})}(\cal Y^{\flat})$ is naturally an object of $\mathbf{FrmMod}_{/S}^{(H_0, H_0^{\flat})}(\cal Y/H^0)$, and we have a second isomorphism theorem:

\begin{prop}
\label{prop-normal-subpair-geom}
There is a natural isomorphism:
$$
\mathbf Q^{(H_0, H_0^{\flat})}\circ\mathbf Q^{(H^0, (H^0)^{\flat})}(\cal Y^{\flat}) \xrightarrow{\sim} \mathbf Q^{(H, H^{\flat})}(\cal Y^{\flat}).
$$
\end{prop}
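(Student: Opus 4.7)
The plan is to verify that both sides corepresent the same functor on $\mathbf{FrmMod}_{/S}(\cal Y/H)$ and then invoke Yoneda. By the universal property \eqref{eq-quotient-geom-univ}, for any $\cal Z^\flat \in \mathbf{FrmMod}_{/S}(\cal Y/H)$, maps out of $\mathbf Q^{(H,H^\flat)}(\cal Y^\flat)$ correspond to $(H, H^\flat)$-equivariant maps $\cal Y^\flat \to \pi^!_{\mathbf{FrmMod}}\cal Z^\flat$, where $\pi : \cal Y \to \cal Y/H$. Writing $\pi = \pi_0 \circ \pi^{(0)}$ with $\pi^{(0)} : \cal Y \to \cal Y/H^0$ and $\pi_0 : \cal Y/H^0 \to \cal Y/H$, functoriality of $(-)^!_{\mathbf{FrmMod}}$ yields $\pi^!_{\mathbf{FrmMod}}\cal Z^\flat \simeq (\pi^{(0)})^!_{\mathbf{FrmMod}}(\pi_0)^!_{\mathbf{FrmMod}}\cal Z^\flat$, so the target can be rewritten in iterated form.

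The key preliminary step is an equivariant enhancement of \eqref{eq-quotient-geom-univ}: for any $\cal Y^\flat \in \mathbf{FrmMod}_{/S}^{(H, H^\flat)}(\cal Y)$, the quotient $\mathbf Q^{(H^0, (H^0)^\flat)}(\cal Y^\flat)$ inherits a canonical $(H_0, H_0^\flat)$-action, and for any $\cal W^\flat \in \mathbf{FrmMod}_{/S}^{(H_0, H_0^\flat)}(\cal Y/H^0)$ there is a natural equivalence
$$
\op{Maps}_{\mathbf{FrmMod}_{/S}^{(H_0, H_0^\flat)}(\cal Y/H^0)}(\mathbf Q^{(H^0, (H^0)^\flat)}(\cal Y^\flat), \cal W^\flat) \xrightarrow{\sim} \op{Maps}_{\mathbf{FrmMod}_{/S}^{(H, H^\flat)}(\cal Y)}(\cal Y^\flat, (\pi^{(0)})^!_{\mathbf{FrmMod}}\cal W^\flat).
$$
The residual $(H_0, H_0^\flat)$-action arises by applying the Hecke-groupoid construction of Proposition \ref{prop-quotient-2} to the $(H,H^\flat)$-equivariant simplicial diagram encoding $\cal Y^\flat$: the exact sequence $1 \to (H^0, (H^0)^\flat) \to (H, H^\flat) \to (H_0, H_0^\flat) \to 1$ decomposes the groupoid $\cal Y^\flat \underset{S}{\overset{H}{\times}} H^\flat/H$ into inner and outer pieces, and the inner quotient yields $\mathbf Q^{(H^0, (H^0)^\flat)}(\cal Y^\flat)$ together with a residual $H_0^\flat/H_0$-Hecke groupoid action. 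The displayed equivalence then follows by repeating the universal-property argument for \eqref{eq-quotient-geom-univ} while tracking the extra equivariance structures.

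Combining these ingredients produces a chain of natural equivalences between $\op{Maps}_{\mathbf{FrmMod}_{/S}(\cal Y/H)}(\mathbf Q^{(H_0, H_0^\flat)}\circ\mathbf Q^{(H^0, (H^0)^\flat)}(\cal Y^\flat), \cal Z^\flat)$, the $(H_0, H_0^\flat)$-equivariant maps from $\mathbf Q^{(H^0, (H^0)^\flat)}(\cal Y^\flat)$ to $(\pi_0)^!_{\mathbf{FrmMod}}\cal Z^\flat$, the $(H, H^\flat)$-equivariant maps from $\cal Y^\flat$ to $\pi^!_{\mathbf{FrmMod}}\cal Z^\flat$, and finally $\op{Maps}_{\mathbf{FrmMod}_{/S}(\cal Y/H)}(\mathbf Q^{(H, H^\flat)}(\cal Y^\flat), \cal Z^\flat)$; Yoneda then delivers the claim. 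The main obstacle will be the equivariant enhancement itself, which requires tracking how the bar construction defining the quotient splits along the above exact sequence. Since geometric realization commutes with itself, this ``bar of bars'' reorganization is formal at the level of colimits; the only subtlety is to check that at each stage the intermediate quotient remains in $\mathbf{PStk}_{\op{laft-def}/S}$ so that Proposition \ref{prop-quotient-2} applies to the outer quotient, which is automatic because $H_0 = H/H^0$ inherits from $H$ the property of being a group scheme locally of finite type.
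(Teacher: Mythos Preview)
Your approach is correct, but it takes a longer route than the paper. The paper's proof is a single sentence: both sides are the quotient of $(\cal Y,\cal Y^{\flat})$ by $(H,H^{\flat})$ in the $\infty$-category $\mathbf{FrmMod}_{/S}$. Since the quotient functor $\mathbf Q^{(H,H^{\flat})}$ is \emph{defined} (in \S\ref{sec-quotient-2-exists}) as the geometric realization of the action simplicial object in $\mathbf{FrmMod}_{/S}$, and since iterated geometric realizations along an exact sequence of group objects compute the same colimit as the one-step realization, the isomorphism is immediate from the uniqueness of colimits. There is no need to pass through the corepresentability form \eqref{eq-quotient-geom-univ}.

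Your argument is the Yoneda-dual of this: you translate colimits into their universal properties and then check that the two universal properties coincide. This is sound---the ``equivariant enhancement'' you state is exactly the corepresentability statement for the iterated colimit, and your ``bar of bars'' observation is the Fubini theorem for geometric realizations that underlies the paper's one-liner. The advantage of the paper's phrasing is that it avoids having to articulate and verify the equivariant enhancement separately: once the quotient is recognized as a colimit, the residual $(H_0,H_0^{\flat})$-action on $\mathbf Q^{(H^0,(H^0)^{\flat})}(\cal Y^{\flat})$ and the second-isomorphism statement both follow from general nonsense about colimits over group diagrams. Your approach, on the other hand, makes the structure of the argument more visible and would be preferable if one needed to track the isomorphism explicitly.
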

\begin{proof}
Both sides are the quotient of $(\cal Y, \cal Y^{\flat})$ by $(H,H^{\flat})$ in the $\infty$-category $\mathbf{FrmMod}_{/S}$.
\end{proof}

\subsubsection{}
Suppose we have a quasi-twisting $\widehat{\cal Y}^{\flat} \in \mathbf{QTw}_{/S}(\cal Y/\cal Y^{\flat})$, such that $(\cal Y, \widehat{\cal Y}^{\flat})$ is also an $(H,H^{\flat})$-formal moduli problem, and the morphism $\widehat{\cal Y}^{\flat}\rightarrow\cal Y^{\flat}$ preserves this structure. We call quasi-twistings with these additional data \emph{$(H,H^{\flat})$-quasi-twistings} (based at $\cal Y^{\flat}$) and denote the category of them by $\mathbf{QTw}_{/S}^{(H,H^{\flat})}(\cal Y/\cal Y^{\flat})$. The quotient $\mathbf Q^{(H,H^{\flat})}(\widehat{\cal Y}^{\flat})$ inherits the structure of a quasi-twisting on $\cal Y/H$ based at $\mathbf Q^{(H,H^{\flat})}(\cal Y^{\flat})$. Indeed,
\begin{itemize}
	\item applying $\mathbf Q^{(H,H^{\flat})}$ to the action groupoid $\widehat{\cal Y}^{\flat}\times\op B\widehat{\mathbb G}_m^{\bullet}$, we obtain a $\op B\widehat{\mathbb G}_m$-action on $\mathbf Q^{(H,H^{\flat})}(\widehat{\cal Y}^{\flat})$, which gives rise to a $\widehat{\mathbb G}_m$-gerbe structure;
	\item the section $\cal Y/H \rightarrow \mathbf Q^{(H,H^{\flat})}(\widehat{\cal Y}^{\flat})$ is given by the composition:
	$$
	\cal Y/H \rightarrow \widehat{\cal Y}^{\flat}/H \rightarrow \mathbf Q^{(H,H^{\flat})}(\widehat{\cal Y}^{\flat}).
	$$
\end{itemize}
Therefore, we may view $\mathbf Q^{(H,H^{\flat})}$ as a functor $\mathbf{QTw}_{/\cal Y^{\flat}}^{(H,H^{\flat})}(Y/S)\rightarrow \mathbf{QTw}_{/\mathbf Q^{(H,H^{\flat})}(\cal Y^{\flat})}((Y/H)/S)$.

\subsection{Comparison of $\mathbf Q_{\op{inj}}^{(\fr k,H)}$ and $\mathbf Q^{(H,H^{\flat})}$} Suppose $(\fr k,H)$ and $(H,H^{\flat})$ are as in \S\ref{sec-classical-to-geom-action-pair}, and let $Y\in\mathbf{Sch}^{\op{lft}}_{/S}$ be acted on by $H$.

\subsubsection{} We now show that the two quotient functors constructed above are compatible.

\begin{prop}
\label{prop-quotient-compare}
The following diagram is commutative:
$$
\xymatrix{
\mathbf{LieAlgd}_{\op{inj}/S}^{(\fr k,H)}(Y) \ar@{^{(}->}[r]^-{\eqref{eq-classical-to-geom-action}} \ar[d]^{\mathbf Q_{\op{inj}}^{(\fr k, H)}} & \mathbf{FrmMod}_{/S}^{(H,H^{\flat})}(Y) \ar[d]^{\mathbf Q^{(H,H^{\flat})}} \\
\mathbf{LieAlgd}_{/S}(Y/H) \ar@{^{(}->}[r]^-{\eqref{eq-liealgd-to-frmmod}} & \mathbf{FrmMod}_{/S}(Y/H).
}
$$
\end{prop}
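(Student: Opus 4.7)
The plan is to construct an explicit comparison morphism in $\mathbf{FrmMod}_{/S}(Y/H)$ and then show it is an isomorphism by a tangent-complex calculation along the smooth cover $\pi:Y\to Y/H$.

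First, I would construct a canonical map $\mathbf{Q}^{(H,H^{\flat})}(\cal L_{\mathbf F}) \to \mathbf{Q}_{\op{inj}}^{(\fr k,H)}(\cal L)_{\mathbf F}$. By the universal property \eqref{eq-quotient-geom-univ}, giving such a morphism amounts to producing an $(H,H^{\flat})$-equivariant arrow $\cal L_{\mathbf F} \to \pi^!_{\mathbf{FrmMod}}\bigl(\mathbf{Q}_{\op{inj}}^{(\fr k,H)}(\cal L)_{\mathbf F}\bigr)$. The compatibility diagram \eqref{eq-liealgd-to-frmmod-diag} identifies the target with $\pi^!_{\mathbf{LieAlgd}}\bigl(\mathbf{Q}_{\op{inj}}^{(\fr k,H)}(\cal L)\bigr)_{\mathbf F}$. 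Since the target lies in the essential image of the fully faithful functor \eqref{eq-classical-to-geom-action} (by Proposition \ref{prop-classical-to-geom-action-image}), it suffices to provide a $(\fr k,H)$-Lie algebroid morphism $\cal L \to \pi^!_{\mathbf{LieAlgd}}\bigl(\mathbf{Q}_{\op{inj}}^{(\fr k,H)}(\cal L)\bigr)$; and such a morphism is furnished by Proposition \ref{prop-quotient-classical-univ}, applied to the identity on $\mathbf{Q}_{\op{inj}}^{(\fr k,H)}(\cal L)$.

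Next, I would verify that this comparison morphism is an isomorphism. By Theorem \ref{thm-liealgd-derived} it is enough to show: (a) $\mathbf{Q}^{(H,H^{\flat})}(\cal L_{\mathbf F})$ lies in the essential image of $\mathbf{LieAlgd}_{/S}(Y/H)$, i.e.\ the relative tangent complex $\mathbb T_{(Y/H)/\mathbf{Q}^{(H,H^{\flat})}(\cal L_{\mathbf F})}$ lies in $\Upsilon_{Y/H}(\op{QCoh}(Y/H)^{\heartsuit})$; and (b) that the corresponding Lie algebroid identifies with $\mathbf{Q}_{\op{inj}}^{(\fr k,H)}(\cal L)$. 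Pulling back along the smooth $H$-torsor $\pi : Y\to Y/H$ (which is conservative and preserves the heart), I would use the tangent triangle associated to the factorization $Y \to \cal L_{\mathbf F} \to \mathbf{Q}^{(H,H^{\flat})}(\cal L_{\mathbf F})$, together with the alternative description of the $(H,H^{\flat})$-action from the diagram \eqref{eq-recover-eta-diag-alt} and the map \eqref{eq-recover-eta-alt}. The outcome of this calculation should be an identification
\[
\pi^{*}\mathbb T_{(Y/H)/\mathbf{Q}^{(H,H^{\flat})}(\cal L_{\mathbf F})} \;\xrightarrow{\sim}\; \Upsilon_{Y}\bigl(\cal L / \eta(\fr k\otimes\cal O_Y)\bigr),
\]
that is, $\pi^{*}$ applied to $\mathbf{Q}_{\op{inj}}^{(\fr k,H)}(\cal L)$. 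Smooth descent for the embedding \eqref{eq-liealgd-to-frmmod} (in the form \eqref{eq-liealgd-to-frmmod-diag}) then propagates this identification down to $Y/H$.

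The main obstacle will be Step (a), the discreteness of the tangent complex of the quotient formal moduli problem. A priori, computing $\mathbb T$ of a geometric quotient by $(H,H^{\flat})$ could produce higher-degree terms coming from the formal (derived) directions of $H^{\flat}/H$, and the passage from the action groupoid on $\cal L_{\mathbf F}$ to its geometric realization is nontrivial. The crucial input is the injectivity hypothesis on $\eta$: it ensures that the map $\fr k\otimes\cal O_Y \to \cal L$ contributing the ``formal'' $\exp(\fr k)$-directions is absorbed without truncation, so that the expected kernel $\cal L/\eta(\fr k\otimes\cal O_Y)$ is a genuine classical $\cal O_{Y/H}$-module rather than something living in higher degrees. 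Once this is verified, the universal properties of Proposition \ref{prop-quotient-classical-univ} and \eqref{eq-quotient-geom-univ}, combined with the full faithfulness of \eqref{eq-liealgd-to-frmmod} and \eqref{eq-classical-to-geom-action}, force the comparison map to be an isomorphism by Yoneda.
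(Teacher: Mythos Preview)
Your proposal is correct and follows essentially the same route as the paper's proof. The paper likewise reduces the comparison to showing that $\mathbf{Q}^{(H,H^{\flat})}(\cal L_{\mathbf F})$ lands in the essential image of \eqref{eq-liealgd-to-frmmod}, and proves this via a lemma identifying $\mathbb T_{(Y/H)/\mathbf{Q}^{(H,H^{\flat})}(\cal L_{\mathbf F})}\big|_Y \cong \op{Cofib}(\eta)$ (hence a discrete sheaf, by injectivity of $\eta$) using exactly the Hecke-groupoid description and the diagrams \eqref{eq-recover-eta-diag-alt}--\eqref{eq-recover-eta-alt} you single out; the only cosmetic difference is that the paper invokes the two universal properties \eqref{eq-quotient-classical-univ} and \eqref{eq-quotient-geom-univ} directly (via the Remark identifying the $(H,H^{\flat})$-structure on $\pi^!_{\mathbf{FrmMod}}$ with the $(\fr k,H)$-structure on $\pi^!_{\mathbf{LieAlgd}}$) rather than first writing down an explicit comparison arrow.
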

\begin{proof}
Suppose $(\cal L, \eta)\in\mathbf{LieAlgd}_{\op{inj}/S}^{(\fr k, H)}(Y)$, i.e., $\cal L$ is a $(\fr k, H)$-Lie algebroid over $Y$ such that the map $\eta: \fr k\otimes\cal O_Y\rightarrow\cal L$ is injective. Let $\cal Y^{\flat}$ be the corresponding formal moduli problem under $Y$, equipped with the $H^{\flat}$-action defined by the functor \eqref{eq-classical-to-geom-action}. Thus $\mathbf Q^{(H,H^{\flat})}(\cal Y^{\flat})$ satisfies the universal property \eqref{eq-quotient-geom-univ} for $\cal Z^{\flat}\in\mathbf{FrmMod}_{/S}(\cal Y/H)$.

On the other hand, $\mathbf Q_{\op{inj}}^{(\fr k, H)}(\cal L)$ satisfies the universal property \eqref{eq-quotient-classical-univ}. Since the essential image of \eqref{eq-liealgd-to-frmmod} consists of objects $\cal Z^{\flat}\in\mathbf{FrmMod}_{/S}(Y/H)$ such that $\mathbb T_{(Y/H)/\cal Z^{\flat}}\in\Upsilon_{Y/H}(\op{QCoh}(Y/H)^{\heartsuit})$, it suffices to show that $\mathbf Q^{(H,H^{\flat})}(\cal Y^{\flat})$ has this property. The result thus follows from the lemma below and the fact that $Y\rightarrow Y/H$ is faithfully flat.
\end{proof}

\begin{lem}
Suppose $(Y,\cal Y^{\flat})$ is the $(H,H^{\flat})$-formal moduli problem corresponding to the $(\fr k,H)$-Lie algebroid $(\cal L,\eta)$ under the functor \eqref{eq-classical-to-geom-action}. Then there is a canonical isomorphism
$$
\cal T_{(Y/H)/\mathbf Q^{(H,H^{\flat})}(\cal Y^{\flat})}\big|_Y \xrightarrow{\sim} \op{Cofib}(\eta).
$$
\end{lem}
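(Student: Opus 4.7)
The plan is to compute $\cal T_{(Y/H)/\mathbf Q}|_Y$ (writing $\mathbf Q := \mathbf Q^{(H,H^{\flat})}(\cal Y^{\flat})$) via a fiber sequence of relative tangent complexes for the factorization $Y/H \to \cal Y^{\flat}/H \to \mathbf Q$. Namely, I would set up
$$
\cal T_{(Y/H)/(\cal Y^{\flat}/H)}|_Y \rightarrow \cal T_{(Y/H)/\mathbf Q}|_Y \rightarrow \cal T_{(\cal Y^{\flat}/H)/\mathbf Q}|_Y
$$
and identify the outer terms.

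For the first term: since $Y \to Y/H$ and $\cal Y^{\flat} \to \cal Y^{\flat}/H$ are both $H$-torsors and $Y \to \cal Y^{\flat}$ is $H$-equivariant, base change along the $H$-quotient gives $\cal T_{(Y/H)/(\cal Y^{\flat}/H)}|_Y \cong \cal T_{Y/\cal Y^{\flat}} \cong \cal L$. For the third term, I would use the explicit presentation $\mathbf Q = \op B_{\cal Y^{\flat}/H}(\cal Y^{\flat}\overset{H}{\times} H^{\flat}/H)$ together with Theorem \ref{thm-frmmod-to-frmgpd}: then $\cal T_{(\cal Y^{\flat}/H)/\mathbf Q}|_{\cal Y^{\flat}/H}$ is the Lie algebroid of the Hecke groupoid at its identity section, which along the identity reduces to $\cal T_{H^{\flat}/H}|_e \otimes \cal O_{\cal Y^{\flat}/H}$. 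Because $H \to H^{\flat}$ is a surjective group homomorphism with kernel $\exp(\fr k)$ (which is nil-dense), the quotient $H^{\flat}/H$ is canonically $\op B\exp(\fr k)$, so $\cal T_{H^{\flat}/H}|_e \cong \fr k[1]$. Pulling back to $Y$ yields $\cal T_{(\cal Y^{\flat}/H)/\mathbf Q}|_Y \cong \fr k\otimes\cal O_Y[1]$.

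The fiber sequence thus reduces to $\cal L \to \cal T_{(Y/H)/\mathbf Q}|_Y \to \fr k\otimes\cal O_Y[1]$, and it remains to identify its connecting map $\fr k \otimes \cal O_Y \to \cal L$ with $\eta$. For this I would exploit the alternative description \eqref{eq-recover-eta-alt} at the end of \S\ref{sec-classical-to-geom-action}, which presents $\eta$ as the composite $\cal T_{Y/(Y \times H^{\flat}/H)}|_{Y\times\{1\}} \to \cal T_{Y/(\cal Y^{\flat}/H)} \to \cal T_{(Y/H)/(\cal Y^{\flat}/H)}|_Y \cong \cal L$. The key observation is that the composition $Y \times (H^{\flat}/H) \to \cal Y^{\flat}/H \to \mathbf Q$ factors through $Y \to \mathbf Q$, since the $H^{\flat}/H$-direction is precisely what is collapsed in the Hecke quotient. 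Consequently, the map $\cal T_{Y/(Y\times H^{\flat}/H)}|_{Y\times\{1\}} \to \cal T_{Y/\mathbf Q}$ vanishes, and comparing this vanishing with the boundary map of the fiber sequence above identifies the connecting arrow $\fr k\otimes\cal O_Y \to \cal L$ with $\eta$.

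Once this identification is in place, the conclusion is immediate: $\op{Cofib}(\eta)$ sits in the canonical fiber sequence $\cal L \to \op{Cofib}(\eta) \to \fr k\otimes\cal O_Y[1]$ with connecting map $\eta[1]$, so the uniqueness of the middle term of a fiber sequence yields the desired canonical isomorphism $\cal T_{(Y/H)/\mathbf Q}|_Y \xrightarrow{\sim} \op{Cofib}(\eta)$. The main obstacle is step three, the identification of the connecting map with $\eta$; it requires carefully chasing the diagrams of \S\ref{sec-classical-to-geom-action}, but the essential mechanism is the collapsing of the $H^{\flat}/H$-factor passing from $\cal Y^{\flat}/H$ to $\mathbf Q$.
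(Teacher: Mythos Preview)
Your approach is essentially the same as the paper's: both use the fiber sequence for the composition $Y/H \to \cal Y^{\flat}/H \to \mathbf Q^{(H,H^{\flat})}(\cal Y^{\flat})$, identify the outer terms as $\cal L$ and $\fr k\otimes\cal O_Y[1]$ respectively, and then invoke the alternative description \eqref{eq-recover-eta-alt} to recognize the connecting map as $\eta$. The paper packages the final identification into one explicit commutative diagram built from Cartesian squares (extending \eqref{eq-recover-eta-diag-alt} by the Hecke presentation of $\mathbf Q^{(H,H^{\flat})}(\cal Y^{\flat})$), which makes the comparison of the boundary map with $\eta$ a matter of reading off vertical isomorphisms; your ``collapsing of the $H^{\flat}/H$-factor'' is the informal version of exactly those Cartesian squares. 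One small correction: $H\to H^{\flat}$ does not literally have kernel $\exp(\fr k)$ (the map $\exp(\fr k)\to H$ need not be injective), but your conclusion $H^{\flat}/H\cong \op B\exp(\fr k)$ is correct since $H^{\flat}=H/\exp(\fr k)$ and the left and right $H$-actions commute.
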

\begin{proof}
We will use the expression of $\mathbf Q^{(H,H^{\flat})}(\cal Y^{\flat})$ as quotient of the Hecke groupoid $\cal Y^{\flat}\underset{S}{\overset{H}{\times}}H^{\flat}/H$ (see \eqref{eq-quotient-geom}). Consider the following commutative diagram extending the diagram \eqref{eq-recover-eta-diag-alt}:
$$
\xymatrix@C=1.5em@R=1.5em{
	Y \ar@[red]@{.>}[rr] \ar[d]_{\op{id}\times\{1\}} \ar@{.>}[ddr] & & Y/H \ar@{.>}@[red][d] \\
	Y\underset{S}{\times} H^{\flat}/H \ar@[blue][r]^{\widetilde j}\ar@[blue][d]_{\op{pr}} & \cal Y^{\flat}\underset{S}{\times} H^{\flat}/H \ar@[blue][r]^-{\widetilde{\op{act}}^{\flat}}\ar[d]_{\op{pr}} & \cal Y^{\flat}/H \ar@[purple][d] \\
	Y \ar@[blue][r] & \cal Y^{\flat} \ar@{.>}[ur]\ar@[blue][r] & \mathbf Q^{(H,H^{\flat})}(\cal Y^{\flat})
}
$$
where the two lower squares, as well as the dotted quadrilateral, are Cartesian. Thus, we obtain the following commutative diagram of objects in $\op{QCoh}(Y)$, where commutativity of the red (resp.~blue) squares is derived from the red (resp.~blue) arrows in the above diagram\footnote{Recall: purple = red + blue.}:
$$
\xymatrix@C=1.5em@R=1.5em{
	\cal T_{(Y\underset{S}{\times} H^{\flat}/H)/Y}\big|_Y[-1] \ar@[blue][r]^-{\sim}\ar@[blue][d]^-{\sim} & \cal T_{(\cal Y^{\flat}/H)/\mathbf Q^{(H,H^{\flat})}(\cal Y^{\flat})}\big|_Y[-1] \ar@[red][r]\ar@[purple][d] & \cal T_{(Y/H)/(\cal Y^{\flat}/H)}\big|_Y \ar@[red][r]\ar@[red][d]^-{\sim} & \cal T_{(Y/H)/\mathbf Q^{(H,H^{\flat})}(\cal Y^{\flat})}\big|_Y \ar@[red][d] \\
	\cal T_{Y/(Y\underset{S}{\times} H^{\flat}/H)} \ar@/_1pc/[drr]_-{\eqref{eq-recover-eta-alt}} \ar@[blue][r]^-{\widetilde{\op{act}}^{\flat}_*\circ\widetilde j_*} & \cal T_{Y/(\cal Y^{\flat}/H)} \ar@[red][r] & \cal T_{(Y/H)/(\cal Y^{\flat}/H)}\big|_Y\ar@[red][r]\ar[d]^{\sim} & \cal T_{Y/(Y/H)}[1] \\
	& & \cal T_{Y/\cal Y^{\flat}} &
}
$$
Furthermore, the two horizontal red triangles are exact. Note that the composition \eqref{eq-recover-eta-alt} identifies with $\eta$, so the upper horizontal triangle identifies $\cal T_{(Y/H)/\mathbf Q^{(H,H^{\flat})}(\cal Y^{\flat})}\big|_Y$ with $\op{Cofib}(\eta)$.
\end{proof}

\subsection{Example: inert quasi-twistings}
\label{sec-inert-qtw}
We now specialize to Lie algebroids arising from abelian Lie algebras. They give rise to what we call ``inert quasi-twistings.'' In the geometric Langlands theory, they arise naturally as degeneration of (non-inert) quasi-twistings as the quantum parameter $\kappa$ tends to $\infty$ (see \S\ref{sec-infty}).

\subsubsection{} Recall that over any $\cal Y\in\mathbf{PStk}_{\op{laft-def}/S}$, there is a functor
$$
\op{triv} : \op{IndCoh}(\cal Y)\rightarrow\mathbf{Lie}(\op{IndCoh}(\cal Y))
$$
that associates to an ind-coherent sheaf $\cal F$ the abelian Lie algebra on $\cal F$. More precisely, $\op{triv}$ is the right inverse to the forgetful functor; because the latter is conservative and preserves limits, $\op{triv}$ also preserves limits.

We also have a pair of adjunction:
$$
\xymatrix{
	\op{diag}_{\cal Y} : \mathbf{Lie}(\op{IndCoh}(\cal Y)) \ar@<0.5ex>[r] & \mathbf{FrmMod}(\cal Y) : \op{ker-anch} \ar@<0.5ex>[l]
}
$$
where $\op{diag}_{\cal Y}$ preserves fiber products.\footnote{One sees this by identifying $\mathbf{Lie}(\op{IndCoh}(\cal Y))$ with $\mathbf{FrmMod}(\cal Y)_{/\cal Y}$, where $\cal Y$ is regarded as a formal moduli problem under itself by the identity map. Under this identification, $\op{diag}_{\cal Y}$ becomes the tautological forgetful functor; see \cite[IV.4]{GR16}.} It follows that the composition $\op{diag}_{\cal Y}\circ\op{triv}$ preserves fiber products. We call $\cal Y^{\flat}:=\op{diag}_{\cal Y}\circ\op{triv}(\cal F)$ the \emph{inert} formal moduli problem on $\cal F$.

\begin{rem}
Let $Y$ be a scheme (not necessarily locally of finite type) over $S$. The classical analogue of the above construction associates to an $\cal O_Y$-module $\cal F$ the Lie algebroid on $\cal F$ with \emph{zero} Lie bracket and anchor map. If $Y\in\mathbf{Sch}^{\op{lft}}_{/S}$, then the image of $\cal F$ under \eqref{eq-liealgd-to-frmmod} agrees with $\op{diag}_{\cal Y}\circ\op{triv}(\Upsilon_Y(\cal F))$.
\end{rem}

\subsubsection{} For the remainder of this section, we suppose $Y\in\mathbf{Sch}_{/S}^{\op{lft}}$ is \emph{smooth}. Then the identification $\Upsilon_Y : \op{QCoh}(Y)\xrightarrow{\sim}\op{IndCoh}(Y)$ allows us to view the universal enveloping algebra\footnote{This is defined as a monad on $\op{IndCoh}(Y)$ in \cite[IV.4.4]{GR16}.} of an object $\cal Y^{\flat}\in\mathbf{FrmMod}_{/S}(Y)$ as an algebra in $\op{QCoh}(Y)$. If $\cal Y^{\flat}=\op{diag}_{Y}\circ\op{triv}(\Upsilon_Y(\cal F))$, then it is given by $\op{Sym}_{\cal O_Y}(\cal F)$.

Let $\mathbb V(\cal F):=\underline{\op{Spec}}_Y\op{Sym}_{\cal O_Y}(\cal F)$; it is a stack over $Y$ fibered in linear DG schemes. We have an equivalence of DG categories:
\begin{equation}
\label{eq-module-cat-abelian}
\op{IndCoh}(\cal Y^{\flat}) \xrightarrow{\sim} \op{QCoh}(\mathbb V(\cal F)),
\end{equation}
where $\mathbf{oblv} : \op{IndCoh}(\cal Y^{\flat})\rightarrow\op{IndCoh}(Y)$ passes to the pushforward functor on $\op{QCoh}$ (see \cite[IV.4 \S4.1.3, IV.2 (7.12), and IV.3 Proposition 5.1.2]{GR16}).

\subsubsection{}
\label{sec-inert-qtw-on-triangle} Suppose, furthermore, that we have a quasi-twisting $\widehat{\cal Y}^{\flat}\in\mathbf{QTw}_{/S}(Y/\cal Y^{\flat})$ that arises from a triangle $\cal O_Y \rightarrow \widehat{\cal F} \rightarrow \cal F$ in $\op{QCoh}(Y)$ under the composition of functors $\op{diag}_{Y}\circ\op{triv}\circ\Upsilon_Y$. We call $\widehat{\cal Y}^{\flat}$ the \emph{inert quasi-twisting} on the triangle $\cal O_Y\rightarrow\widehat{\cal F} \rightarrow\cal F$.

The map $\cal O_Y\rightarrow\widehat{\cal F}$ gives rise to a morphism of DG schemes:
\begin{equation}
\label{eq-embedding-into-a1}
\underline{\op{Spec}}_Y\op{Sym}_{\cal O_Y}(\widehat{\cal F}) \rightarrow Y\times\mathbb A^1
\end{equation}
We let $\mathbb V(\widehat{\cal F})_{\lambda=1}$ be the fiber of \eqref{eq-embedding-into-a1} at $\{ 1\}\hookrightarrow\mathbb A^1$. Note that the analogously defined fiber $\mathbb V(\widehat{\cal F})_{\lambda=0}$ identifies with $\mathbb V(\cal F)$. There is a canonical equivalence of DG categories:
\begin{equation}
\label{eq-module-cat-abelian-qtw}
\widehat{\cal Y}^{\flat}\Mod \xrightarrow{\sim} \op{QCoh}(\mathbb V(\widehat{\cal F})_{\lambda = 1}).
\end{equation}

\begin{rem}
From our point of view, the DG category $\op{QCoh}(\op{LocSys}_G)$ is realized by modules over some quasi-twisting on $\op{Bun}_G$. The DG stack $\op{LocSys}_G$ only appears \emph{a posteriori} through \eqref{eq-module-cat-abelian-qtw}. Thus, one can say that the origin of $\op{QCoh}(\op{LocSys}_G)$ is non-geometric.
\end{rem}

\subsubsection{} We now discuss how quotient interacts with inert quasi-twistings. Denote by $\op{pt}$ the $S$-scheme $S$ itself. Suppose $(\fr k, H)$ is a classical action pair with \emph{zero} map $\fr k\rightarrow\fr h$. Then we have
$$
H^{\flat} := H/\exp(\fr k) \xrightarrow{\sim} H\ltimes (\op{pt}/\exp(\fr k)),
$$
where the formation of the semidirect product is formed by the $H$-action on $\op{pt}/\exp(\fr k)$. Note that the normal subpair $(\op{pt}, \op{pt}/\exp(\fr k))$ of $(H, H^{\flat})$ has quotient $(H, H)$, since
$$
\mathbf Q^{(\op{pt}, \op{pt}/\exp(\fr k))}(H^{\flat})\xrightarrow{\sim} \op B_{H^{\flat}}(H^{\flat}\times (\op{pt}/\exp(\fr k))^{\bullet}) \xrightarrow{\sim} H;
$$
see \S\ref{sec-normal-subpair-geom}.

\subsubsection{}
\label{sec-inert-qtw-geom} We now assume that $\fr k$ is also abelian. Suppose the smooth scheme $Y$ admits an $H$-action, and $\cal Y^{\flat}$ is the inert formal moduli problem on some $H$-equivariant sheaf $\cal F\in\op{QCoh}(Y)^{\heartsuit}$.

Suppose we have an $H$-equivariant map $\eta : \fr k\otimes\cal O_Y\rightarrow\cal F$, giving rise to an $H^{\flat}$-action on $\cal Y^{\flat}$ (see \S\ref{sec-classical-to-geom-action}). Let $\cal Q:=\op{Cofib}(\eta)$; it is an $H$-equivariant complex of $\cal O_Y$-modules, hence descends to an object $\cal Q^{\op{desc}} \in \op{QCoh}(Y/H)$.

\begin{prop}
\label{prop-quotient-abelian-frmmod}
The quotient $\mathbf Q^{(H,H^{\flat})}(\cal Y^{\flat})$ identifies with the inert formal moduli problem on $\cal Q^{\op{desc}}\in\op{QCoh}(Y/H)$.
\end{prop}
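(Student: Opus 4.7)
The plan is to factor the quotient through the normal subpair decomposition and handle each factor separately. Since $H^{\flat}\xrightarrow{\sim} H\ltimes(\op{pt}/\exp(\fr k))$, the pair $(\op{pt},\op{pt}/\exp(\fr k))$ is a normal subpair of $(H,H^{\flat})$ with quotient $(H,H)$. Proposition \ref{prop-normal-subpair-geom} then supplies a canonical isomorphism
\[
\mathbf Q^{(H,H^{\flat})}(\cal Y^{\flat})\simeq\mathbf Q^{(H,H)}\circ\mathbf Q^{(\op{pt},\op{pt}/\exp(\fr k))}(\cal Y^{\flat}),
\]
and we shall treat the two quotients in turn.

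The heart of the argument is the first (formal) quotient. Let $\cal Z^{\flat}$ denote the inert formal moduli problem on $\cal Q$ over $Y$; applying $\op{diag}_Y\circ\op{triv}$ to the map of abelian Lie algebras $\cal F\to\cal Q$ produces a morphism $\pi\colon\cal Y^{\flat}\to\cal Z^{\flat}$ in $\mathbf{FrmMod}_{/S}(Y)$. We claim that the action groupoid $\cal Y^{\flat}\times(\op{pt}/\exp(\fr k))^{\bullet}$ is isomorphic, as a formal groupoid on $\cal Y^{\flat}$, to the \v{C}ech nerve $\Omega_{\cal Y^{\flat}}(\pi)$. Granted this, Theorem \ref{thm-frmmod-to-frmgpd} yields
\[
\mathbf Q^{(\op{pt},\op{pt}/\exp(\fr k))}(\cal Y^{\flat})=\op B_{\cal Y^{\flat}}\bigl(\cal Y^{\flat}\times(\op{pt}/\exp(\fr k))^{\bullet}\bigr)\simeq\op B_{\cal Y^{\flat}}\bigl(\Omega_{\cal Y^{\flat}}(\pi)\bigr)\simeq\cal Z^{\flat}.
\]
Both groupoids reside in the full subcategory of inert formal moduli problems, so by the Segal condition it suffices to match their level-$1$ objects together with the pair of face maps. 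Preservation of fiber products by $\op{diag}_Y\circ\op{triv}$ shows the \v{C}ech level-$1$ term to be $\op{diag}_Y\circ\op{triv}\bigl(\cal F\underset{\cal Q}{\times}\cal F\bigr)$ and the action level-$1$ term to be $\op{diag}_Y\circ\op{triv}\bigl(\cal F\oplus(\fr k\otimes\cal O_Y)\bigr)$. In the stable $\infty$-category $\op{IndCoh}(Y)$ the cofiber sequence $\fr k\otimes\cal O_Y\xrightarrow{\eta}\cal F\to\cal Q$ is simultaneously a fiber sequence, yielding a canonical isomorphism $\cal F\underset{\cal Q}{\times}\cal F\xrightarrow{\sim}\cal F\oplus(\fr k\otimes\cal O_Y)$ via $(a,b)\mapsto(a,b-a)$ that sends the two tautological projections to the projection and action maps on the action groupoid side.

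For the second quotient, $\mathbf Q^{(H,H)}$ collapses to the prestack quotient by $H$ since the source and target groups coincide. The inert formal moduli problem construction $\op{diag}_{(-)}\circ\op{triv}$ commutes with $!$-pullback, so pulling back the inert formal moduli problem on $\cal Q^{\op{desc}}$ along the $H$-torsor $Y\to Y/H$ returns $\cal Z^{\flat}$ together with its natural $H$-equivariance. Faithfully flat descent then identifies $\cal Z^{\flat}/H$ with the inert formal moduli problem on $\cal Q^{\op{desc}}$, completing the proof.

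The principal obstacle is the identification in the first quotient. While agreement of the underlying level-$1$ objects follows readily from preservation of fiber products, matching the full simplicial structure rests on the additive decomposition $\cal F\underset{\cal Q}{\times}\cal F\simeq\cal F\oplus(\fr k\otimes\cal O_Y)$---a feature available only because $\op{IndCoh}(Y)$ is stable. This ``abelian'' simplification is precisely what reduces the formal-groupoid quotient to a cofiber computation in $\op{IndCoh}(Y)$.
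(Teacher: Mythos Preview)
Your proof is correct and follows essentially the same approach as the paper: factor via the normal subpair $(\op{pt},\op{pt}/\exp(\fr k))\hookrightarrow(H,H^{\flat})$ using Proposition~\ref{prop-normal-subpair-geom}, identify the first quotient with the inert formal moduli problem on $\cal Q$ by matching the action groupoid with the \v{C}ech nerve of $\cal Y^{\flat}\to\op{diag}_Y\circ\op{triv}(\cal Q)$ via preservation of fiber products by $\op{diag}_Y\circ\op{triv}$, and then handle the $(H,H)$-quotient by descent. The paper phrases the groupoid comparison slightly more directly---applying $\op{diag}_Y\circ\op{triv}$ to the full \v{C}ech nerve $\cal F\oplus(\fr k\otimes\cal O_Y)^{\oplus\bullet}$ at once rather than arguing level-by-level---but the content is the same.
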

\begin{proof}
By Proposition \ref{prop-normal-subpair-geom}, we have
$$
\mathbf Q^{(H, H^{\flat})}(\cal Y^{\flat}) \xrightarrow{\sim} \mathbf Q^{(H,H)}\circ\mathbf Q^{(\op{pt}, \op{pt}/\exp(\fr k))}(\cal Y^{\flat})\xrightarrow{\sim} \mathbf Q^{(\op{pt}, \op{pt}/\exp(\fr k))}(\cal Y^{\flat})/H.
$$
Note that descent of $\cal O_Y$-modules corresponds to quotient by $H$ on the inert formal moduli problem. Hence we only need to identify $\mathbf Q^{(\op{pt}, \op{pt}/\exp(\fr k))}(\cal Y^{\flat})$ as the inert formal moduli problem on $\cal Q$.

Consider the \v Cech nerve of $\cal F\rightarrow\cal Q$ in $\op{QCoh}(Y)$, which identifies with the groupoid $\cal F\oplus(\fr k\otimes\cal O_Y)^{\oplus\bullet}$. Since the composition $\op{diag}_Y\circ\op{triv}$ preserves fiber products, we see that 
$$
\op{diag}_Y\circ\op{triv}(\cal F\oplus(\fr k\otimes\cal O_Y)^{\oplus\bullet}) \xrightarrow{\sim} \cal Y^{\flat}\times(\op{pt}/\exp(\fr k))^{\bullet}
$$
identifies with the \v Cech nerve of the map $\cal Y^{\flat} \rightarrow \op{diag}_Y\circ\op{triv}(\cal Q)$. The result follows since this is also the \v Cech nerve of $\cal Y^{\flat}\rightarrow\mathbf Q^{(\op{pt}, \op{pt}/\exp(\fr k))}(\cal Y^{\flat})$.
\end{proof}

\begin{rem}
When $Y$ is any scheme over $S$ (\emph{not} necessarily locally of finite type) but $\eta$ is injective, we also have an identification of $\mathbf Q^{(\fr k, H)}_{\op{inj}}(\cal F)$ with the Lie algebroid on $\cal Q^{\op{desc}}$ with zero Lie bracket and anchor map. This follows immediately from the definition of $\mathbf Q_{\op{inj}}^{(\fr k, H)}(\cal F)$.
\end{rem}

Geometrically, the datum of $\eta$ gives rise to a map $\phi : \mathbb V(\cal F) \rightarrow Y\underset{S}{\times}\fr k^*$, and $\mathbb V(\cal Q)$ identifies with its fiber at $\{0\} \hookrightarrow \fr k^*$. Hence we have isomorphisms of DG stacks:
\begin{equation}
\label{eq-quotient-abelian-frmmod}
\mathbb V(\cal Q^{\op{desc}})\xrightarrow{\sim} \mathbb V(\cal Q)/H \xrightarrow{\sim} \phi^{-1}(0)/H.
\end{equation}

\subsubsection{} Suppose we have an exact sequence of $H$-equivariant $\cal O_Y$-modules:
$$
0\rightarrow \cal O_Y \rightarrow\widehat{\cal F} \rightarrow\cal F\rightarrow 0.
$$
Let $\widehat{\cal Y}^{\flat} \in \mathbf{QTw}_{/\cal Y^{\flat}}(Y/S)$ be the corresponding inert quasi-twisting. Assume that $\eta$ lifts to an $H$-equivariant map $\widehat{\eta} : \fr k\otimes\cal O_Y\rightarrow\widehat{\cal F}$. Then Proposition \ref{prop-quotient-abelian-frmmod} shows that the quotient quasi-twisting arises from a triangle in $\op{QCoh}(Y/H)$:
$$
\cal O_{Y/H} \rightarrow \widehat{\cal Q}^{\op{desc}} \rightarrow \cal Q^{\op{desc}}
$$
where $\widehat{\cal Q}^{\op{desc}}$ is the descent of $\widehat{\cal Q}:=\op{Cofib}(\widehat{\eta})$ to $Y/H$.

In particular, we have isomorphisms of DG stacks:
\begin{equation}
\label{eq-quotient-abelian-qtw}
\mathbb V(\widehat{\cal Q}^{\op{desc}})_{\lambda =1}\xrightarrow{\sim} \mathbb V(\widehat{\cal Q})_{\lambda =1}/H \xrightarrow{\sim} \widehat{\phi}^{-1}_{\lambda=1}(0)/H
\end{equation}
where $\widehat{\phi}_{\lambda=1}$ is the composition
$$
\mathbb V(\widehat{\cal F})_{\lambda=1}\hookrightarrow\mathbb V(\widehat{\cal F})\xrightarrow{\mathbb V(\widehat{\eta})} Y\underset{S}{\times}\fr k^*.
$$

\begin{rem}
In light of \eqref{eq-quotient-abelian-frmmod} and \eqref{eq-quotient-abelian-qtw}, we would like to think of $\mathbf Q^{(H,H^{\flat})}$ on inert quasi-twistings as an analogue of symplectic reduction where $\phi$ and $\widehat{\phi}_{\lambda=1}$ play the role of the moment map (but of course, with no symplectic structures involved \emph{a priori}.)
\end{rem}

\medskip

\part*{The universal quasi-twisting}

\section{Construction of $\cal T_G^{(\kappa, E)}$}
\label{sec-construction}

In this section, we construct a quasi-twisting $\cal T_G^{(\kappa, E)}$ over $S\times\op{Bun}_G$ (relative to $S$), which depends functorially on the parameter $(\fr g^{\kappa}, E) : S\rightarrow\op{Par}_G$. We proceed by first constructing a Lie-$*$ algebra $\widehat{\fr g}_{\cal D}^{(\kappa, E)}$ over $S\times X$, then twist its pullback to $S\times\op{Bun}_{G,\infty x}\times X$ by the tautological $G$-bundle $\tilde{\cal P}_G$. Via taking sections over $\overset{\circ}{D}_x$ and using the residue theorem, we produce a classical quasi-twisting $\tilde{\cal T}_G^{(\kappa ,E)}$ over $S\times\op{Bun}_{G,\infty x}$. Then we show that $\tilde{\cal T}_G^{(\kappa, E)}$ admits an action by the pair $(\fr g^{\kappa}(\cal O_x), \cal L_x^+G)$, so we may form the quotient $\cal T_G^{(\kappa, E)}:=\mathbf Q^{(\fr g^{\kappa}(\cal O_x), \cal L_x^+G)}(\tilde{\cal T}_G^{(\kappa, E)})$. As we shall see, this step requires both quotient functors constructed in \S\ref{sec-quotient} and their compatibility.

We then verify that for a simple group $G$ and $\fr g^{\kappa}$ arising from the bilinear form $\kappa=\lambda\cdot\op{Kil}$, the quasi-twisting $\cal T_G^{(\kappa,0)}$ identifies with the twisting given by $\lambda$-power of the determinant line bundle $\cal L_{G,\det}$ over $\op{Bun}_G$.

\subsection{Recollection on Lie-$*$ algebras} Fix a base scheme $S\in\mathbf{Sch}_{/k}$. Let $\cal X\rightarrow S$ be a smooth curve relative to $S$ with connected fibers.\footnote{For our applications, we will take $\cal X:=S\times X$.} In particular, the diagonal morphism $\Delta : \cal X\rightarrow \cal X\underset{S}{\times}\cal X$ is a closed immersion. Denote by $\cal D_{\cal X/S}\Mod^r$ the category of $\cal O_{\cal X}$-modules equipped with a right action of the relative differential operators $\cal D_{\cal X/S}$.

\subsubsection{}
A \emph{Lie-$*$ algebra} on $\cal X$ (relative to $S$) is an object $\cal B \in \cal D_{\cal X/S}\Mod^r$, equipped with a $\cal D_{\cal X\underset{S}{\times}\cal X/S}$-linear morphism\footnote{We use $\boxtimes$ to denote tensoring over $\cal O_S$.} $[-,-]:\cal B^{\boxtimes 2} \rightarrow \Delta_!(\cal B)$ such that the following properties are satisfied:
\begin{enumerate}[(a)]
	\item (\emph{anti-symmetry}) for all sections $a, b$ of $\cal B$, there holds
	$$
	\tilde\sigma_{12}([a\boxtimes b]) = - [b\boxtimes a],
	$$
	where $\tilde\sigma_{12}$ is the transposition morphism over $\cal X\underset{S}{\times}\cal X$ given by:
	$$
	\sigma_{12}^{-1}\Delta_!(\cal B)\rightarrow\Delta_!(\cal B);\quad\text{where}\quad\sigma_{12}(x,y)=(y,x).
	$$
	
	\item (\emph{Jacobi identity}) for all sections $a$, $b$, and $c$ of $\cal B$, there holds
	$$
	[[a\boxtimes b]\boxtimes c] + \tilde\sigma_{123}([[b\boxtimes c]\boxtimes a]) + \tilde\sigma_{123}^2([[c\boxtimes a]\boxtimes b]) = 0,
	$$
	where $\tilde\sigma_{123}$ denotes the morphism over $\cal X\underset{S}{\times}\cal X\underset{S}{\times}\cal X$ given by:
	$$
	\sigma_{123}^{-1}(\Delta_{x=y=z})_!(\cal B) \rightarrow (\Delta_{x=y=z})_!(\cal B);\quad\text{where}\quad \sigma_{123}(x,y,z)=(y,z,x).
	$$
\end{enumerate}

Denote by $\mathbf{Lie}^*(\cal X/S)$ the category of Lie-$*$ algebras on $\cal X$ relative to $S$. Clearly, for any morphism $S'\rightarrow S$ with $\cal X':=\cal X\underset{S}{\times}S'$, we have a functor $\mathbf{Lie}^*(\cal X/S)\rightarrow\mathbf{Lie}^*(\cal X'/S')$ acting as pulling back a $\cal D_{\cal X/S}$-module, and equipping it with the induced Lie-$*$ algebra structure.

\subsubsection{} Lie-$*$ algebras are \'etale local objects. More precisely, let $\textbf{\'Et}_{/\cal X}$ be the small \'etale site of $\cal X$. Given $\cal B\in\mathbf{Lie}^*(\cal U/S)$ where $\cal U\in\textbf{\'Et}_{/\cal X}$ and a morphism $\tilde{\cal U}\rightarrow\cal U$, we may associate an object $\cal B\big|_{\tilde{\cal U}}\in\mathbf{Lie}^*(\tilde{\cal U}/S)$. This procedure defines a functor in groupoids:
\begin{equation}
\label{eq-lie*-etale-local}
\textbf{\'Et}_{/\cal X}^{\op{op}} \rightarrow \op{Gpd},\quad \cal U\leadsto\mathbf{Lie}^*(\cal U/S).
\end{equation}
The \'etale local nature of Lie-$*$ algebras refers to the fact that \eqref{eq-lie*-etale-local} satisfies descent.

\subsubsection{} Let $\cal G$ be a presheaf of group schemes on $\textbf{\'Et}_{/\cal X}$, and $\cal B\in\mathbf{Lie}^*(\cal X/S)$. A \emph{$\cal G$-action} on $\cal L$ consists of the following data:
\begin{itemize}
	\item for each $\cal U\in\textbf{\'Et}_{/\cal X}$, an action of $\cal G_{\cal U}$ as endomorphisms of $\cal B\big|_{\cal U}\in\mathbf{Lie}^*(\cal U/S)$;
\end{itemize}
furthermore, this action is required to be functorial in $\cal U$.

Suppose $\cal P$ is an \'etale $\cal G$-torsor over $\cal X$, and $\cal B\in\mathbf{Lie}^*(\cal X/S)$ admits a $\cal G$-action. Then we can form the \emph{$\cal P$-twisted Lie-$*$ algebra} $\cal B_{\cal P} \in \mathbf{Lie}^*(\cal X/S)$ using the descent property of \eqref{eq-lie*-etale-local}.

\subsubsection{} Suppose we have a section $\underline x : S\hookrightarrow\cal X$. Let $D_{\underline x}$ be the completion of $\underline x$ and $\overset{\circ}{D}_{\underline x} := D_{\underline x}-\{\underline x\}$ be its localization.

\begin{eg}
For $\cal X=S\times X$, any closed point $x\in X$ determines a section as above. Affine locally on $S$, we have $D_{\underline x}\xrightarrow{\sim}\op{Spec}(\cal O_S\widehat{\otimes}\cal O_x)$ and $\overset{\circ}{D}_{\underline x}\xrightarrow{\sim}\op{Spec}(\cal O_S\widehat{\otimes}\cal K_x)$, where $\cal O_x$ denotes the completed local ring at $x$, and $\cal K_x$ the localization of $\cal O_x$ at its uniformizer.
\end{eg}

The functor $\Gamma_{\dR}$ of \emph{zero}th de Rham cohomology gives rise to functors:
$$
\Gamma_{\dR}(D_{\underline x}, -),\;\Gamma_{\dR}(\overset{\circ}{D}_{\underline x}, -) : \cal D_{\cal X/S}\Mod^r_{\op{Coh}} \rightarrow \op{QCoh}^{\op{Tate}}(S).
$$
where $\cal D_{\cal X/S}\Mod^r_{\op{Coh}}$ denotes the category of $\cal D_{\cal X/S}$-coherent modules (see \cite[\S2.1.13-16]{BD04}). Furthermore, given a Lie-$*$ algebra $\cal B$, the object $\Gamma_{\dR}(\overset{\circ}{D}_{\underline x}, \cal B)$ acquires the structure of a Lie algebra in $\op{QCoh}^{\op{Tate}}(S)$, whose (continuous) Lie bracket is given by the composition:
$$
[-,-] : \Gamma_{\dR}(\overset{\circ}{D}_{\underline x}, \cal B)^{\boxtimes 2} \xrightarrow{\sim} \Gamma_{\dR}(\overset{\circ}{D}_{\underline x}\underset{S}{\times}\overset{\circ}{D}_{\underline x}, \cal B^{\boxtimes 2}) \rightarrow \Gamma_{\dR}(\overset{\circ}{D}_{\underline x}\underset{S}{\times}\overset{\circ}{D}_{\underline x}, \Delta_!(\cal B))\xrightarrow{\sim} \Gamma_{\dR}(\overset{\circ}{D}_{\underline x}, \cal B).
$$
The map $\Gamma_{\dR}(D_{\underline x}, \cal B)\rightarrow\Gamma_{\dR}(\overset{\circ}{D}_{\underline x}, \cal B)$ realizes $\Gamma_{\dR}(D_{\underline x}, \cal B)$ as a Lie subalgebra if $\cal B$ is $\cal O_{\cal X}$-flat.

\subsection{The Kac-Moody Lie-$*$ algebra}
\label{sec-kac-moody}
Suppose now that $S\in\mathbf{Sch}_{/k}$ is equipped with a morphism $S\rightarrow\op{Par}_G$, represented by $(\fr g^{\kappa}, E)$ (see \S\ref{sec-quantum-parameter}). We will construct a central extension of Lie-$*$ algebras over $\cal X:=S\times X$:
\begin{equation}
\label{eq-kac-moody-extension}
0 \rightarrow \omega_{\cal X/S} \rightarrow \widehat{\fr g}_{\cal D}^{(\kappa,E)} \rightarrow \fr g_{\cal D}^{\kappa} \rightarrow 0,
\end{equation}
together with $\cal G$-actions on $\widehat{\fr g}_{\cal D}^{(\kappa,E)}$ and $\fr g_{\cal D}^{\kappa}$, where $\cal G$ is the presheaf of group schemes $\cal G_{\cal U}:=\underline{\op{Maps}}(\cal U,G)$ on $\textbf{\'Et}_{/\cal X}$. The construction will be functorial in $S$.

\begin{rem}
The central extension \eqref{eq-kac-moody-extension}, together with the $\cal G$-action, is called the \emph{(generalized) Kac-Moody} central extension of Lie-$*$ algebras, and we refer to $\widehat{\fr g}_{\cal D}^{(\kappa,E)}$ as the \emph{(generalized) Kac-Moody} Lie-$*$ algebra.
\end{rem}

\subsubsection{} The Lie-$*$ algebra $\fr g_{\cal D}^{\kappa}$ has underlying $\cal D_{\cal X/S}$-module $\fr g^{\kappa}\boxtimes\cal D_{\cal X/S}$. Its Lie-$*$ algebra structure is defined using the Lie bracket \eqref{eq-structure-lie} on $\fr g^{\kappa}$:
$$
[-,-] : (\fr g_{\cal D}^{\kappa})^{\boxtimes 2}\rightarrow\Delta_!(\fr g_{\cal D}^{\kappa}),\quad (\mu\otimes\mathbf 1)\boxtimes(\mu'\otimes\mathbf 1) \leadsto [\mu,\mu']\otimes\mathbf 1_{\cal D},
$$
where $\mathbf 1_{\cal D}$ is the canonical symmetric section of $\Delta_!(\cal D_{\cal X/S})$. Note that the Lie-$*$ bracket $[-,-]$ factors through the embedding $\fr g_{\op{s.s.}}^{\kappa}\boxtimes\cal D_{\cal X/S}\hookrightarrow\fr g_{\cal D}^{\kappa}$.\footnote{See \S\ref{sec-reduction-to-center} for the notation $\fr g_{\op{s.s.}}^{\kappa}$.}

We construct a $\cal G$-action on $\fr g^{\kappa}_{\cal D}$ as follows: for every $\cal U\in\textbf{\'Et}_{/\cal X}$, there is an \emph{adjoint-coadjoint} action of the group scheme $\underline{\op{Maps}}(\cal U, G)$ on $\fr g^{\kappa}\otimes\cal O_{\cal U}$:
\begin{equation}
\label{group-action-on-last-term}
g_{\cal U} \cdot (\xi\oplus\varphi) = \op{Ad}_{g_{\cal U}}(\xi) \oplus \op{Coad}_{g_{\cal U}}(\varphi).
\end{equation}
where $\xi\oplus\varphi$ denotes a section of $\fr g^{\kappa}\otimes\cal O_{\cal U}$, regarded as a subbundle of $(\fr g\otimes\cal O_{\cal U})\oplus(\fr g^*\otimes\cal O_{\cal U})$. The action \eqref{group-action-on-last-term} extends to an action of $\underline{\op{Maps}}(\cal U, G)$ on $\fr g^{\kappa}\underset{\cal O_{\cal U}}{\otimes}\cal D_{\cal U/S}$ by Lie-$*$ algebra endomorphisms.

\subsubsection{} The underlying $\cal D_{\cal X/S}$-modules of \eqref{eq-kac-moody-extension} are defined by first inducing a sequence of $\cal D_{\cal X/S}$-modules from \eqref{eq-canonical-module-ext}:
\begin{equation}
\label{eq-canonical-d-module-ext}
0 \rightarrow \omega_{\cal X/S}\underset{\cal O_{\cal X}}{\otimes}\cal D_{\cal X/S} \rightarrow \widehat{\fr g}^{\kappa}\underset{\cal O_{\cal X}}{\otimes}\cal D_{\cal X/S} \rightarrow \fr g^{\kappa}\boxtimes\cal D_{\cal X/S} \rightarrow 0
\end{equation}
and then taking the push-out along the action map $\omega_{\cal X/S}\underset{\cal O_{\cal X}}{\otimes}\cal D_{\cal X/S} \rightarrow \omega_{\cal X/S}$.

In particular, the extension $\widehat{\fr g}_{\cal D}^{(\kappa,E)}\rightarrow\fr g_{\cal D}^{\kappa}$ splits over $\fr g_{\op{s.s.}}^{\kappa}\boxtimes\cal D_{\cal X/S}$, and we have a decomposition
\begin{equation}
\label{eq-d-module-splitting}
\widehat{\fr g}_{\cal D}^{(\kappa,E)} \xrightarrow{\sim} E_{\cal D} \oplus (\fr g_{\op{s.s.}}^{\kappa}\boxtimes\cal D_{\cal X/S}).
\end{equation}
where $E_{\cal D}$ is the push-out of $E\underset{\cal O_{\cal X}}{\otimes}\cal D_{\cal X/S}$ along $\omega_{\cal X/S}\underset{\cal O_{\cal X}}{\otimes}\cal D_{\cal X/S} \rightarrow \omega_{\cal X/S}$.

\subsubsection{}
The Lie-$*$ algebra structure on $\widehat{\fr g}_{\cal D}^{(\kappa,E)}$ is defined by the composition:
$$
(\widehat{\fr g}_{\cal D}^{(\kappa,E)})^{\boxtimes 2} \rightarrow (\fr g^{\kappa}_{\cal D})^{\boxtimes 2} \rightarrow \Delta_!(\omega_{\cal X/S})\oplus \Delta_!(\fr g_{\op{s.s.}}^{\kappa}\boxtimes\cal D_{\cal X/S}) \rightarrow \Delta_!(\widehat{\fr g}_{\cal D}^{(\kappa,E)})
$$
where the middle map is defined using the bilinear form \eqref{eq-structure-form} and the Lie bracket \eqref{eq-structure-lie} on $\fr g^{\kappa}$:
$$
(\mu\otimes\mathbf 1)\boxtimes(\mu'\otimes\mathbf 1) \leadsto (\mu, \mu')\mathbf 1'_{\omega} + [\mu,\mu']\otimes\mathbf 1_{\cal D};
$$
the notation $\mathbf 1'_{\omega}$ denotes the canonical anti-symmetric section of $\Delta_!(\omega_{\cal X/S})$.

\subsubsection{}
\label{sec-g-action-on-kac-moody} We now construct the $\cal G$-action on $\widehat{\fr g}_{\cal D}^{(\kappa,E)}$. Let $\cal U\in\textbf{\'Et}_{/\cal X}$ and $g_{\cal U}$ be a point of $\underline{\op{Maps}}(\cal U, G)$. The corresponding endomorphism $g_{\cal U} : \widehat{\fr g}_{\cal D}^{(\kappa,E)} \rightarrow \widehat{\fr g}_{\cal D}^{(\kappa,E)}$ is defined by the sum of the following maps (using the decomposition \eqref{eq-d-module-splitting}):
\begin{itemize}
	\item identity on $E_{\cal D}$;
	\item adjoint-coadjoint action on $\fr g_{\op{s.s.}}^{\kappa}\boxtimes\cal D_{\cal U/S}$ by formula \eqref{group-action-on-last-term};
	\item the composition:
	\begin{equation}
	\label{eq-g-action-on-kac-moody}
	\widehat{\fr g}_{\cal D}^{(\kappa,E)}\big|_{\cal U} \rightarrow \fr g_{\cal D}^{\kappa}\big|_{\cal U} \xrightarrow{\sim} (\fr g^{\kappa}\boxtimes\cal O_{\cal U}) \underset{\cal O_{\cal U}}{\otimes}\cal D_{\cal X/S} \xrightarrow{\op{res}(g_{\cal U})} \omega_{\cal U/S} \hookrightarrow \widehat{\fr g}_{\cal D}^{(\kappa,E)}\big|_{\cal U}
	\end{equation}
	where the map $\op{res}(g_{\cal U})$ is defined by the formula:
	$$
	(\xi\oplus\varphi)\otimes\mathbf 1 \leadsto \varphi(g_{\cal U}^{-1} dg_{\cal U}),\quad \xi\oplus\varphi \in \fr g^{\kappa}\boxtimes\cal O_{\cal U}.
	$$
	Here, $d : \cal O_{\cal U} \rightarrow \omega_{\cal U/S}$ is the exterior derivative, so $g_{\cal U}^{-1}dg_{\cal U}$ is a section of $\fr g\boxtimes\cal O_{\cal U}$, on which $\varphi$ rightfully acts.
\end{itemize}
It is clear from the construction that $\widehat{\fr g}_{\cal D}^{(\kappa,E)}\rightarrow\fr g_{\cal D}^{\kappa}$ is $\cal G$-equivariant.

\begin{rem}
If $\fr g^{\kappa}$ arises from a symmetric bilinear form $\kappa$ (see \S\ref{sec-quantum-parameter}), then we have an isomorphism $\widehat{\fr g}_{\cal D}^{(\kappa, 0)} \xrightarrow{\sim} \cal B(\fr g, \kappa)$ where $\cal B(\fr g,\kappa)$ is the Kac-Moody Lie-$*$ algebra at level $\kappa$ in the ordinary sense (see \cite{Ga98}).
\end{rem}

\subsubsection{}
\label{sec-kac-moody-bracket} Let us bring in the closed point $x\in X$, which induces a section $\underline x : S\rightarrow\cal X$. Applying $\Gamma_{\dR}(\overset{\circ}{D}_{\underline x}, -)$ to the sequence \eqref{eq-kac-moody-extension}, we obtain a central extension of Lie algebras in $\op{QCoh}^{\op{Tate}}(S)$:
\begin{equation}
\label{eq-kac-moody-extension-tate}
0 \rightarrow \cal O_S \rightarrow \widehat{\fr g}^{(\kappa,E)} \rightarrow \fr g^{\kappa}(\cal K_x) \rightarrow 0,
\end{equation}
where the notation $\fr g^{\kappa}(\cal O_x)$ (resp.~$\fr g^{\kappa}(\cal K_x)$) denotes the Tate $\cal O_S$-module $\fr g^{\kappa}\widehat{\otimes}\cal O_x$ (resp.~localization at the uniformizer of $\cal O_x$.)

The Lie bracket on $\widehat{\fr g}^{(\kappa,E)}$ is given by the composition:
$$
(\widehat{\fr g}^{(\kappa,E)})^{\boxtimes 2} \rightarrow (\fr g^{\kappa}(\cal K_x))^{\boxtimes 2} \rightarrow \cal O_S \oplus \fr g_{\op{s.s.}}^{\kappa}(\cal K_x) \rightarrow \widehat{\fr g}^{(\kappa, E)},
$$
where the middle map is defined by
$$
(\mu\otimes f)\boxtimes(\mu'\otimes f') \leadsto (\mu,\mu')\cdot\op{Res}((df)f') + [\mu,\mu']\otimes ff'.
$$

\begin{lem}
The central extension \eqref{eq-kac-moody-extension-tate} canonically splits over $\fr g^{\kappa}(\cal O_x)$.
\end{lem}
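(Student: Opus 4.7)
The plan is to exhibit the splitting as the composition
\[
\fr g^{\kappa}(\cal O_x) \xleftarrow{\sim} \Gamma_{\dR}(D_{\underline x}, \widehat{\fr g}_{\cal D}^{(\kappa, E)}) \longrightarrow \Gamma_{\dR}(\overset{\circ}{D}_{\underline x}, \widehat{\fr g}_{\cal D}^{(\kappa, E)}) = \widehat{\fr g}^{(\kappa, E)},
\]
where the isomorphism on the left arises from a de Rham cohomology calculation on the formal disk and the arrow on the right is the canonical restriction. The key input is the Poincar\'e lemma in characteristic zero, which for the right $\cal D$-module $\omega_{\cal X/S}$ reads $\Gamma_{\dR}(D_{\underline x}, \omega_{\cal X/S}) = 0$ (every relative $1$-form $fdt$ on the formal disk equals $dF$ for some formal antiderivative $F$). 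Combined with the identity $\Gamma_{\dR}(D_{\underline x}, \fr g_{\cal D}^{\kappa}) = \fr g^{\kappa}(\cal O_x)$ and the vanishing of $H^{-1}$ of the de Rham complex of the induced right $\cal D$-module $\fr g_{\cal D}^{\kappa} = \fr g^{\kappa} \boxtimes \cal D_{\cal X/S}$, the long exact sequence associated to \eqref{eq-kac-moody-extension} collapses to the desired canonical isomorphism.

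To check that the resulting composition defines a section of the projection $\widehat{\fr g}^{(\kappa, E)} \twoheadrightarrow \fr g^{\kappa}(\cal K_x)$, I will use functoriality: applying the above analysis to the surjection $\widehat{\fr g}_{\cal D}^{(\kappa, E)} \twoheadrightarrow \fr g_{\cal D}^{\kappa}$ identifies our constructed map, after post-composition with the projection to $\fr g^{\kappa}(\cal K_x)$, with the tautological inclusion $\fr g^{\kappa}(\cal O_x) \hookrightarrow \fr g^{\kappa}(\cal K_x)$. For the Lie-algebra compatibility, I will invoke the fact noted just above this lemma that $\Gamma_{\dR}(D_{\underline x}, \cal B) \to \Gamma_{\dR}(\overset{\circ}{D}_{\underline x}, \cal B)$ realizes its source as a Lie subalgebra of its target whenever $\cal B$ is $\cal O_{\cal X}$-flat. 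For $\cal B = \widehat{\fr g}_{\cal D}^{(\kappa, E)}$, flatness is read off from the decomposition \eqref{eq-d-module-splitting}: the semisimple summand $\fr g_{\op{s.s.}}^{\kappa} \boxtimes \cal D_{\cal X/S}$ is plainly flat, and $E_{\cal D}$ sits in an extension of the flat $\cal D$-module $(\fr g^{\kappa})^G \boxtimes \cal D_{\cal X/S}$ by $\omega_{\cal X/S}$, hence is itself flat.

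The main obstacle is the de Rham cohomology bookkeeping, especially keeping straight the conventions for right $\cal D$-modules on a curve (where the de Rham complex sits in degrees $[-1, 0]$, so $\Gamma_{\dR} = H^0$ corresponds to the coinvariants $\cal N/\cal N \cdot T_{\cal X/S}$, and the contribution of $H^{-1}(\omega_{\cal X/S}) = \cal O_S$ must be accounted for). As an \emph{a posteriori} sanity check: the cocycle formula $(\mu \otimes f, \mu' \otimes f') \mapsto (\mu, \mu') \cdot \op{Res}((df) f')$ from \S\ref{sec-kac-moody-bracket} vanishes identically on $\fr g^{\kappa}(\cal O_x) \times \fr g^{\kappa}(\cal O_x)$, since $(df)f'$ has no pole when $f, f' \in \cal O_x$; this confirms that a module-theoretic section automatically upgrades to a Lie-algebra section.
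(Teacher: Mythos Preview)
Your proof is correct and follows exactly the paper's approach: apply $\Gamma_{\dR}(D_{\underline x},-)$ to the extension \eqref{eq-kac-moody-extension} and use the vanishing $\Gamma_{\dR}(D_{\underline x},\omega_{\cal X/S})=0$ to obtain the canonical isomorphism $\fr g^{\kappa}(\cal O_x)\xrightarrow{\sim}\Gamma_{\dR}(D_{\underline x},\widehat{\fr g}_{\cal D}^{(\kappa,E)})$, whose composition with restriction to $\overset{\circ}{D}_{\underline x}$ is the splitting. You have simply supplied more detail (the flatness check, the Lie-subalgebra compatibility, and the cocycle sanity check) than the paper's one-line proof.
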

\begin{proof}
The result follows from applying $\Gamma_{\dR}(D_{\underline x}, -)$ to the sequence \eqref{eq-kac-moody-extension} and observing that $\Gamma_{\dR}(D_{\underline x}, \omega_{\cal X/S})$ vanishes.
\end{proof}

Let $\cal L_xG$ (resp.~$\cal L^+_xG$) denote the loop (resp.~arc) group of $G$ at $x$. There is an action of $\cal L_xG$ on $\widehat{\fr g}^{(\kappa, E)}$ defined analogously to \S\ref{sec-g-action-on-kac-moody}, with the composition \eqref{eq-g-action-on-kac-moody} replaced by:
$$
\widehat{\fr g}^{(\kappa,E)} \rightarrow \fr g^{\kappa}(\cal K_x) \xrightarrow{\op{res}(g)} \cal O_S \hookrightarrow\widehat{\fr g}^{(\kappa,E)}
$$
where the map $\op{res}(g)$ ($g$ is a point of $\cal L_xG$) is defined by the formula:
$$
(\xi\oplus\varphi)\otimes f \leadsto \op{Res}(f\cdot\varphi(g^{-1}dg)).
$$
Since the Lie algebra of $\cal L_xG$ identifies with $\fr g(\cal K_x)$, this $\cal L_xG$-action induces a $\fr g(\cal K_x)$-action on $\widehat{\fr g}^{(\kappa,E)}$ by $\cal O_S$-linear endomorphisms.

\begin{lem}
\label{lem-alternative-lie-bracket}
The Lie bracket on $\widehat{\fr g}^{(\kappa,E)}$ agrees with the composition:
$$
(\widehat{\fr g}^{(\kappa,E)})^{\boxtimes 2} \xrightarrow{(\op{pr},\op{id})} \fr g(\cal K_x) \boxtimes \widehat{\fr g}^{(\kappa,E)}\xrightarrow{\op{act}} \widehat{\fr g}^{(\kappa,E)}.
$$
\end{lem}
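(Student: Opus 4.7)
The plan is to verify the identity by direct evaluation on elementary tensors. Since both maps are $\cal O_S$-bilinear, I would reduce to testing the identity on sections of the form $(\mu\otimes f)$ and $(\mu'\otimes f')$, where $\mu = \xi\oplus\varphi$, $\mu' = \xi'\oplus\varphi'$ are sections of $\fr g^{\kappa}$ and $f,f'\in\cal K_x$. The left-hand side is then given explicitly by the formula recorded in \S\ref{sec-kac-moody-bracket}:
$$
[(\mu\otimes f), (\mu'\otimes f')] \;=\; \varphi'(\xi)\cdot\op{Res}(df\cdot f') \;+\; \bigl([\xi,\xi']\oplus\op{Coad}_{\xi}(\varphi')\bigr)\otimes(ff'),
$$
where the second summand lies in $\fr g^{\kappa}_{\op{s.s.}}(\cal K_x)$ by \eqref{eq-structure-lie}.

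Next I would unpack the right-hand side. The projection $\op{pr}$ sends $(\xi\oplus\varphi)\otimes f$ to the element $\eta:=\xi\cdot f\in\fr g(\cal K_x)$. The action of $\eta$ on $\widehat{\fr g}^{(\kappa,E)}$ is the derivative at $\epsilon=0$ of the $\cal L_xG$-action in direction $g=\exp(\epsilon\eta)$, which I would split into the three summands coming from the definition just above the lemma:
\begin{itemize}
\item the identity on $E_{\cal D}$ differentiates to zero;
\item the adjoint-coadjoint action on $\fr g^{\kappa}_{\op{s.s.}}(\cal K_x)$ differentiates to $\bigl([\eta,\xi']\oplus\op{Coad}_{\eta}(\varphi')\bigr)\cdot f'$, which by $\cal O_S$-bilinearity of the bracket and coadjoint action equals $\bigl([\xi,\xi']\oplus\op{Coad}_{\xi}(\varphi')\bigr)\cdot(ff')$;
\item the residue term $\op{Res}\bigl(f'\cdot\varphi'(g^{-1}dg)\bigr)$ differentiates using the standard identity $\tfrac{d}{d\epsilon}\big|_0(g^{-1}dg) = d\eta$, and since $\eta=\xi\cdot f$ with $\xi$ constant in the uniformizer, $d\eta=\xi\cdot df$; thus the derivative is $\op{Res}(f'\cdot\varphi'(\xi)\,df)=\varphi'(\xi)\cdot\op{Res}(f'\cdot df)$.
\end{itemize}
Combining and using $\op{Res}(df\cdot f') = \op{Res}(f'\cdot df)$ (antisymmetry of the residue pairing), the right-hand side matches the left-hand side term by term.

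The only mildly delicate point is bookkeeping around the decomposition $\widehat{\fr g}^{(\kappa,E)}\cong E\oplus\fr g^{\kappa}_{\op{s.s.}}(\cal K_x)$: one must check that the identity continues to hold when $\mu$ or $\mu'$ lies in the $(\fr g^{\kappa})^G$-part. This is immediate, since both the pairing $\varphi'(\xi)$ and the bracket $[\xi,\xi']\oplus\op{Coad}_\xi(\varphi')$ vanish whenever either argument is $G$-invariant, while the infinitesimal $\cal L_xG$-action is trivial on $E_{\cal D}$ by construction. Hence both sides reduce to zero in these edge cases, completing the verification.
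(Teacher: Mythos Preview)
Your approach is exactly what the paper does (it records only ``This is a straightforward computation''), and your main computation via the three summands is correct.

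There is, however, a slip in your edge-case discussion. The claim that the pairing $\varphi'(\xi)$ vanishes ``whenever either argument is $G$-invariant'' is false: if \emph{both} $\mu$ and $\mu'$ lie in $(\fr g^{\kappa})^G$, then $\xi\in\fr z$ and $\varphi'\in\fr z^*$, so $\varphi'(\xi)$ can be nonzero. Correspondingly, the infinitesimal $\cal L_xG$-action on an element of the $E$-part is not trivial: while the ``identity on $E_{\cal D}$'' component does differentiate to zero, the residue term still applies to its projection to $(\fr g^{\kappa})^G(\cal K_x)$ and contributes $\varphi'(\xi)\cdot\op{Res}(f'\,df)$. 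So neither side is zero in this case. Fortunately they still match, by precisely the residue computation you already did in your third bullet---that argument works for any $\mu'\in\fr g^{\kappa}$, not just the semisimple part. In short: drop the vanishing claim and simply observe that the residue term handles the $(\fr g^{\kappa})^G$-part as well.
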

\begin{proof}
This is a straightforward computation.
\end{proof}

\subsection{The classical quasi-twisting $\tilde{\cal T}_G^{(\kappa,E)}$ over $\op{Bun}_{G,\infty x}$} Let $\op{Bun}_{G,\infty x}$ denote the stack classifying pairs $(\cal P_G, \alpha)$ where $\cal P_G$ is a $G$-bundle on $X$ and $\alpha : \cal P_G\big|_{D_x}\xrightarrow{\sim}\cal P_G^0$ is a trivialization over $D_x$. The (right) $\cal L_x^+G$-action on $\op{Bun}_{G,\infty x}$ by changing $\alpha$ realizes $\op{Bun}_{G,\infty x}$ as a $\cal L_x^+G$-bundle over $\op{Bun}_G$, locally trivial in the \'etale topology. In particular, $\op{Bun}_{G,\infty x}$ is placid; see \S\ref{sec-elephant}.

\subsubsection{} The Beauville-Laszlo theorem shows that $\op{Bun}_{G,\infty x}$ also classifies pairs $(\cal P_{G,\Sigma}, \alpha)$, where $\cal P_{G,\Sigma}$ is a $G$-bundle on $\Sigma:=X-\{x\}$ and $\alpha : \cal P_{G,\Sigma}\big|_{\overset{\circ}{D}_x}\xrightarrow{\sim}\cal P_G^0$ is a trivialization over $\overset{\circ}{D}_x$. This alternative description shows that the $\cal L_x^+G$-action on $\op{Bun}_{G,\infty x}$ extends to an $\cal L_xG$-action.

\subsubsection{} Fix an $S$-point $(\fr g^{\kappa}, E)$ of $\op{Par}_G$. We apply the construction of \S\ref{sec-kac-moody} to the relative curve
$$
\tilde{\cal X}:= S\times\op{Bun}_{G,\infty x}\times X\quad\text{over}\quad \tilde{\cal S}:= S\times\op{Bun}_{G,\infty x},
$$
and obtain a central extension in $\mathbf{Lie}^*(\tilde{\cal X}/\tilde{\cal S})$:
\begin{equation}
\label{eq-kac-moody-extension-univ}
0 \rightarrow \omega_{\tilde{\cal X}/\tilde{\cal S}} \rightarrow \widehat{\fr g}_{\cal D}^{(\kappa,E)} \rightarrow \fr g_{\cal D}^{\kappa} \rightarrow 0.
\end{equation}
In other words, \eqref{eq-kac-moody-extension-univ} is the image of Kac-Moody extension \eqref{eq-kac-moody-extension} under the base change functor $-\boxtimes\cal O_{\op{Bun}_{G,\infty x}} : \mathbf{Lie}^*(\cal X/S)\rightarrow\mathbf{Lie}^*(\tilde{\cal X}/\tilde{\cal S})$.

Let $\underline{\tilde x} : \tilde{\cal S}\hookrightarrow \tilde{\cal X}$ (resp.~$\underline x:S\hookrightarrow \cal X$) denote the section given by $x\in X$. Let $\tilde{\cal P}_G$ be the tautological $G$-bundle over $\tilde{\cal X}$ equipped with the trivialization $\alpha$ over $D_{\underline{\tilde x}}$. Since $\widehat{\fr g}_{\cal D}^{(\kappa, E)}$ and $\fr g_{\cal D}^{\kappa}$ are equipped with $\cal G$-actions, we can form the $\tilde{\cal P}_G$-twist of \eqref{eq-kac-moody-extension-univ}:
\begin{equation}
\label{eq-kac-moody-extension-twist}
0 \rightarrow \omega_{\tilde{\cal X}/\tilde{\cal S}} \rightarrow (\widehat{\fr g}_{\cal D}^{(\kappa,E)})_{\tilde{\cal P}_G} \rightarrow (\fr g_{\cal D}^{\kappa})_{\tilde{\cal P}_G} \rightarrow 0.
\end{equation}

\begin{rem}
\begin{itemize}
	\item Since $\fr g_{\cal D}^{\kappa}$ is the $\cal D_{\tilde{\cal X}/\tilde{\cal S}}$-module induced from $\fr g^{\kappa}\boxtimes\cal O_{\op{Bun}_{G,\infty x}\times X}$ and the $\cal G$-action comes from one on $\fr g^{\kappa}\boxtimes\cal O_{\op{Bun}_{G,\infty x}\times X}$, we see that $(\fr g_{\cal D}^{\kappa})_{\tilde{\cal P}_G}$ is the $\cal D_{\tilde{\cal X}/\tilde{\cal S}}$-module induced from $\fr g^{\kappa}_{\tilde{\cal P}_G}$.
	
	\item the datum of $\alpha$ gives an isomorphism between \eqref{eq-kac-moody-extension-univ} and \eqref{eq-kac-moody-extension-twist} when restricted to $D_{\tilde x}$.
\end{itemize}
\end{rem}

We apply the functors $\Gamma_{\dR}(\Sigma,-)$ and $\Gamma_{\dR}(\overset{\circ}{D}_{\underline{\tilde x}}, -)$ to \eqref{eq-kac-moody-extension-twist}. Using the two observations above, we obtain a morphism between two triangles in $\op{QCoh}^{\op{Tate}}(\tilde{\cal S})$:
\begin{equation}
\label{eq-main-splitting}
\xymatrix@C=1.5em@R=1.5em{
	\Gamma_{\dR}(\Sigma, \omega_{\tilde{\cal X}/\tilde{\cal S}}) \ar[r]\ar[d] & \Gamma_{\dR}(\Sigma, (\widehat{\fr g}_{\cal D}^{(\kappa,E)})_{\tilde{\cal P}_G}) \ar[r]\ar[d] & \Gamma(\Sigma, \fr g^{\kappa}_{\tilde{\cal P}_G}) \ar[d]^{\gamma} \ar@{.>}[dl]_{\widehat{\gamma}} \\
	\Gamma_{\dR}(\overset{\circ}{D}_{\underline{\tilde x}}, \omega_{\tilde{\cal X}/\tilde{\cal S}}) \ar[r] & \Gamma_{\dR}(\overset{\circ}{D}_{\underline{\tilde x}}, \widehat{\fr g}_{\cal D}^{(\kappa,E)}) \ar[r] & \fr g^{\kappa}(\cal K_x)\widehat{\boxtimes}\cal O_{\op{Bun}_{G,\infty x}}
}
\end{equation}
where $\fr g^{\kappa}(\cal K_x)$ is (as before) an object of $\op{QCoh}^{\op{Tate}}(S)$.

By residue theory, the first vertical map in \eqref{eq-main-splitting} vanishes. Hence we obtain a splitting $\widehat{\gamma}$ as depicted. Note that $\gamma$ (hence $\widehat{\gamma}$) is injective, so we may define two Tate $\cal O_{\tilde{\cal S}}$-modules by cokernels without running into DG issues:
$$
\widehat{\cal L}^{(\kappa, E)}:=\op{Coker}(\widehat{\gamma}),\quad \cal L^{\kappa}:=\op{Coker}(\gamma).
$$
Since $\Gamma_{\dR}(\overset{\circ}{D}_{\underline{\tilde x}}, \omega_{\tilde{\cal X}/\tilde{\cal S}})$ is canonically isomorphic to $\cal O_{\tilde{\cal S}}$, we arrive at an exact sequence of Tate $\cal O_{\tilde{\cal S}}$-modules:
\begin{equation}
\label{eq-qtw-full-level}
0 \rightarrow \cal O_{\tilde{\cal S}} \rightarrow \widehat{\cal L}^{(\kappa, E)} \rightarrow \cal L^{\kappa} \rightarrow 0
\end{equation}

\begin{nota}
In what follows, we will show that \eqref{eq-qtw-full-level} has the structure of a classical quasi-twisting (on Tate modules) over $\tilde{\cal S}$ (relative to $S$; see \S\ref{sec-tate-liealgd}), denoted by $\tilde{\cal T}_G^{(\kappa, E)}$.
\end{nota}

\subsubsection{} We (temporarily) use the notation $\widehat{\fr g}_{\cal D, \cal X}^{(\kappa,E)}$ to denote the Kac-Moody Lie-$*$ algebra over $\cal X$, constructed using the recipe in \S\ref{sec-kac-moody} for the relative curve $\cal X\rightarrow S$.

The isomorphism $\widehat{\fr g}_{\cal D}^{(\kappa,E)}\xrightarrow{\sim} \widehat{\fr g}_{\cal D, \cal X}^{(\kappa, E)}\boxtimes\cal O_{\op{Bun}_{G,\infty x}}$ gives rise to an isomorphism in $\op{QCoh}^{\op{Tate}}(\tilde{\cal S})$:
\begin{equation}
\label{eq-decompose-loop-sections}
\Gamma_{\dR}(\overset{\circ}{D}_{\underline{\tilde x}}, \widehat{\fr g}_{\cal D}^{(\kappa,E)}) \xrightarrow{\sim} \Gamma_{\dR}(\overset{\circ}{D}_{\underline x}, \widehat{\fr g}_{\cal D, \cal X}^{(\kappa, E)})\widehat{\boxtimes} \cal O_{\op{Bun}_{G,\infty x}} \cong \widehat{\fr g}^{(\kappa,E)}\widehat{\boxtimes}\cal O_{\op{Bun}_{G,\infty x}}
\end{equation}
Observe that the $G(\cal K_x)$-action on $\op{Bun}_{G,\infty x}$ gives rise to a $\fr g(\cal K_x)$-action\footnote{Unlike the Tate $\cal O_S$-module $\fr g^{\kappa}(\cal K_x)$, the notation $\fr g(\cal K_x)$ is reserved for the Tate vector space $\fr g\otimes\cal K_x$ (similar for the notation $\fr g(\cal O_x)$.)} on $\cal O_{\op{Bun}_{G,\infty x}}$ by derivations. Hence, the Lie (algebroid) bracket on $\Gamma_{\dR}(\overset{\circ}{D}_{\underline {\tilde x}}, \widehat{\fr g}_{\cal D}^{(\kappa,E)})$ can be defined using the $\cal O_S$-linear Lie bracket on $\widehat{\fr g}^{(\kappa,E)}$ (see \S\ref{sec-kac-moody-bracket}):
$$
[\mu\boxtimes f, \mu'\boxtimes f'] := [\mu, \mu'] + \overline{\mu}(f')\cdot\mu' - \overline{\mu}'(f)\cdot \mu.
$$
where $\overline{\mu}$ denotes the image of $\mu\in \widehat{\fr g}^{(\kappa,E)}$ along $\widehat{\fr g}^{(\kappa,E)} \rightarrow \fr g^{\kappa}(\cal K_x) \rightarrow \fr g(\cal K_x)\widehat{\boxtimes}\cal O_S$, which acts on $\cal O_{\tilde{\cal S}}$ by $\cal O_S$-linear derivations. The anchor map $\widehat{\sigma}$ of $\Gamma_{\dR}(\overset{\circ}{D}_{\underline{\tilde x}}, \widehat{\fr g}_{\cal D}^{(\kappa,E)})$ is defined by the composition:
\begin{equation}
\label{eq-anchor-map-full-level}
\Gamma_{\dR}(\overset{\circ}{D}_{\underline {\tilde x}}, \widehat{\fr g}_{\cal D}^{(\kappa,E)}) \xrightarrow{\eqref{eq-decompose-loop-sections}} \widehat{\fr g}^{(\kappa,E)}\widehat{\boxtimes}\cal O_{\op{Bun}_{G,\infty x}} \rightarrow \fr g(\cal K_x)\widehat{\boxtimes}\cal O_{\tilde{\cal S}} \rightarrow \cal T_{\tilde{\cal S}/S}.
\end{equation}

We have thus equipped $\Gamma_{\dR}(\overset{\circ}{D}_{\underline{\tilde x}}, \widehat{\fr g}_{\cal D}^{(\kappa,E)})$ with the structure of a Lie algebroid. The following lemma, whose proof is deferred to \S\ref{lem-liealgd-ideal}, extends this Lie algebroid structure to its quotient $\widehat{\cal L}^{(\kappa,E)}$:

\begin{lem}
\label{lem-liealgd-ideal}
The morphism $\widehat{\gamma}$ realizes $\Gamma(\Sigma, \fr g_{\tilde{\cal P}_G}^{\kappa})$ as an ideal of $\Gamma_{\dR}(\overset{\circ}{D}_{\underline{\tilde x}}, \widehat{\fr g}_{\cal D}^{(\kappa,E)})$.
\end{lem}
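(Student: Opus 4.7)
The plan is to invoke Lemma~\ref{lem-alternative-lie-bracket} to rewrite the Lie algebroid bracket on $\Gamma_{\dR}(\overset{\circ}{D}_{\underline{\tilde x}}, \widehat{\fr g}_{\cal D}^{(\kappa,E)})$ in action form, and then verify that when the first argument lies in the image of $\widehat\gamma$, the result remains in this image. The heart of the matter is a translation between the two descriptions of $\widehat\gamma(a)$: as an element of $\widehat{\fr g}^{(\kappa,E)} \widehat\boxtimes \cal O_{\op{Bun}_{G,\infty x}}$ via~\eqref{eq-decompose-loop-sections}, and as the restriction to $\overset{\circ}{D}_{\underline{\tilde x}}$ (via the trivialization $\alpha$) of a lift $\tilde a \in \Gamma_{\dR}(\Sigma, (\widehat{\fr g}_{\cal D}^{(\kappa,E)})_{\tilde{\cal P}_G})$ of $a$. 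Such a lift is well-defined modulo $\Gamma_{\dR}(\Sigma, \omega_{\tilde{\cal X}/\tilde{\cal S}})$, whose image in $\Gamma_{\dR}(\overset{\circ}{D}_{\underline{\tilde x}}, \omega_{\tilde{\cal X}/\tilde{\cal S}}) \cong \cal O_{\tilde{\cal S}}$ vanishes by the residue theorem; thus the image of $\widehat\gamma$ is precisely the subspace of elements admitting such a $\Sigma$-extension.

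By Lemma~\ref{lem-alternative-lie-bracket}, under~\eqref{eq-decompose-loop-sections} the algebroid bracket $[\widehat\gamma(a), b]$ equals the action of $\overline{\gamma(a)} \in \fr g(\cal K_x) \widehat\boxtimes \cal O_{\op{Bun}_{G,\infty x}}$ on $b \in \widehat{\fr g}^{(\kappa,E)} \widehat\boxtimes \cal O_{\op{Bun}_{G,\infty x}}$, combining the $\cal L_xG$-action on $\widehat{\fr g}^{(\kappa,E)}$ from~\S\ref{sec-g-action-on-kac-moody} with the anchor action on $\cal O_{\op{Bun}_{G,\infty x}}$ induced by the $\cal L_xG$-action on $\op{Bun}_{G,\infty x}$. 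But $\overline{\gamma(a)}$ is the boundary value at $x$ (via $\alpha$) of the $\fr g$-projection of $\tilde a$, namely an infinitesimal gauge transformation of $\tilde{\cal P}_G|_\Sigma$. Applying such a globally $\Sigma$-defined automorphism to the twisted Kac-Moody Lie-$*$ algebra $(\widehat{\fr g}_{\cal D}^{(\kappa,E)})_{\tilde{\cal P}_G}$ produces a section again defined over all of $\Sigma$, which lies in the image of $\widehat\gamma$.

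The main technical obstacle will be the final step: carefully verifying that the residue-term contribution in~\eqref{eq-g-action-on-kac-moody}---a manifestation of the non-splitness of the central extension---is precisely balanced by the anchor-term correction, so that the output of the action lies in the image of $\widehat\gamma$ (rather than merely in that image modulo the $\omega$-center $\cal O_{\tilde{\cal S}}$). I expect this to reduce, as in the construction of $\widehat\gamma$ itself, to a further application of the residue theorem on $X$.
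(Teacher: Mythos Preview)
Your overall strategy---reduce the ideal property to an invariance statement via the action formula of Lemma~\ref{lem-alternative-lie-bracket}---is exactly the paper's. But you have the two slots of the bracket reversed, and this makes the argument fail as written.

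You compute $[\widehat\gamma(a),b]=\op{act}(\overline{\gamma(a)},b)$ and argue that because $\overline{\gamma(a)}$ is the boundary value of a $\Sigma$-section, the output lies in the image of~$\widehat\gamma$. But $b$ is an \emph{arbitrary} element of $\widehat{\fr g}^{(\kappa,E)}\widehat\boxtimes\cal O_{\op{Bun}_{G,\infty x}}$, i.e., a section over the punctured disk; acting on it by an infinitesimal gauge transformation that happens to extend to $\Sigma$ still only produces a punctured-disk section. Your sentence ``applying such a globally $\Sigma$-defined automorphism to the twisted Kac--Moody Lie-$*$ algebra\ldots\ produces a section again defined over all of $\Sigma$'' is true for $\Sigma$-sections of the twisted algebra, but $b$ is not one of those. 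The ``technical obstacle'' you anticipate in the last paragraph is a symptom of this: no residue cancellation will rescue the argument in this direction.

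The paper instead places the image element in the \emph{second} slot. By anti-symmetry, $[\widehat\gamma(a),b]=-\op{act}(\op{pr}(b),\widehat\gamma(a))$, so it suffices to show that the submodule $\Gamma_{\dR}(\Sigma,(\widehat{\fr g}_{\cal D}^{(\kappa,E)})_{\tilde{\cal P}_G})\hookrightarrow\Gamma_{\dR}(\overset{\circ}{D}_{\underline{\tilde x}},\widehat{\fr g}_{\cal D}^{(\kappa,E)})$ is invariant under the full $\fr g(\cal K_x)\widehat\boxtimes\cal O_{\tilde{\cal S}}$-action. The paper checks this at the group level: both sides carry natural $S\times\cal L_xG$-equivariance structures (on the $\Sigma$-side, $g$ acts by the identity on the fiber over $(\cal P_{G,\Sigma},\alpha)$, since only the trivialization changes), and the embedding intertwines them---a tautology once one unwinds that the map is ``restrict to $\overset{\circ}{D}$ and trivialize via $\alpha$.'' No explicit residue computation is needed.
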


In an analogous way, we turn $\fr g^{\kappa}(\cal K_x)\widehat{\boxtimes}\cal O_{\op{Bun}_{G,\infty x}}$ into an object of $\mathbf{LieAlgd}(\tilde{\cal S}/S)$, and the map $\Gamma_{\dR}(\overset{\circ}{D}_{\underline{\tilde x}}, \widehat{\fr g}_{\cal D}^{(\kappa,E)}) \rightarrow \fr g^{\kappa}(\cal K_x)\widehat{\boxtimes}\cal O_{\op{Bun}_{G,\infty x}}$ in \eqref{eq-main-splitting} is a morphism of such. Lemma \ref{lem-liealgd-ideal} shows that $\gamma$ also realizes $\Gamma(\Sigma, \fr g_{\tilde{\cal P}_G}^{\kappa})$ as an ideal of $\fr g^{\kappa}(\cal K_x)\widehat{\boxtimes}\cal O_{\op{Bun}_{G,\infty x}}$. Hence the cokernels \eqref{eq-qtw-full-level} is a central extension of Lie algebroids.

\subsubsection{Proof of Lemma \ref{lem-liealgd-ideal}}
\label{sec-liealgd-ideal}
We first give an alternative description of the Lie bracket on $\Gamma_{\dR}(\overset{\circ}{D}_{\underline{\tilde x}}, \widehat{\fr g}_{\cal D}^{(\kappa,E)})$. Indeed, from the identification in \eqref{eq-decompose-loop-sections} and the $\fr g(\cal K_x)$-action on $\widehat{\fr g}^{(\kappa,E)}$ (see \S\ref{sec-kac-moody-bracket}), we obtain an action of $\fr g(\cal K_x)\widehat{\boxtimes}\cal O_{\tilde{\cal S}}$ on $\Gamma_{\dR}(\overset{\circ}{D}_{\underline {\tilde x}}, \widehat{\fr g}_{\cal D}^{(\kappa,E)})$ by $\cal O_S$-linear derivations. It follows from Lemma \ref{lem-alternative-lie-bracket} that the Lie bracket on $\Gamma_{\dR}(\overset{\circ}{D}_{\underline {\tilde x}}, \widehat{\fr g}_{\cal D}^{(\kappa,E)})$ agrees with the composition:
\begin{equation}
\label{eq-alternative-lie-bracket}
\Gamma_{\dR}(\overset{\circ}{D}_{\underline {\tilde x}}, \widehat{\fr g}_{\cal D}^{(\kappa,E)})^{\boxtimes 2} \xrightarrow{(\op{pr}, \op{id})} (\fr g(\cal K_x)\widehat{\boxtimes}\cal O_{\tilde{\cal S}})\boxtimes \Gamma_{\dR}(\overset{\circ}{D}_{\underline {\tilde x}}, \widehat{\fr g}_{\cal D}^{(\kappa,E)}) \xrightarrow{\op{act}} \Gamma_{\dR}(\overset{\circ}{D}_{\underline {\tilde x}}, \widehat{\fr g}_{\cal D}^{(\kappa,E)}),
\end{equation}
where $\op{pr}$ denotes the composition of the first two maps in \eqref{eq-anchor-map-full-level}.

Therefore, it suffices to show that the Tate $\cal O_{\tilde{\cal S}}$-submodule:
\begin{equation}
\label{eq-equivariant-submodule}
\Gamma_{\dR}(\Sigma, (\widehat{\fr g}_{\cal D}^{(\kappa, E)})_{\tilde{\cal P}_G}) \hookrightarrow \Gamma_{\dR}(\overset{\circ}{D}_{\underline {\tilde x}}, \widehat{\fr g}_{\cal D}^{(\kappa, E)})
\end{equation}
is invariant under the aforementioned $\fr g(\cal K_x)\widehat{\boxtimes}\cal O_{\tilde{\cal S}}$-action. Note that by construction, this action arises from the $S\times\cal L_xG$-equivariance structure on $\Gamma_{\dR}(\overset{\circ}{D}_{\underline {\tilde x}}, \widehat{\fr g}_{\cal D}^{(\kappa, E)})$. The following claim is immediate:

\begin{claim}
There is also an $S\times\cal L_xG$-equivariance structure on $\Gamma_{\dR}(\Sigma, (\widehat{\fr g}_{\cal D}^{(\kappa, E)})_{\tilde{\cal P}_G})$, defined at every $T$-point $(s,\cal P_{G,\Sigma},\alpha, g)$ of $S\times\op{Bun}_{G,\infty x}\times\cal L_xG$ (for $T\in\Sch_{/k}^{\op{aff}}$) by:
\begin{itemize}
	\item first identifying the fiber of $\Gamma_{\dR}(\Sigma, (\widehat{\fr g}_{\cal D}^{(\kappa, E)})_{\tilde{\cal P}_G})$ at both of the $T$-points
	$$
	(s,\cal P_{G,\Sigma}, \alpha),\;\text{and}\;(s,\cal P_{G,\Sigma}, g\cdot\alpha),\quad g\in\op{Maps}(T,\cal L_xG),
	$$
	with $\Gamma_{\dR}(\Sigma, (\widehat{\fr g}_{\cal D}^{(\kappa,E)})_{\cal P_{G,\Sigma}})$;\footnote{We are slightly abusing the notation $(\widehat{\fr g}_{\cal D}^{(\kappa, E)})_{\cal P_{G,\Sigma}}$, since this is now the Kac-Moody extension associated to the parameter $T\xrightarrow{s} S \xrightarrow{(\fr g^{\kappa},E)}\op{Par}_G$, twisted by $\cal P_{G,\Sigma}$ on the open curve $T\times\Sigma$.}
	\item relating the above two fibers via the identity map on $\Gamma_{\dR}(\Sigma, (\widehat{\fr g}_{\cal D}^{(\kappa,E)})_{\cal P_{G,\Sigma}})$. \qed
\end{itemize}
\end{claim}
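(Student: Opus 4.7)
The plan is to exploit the fact that the $\cal L_xG$-action on $\op{Bun}_{G,\infty x}$ modifies only the trivialization $\alpha$ over $D_x$ (equivalently, over $\overset{\circ}{D}_x$ under Beauville-Laszlo) while leaving the underlying $G$-bundle on $\Sigma$ unchanged. Consequently, I expect the restriction of the tautological bundle $\tilde{\cal P}_G$ to $\tilde{\cal S}\times\Sigma$ to descend, up to canonical isomorphism, along the $\cal L_xG$-invariant forgetful map from $\tilde{\cal S}$ to the product of $S$ with the moduli stack of $G$-bundles on $\Sigma$, the latter carrying a trivial $\cal L_xG$-action. This descent is morally what produces the equivariance structure; the remaining task is to formulate it at the level of $T$-points and check the cocycle condition.

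Concretely, I would first observe that for a $T$-point $(s,\cal P_{G,\Sigma},\alpha)$ of $\tilde{\cal S}$, the pullback of $\tilde{\cal P}_G\big|_{\tilde{\cal S}\times\Sigma}$ to $T\times\Sigma$ identifies canonically with $\cal P_{G,\Sigma}$ by the very moduli interpretation. Applying this identification at both $(s,\cal P_{G,\Sigma},\alpha)$ and $(s,\cal P_{G,\Sigma},g\cdot\alpha)$ for $g\in\op{Maps}(T,\cal L_xG)$ yields two canonical isomorphisms of the fibers of $(\widehat{\fr g}_{\cal D}^{(\kappa,E)})_{\tilde{\cal P}_G}\big|_{\tilde{\cal S}\times\Sigma}$ with $(\widehat{\fr g}_{\cal D}^{(\kappa,E)})_{\cal P_{G,\Sigma}}$, the latter being the Kac-Moody Lie-$*$ algebra associated to the parameter pulled back along $T\xrightarrow{s}S$ and twisted by $\cal P_{G,\Sigma}$ on $T\times\Sigma$. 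Taking $\Gamma_{\dR}(\Sigma,-)$ and composing one isomorphism with the inverse of the other supplies the desired action map, which tautologically agrees with the identity on $\Gamma_{\dR}(\Sigma,(\widehat{\fr g}_{\cal D}^{(\kappa,E)})_{\cal P_{G,\Sigma}})$.

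The cocycle condition then reduces to an identity: for $g_1,g_2\in\op{Maps}(T,\cal L_xG)$, both the map induced by $g_2g_1$ and the composition of those induced by $g_1$ and $g_2$ unfold, via the same system of canonical identifications, into the identity on $\Gamma_{\dR}(\Sigma,(\widehat{\fr g}_{\cal D}^{(\kappa,E)})_{\cal P_{G,\Sigma}})$. Functoriality in $T$ and compatibility with base change in $S$ follow from the corresponding functoriality of the Kac-Moody Lie-$*$ algebra construction in \S\ref{sec-kac-moody} and of de Rham cohomology along $\Sigma$.

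The main obstacle I anticipate is not the construction itself, which is essentially tautological, but in verifying that the inclusion \eqref{eq-equivariant-submodule} is equivariant for this new structure on the source and the $\cal L_xG$-structure on $\Gamma_{\dR}(\overset{\circ}{D}_{\underline{\tilde x}},\widehat{\fr g}_{\cal D}^{(\kappa,E)})$ inherited from the action of \S\ref{sec-kac-moody-bracket}. This compatibility is precisely what is needed in \S\ref{sec-liealgd-ideal} to conclude Lemma \ref{lem-liealgd-ideal}: under the canonical identifications above, the induced $\fr g(\cal K_x)\widehat{\boxtimes}\cal O_{\tilde{\cal S}}$-action on sections over $\Sigma$ becomes the trivial action, so the submodule \eqref{eq-equivariant-submodule} is preserved and the Lie bracket \eqref{eq-alternative-lie-bracket} restricts to it, as required.
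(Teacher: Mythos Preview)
Your proposal is correct and matches the paper's approach: the paper declares the claim ``immediate'' and simply states the construction, while you have unpacked the underlying tautology---namely that the $\cal L_xG$-action only modifies the trivialization $\alpha$ and leaves $\cal P_{G,\Sigma}$ fixed, so the fibers over $\Sigma$ are canonically identified and the cocycle condition is trivially satisfied. Your final paragraph anticipates the step \emph{after} the claim (the commutativity of diagram \eqref{eq-equivariant-submodule-diagram}), which the paper verifies separately; that discussion is correct context but is not needed to establish the claim itself.
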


So we have reduced the problem to showing that \eqref{eq-equivariant-submodule} preserves the $S\times\cal L_xG$-equivariance structure. In other words, the following diagram in $\op{QCoh}^{\op{Tate}}(T)$ needs to commute:
\begin{equation}
\label{eq-equivariant-submodule-diagram}
\xymatrix@C=1.5em@R=1.5em{
	\Gamma_{\dR}(\Sigma, (\widehat{\fr g}_{\cal D}^{(\kappa,E)})_{\cal P_{G,\Sigma}}) \ar[r]^-{\sim}\ar[d]^{\op{id}} & \Gamma_{\dR}(\Sigma, (\widehat{\fr g}_{\cal D}^{(\kappa, E)})_{\cal P_G})\big|_{(s,\cal P_{G,\Sigma},\alpha)} \ar[r]^-{\eqref{eq-equivariant-submodule}} & \Gamma_{\dR}(\overset{\circ}{D}_{\underline {\tilde x}}, \widehat{\fr g}_{\cal D}^{(\kappa, E)}) \ar[d]^{g\cdot} \\
	\Gamma_{\dR}(\Sigma, (\widehat{\fr g}_{\cal D}^{(\kappa,E)})_{\cal P_{G,\Sigma}}) \ar[r]^-{\sim} & \Gamma_{\dR}(\Sigma, (\widehat{\fr g}_{\cal D}^{(\kappa, E)})_{\cal P_G})\big|_{(s,\cal P_{G,\Sigma},g\cdot\alpha)} \ar[r]^-{\eqref{eq-equivariant-submodule}} & \Gamma_{\dR}(\overset{\circ}{D}_{\underline {\tilde x}}, \widehat{\fr g}_{\cal D}^{(\kappa, E)}).
}
\end{equation}
Here, the two horizontal compositions express the procedure of
\begin{itemize}
	\item first restricting a flat section of $(\widehat{\fr g}_{\cal D}^{(\kappa,E)})_{\cal P_{G,\Sigma}}$ to $\overset{\circ}{D}_{\underline {\tilde x}} \hookrightarrow T\times\Sigma$;
	\item then using the trivialization $\alpha$ (respectively, $g\cdot\alpha$) to identify it with a section of $\widehat{\fr g}_{\cal D}^{(\kappa, E)}$.
\end{itemize}
However, the following diagram is tautologically commutative:
$$
\xymatrix@R=1.5em{
	\Gamma_{\dR}(\overset{\circ}{D}_{\underline {\tilde x}}, (\widehat{\fr g}_{\cal D}^{(\kappa, E)})_{\cal P_{G,\Sigma}}) \ar[d]^{\op{id}}\ar[r]^-{\alpha_*} & \Gamma_{\dR}(\overset{\circ}{D}_{\underline {\tilde x}}, \widehat{\fr g}_{\cal D}^{(\kappa, E)}) \ar[d]^{g\cdot} \\
	\Gamma_{\dR}(\overset{\circ}{D}_{\underline {\tilde x}}, (\widehat{\fr g}_{\cal D}^{(\kappa, E)})_{\cal P_{G,\Sigma}}) \ar[r]^-{(g\cdot\alpha)_*} & \Gamma_{\dR}(\overset{\circ}{D}_{\underline {\tilde x}}, \widehat{\fr g}_{\cal D}^{(\kappa, E)}),
}
$$
so we obtain the commutativity of \eqref{eq-equivariant-submodule-diagram}. \qed(Lemma \ref{lem-liealgd-ideal})

\subsection{Descent to $\op{Bun}_G$} We continue to fix the $S$-point $(\fr g^{\kappa}, E)$ of $\op{Par}_G$. The goal of this section is to ``descend'' the classical quasi-twisting $\tilde{\cal T}_G^{(\kappa, E)}$ to $\op{Bun}_G$. Recall the action of $H:=S\times\cal L_x^+G$ on $\tilde{\cal S}=S\times\op{Bun}_{G,\infty x}$, whose quotient is given by $\tilde{\cal S}/H\xrightarrow{\sim} S\times\op{Bun}_G$. Let $\fr k:=\fr g^{\kappa}(\cal O_x)$. Then $(\fr k, H)$ forms a classical action pair (see \S\ref{sec-action-pair}).

\subsubsection{} We now equip \eqref{eq-qtw-full-level} with the structure of a $(\fr k, H)$-action. Indeed, applying the functor $\Gamma(D_{\underline {\tilde x}},-)$ to \eqref{eq-kac-moody-extension-twist} and using $\Gamma_{\dR}(D_{\underline {\tilde x}}, \omega_{\tilde{\cal X}/\tilde{\cal S}})=0$, we obtain a commutative diagram:
\begin{equation}
\label{eq-arc-splitting}
\xymatrix@C=1.5em@R=1.5em{
	& \Gamma_{\dR}(D_{\underline {\tilde x}}, \widehat{\fr g}_{\cal D}^{(\kappa,E)}) \ar[r]^-{\sim}\ar[d] & \Gamma(D_{\underline {\tilde x}}, \fr g^{\kappa}\boxtimes\cal O_{\op{Bun}_{G,\infty x}\times X}) \ar[d]^{\eta} \ar@{.>}[dl]_{\widehat{\eta}} \\
	\Gamma_{\dR}(\overset{\circ}{D}_{\underline{\tilde x}}, \omega_{\tilde{\cal X}/\tilde{\cal S}}) \ar[r] & \Gamma_{\dR}(\overset{\circ}{D}_{\underline{\tilde x}}, \widehat{\fr g}_{\cal D}^{(\kappa,E)}) \ar[r] & \fr g^{\kappa}(\cal K_x)\widehat{\boxtimes}\cal O_{\op{Bun}_{G,\infty x}}
}
\end{equation}
where the splitting $\widehat{\eta}$ exists for obvious reasons. Since $\Gamma(D_{\underline {\tilde x}}, \fr g^{\kappa}\boxtimes\cal O_{\op{Bun}_{G,\infty x}\times X})$ is canonically isomorphic to $\fr k\widehat{\otimes}\cal O_{\tilde{\cal S}}$, we obtain the $(\fr k, H)$-action datum on $\widehat{\cal L}^{(\kappa, E)}$ via the composition:
$$
\fr k\widehat{\otimes}\cal O_{\tilde{\cal S}} \xrightarrow{\widehat{\eta}} \Gamma_{\dR}(\overset{\circ}{D}_{\underline {\tilde x}}, \widehat{\fr g}_{\cal D}^{(\kappa,E)}) \rightarrow \widehat{\cal L}^{(\kappa, E)},
$$
which we again denote by $\widehat{\eta}$.

\begin{rem}
Ideally, we would like to define $\cal T^{(\kappa,E)}$ as the quotient $\mathbf Q^{(\fr k, H)}(\tilde{\cal T}^{(\kappa,E)})$. However, we run into problems because $\tilde{\cal S}$ is not locally of finite type (so we cannot use $\mathbf Q^{(H,H^{\flat})}$ \eqref{eq-quotient-geom}), and $\widehat{\eta}$ is not injective (so we cannot use $\mathbf Q_{\op{inj}}^{(\fr k, H)}$ \eqref{eq-quotient-inj}). In what follows, we circumvent this technical problem using a combination of the two functors.
\end{rem}

\subsubsection{} For each integer $n\ge 0$, let $\op{Bun}_{G,nx}$ denote the stack classifying pairs $(\cal P_G, \alpha_n)$ where $\cal P_G$ is a $G$-bundle on $X$ and $\alpha_n : \cal P_G\big|_{\op{Spec}(\cal O_x^{(n)})} \xrightarrow{\sim} \cal P_G^0$ is a trivialization over the $n$th infinitesimal neighborhood $\op{Spec}(\cal O_x^{(n)})$ of $x$. Then $\op{Bun}_{G,nx}$ is an $\cal L_{nx}G$-torsor over $\op{Bun}_G$, where $\cal L_{nx}G$ classifies maps from $\op{Spec}(\cal O_x^{(n)})$ to $G$.

\begin{rem}
In particular, $\cal L_{nx}G$ is a group scheme of finite type.
\end{rem}

Set $H_n:=S\times\cal L_{nx}G$, and we have an exact sequence of group schemes over $S$:
$$
1 \rightarrow H^n \rightarrow H\rightarrow H_n \rightarrow 1.
$$
Define $\fr k^n:=\cal K\otimes\fr m_x^n$, and $\fr k_n:=\fr k/\fr k^n\cong\cal K\otimes\cal O_x^{(n)}$. Then the above sequence extends to an exact sequence of action pairs (see \S\ref{sec-normal-subpair}):
\begin{equation}
\label{eq-relevant-action-pair-sequence}
1 \rightarrow (\fr k^n, H^n) \rightarrow (H,\fr k) \rightarrow (H_n, \fr k_n) \rightarrow 1.
\end{equation}

\subsubsection{} We briefly review the Harder-Narasimhan truncation of $\op{Bun}_G$. For this, we need to fix a Borel $B\hookrightarrow G$, whose quotient torus is denoted by $T$. There are canonical maps
$$
\xymatrix@C=1em@R=1em{
 & \op{Bun}_B \ar[dl]_{\fr p}\ar[dr]^{\fr q} & \\
\op{Bun}_G & & \op{Bun}_T.
}
$$

Let $\Lambda_G$ denote the coweight lattice of $G$, and $\Lambda_G^+, \Lambda^{\op{pos}}_G\subset\Lambda_G$ denote the submonoid of dominant coweights, respectively the submonoid generated by positive simple coroots. Denote by $\Lambda_G^{+,\mathbb Q}$ and $\Lambda_G^{\op{pos}, \mathbb Q}$ the corresponding rational cones.

There is a partial ordering on $\Lambda_G^{\mathbb Q}$, given by:
$$
\lambda_1 \underset{G}{\leq}\lambda_2 \iff \lambda_2-\lambda_1\in\Lambda_G^{\op{pos},\mathbb Q}.
$$
Given $\lambda\in\Lambda_G^{\mathbb Q}$, define $\op{Bun}_B^{\lambda}$ as the pre-image of $\lambda$ under the composition:
$$
\op{Bun}_B\xrightarrow{\fr q}\op{Bun}_T \xrightarrow{\deg} \Lambda_T^{\mathbb Q}\cong\Lambda_G^{\mathbb Q}.
$$
For each $\theta\in\Lambda_G^{+,\mathbb Q}$, define $\op{Bun}_G^{(\le\theta)}$ as the substack of $\op{Bun}_G$ classifying $G$-bundles $\cal P_G$ with the following property:
\begin{itemize}
	\item for each $B$-bundle $\cal P_B\in\op{Bun}_B^{\lambda}$ with $\fr p(\cal P_B)\cong\cal P_G$, we have $\lambda\underset{G}{\le}\theta$.
\end{itemize}

The following result is proved in \cite{DG11}:
\begin{lem}
$\op{Bun}_G^{(\le\theta)}$ is an open, quasi-compact substack of $\op{Bun}_G$. \qed
\end{lem}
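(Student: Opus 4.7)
The plan is to establish the two properties of $\op{Bun}_G^{(\le\theta)}$ separately, both via the Harder-Narasimhan theory for $G$-bundles as developed in \cite{DG11}.

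For openness, I would show that the complement is closed in $\op{Bun}_G$. A bundle $\cal P_G$ lies in the complement precisely when it admits a $B$-reduction $\cal P_B$ of some degree $\lambda$ with $\lambda\not\underset{G}{\le}\theta$. Unpacking the partial order, this means there exists a fundamental weight $\check\omega_i$ of $G$ with $\langle\check\omega_i,\theta-\lambda\rangle<0$. The degree $\langle\check\omega_i,\lambda\rangle$ of the associated line bundle $\cal L_{\cal P_B}^{\check\omega_i}$ admits a Pl\"ucker-style interpretation: the existence of such a $B$-reduction corresponds to the existence of a nowhere-vanishing section of a suitable twist of the $G$-associated vector bundle $\cal V_{\cal P_G}^{\check\omega_i}$. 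Since the locus of $\cal P_G$ admitting such a section in a fixed degree is closed by upper semicontinuity of $h^0$, and one only needs finitely many ``violation profiles'' to cover all $\lambda\not\underset{G}{\le}\theta$ (those with bounded distance above $\theta$), the complement is a finite union of closed substacks.

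For quasi-compactness, I would invoke the Harder-Narasimhan stratification of $\op{Bun}_G$. Every $G$-bundle admits a canonical reduction to a standard parabolic $P$ with Levi $M$, such that the induced $M$-bundle is semistable of slope $\mu\in\Lambda_G^{\op{pos},\bb Q}$ that is dominant-regular with respect to $P$. The inclusion $\cal P_G\in\op{Bun}_G^{(\le\theta)}$ cuts out precisely the HN strata with $\mu\underset{G}{\le}\theta$. There are finitely many such strata, since there are only finitely many standard parabolics and, for each, only finitely many slopes $\mu$ compatible with the bound. Each individual stratum is itself quasi-compact, being presented as a nilpotent extension (classifying liftings of the canonical $M$-reduction to a $P$-bundle, i.e., $H^1$ of the unipotent radical) over the semistable stack $\op{Bun}_M^{\mu,\op{ss}}$.

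The main obstacle is the quasi-compactness of $\op{Bun}_M^{\mu,\op{ss}}$, which is the genuinely non-trivial input: it goes back to Ramanathan for simple $G$ and is established in the required generality in \cite{DG11}, using e.g.~a generic uniformization argument together with boundedness of appropriate Quot schemes. Once this is granted, the finiteness of HN types and the semicontinuity argument for openness are essentially formal. In short, the ``proof'' of the lemma reduces to a pointer to \emph{loc.cit.}
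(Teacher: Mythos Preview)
Your conclusion matches the paper exactly: the lemma is stated with a \qed and the line ``The following result is proved in \cite{DG11}'' immediately preceding it; there is no proof in the paper beyond the citation, and your final sentence lands in the same place.

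That said, a word of caution about the openness sketch you offered en route. The claim that ``one only needs finitely many violation profiles to cover all $\lambda\not\underset{G}{\le}\theta$'' is not correct as stated: there are infinitely many such $\lambda$, and the complement of $\op{Bun}_G^{(\le\theta)}$ is an infinite union of images of various $\op{Bun}_B^{\lambda}$. Likewise, upper semicontinuity of $h^0$ controls existence of sections, not of \emph{nowhere-vanishing} sections, so that step does not yield closedness directly. The standard argument in \cite{DG11} instead goes through upper semicontinuity of the Harder--Narasimhan type itself (equivalently, properness of the map from Drinfeld's compactification $\overline{\op{Bun}}_B^{\lambda}\to\op{Bun}_G$), which shows that each closed HN stratum $\op{Bun}_G^{(\ge\mu)}$ is genuinely closed; $\op{Bun}_G^{(\le\theta)}$ is then the complement of finitely many such. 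Your quasi-compactness outline is accurate, with the boundedness of $\op{Bun}_M^{\mu,\op{ss}}$ being the substantive input. None of this affects the validity of your proposal, since you correctly defer to the reference; just be aware that the openness half is less formal than your sketch suggests.
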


\begin{rem}
The definition of $\op{Bun}_G^{(\le\theta)}$ in \cite{DG11} refers to all standard parabolics $P$ of $G$, rather than just the Borel. However, the two definitions are equivalent; see the discussion in \S7.3.3 in \emph{loc.cit}.
\end{rem}

\subsubsection{} For each integer $n\ge 0$ (as well as $n=\infty$), we let $\op{Bun}_{G,nx}^{(\le\theta)}$ denote the preimage of $\op{Bun}_G^{(\le\theta)}$ under the canonical map $\op{Bun}_{G,nx} \rightarrow \op{Bun}_G$. We denote the universal $G$-bundle over $\op{Bun}_G^{(\le\theta)}\times X$ by $\cal P_G$, and that over $\op{Bun}_{G,\infty x}^{(\le\theta)}\times X$ by $\tilde{\cal P}_G$; their pullbacks to $S\times\op{Bun}_G^{(\le\theta)}\times X$ and $S\times\op{Bun}_{G,\infty x}^{(\le\theta)}\times X$ are denoted by the same characters.

The key technical assertion we need is:

\begin{prop}
\label{prop-lattice-property}
For each $\theta\in\Lambda_G^{+,\mathbb Q}$, there exists an integer $N(\theta)$ such that whenever $n\ge N(\theta)$, we have
$$
(\fr g^{\kappa}(\fr m_x^n)\widehat{\boxtimes}\cal O_{\op{Bun}^{(\le\theta)}_{G,\infty x}}) \cap \Gamma(\Sigma, \fr g^{\kappa}_{\tilde{\cal P}_G}) = 0
$$
as submodules of $\fr g^{\kappa}(\cal K_x)\widehat{\boxtimes}\cal O_{\op{Bun}_{G,\infty x}^{(\le\theta)}}$ (via $\eta$ and $\gamma$).
\end{prop}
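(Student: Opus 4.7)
The plan is to identify the intersection with a relative module of global sections on $X$, and then invoke Harder--Narasimhan bounds available on the quasi-compact truncation $\op{Bun}_G^{(\le\theta)}$.

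First, using the trivialization $\alpha$ of $\tilde{\cal P}_G$ over $D_{\underline{\tilde x}}$ and Beauville--Laszlo gluing, any section of $\fr g^{\kappa}_{\tilde{\cal P}_G}$ over $\Sigma$ whose image in $\fr g^{\kappa}(\cal K_x)\widehat{\boxtimes}\cal O_{\op{Bun}_{G,\infty x}^{(\le\theta)}}$ lies in the submodule $\fr g^{\kappa}(\fr m_x^n)\widehat{\boxtimes}\cal O_{\op{Bun}_{G,\infty x}^{(\le\theta)}}$ extends uniquely across $x$ to a global section vanishing to order $n$ at $x$. Hence the intersection in question identifies with the relative global sections
$$
(p_{\tilde{\cal S}})_*\bigl(\fr g^{\kappa}_{\tilde{\cal P}_G}\otimes\cal O_{\tilde{\cal X}}(-n\underline{\tilde x})\bigr),
$$
where $p_{\tilde{\cal S}}:\tilde{\cal X}\to\tilde{\cal S}$ denotes the projection.

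Since $\pi:\op{Bun}^{(\le\theta)}_{G,\infty x}\to\op{Bun}_G^{(\le\theta)}$ is pro-smooth (being an $\cal L_x^+G$-torsor), flat base change further identifies this pushforward with the $\pi^*$ of the analogous pushforward on $S\times\op{Bun}_G^{(\le\theta)}$, so it suffices to show that the latter vanishes. By cohomology and base change along the proper projection $S\times\op{Bun}_G^{(\le\theta)}\times X\to S\times\op{Bun}_G^{(\le\theta)}$, this reduces to the fiberwise vanishing $H^0(X,\fr g^{\kappa}(s)_{\cal P_G}(-nx))=0$ at every geometric point $(s,\cal P_G)$.

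To verify this fiberwise vanishing, observe that by Corollary \ref{cor-subspace-representation} the $G$-representation $\fr g^{\kappa}(s)$ is (non-canonically) isomorphic to $\fr g$, so $\fr g^{\kappa}(s)_{\cal P_G}$ is abstractly isomorphic to the adjoint bundle $\fr g_{\cal P_G}$ as a vector bundle on $X$; in particular, its Harder--Narasimhan slopes depend only on $\cal P_G$ and not on $s$. Combining the upper semicontinuity of the HN polygon with the quasi-compactness of $\op{Bun}_G^{(\le\theta)}$ yields an integer $N(\theta)$ bounding the maximum HN slope of $\fr g_{\cal P_G}$ uniformly for $\cal P_G\in\op{Bun}_G^{(\le\theta)}$. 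For any $n\ge N(\theta)$ the twist $\fr g_{\cal P_G}(-nx)$ then has strictly negative maximum slope, which forces $H^0$ to vanish (as any nonzero section would yield a line subbundle $\cal O_X\hookrightarrow\fr g_{\cal P_G}(-nx)$ of slope $0$).

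The main obstacle is the uniform Harder--Narasimhan estimate for the adjoint bundle on $\op{Bun}_G^{(\le\theta)}$. This is a standard consequence of Harder--Narasimhan theory for $G$-bundles---a bound on the degrees of Borel reductions of $\cal P_G$, as in the definition of the truncation, translates via the weights of the adjoint representation into bounds on the slopes of any associated bundle---but some care is required to extract this uniformly over the quasi-compact substack, for which one may invoke the results of \cite{DG11}.
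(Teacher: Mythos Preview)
Your proof is correct and follows the same approach as the paper: identify the intersection with $\op R^0(\op{pr})_*\fr g^{\kappa}_{\cal P_G}(-nx)$, descend by flat base change to $S\times\op{Bun}_G^{(\le\theta)}$, invoke Corollary~\ref{cor-subspace-representation} to replace $\fr g^{\kappa}(s)_{\cal P_G}$ by the adjoint bundle, and use quasi-compactness of $\op{Bun}_G^{(\le\theta)}$ to find $N(\theta)$. The only difference in emphasis is that the paper spells out your ``cohomology and base change'' step---it shows $\op R^1(\op{pr})_*\fr g^{\kappa}_{\cal P_G}(-nx)$ is locally free via a Riemann--Roch fiber-dimension count, so that $\op R^0$ is a vector bundle with vanishing fibers---whereas conversely you supply the Harder--Narasimhan slope argument for the existence of $N(\theta)$ that the paper leaves implicit.
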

\begin{proof}
Fix $\theta\in\Lambda_G^{+,\mathbb Q}$. For each integer $n\ge 0$, we have an isomorphism:
$$
(\fr g^{\kappa}(\fr m_x^n)\widehat{\boxtimes}\cal O_{\op{Bun}^{(\le\theta)}_{G,\infty x}}) \cap \Gamma(\Sigma, \fr g^{\kappa}_{\tilde{\cal P}_G}) \xrightarrow{\sim} \op R^0(\op{pr}_{\infty x})_*\fr g^{\kappa}_{\tilde{\cal P}_G}(-nx),
$$
where $\op{pr}_{\infty x}$ is the projection map in the following Cartesian diagram:
$$
\xymatrix@C=1.5em@R=1.5em{
	S\times\op{Bun}_{G,\infty x}^{(\le\theta)} \times X \ar[r]\ar[d]^{\op{pr}_{\infty x}} & S\times\op{Bun}_G^{(\le\theta)}\times X \ar[d]^{\op{pr}} \\
	S\times\op{Bun}_{G,\infty x}^{(\le\theta)} \ar[r] & S\times\op{Bun}_G^{(\le\theta)}.
}
$$

Since $\tilde{\cal P}_G$ is the pullback of the universal $G$-bundle ${\cal P}_G$ over $S\times\op{Bun}_{G}^{(\le\theta)}\times X$, it suffices to show that $\op R^0(\op{pr})_*\fr g_{{\cal P}_G}^{\kappa}(-nx)$ vanishes for sufficiently large $n$ (relative to $\theta$).\footnote{Identification of $\op R^0(\op{pr}_{\infty x})_*\fr g_{\cal P_G}^{\kappa}(-nx)$ with the pullback of $\op R^0(\op{pr})_*\fr g_{\cal P_G}^{\kappa}(-nx)$ follows from flatness of the projection $S\times\op{Bun}_{G,\infty x}^{(\le\theta)}\rightarrow S\times\op{Bun}_G^{(\le\theta)}$.} We shall choose $n$ such that $\op H^0(X, \fr g_{\underline{\cal P}_G}(-nx))$ vanishes for all $\underline{\cal P}_G\in\op{Bun}_G^{(\le\theta)}$.

\begin{claim}
For such $n$, $\op R^0(\op{pr})_*\fr g_{\cal P_G}^{\kappa}(-nx)$ is a vector bundle.
\end{claim}
\noindent
Indeed, representing $\op{pr}_*\fr g_{\cal P_G}^{\kappa}(-nx)$ by a two-term complex of vector bundles, it suffices to show that $\op R^1(\op{pr})_*\fr g_{\cal P_G}^{\kappa}(-nx)$ is flat. However, its fiber at a $k$-point $(\kappa,\underline{\cal P}_G)$ is given by:
	$$
	\op H^1(\op L\iota_{(\kappa,\underline{\cal P}_G)}^*\circ\op R\op{pr}_*\fr g_{\cal P_G}^{\kappa}(-nx)) \cong \op H^1(X, \fr g_{\underline{\cal P}_G}(-nx)),
	$$
	since $\kappa\cong\fr g$ as a $G$-representation (Corollary \ref{cor-subspace-representation}). The Riemann-Roch theorem shows:
	$$
	\dim\op H^1(X,\fr g_{\underline{\cal P}_G}(-nx)) = -\deg(\fr g_{\underline{\cal P}_G}(-nx)) - \dim(\fr g)\cdot(1-g) = \dim(\fr g)(n+g-1),
	$$
	which is constant as the $k$-point $(\kappa,\underline{\cal P}_G)$ varies.

Now that $\op R^0(\op{pr})_*\fr g_{\cal P_G}^{\kappa}(-nx)$ is a vector bundle, its fiber at any $k$-point $(\kappa, \underline{\cal P}_G)$ can be computed as follows:
$$
\op R^0(\op{pr})_*\fr g_{{\cal P}_G}^{\kappa}(-nx) \big|_{(\kappa,\underline{\cal P}_G)} \xrightarrow{\sim} \op H^0(X, \fr g_{\underline{\cal P}_G}(-nx)) \cong 0.
$$
This establishes the required vanishing.
\end{proof}

It follows from Proposition \ref{prop-lattice-property} that the $(\fr k, H)$-algebroid $\cal L^{\kappa}$ (hence also $\cal L^{(\kappa, E)}$) is an object of $\mathbf{LieAlgd}^{(\fr k^n ,H^n)}_{\op{inj}}(S\times\op{Bun}_G^{(\le\theta)}/S)$ whenever $n\ge N(\theta)$.

\subsubsection{} For each $\theta\in\Lambda_G^{+,\mathbb Q}$, denote by $\tilde{\cal T}^{(\le\theta)}_G$ the restriction of the classical quasi-twisting $\tilde{\cal T}_G^{(\kappa, E)}$ to $S\times\op{Bun}_{G,\infty x}^{(\le\theta)}$.\footnote{We temporarily suppress the notational dependence on the parameter $(\fr g^{\kappa}, E)$.} Given $n\ge N(\theta)$, we can define a quasi-twisting over $S\times\op{Bun}_G^{(\le\theta)}$ by the formula:
\begin{equation}
\label{eq-descent-formula}
\cal T^{(\le\theta)}_{G, n} := \mathbf Q^{(H_n, H_n^{\flat})} \circ \mathbf Q^{(\fr k^n, H^n)}_{\op{inj}}(\tilde{\cal T}^{(\le\theta)}_G),
\end{equation}
where $H_n^{\flat}$ denotes the quotient $H_n/\exp(\fr k_n)$ (see \S\ref{sec-classical-to-geom-action-pair}).

\begin{rem}
Note that $\mathbf Q^{(\fr k^n, H^n)}_{\op{inj}}(\cal T^{(\le\theta)}_{G})$ is well-defined as a classical quasi-twisting over $S\times\op{Bun}_{G,nx}^{(\le\theta)}$, equipped with a $(\fr k_n, H_n)$-action. Since the stack $S\times\op{Bun}_{G,nx}^{(\le\theta)}$ is locally of finite type, any classical quasi-twisting gives rise to a quasi-twisting, and the $(\fr k_n,H_n)$-action induces an $(H_n, H_n^{\flat})$-action (see \S\ref{sec-classical-to-geom-action}). Hence the formula \eqref{eq-descent-formula} makes sense.
\end{rem}

\subsubsection{} Suppose $n_1\ge n_2\ge N(\theta)$. We would like to construct a canonical isomorphism of quasi-twistings
\begin{equation}
\label{eq-descent-compare-n}
\cal T_{G, n_1}^{(\le\theta)} \xrightarrow{\sim} \cal T_{G, n_2}^{(\le\theta)}.
\end{equation}
Indeed, let $(\fr k', H')$ be the kernel of the map $(\fr k_{n_1}, H_{n_1}) \rightarrow (\fr k_{n_2}, H_{n_2})$. In particular, $H'$ is of finite type. Furthermore, we have an exact sequence of classical action pairs:
$$
1 \rightarrow (\fr k^{n_1}, H^{n_1}) \rightarrow (\fr k^{n_2}, H^{n_2}) \rightarrow (\fr k', H') \rightarrow 1.
$$
Hence, there are isomorphisms:
\begin{align*}
\cal T_{G, n_1}^{(\le\theta)} \xrightarrow{\sim} & \mathbf Q^{(H_{n_2}, H_{n_2}^{\flat})}\circ \mathbf Q^{(H', (H')^{\flat})}\circ \mathbf Q^{(\fr k^{n_1}, H^{n_1})}_{\op{inj}}(\tilde{\cal T}_{G}^{(\le\theta)}) \\
& \xrightarrow{\sim} \mathbf Q^{(H_{n_2}, H_{n_2}^{\flat})}\circ \mathbf Q^{(\fr k',H')}_{\op{inj}}\circ \mathbf Q^{(\fr k^{n_1}, H^{n_1})}_{\op{inj}}(\tilde{\cal T}_{G}^{(\le\theta)}) \xrightarrow{\sim}  \cal T_{G, n_2}^{(\le\theta)},
\end{align*}
using Propositions \ref{prop-normal-subpair-geom}, \ref{prop-quotient-compare}, and \ref{prop-normal-subpair-cl}. In light of the isomorphism \eqref{eq-descent-compare-n}, we may let $\cal T^{(\le\theta)}_G$ denote the quasi-twisting $\cal T_{G, n}^{(\le\theta)}$ over $S\times\op{Bun}_G^{(\le\theta)}$ for any $n\ge N(\theta)$.

\subsubsection{} Finally, we check that the quasi-twistings $\cal T^{(\le\theta)}_G$ glue along various Harder-Narasimhan truncations. Indeed, suppose $\theta_1,\theta_2\in\Lambda_G^{+,\mathbb Q}$. Then we have isomorphisms:
\begin{align*}
\cal T_{G, n}^{(\le\theta_1)}\big|_{S\times(\op{Bun}_G^{(\le\theta_1)}\cap\op{Bun}_G^{(\le\theta_2)})} \xrightarrow{\sim} & \mathbf Q^{(H_n, (H_n)^{\flat})}\circ\mathbf Q_{\op{inj}}^{(\fr k^n, H^n)}(\cal T_{\infty x}\big|_{S\times(\op{Bun}_{G,\infty x}^{(\le\theta_1)}\cap\op{Bun}_{G,\infty x}^{(\le\theta_2)})}) \\
& \xrightarrow{\sim} \cal T_{G, n}^{(\le\theta_2)}\big|_{S\times(\op{Bun}_G^{(\le\theta_1)}\cap\op{Bun}_G^{(\le\theta_2)})},
\end{align*}
whenever $n\ge N(\theta_1), N(\theta_2)$. Therefore we obtain a quasi-twisting $\cal T^{(\kappa,E)}_G$ on $S\times\op{Bun}_G$ (relative to $S$) whose restriction to each $S\times\op{Bun}_G^{(\le\theta)}$ agrees with $\cal T^{(\le\theta)}_G$.

\begin{nota}
We write $\cal T_G^{(\kappa, E)}=\mathbf Q^{(\fr g^{\kappa}(\cal O_x), \cal L_x^+G)}(\tilde{\cal T}_G^{(\kappa,E)})$, although it is tacitly understood that the construction of $\cal T_G^{(\kappa, E)}$ requires two quotient steps and gluing. In a similar way, we write:
\begin{equation}
\label{eq-qtw-at-level-n}
\cal T_{G,n}^{(\kappa, E)} := \mathbf Q^{(\fr g^{\kappa}(\fr m_x^{(n)}), H^n)}(\tilde{\cal T}_G^{(\kappa, E)}),
\end{equation}
for the corresponding quasi-twisting on $S\times\op{Bun}_{G,nx}$.

Since the construction of $\cal T_G^{(\kappa,E)}$ (resp.~$\cal T_{G,n}^{(\kappa, E)}$) is functorial in $S$, we obtain a \emph{universal} quasi-twisting $\cal T_G^{\op{univ}}$ over $\op{Par}_G\times\op{Bun}_G$ (resp.~$\cal T_{G,n}^{\op{univ}}$ over $\op{Par}_G\times\op{Bun}_{G,nx}$.)
\end{nota}

\begin{rem}
Note that the DG category $\cal T^{(\kappa, E)}_G\Mod$ is naturally a $\op{QCoh}(S)$-module. Again from the functoriality in maps $(\fr g^{\kappa}, E): S\rightarrow \op{Par}_G$, we obtain a sheaf of DG categories over $\op{Par}_G$, denoted by $\cal T_G^{\op{univ}}\Mod$.

The \emph{na\"ive} version of the quantum Langlands duality claims an equivalence of sheaves of DG categories:
\begin{equation}
\label{eq-naive-quantum-langlands}
\cal T^{\op{univ}}_G\Mod \xrightarrow{\sim} \cal T^{\op{univ}}_{\check G}\Mod
\end{equation}
over the common base $\op{Par}_G\xrightarrow{\sim}\op{Par}_{\check G}$ (by \eqref{eq-duality-for-par}). However, the hypothetical equivalence \eqref{eq-naive-quantum-langlands} is false whenever $G$ is not a torus, and a renormalization procedure is required for stating the correct version of quantum Langlands duality.
\end{rem}

\subsubsection{Recovering the classical TDOs}
\label{sec-identify-det}
Suppose $G$ is simple, and we fix a $k$-valued parameter $(\fr g^{\kappa}, 0)$ of $\op{Par}_G$ corresponding to some bilinear form $\kappa$ on $\fr g$. Let $\lambda$ and $c$ be as in Example \ref{eg-dual-coxeter-number}.

Let $\cal L_{G,\det}$ denote the determinant line bundle over $\op{Bun}_G$, and $\tilde{\cal L}_{G,\det}$ its pullback to $\op{Bun}_{G,\infty x}$.

\begin{prop}
\label{prop-recover-classical-tdo}
The classical quasi-twisting \eqref{eq-qtw-full-level} at the parameter $(\fr g^{\op{Kil}}, 0)$:
$$
0 \rightarrow \cal O_{\op{Bun}_{G,\infty x}} \rightarrow \widehat{\cal L}^{(\op{Kil}, 0)} \rightarrow \cal L^{\op{Kil}} \rightarrow 0
$$
identifies with the Picard algebroid $\op{Diff}^{\le 1}(\tilde{\cal L}_{G,\det})$.
\end{prop}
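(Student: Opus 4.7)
The plan is to recognize $\widehat{\cal L}^{(\op{Kil},0)}$ as a Picard algebroid on $\op{Bun}_{G,\infty x}$ and to construct a canonical isomorphism with $\op{Diff}^{\le 1}(\tilde{\cal L}_{G,\det})$ by using the infinitesimal action of the Kac--Moody extension on the determinant line bundle.

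First I would verify that the cokernel $\cal L^{\op{Kil}}$ agrees with the tangent sheaf $\cal T_{\op{Bun}_{G,\infty x}}$. Because $\op{Kil}$ arises from a bilinear form, the projection $\fr g^{\op{Kil}}\to\fr g$ is an isomorphism, and the anchor constructed in \eqref{eq-anchor-map-full-level} reduces to the infinitesimal action $\fr g(\cal K_x)\widehat{\boxtimes}\cal O_{\op{Bun}_{G,\infty x}}\to\cal T_{\op{Bun}_{G,\infty x}}$. The classical Kac--Moody uniformization makes this map surjective, and at every geometric point $(\cal P_{G,\Sigma},\alpha)$ its kernel is identified with $\Gamma(\Sigma,\fr g_{\tilde{\cal P}_G})$, which is exactly the image of $\gamma$ in \eqref{eq-main-splitting}. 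Hence $\cal L^{\op{Kil}}\xrightarrow{\sim}\cal T_{\op{Bun}_{G,\infty x}}$, and $\widehat{\cal L}^{(\op{Kil},0)}$ is a Picard algebroid.

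Next I would construct a morphism of Picard algebroids $\widehat{\cal L}^{(\op{Kil},0)}\to\op{Diff}^{\le 1}(\tilde{\cal L}_{G,\det})$. The line bundle $\tilde{\cal L}_{G,\det}$ carries a canonical action of a central extension of $\cal L_xG$ by $\mathbb G_m$, whose derivative yields a Lie algebroid map
\begin{equation*}
\widehat{\fr g}^{(\op{Kil},0)}\widehat{\boxtimes}\cal O_{\op{Bun}_{G,\infty x}}\longrightarrow\op{Diff}^{\le 1}(\tilde{\cal L}_{G,\det})
\end{equation*}
lifting the infinitesimal action and sending the central $\cal O$ identically to the scalars. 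The crucial input is that the cocycle of the central extension of $\fr g(\cal K_x)$ governing the action on $\tilde{\cal L}_{G,\det}$ coincides with the one in $\widehat{\fr g}^{(\op{Kil},0)}$ built from \eqref{eq-structure-form} on the graph of $\op{Kil}$; both are the Tate residue cocycle $\op{Kil}(\mu,\mu')\cdot\op{Res}((df)f')$. Using the splitting $\widehat\gamma$ from \eqref{eq-main-splitting}, one then checks the above map vanishes on $\widehat\gamma(\Gamma(\Sigma,\fr g_{\tilde{\cal P}_G}))$: these sections represent infinitesimal automorphisms of the pair $(\tilde{\cal P}_G,\alpha)$ on $X$, and $\tilde{\cal L}_{G,\det}$ depends functorially on that pair, so they act trivially. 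Descending through the quotient yields the desired morphism of Picard algebroids, which is a morphism of central extensions of $\cal T_{\op{Bun}_{G,\infty x}}$ by $\cal O_{\op{Bun}_{G,\infty x}}$ and hence automatically an isomorphism.

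The main obstacle is the cocycle identification above: one must confirm that the Kac--Moody extension controlling the action on $\tilde{\cal L}_{G,\det}$ is precisely $\widehat{\fr g}^{(\op{Kil},0)}$, and not a scalar multiple of the minimal form or a shift by the critical form. This is a Tate--determinantal computation for $\fr g(\cal K_x)$ acting on $\fr g(\cal K_x)/\fr g(\cal O_x)$, whose normalization is dictated by the identity $\op{tr}_{\fr g}(\op{ad}\mu\circ\op{ad}\mu')=\op{Kil}(\mu,\mu')$; once this is verified the remaining arguments are formal diagram chases.
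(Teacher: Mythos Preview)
Your proposal is correct and follows essentially the same route as the paper: identify the lower row of \eqref{eq-main-splitting} with the Tate/Kac--Moody extension at the Killing level, use the action of the corresponding central extension of $\cal L_xG$ on $\tilde{\cal L}_{G,\det}$ to produce a Lie algebroid map into $\op{Diff}^{\le 1}(\tilde{\cal L}_{G,\det})$, check that $\Gamma(\Sigma,\fr g_{\tilde{\cal P}_G})$ acts trivially, and conclude that the induced map of central extensions is an isomorphism. The paper handles the cocycle normalization you flag by citing \cite{So00} rather than recomputing it, but your indicated verification via $\op{tr}_{\fr g}(\op{ad}\mu\circ\op{ad}\mu')=\op{Kil}(\mu,\mu')$ is exactly the content of that reference.
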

\begin{proof}
Via the isomorphism $\op{pr}_{\fr g} : \fr g^{\op{Kil}} \xrightarrow{\sim} \fr g$, the lower triangle of \eqref{eq-main-splitting} identifies with:
\begin{equation}
\label{eq-tate-extension}
0 \rightarrow \cal O_{\op{Bun}_{G,\infty x}} \rightarrow \widehat{\fr g}^{\op{Tate}} \widehat{\boxtimes}\cal O_{\op{Bun}_{G,\infty x}} \rightarrow \fr g(\cal K_x) \widehat{\boxtimes}\cal O_{\op{Bun}_{G,\infty x}} \rightarrow 0.
\end{equation}
where $\widehat{\fr g}^{\op{Tate}}$ is the central extension of $\fr g(\cal K_x)$ defined by the cocycle
$$
(\xi\otimes f, \xi'\otimes f') \leadsto \op{Kil}(\xi, \xi')\cdot\op{Res}(df\cdot f').
$$
Recall that \eqref{eq-tate-extension} is a classical quasi-twisting, where the Lie algebroid brackets are induced from the $\cal L_xG$-action on $\op{Bun}_{G,\infty x}$.

It is well known (see, e.g. \cite[\S7, \S10]{So00}) that $\widehat{\fr g}^{\op{Tate}}$ comes from a central extension of group ind-schemes:
$$
1 \rightarrow \mathbb G_m \rightarrow \widehat G^{\op{Tate}} \rightarrow \cal L_xG \rightarrow 1,
$$
and the $\cal L_xG$-action on $\op{Bun}_{G,\infty x}$ extends to an action of $\widehat G^{\op{Tate}}$ on $\widetilde{\cal L}_{G,\det}$. Hence $\widehat{\fr g}^{\op{Tate}}$ acts as derivations on $\widetilde{\cal L}_{G,\det}$, and we obtain a morphism $\widehat{\fr g}^{\op{Tate}}\widehat{\boxtimes}\cal O_{\op{Bun}_{G,\infty x}} \rightarrow \op{Diff}^{\le 1}(\widetilde{\cal L}_{G,\det})$ of Lie algebroids. Note that the following diagram commutes:
$$
\xymatrix@C=1.5em@R=1.5em{
0 \ar[r] & \cal O_{\op{Bun}_{G,\infty x}} \ar[r]\ar[d]^{\rotatebox{90}{$\sim$}} & \widehat{\fr g}^{\op{Tate}} \widehat{\boxtimes}\cal O_{\op{Bun}_{G,\infty x}} \ar[r]\ar[d] & \fr g(\cal K_x) \widehat{\boxtimes}\cal O_{\op{Bun}_{G,\infty x}} \ar[r]\ar[d] & 0. \\
0 \ar[r] & \cal O_{\op{Bun}_{G,\infty x}} \ar[r] & \op{Diff}^{\le 1}(\widetilde{\cal L}_{G,\det}) \ar[r] & \cal T_{\op{Bun}_{G,\infty x}} \ar[r] & 0
}
$$
Furthermore, the $\cal O_{\op{Bun}_{G,\infty x}}$-submodule $\Gamma(\Sigma, \fr g_{\widetilde{\cal P}_G})$ of $\widehat{\fr g}^{\op{Tate}} \widehat{\boxtimes}\cal O_{\op{Bun}_{G,\infty x}}$ acts by zero on $\widetilde{\cal L}_{G,\det}$, so by modding out $\Gamma(\Sigma, \fr g_{\widetilde{\cal P}_G})$, we obtain a morphism of classical quasi-twistings:
$$
\xymatrix@C=1.5em@R=1.5em{
0 \ar[r] & \cal O_{\op{Bun}_{G,\infty x}} \ar[r]\ar[d]^{\rotatebox{90}{$\sim$}} & \widehat{\cal L}^{(\op{Kil},0)} \ar[r]\ar[d] & \cal L^{\op{Kil}} \ar[r]\ar[d] & 0. \\
0 \ar[r] & \cal O_{\op{Bun}_{G,\infty x}} \ar[r] & \op{Diff}^{\le 1}(\widetilde{\cal L}_{G,\det}) \ar[r] & \cal T_{\op{Bun}_{G,\infty x}} \ar[r] & 0
}
$$
where the last terms $\cal L^{\op{Kil}}$ and $\cal T_{\op{Bun}_{G,\infty x}}$ are identified. As such, it is an isomorphism of classical quasi-twistings.
\end{proof}

It follows from Proposition \ref{prop-recover-classical-tdo} that the classical quasi-twisting at $(\fr g^{\kappa}, 0)$ operates on the virtual line bundle $\tilde{\cal L}_{G,\det}^{\lambda}$. Since quotient by the action pair $(\fr g(\cal O_x), \cal L_x^+G)$ agrees with strong quotient of Picard algebroids, we obtain an equivalence
$$
\cal T^{(\kappa,\op{triv})}_G\Mod \xrightarrow{\sim} \op{Diff}(\cal L_{G,\det}^{\lambda})\Mod(\op{Bun}_G).
$$
In particular, the hypothetical equivalence \eqref{eq-naive-quantum-langlands} specializes to \eqref{eq-quantum-langlands}.

\medskip

\section{Recovering $\op{QCoh}(\op{LocSys}_G)$ at $\kappa=\infty$}
\label{sec-infty}

In this section, we show that at level $\infty$, the quasi-twisting $\cal T_G^{(\kappa, E)}$ constructed in \S\ref{sec-construction} recovers the DG algebraic stack $\op{LocSys}_G$ in the following sense: $\cal T_G^{(\infty, 0)}$ is the \emph{inert} quasi-twisting on some triangle $\cal O_{\op{Bun}_G} \rightarrow \widehat{\cal Q}_{\op{desc}}^{(\infty, 0)} \rightarrow \cal Q_{\op{desc}}^{(\infty, 0)}$ in $\op{QCoh}(\op{Bun}_G)$ (see \S\ref{sec-inert-qtw-on-triangle} for what this means). Furthermore, the corresponding stack $\mathbb V(\widehat{\cal Q}_{\op{desc}}^{(\infty,0)})_{\lambda=1}$ over $\op{Bun}_G$ identifies with $\op{LocSys}_G$, so we obtain an equivalence of DG categories $\cal T_G^{(\infty, 0)}\Mod\xrightarrow{\sim}\op{QCoh}(\op{LocSys}_G)$.

Finally, we comment on the role of certain additional parameters $E$ when $\fr g^{\kappa}=\fr g^{\infty}$.

\subsection{The underlying $\cal O_{S\times\op{Bun}_G}$-modules of $\cal T_{G,n}^{(\kappa,0)}$}
\label{sec-underlying-o-modules}
We adopt the following notations from the previous section: let $\cal S_n:=S\times\op{Bun}_{G,nx}$, and $\cal X_n:=S\times\op{Bun}_{G,nx}\times X$ which is a curve over $\cal S_n$. The tautological $G$-bundle over $\cal X_n$ is denoted by $\cal P_G^{(n)}$. Write $\widetilde S:=S\times\op{Bun}_{G,\infty x}$ and similarly for $\widetilde{\cal X}$ and $\widetilde{\cal P}_G$.

Recall the quasi-twisting $\cal T_{G,n}^{(\kappa,0)}$ and $\cal T_G^{(\kappa,0)}=\cal T_{G,0}^{(\kappa,0)}$ which are special cases of \eqref{eq-qtw-at-level-n} for the $S$-valued parameter $(\fr g^{\kappa}, 0)$. Suppose $\cal T_{G,n}^{(\kappa,0)}$ is expressed as a map of some formal moduli problems $\widehat{\cal S}_n^{\flat}\rightarrow\cal S_n^{\flat}$ under $\cal S_n$.

\subsubsection{} Since $\cal T_{G,n}^{(\kappa,0)}$ is the quotient of $\tilde{\cal T}_G^{(\kappa,0)}$ by the pair $(\fr g^{\kappa}(\fr m_x^n), H^n)$, the underlying ind-coherent sheaves of $\widehat{\cal S}_n^{\flat}$ and $\cal S_n^{\flat}$ arise from a triangle in $\op{QCoh}(\cal S_n)$:
\begin{equation}
\label{eq-underlying-o-modules}
\cal O_{\cal S_n} \rightarrow \widehat{\cal Q}^{(\kappa,0)}_{n,\op{desc}} \rightarrow \cal Q^{\kappa}_{n,\op{desc}},
\end{equation}
where $\widehat{\cal Q}_{n,\op{desc}}^{(\kappa,0)}$ is the descent of the $H^n$-equivariant complex of $\cal O_{\tilde{\cal S}}$-modules $\widehat{\cal Q}^{(\kappa,0)}_n:=\op{Cofib}(\fr g^{\kappa}(\fr m_x^n)\boxtimes\cal O_{\op{Bun}_{G,\infty x}} \rightarrow \widehat{\cal L}^{(\kappa, 0)})$, and similarly for $\cal Q_{n,\op{desc}}^{\kappa}$.

\subsubsection{} The Atiyah bundle construction gives rise to a triangle $\omega_{\cal X_n/\cal S_n} \rightarrow \op{At}(\cal P_G^{(n)})^* \rightarrow \fr g_{\cal P_G^{(n)}}^*$ over $\cal X_n$. Its pullback along the projection $\fr g_{\cal P_G^{(n)}}^{\kappa}\rightarrow\fr g^*_{\cal P_G^{(n)}}$ is denoted by:
\begin{equation}
\label{eq-atiyah-at-kappa}
\omega_{\cal X_n/\cal S_n} \rightarrow \cal E^{\kappa}(\cal P_G^{(n)}) \rightarrow \fr g^{\kappa}_{\cal P_G^{(n)}}.
\end{equation}
Note that there is a canonical isomorphism $\cal Q_{n,\op{desc}}^{\kappa} \xrightarrow{\sim} \op R\Gamma(X, \fr g_{\cal P_G^{(n)}}^{\kappa}(-nx))[1]$.

\begin{prop}
\label{prop-underlying-o-modules-identify}
The triangle \eqref{eq-underlying-o-modules} identifies with the push-out of
\begin{equation}
\label{eq-underlying-o-modules-identify}
\op R\Gamma(X, \omega_{\cal X_n/\cal S_n}(-nx))[1] \rightarrow \op R\Gamma(X, \cal E^{\kappa}(\cal P_G^{(n)})(-nx))[1] \rightarrow \op R\Gamma(X, \fr g_{\cal P_G^{(n)}}^{\kappa}(-nx))[1]
\end{equation}
along the trace map $\op R\Gamma(X, \omega_{\cal X_n/\cal S_n}(-nx))[1] \rightarrow \cal O_{\cal S_n}$.
\end{prop}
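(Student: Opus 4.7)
My approach is to reduce via faithfully flat descent along the pro-unipotent $H^n$-torsor $\tilde{\cal S}\rightarrow\cal S_n$ (where $\tilde{\cal S}=S\times\op{Bun}_{G,\infty x}$) to the total space $\tilde{\cal S}$, and then to perform explicit Mayer--Vietoris computations for the Zariski cover $\{\Sigma,D_{\tilde{x}}\}$ of $\tilde{\cal X}$. After pullback, the triangle \eqref{eq-underlying-o-modules} becomes $\cal O_{\tilde{\cal S}}\to\widehat{\cal Q}_n^{(\kappa,0)}\to\cal Q_n^{\kappa}$, with $\widehat{\cal Q}_n^{(\kappa,0)}$ and $\cal Q_n^{\kappa}$ given as cofibers of $\widehat{\gamma}+\widehat{\eta}$ and $\gamma+\eta$ inside $\widehat{\fr g}^{(\kappa,0)}\widehat{\boxtimes}\cal O_{\tilde{\cal S}}$ and $\fr g^{\kappa}(\cal K_x)\widehat{\boxtimes}\cal O_{\tilde{\cal S}}$ respectively. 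Using the trivialization $\alpha_{\infty}$ on $D_{\tilde{x}}$ to identify $\Gamma(\overset{\circ}{D}_{\tilde{x}},\fr g^{\kappa}_{\tilde{\cal P}_G})$ with $\fr g^{\kappa}(\cal K_x)\widehat{\boxtimes}\cal O_{\tilde{\cal S}}$ and $\Gamma(D_{\tilde{x}},\fr g^{\kappa}_{\tilde{\cal P}_G}(-nx))$ with $\fr g^{\kappa}(\fr m_x^n)\widehat{\boxtimes}\cal O_{\tilde{\cal S}}$, Mayer--Vietoris identifies $\cal Q_n^{\kappa}$ with the pullback of $\op R\Gamma(X,\fr g^{\kappa}_{\cal P_G^{(n)}}(-nx))[1]$ via flat base change.

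\medskip

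The structural input for the middle term is the canonical isomorphism of $\cal O_{\tilde{\cal X}}$-modules $(\omega_{\cal X/S}\oplus\fr g^{\kappa}\boxtimes\cal O_X)_{\tilde{\cal P}_G}\xrightarrow{\sim}\cal E^{\kappa}(\tilde{\cal P}_G)$, which holds because the $\cal G$-action cocycle $\op{res}(g)(\xi\oplus\varphi)=\varphi(g^{-1}dg)$ from \S\ref{sec-g-action-on-kac-moody} coincides with the Atiyah cocycle of $\op{At}(\tilde{\cal P}_G)^{*}$ pulled back along $\fr g^{\kappa}\hookrightarrow\fr g\oplus\fr g^{*}\twoheadrightarrow\fr g^{*}$. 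Mayer--Vietoris for $\cal E^{\kappa}(\tilde{\cal P}_G)(-nx)$ then realizes $\op R\Gamma(X,\cal E^{\kappa}(\cal P_G^{(n)})(-nx))[1]$ as the cofiber of the \v Cech difference
\begin{equation*}
\Gamma(\Sigma,\cal E^{\kappa}(\tilde{\cal P}_G))\oplus\Gamma(D_{\tilde{x}},\cal E^{\kappa}(\tilde{\cal P}_G)(-nx))\to\cal E^{\kappa}(\cal K_x)\widehat{\boxtimes}\cal O_{\tilde{\cal S}}.
\end{equation*}
Since $d\cal K_x=\ker(\op{Res}:\omega(\cal K_x)\to k)$, there is a canonical surjection $\cal E^{\kappa}(\cal K_x)\widehat{\boxtimes}\cal O_{\tilde{\cal S}}\twoheadrightarrow\widehat{\fr g}^{(\kappa,0)}\widehat{\boxtimes}\cal O_{\tilde{\cal S}}$ with kernel $d\cal K_x\widehat{\boxtimes}\cal O_{\tilde{\cal S}}$, exhibiting the target as a push-out along the residue map.

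\medskip

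Composing the \v Cech map for $\cal E^{\kappa}$ with this surjection kills the $\omega$-summands of the source (by the global residue theorem on $\Sigma$ and by the no-pole condition on $D_{\tilde{x}}$), so it factors through $\Gamma(\Sigma,\fr g^{\kappa}_{\tilde{\cal P}_G})\oplus\fr g^{\kappa}(\fr m_x^n)\widehat{\boxtimes}\cal O_{\tilde{\cal S}}$, recovering $\widehat{\gamma}+\widehat{\eta}$ on the quotient. The octahedral axiom applied to this composition produces a fiber sequence $d\cal K_x\widehat{\boxtimes}\cal O_{\tilde{\cal S}}\to\op R\Gamma(X,\cal E^{\kappa}(\cal P_G^{(n)})(-nx))[1]\to\widehat{\cal Q}_n^{(\kappa,0)}$, where the left-hand map factors through $\op R\Gamma(X,\omega_{\cal X_n/\cal S_n}(-nx))[1]$ and composes to zero with the trace (since $d\cal K_x$ is the kernel of residue). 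By the universal property of push-outs in a stable $\infty$-category, this identifies $\widehat{\cal Q}_n^{(\kappa,0)}$ with the claimed push-out; descent along $\tilde{\cal S}\to\cal S_n$ delivers the identification on $\cal S_n$. The main obstacle is the derived-categorical bookkeeping in matching extension classes: one must verify that the two natural maps into $\widehat{\cal Q}_n^{(\kappa,0)}$ coming from $\op R\Gamma(X,\cal E^{\kappa}(\cal P_G^{(n)})(-nx))[1]$ and from $\cal O_{\tilde{\cal S}}$ (via the trace) agree on $\op R\Gamma(X,\omega_{\cal X_n/\cal S_n}(-nx))[1]$ up to coherent homotopy, which is ensured by the coincidence of the Kac--Moody residue cocycle with the pullback of the Atiyah cocycle.
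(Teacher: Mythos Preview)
Your approach is correct and essentially the same as the paper's: both reduce to $\tilde{\cal S}$ by $H^n$-equivariance/descent, both identify the $\tilde{\cal P}_G$-twist of the trivial extension $\omega\oplus(\fr g^{\kappa}\otimes\cal O)$ under the $\cal G$-action of \S\ref{sec-g-action-on-kac-moody} with $\cal E^{\kappa}(\tilde{\cal P}_G)$ (the paper states this as an identification $(\widehat{\fr g}_{\cal D}^{(\kappa,0)})_{\tilde{\cal P}_G}\cong\cal E^{\kappa}(\tilde{\cal P}_G)_{\cal D}^{\op{push}}$ at the induced $\cal D$-module level), and both finish with the \v{C}ech/Mayer--Vietoris presentation for the cover $\{\Sigma,D_{\underline{\tilde x}}\}$ together with the residue map. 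The only packaging difference is that the paper carries the push-out step through the $\cal D$-module $\cal E_{\cal D}^{\op{push}}$ and then builds an explicit morphism of triangles by taking quotients by the $\fr m^{(n)}$-lattices and cofibers, whereas you remain at the $\cal O$-module level, produce the surjection $\cal E^{\kappa}(\cal K_x)\twoheadrightarrow\widehat{\fr g}^{(\kappa,0)}$ with kernel $d\cal K_x$, and invoke the octahedral axiom; these are equivalent manipulations, and the paper's explicit triangle morphism is exactly what furnishes the coherent homotopy you flag as the ``main obstacle.''
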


\subsubsection{}
We now begin the proof of Proposition \ref{prop-underlying-o-modules-identify}. Since both triangles in question are descent of triangles over $\tilde{\cal S}$, we ought to establish an $H^n$-equivariant isomorphism between the triangle:
\begin{equation}
\label{eq-underlying-o-modules-infty}
\cal O_{\tilde{\cal S}}\rightarrow\widehat{\cal Q}^{(\kappa,0)}_n \rightarrow \cal Q_n^{\kappa}
\end{equation}
and the push-out of the analogous triangle:
\begin{equation}
\label{eq-underlying-o-modules-identify-infty}
\op R\Gamma(X, \omega_{\widetilde{\cal X}/\widetilde{\cal S}}(-nx))[1] \rightarrow \op R\Gamma(X, \cal E^{\kappa}(\widetilde{\cal P}_G)(-nx))[1] \rightarrow \op R\Gamma(X, \fr g_{\widetilde{\cal P}_G}^{\kappa}(-nx))[1]
\end{equation}
under the trace map $\op R\Gamma(X, \omega_{\widetilde{\cal X}/\widetilde{\cal S}}(-nx))[1] \rightarrow \cal O_{\widetilde{\cal S}}$.

\subsubsection{} We describe more explicitly the $\cal D_{\tilde{\cal X}/\tilde{\cal S}}$-modules underlying the extension sequence of Lie-$*$ algebras \eqref{eq-kac-moody-extension-twist}:
$$
0 \rightarrow \omega_{\tilde{\cal X}/\tilde{\cal S}} \rightarrow (\widehat{\fr g}_{\cal D}^{(\kappa,0)})_{\tilde{\cal P}_G} \rightarrow (\fr g_{\cal D}^{\kappa})_{\tilde{\cal P}_G} \rightarrow 0,
$$
in the case where the $E=0$. Namely, consider the $\cal D_{\tilde{\cal X}/\tilde{\cal S}}$-modules induced from the sequence \eqref{eq-atiyah-at-kappa} (where we use $\tilde{\cal X}$ instead of $\cal X^{(n)}$ in the Atiyah bundle construction):
$$
0 \rightarrow (\omega_{\tilde{\cal X}/\tilde{\cal S}})_{\cal D} \rightarrow \cal E^{\kappa}(\widetilde{\cal P}_G)_{\cal D} \rightarrow (\fr g_{\cal D}^{\kappa})_{\tilde{\cal P}_G} \rightarrow 0
$$
Let $\cal E^{\kappa}(\widetilde{\cal P}_G)_{\cal D}^{\op{push}}$ be the push-out along $\op{act} : (\omega_{\tilde{\cal X}/\tilde{\cal S}})_{\cal D} \rightarrow \omega_{\tilde{\cal X}/\tilde{\cal S}}$ of the $\cal D_{\tilde{\cal X}/\tilde{\cal S}}$-module $\cal E^{\kappa}(\tilde{\cal P}_G)_{\cal D}$.

\begin{lem}
The $\cal D_{\tilde{\cal X}/\tilde{\cal S}}$-module underlying the extension $(\widehat{\fr g}_{\cal D}^{(\kappa, 0)})_{\tilde{\cal P}_G}$ identifies with $\cal E^{\kappa}(\widetilde{\cal P}_G)_{\cal D}^{\op{push}}$.
\end{lem}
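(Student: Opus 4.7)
The plan is to establish the isomorphism by comparing extension classes, using the adjunction between $\cal D$-induction and the forgetful functor.

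Both sides are extensions in right $\cal D_{\tilde{\cal X}/\tilde{\cal S}}$-modules of $(\fr g_{\cal D}^{\kappa})_{\tilde{\cal P}_G}\cong \fr g^{\kappa}_{\tilde{\cal P}_G}\underset{\cal O_{\tilde{\cal X}}}{\otimes}\cal D_{\tilde{\cal X}/\tilde{\cal S}}$ by $\omega_{\tilde{\cal X}/\tilde{\cal S}}$, and the identifications of the sub- and quotient sheaves on both sides agree by construction (the sub is $\omega$ since $\cal G$ acts trivially on it, and the quotient is tautologically $\fr g^{\kappa}_{\tilde{\cal P}_G}\otimes\cal D_{\tilde{\cal X}/\tilde{\cal S}}$). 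Hence it suffices to match their classes in
\[
\op{Ext}^1_{\cal D_{\tilde{\cal X}/\tilde{\cal S}}}\bigl(\fr g^{\kappa}_{\tilde{\cal P}_G}\otimes\cal D_{\tilde{\cal X}/\tilde{\cal S}},\,\omega_{\tilde{\cal X}/\tilde{\cal S}}\bigr)\xrightarrow{\sim}\op{Ext}^1_{\cal O_{\tilde{\cal X}}}\bigl(\fr g^{\kappa}_{\tilde{\cal P}_G},\,\omega_{\tilde{\cal X}/\tilde{\cal S}}\bigr),
\]
the isomorphism coming from the adjunction $(-\otimes\cal D_{\tilde{\cal X}/\tilde{\cal S}})\dashv\mathbf{oblv}$. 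Under this adjunction, the class of a $\cal D$-module extension is recovered by pulling back along the unit $\fr g^{\kappa}_{\tilde{\cal P}_G}\otimes 1\hookrightarrow\fr g^{\kappa}_{\tilde{\cal P}_G}\otimes\cal D_{\tilde{\cal X}/\tilde{\cal S}}$.

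For $\cal E^{\kappa}(\tilde{\cal P}_G)_{\cal D}^{\op{push}}$, this pullback tautologically recovers the Atiyah extension $\cal E^{\kappa}(\tilde{\cal P}_G)$, by construction of the $\cal D$-induction and push-out. For $(\widehat{\fr g}_{\cal D}^{(\kappa,0)})_{\tilde{\cal P}_G}$, I would invoke the splitting \eqref{eq-d-module-splitting} applied with $E=0$ to identify the underlying $\cal D$-module of $\widehat{\fr g}_{\cal D}^{(\kappa,0)}$ with $\omega_{\cal X/S}\oplus(\fr g^{\kappa}\boxtimes\cal D_{\cal X/S})$. Direct inspection of the $\cal G$-action in \S\ref{sec-g-action-on-kac-moody} shows that it preserves the $\cal O$-submodule $\omega_{\cal X/S}\oplus(\fr g^{\kappa}\boxtimes 1)$, acting there by
\[
(f,\mu\otimes 1)\mapsto\bigl(f+\op{res}(g)(\mu),\;\op{Ad-Coad}_g(\mu)\otimes 1\bigr),\qquad\op{res}(g)(\xi\oplus\varphi)=\varphi(g^{-1}dg).
\]
Since $\cal G$-twisting by $\tilde{\cal P}_G$ commutes with passing to $\cal G$-stable subsheaves, restriction of the twisted extension along $\omega\oplus\fr g^{\kappa}\otimes 1$ yields an $\cal O$-module extension whose transition cocycle is precisely that of the Atiyah dual $\op{At}(\tilde{\cal P}_G)^*$ pulled back along the projection $\fr g^{\kappa}\to\fr g^*$—that is, $\cal E^{\kappa}(\tilde{\cal P}_G)$. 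The two Ext classes therefore coincide, and the claimed isomorphism of $\cal D$-module extensions follows.

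The main obstacle is essentially bookkeeping: the residue cross-term $\op{res}(g)$ appearing in \eqref{eq-g-action-on-kac-moody} is engineered to coincide with the Atiyah cocycle for a $G$-bundle, so the only genuine content lies in verifying (i) that the adjunction isomorphism $\op{Ext}^1_{\cal D}(-\otimes\cal D,-)\cong\op{Ext}^1_{\cal O}(-,-)$ is indeed computed by pullback to the $\otimes 1$ subsheaf, and (ii) that $\cal G$-twisting by $\tilde{\cal P}_G$ commutes with this restriction—which holds because $\cal G$ acts trivially on $\cal D_{\cal X/S}$ and therefore preserves the $\otimes 1$ locus. Once these are pinned down, the identification is forced by the explicit formula above.
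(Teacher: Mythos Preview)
Your proof is correct and follows essentially the same route as the paper. The paper frames the argument via a $\cal G$-equivariant push-out diagram rather than Ext classes, but the core content is identical: one observes that the $\cal G$-action on $\widehat{\fr g}_{\cal D}^{(\kappa,0)}\cong\omega\oplus\fr g_{\cal D}^{\kappa}$ preserves the $\cal O$-submodule $\omega\oplus(\fr g^{\kappa}\otimes 1)$ and acts there by the formula \eqref{eq-twist-to-atiyah}, whose $\tilde{\cal P}_G$-twist is exactly the Atiyah extension $\cal E^{\kappa}(\tilde{\cal P}_G)$.
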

\begin{proof}
Recall that $(\widehat{\fr g}_{\cal D}^{(\kappa, 0)})_{\tilde{\cal P}_G}$ is the $\tilde{\cal P}_G$-twist of the trivial extension $\widehat{\fr g}_{\cal D}^{(\kappa,0)} \xrightarrow{\sim} \omega_{\tilde{\cal X}/\tilde{\cal S}}\oplus\fr g_{\cal D}^{\kappa}$. Consider the push-out diagram:
\begin{equation}
\label{eq-underlying-d-module-pushout}
\xymatrix@C=1.5em@R=1.5em{
(\omega_{\tilde{\cal X}/\tilde{\cal S}})_{\cal D} \ar[r]\ar[d]^{\op{act}} & (\omega_{\tilde{\cal X}/\tilde{\cal S}} \oplus (\fr g^{\kappa}\otimes\cal O_{\tilde{\cal X}}))_{\cal D} \ar[d] \\
\omega_{\tilde{\cal X}/\tilde{\cal S}} \ar[r] & \omega_{\tilde{\cal X}/\tilde{\cal S}}\oplus\fr g_{\cal D}^{\kappa}.
}
\end{equation}
Note that the entire diagram is acted on by the sheaf of groups $\cal G$, as described below:
\begin{itemize}
	\item the $\cal G$-actions on $(\omega_{\tilde{\cal X}/\tilde{\cal S}})_{\cal D}$ and $\omega_{\tilde{\cal X}/\tilde{\cal S}}$ are trivial, and the action on $\omega_{\tilde{\cal X}/\tilde{\cal S}}\oplus\fr g_{\cal D}^{\kappa}$ is given by \S\ref{sec-g-action-on-kac-moody};
	\item the $\cal G$-action on $(\omega_{\tilde{\cal X}/\tilde{\cal S}} \oplus (\fr g^{\kappa}\otimes\cal O_{\tilde{\cal X}}))_{\cal D}$ is the $\cal D_{\tilde{\cal X}/\tilde{\cal S}}$-linear extension of the following $\cal G$-action on $\omega_{\tilde{\cal X}/\tilde{\cal S}} \oplus (\fr g^{\kappa}\otimes\cal O_{\tilde{\cal X}})$ centralizing $\omega_{\tilde{\cal X}/\tilde{\cal S}}$:
	\begin{equation}
	\label{eq-twist-to-atiyah}
	g_{\cal U} \cdot (\xi\oplus\varphi) = \varphi(g_{\cal U}^{-1}dg_{\cal U}) + (\op{Ad}_{g_{\cal U}}(\xi) \oplus \op{Coad}_{g_{\cal U}}(\varphi))
	\end{equation}
	where $g_{\cal U}\in\cal G(\cal U)$ and $\xi\oplus\varphi \in \fr g^{\kappa}\otimes\cal O_{\cal U}$.
\end{itemize}
If we twist the trivial $\cal O_{\tilde{\cal X}}$-module extension equipped with the $\cal G$-action \eqref{eq-twist-to-atiyah}:
$$
0 \rightarrow \omega_{\tilde{\cal X}/\tilde{\cal S}} \rightarrow \omega_{\tilde{\cal X}/\tilde{\cal S}} \oplus (\fr g^{\kappa}\otimes\cal O_{\tilde{\cal X}}) \rightarrow \fr g^{\kappa}\otimes\cal O_{\tilde{\cal X}} \rightarrow 0
$$
by the $G$-bundle $\tilde{\cal P}_G$, we obtain precisely the Atiyah sequence (pulled back along $\fr g_{\tilde{\cal P}_G}^{\kappa}\rightarrow \fr g^*_{\tilde{\cal P}_G}$):
$$
0 \rightarrow \omega_{\tilde{\cal X}/\tilde{\cal S}} \rightarrow \cal E^{\kappa}(\tilde{\cal P}_G) \rightarrow \fr g^{\kappa}_{\tilde{\cal P}_G} \rightarrow 0.
$$
Therefore, twisting the diagram \eqref{eq-underlying-d-module-pushout} by $\tilde{\cal P}_G$, we obtain a push-out diagram:
$$
\xymatrix@C=1.5em@R=1.5em{
(\omega_{\tilde{\cal X}/\tilde{\cal S}})_{\cal D} \ar[r]\ar[d]^{\op{act}} & \cal E^{\kappa}(\tilde{\cal P}_G)_{\cal D} \ar[d] \\
\omega_{\tilde{\cal X}/\tilde{\cal S}} \ar[r] & (\widehat{\fr g}_{\cal D}^{(\kappa,0)})_{\tilde{\cal P}_G}.
}
$$
This proves the Lemma.
\end{proof}

\subsubsection{} By construction of $\widehat{\cal Q}_n^{(\kappa,0)}$ and $\cal Q_n^{\kappa}$, the required isomorphism shall follow from a general claim. We first explain the set-up (which is quite involved): let $\cal S$ be a scheme, and $\cal X:=X\times\cal S$ with section $\underline x$ given by the closed point $x\in X$. Suppose we have an exact sequence of $\cal O_{\cal X}$-modules:
$$
0 \rightarrow \omega_{\cal X/\cal S} \rightarrow \cal E \rightarrow\cal F \rightarrow 0.
$$
Let $\cal E_{\cal D}$ denote the induced $\cal D$-module of $\cal E$ and $\cal E_{\cal D}^{\op{push}}$ its push-out along $\op{act}:(\omega_{\cal X/\cal S})_{\cal D}\rightarrow\omega_{\cal X/\cal S}$.

Then we may form a map between exact sequences:
$$
\xymatrix@C=1.5em@R=1.5em{
	0 \ar[r] & \Gamma_{\dR}(\Sigma, \omega_{\cal X/\cal S}) \ar[r]\ar[d]^0 & \Gamma_{\dR}(\Sigma, \cal E_{\cal D}^{\op{push}}) \ar[r]\ar[d] & \Gamma(\Sigma, \cal F) \ar[r]\ar[d]^{\gamma}\ar@{.>}[dl]_{\widehat{\gamma}} & 0 \\
	0 \ar[r] & \Gamma_{\dR}(\overset{\circ}{D}_{\underline x}, \omega_{\cal X/\cal S}) \ar[r] & \Gamma_{\dR}(\overset{\circ}{D}_{\underline x}, \cal E_{\cal D}^{\op{push}}) \ar[r] & \Gamma(\overset{\circ}{D}_{\underline x}, \cal F) \ar[r] & 0,
}
$$
as well as a section $\widehat{\gamma}$ from the residue theorem. On the other hand, let $\cal E_{\cal D}^{\op{push}}(\fr m^{(n)})$ denote the $\cal O_{\cal S}$-submodule of $\Gamma_{\dR}(D_{\underline x}, \cal E_{\cal D}^{\op{push}})$ annihilated by the restriction to $D_{\underline x}^{(n)}$; we use the notation $\cal F(\fr m^{(n)})$ for a similar meaning. We have a triangle:
\begin{equation}
\label{eq-underlying-o-modules-abstract}
\cal O_{\cal S} \rightarrow \widehat{\cal Q}  \rightarrow \cal Q
\end{equation}
where:
\begin{itemize}
	\item $\widehat{\cal Q}:=\op{Cofib}(\Gamma(\Sigma,\cal F)\rightarrow\Gamma_{\dR}(\overset{\circ}{D}_{\underline x}, \cal E_{\cal D}^{\op{push}})/\cal E_{\cal D}^{\op{push}}(\fr m^{(n)}))$;
	\item $\cal Q:=\op{Cofib}(\Gamma(\Sigma, \cal F)\rightarrow \Gamma(\overset{\circ}{D}_{\underline x}, \cal F)/\cal F(\fr m^{(n)}))$.
\end{itemize}

\begin{rem}
For $\cal S:=\tilde{\cal S}$, $\cal E:=\cal E^{\kappa}(\tilde{\cal P}_G)$, and $\cal F:=\fr g_{\tilde{\cal P}_G}^{\kappa}$, we see from the construction of \eqref{eq-underlying-o-modules-infty} that it identifies with the triangle \eqref{eq-underlying-o-modules-abstract}.
\end{rem}

\begin{claim}
The triangle \eqref{eq-underlying-o-modules-abstract} identifies with the push-out of the canonical triangle:
\begin{equation}
\label{eq-underlying-o-modules-identify-abstract}
\op R\Gamma(X, \omega_{\cal X/\cal S}(-nx))[1] \rightarrow \op R\Gamma(X, \cal E(-nx))[1] \rightarrow \op R\Gamma(X, \cal F(-nx))[1]
\end{equation}
along the trace map $\op R\Gamma(X, \omega_{\cal X/\cal S}(-nx))[1]\rightarrow\cal O_{\cal S}$.
\end{claim}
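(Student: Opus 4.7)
The plan is to obtain both triangles from the Mayer--Vietoris (Beauville--Laszlo) decomposition of $\cal X$ along the cover $\{\Sigma\times\cal S,D_{\underline x}\}$ with intersection $\overset{\circ}{D}_{\underline x}$. Since each piece is $\cal S$-affine, for any quasi-coherent sheaf $\cal G$ on $\cal X$ one has a fiber sequence in $\op{QCoh}(\cal S)$:
$$
\op R(\op{pr}_{\cal S})_*\cal G \rightarrow \Gamma(\Sigma\times\cal S,\cal G)\oplus\Gamma(D_{\underline x},\cal G) \rightarrow \Gamma(\overset{\circ}{D}_{\underline x},\cal G).
$$
Applying this to $\cal G=\cal H(-n\underline x)$ for each $\cal H\in\{\omega_{\cal X/\cal S},\cal E,\cal F\}$---using that $\cal H(-n\underline x)$ restricts to $\cal H$ on $\Sigma\times\cal S$ and on $\overset{\circ}{D}_{\underline x}$, while $\Gamma(D_{\underline x},\cal H(-n\underline x))=\cal H(\fr m^{(n)})$---the standard equivalence $\op{Cofib}(A\oplus B\rightarrow C)\simeq\op{Cofib}(A\rightarrow C/B)$ yields a canonical isomorphism
$$
\op R\Gamma(X,\cal H(-nx))[1]\;\simeq\;\op{Cofib}\bigl(\Gamma(\Sigma,\cal H)\rightarrow\Gamma(\overset{\circ}{D}_{\underline x},\cal H)/\cal H(\fr m^{(n)})\bigr).
$$
For $\cal H=\cal F$ the right side is literally $\cal Q$, giving $\op R\Gamma(X,\cal F(-nx))[1]\simeq\cal Q$.

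Next, I would identify the trace. The residue $\op{Res}\colon\Gamma(\overset{\circ}{D}_{\underline x},\omega_{\cal X/\cal S})\rightarrow\cal O_{\cal S}$ vanishes on $\Gamma(\Sigma,\omega_{\cal X/\cal S})$ by the residue theorem and on $\omega_{\cal X/\cal S}(\fr m^{(n)})$ for $n\ge 1$, so it descends through the Mayer--Vietoris presentation to a morphism $\op R\Gamma(X,\omega(-nx))[1]\rightarrow\cal O_{\cal S}$; this is the canonical trace. To handle the middle term $\widehat{\cal Q}$, I would unfold the definition of $\cal E_{\cal D}^{\op{push}}$ as the push-out in right $\cal D$-modules of $\omega_{\cal D}\rightarrow\omega_{\cal X/\cal S}$ along $\omega_{\cal D}\rightarrow\cal E_{\cal D}$. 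Applying $\Gamma_{\dR}$ and invoking $\Gamma_{\dR}(U,\omega_{\cal D})=\Gamma(U,\omega_{\cal X/\cal S})$ and $\Gamma_{\dR}(U,\cal E_{\cal D})=\Gamma(U,\cal E)$ together with $\Gamma_{\dR}(\overset{\circ}{D}_{\underline x},\omega_{\cal X/\cal S})=\cal O_{\cal S}$ and $\Gamma_{\dR}(D_{\underline x},\omega_{\cal X/\cal S})=0$, one deduces canonical identifications
$$
0\rightarrow\cal O_{\cal S}\rightarrow\Gamma_{\dR}(\overset{\circ}{D}_{\underline x},\cal E_{\cal D}^{\op{push}})\rightarrow\Gamma(\overset{\circ}{D}_{\underline x},\cal F)\rightarrow 0,\qquad\Gamma_{\dR}(D_{\underline x},\cal E_{\cal D}^{\op{push}})\xrightarrow{\sim}\Gamma(D_{\underline x},\cal F),
$$
in which the leftmost inclusion is the residue. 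Under the second isomorphism, $\cal E_{\cal D}^{\op{push}}(\fr m^{(n)})$ is identified with $\cal F(\fr m^{(n)})$, and the splitting $\widehat{\gamma}$ is pinned down by the fact that its composition with the projection to $\Gamma(\overset{\circ}{D}_{\underline x},\cal F)$ is $\gamma$, while it must annihilate the residue of any extension of a section over $\Sigma$.

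Assembling these ingredients, one produces a morphism of fiber sequences
$$
\xymatrix@C=1em@R=1.5em{
	\op R\Gamma(X,\omega(-nx))[1] \ar[r]\ar[d]_-{\mathrm{tr}} & \op R\Gamma(X,\cal E(-nx))[1] \ar[r]\ar[d] & \op R\Gamma(X,\cal F(-nx))[1] \ar[d]^-{\sim} \\
	\cal O_{\cal S} \ar[r] & \widehat{\cal Q} \ar[r] & \cal Q
}
$$
whose right vertical is the Mayer--Vietoris identification, left vertical is the trace, and middle vertical is induced by $\cal E\rightarrow\cal E_{\cal D}^{\op{push}}$ via the identifications above. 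Because both rows are fiber sequences and the right vertical is invertible, the left-hand square is automatically a push-out square, which is exactly the statement of the claim.

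The main technical difficulty lies in the middle step: one must trace carefully through the push-out definition of $\cal E_{\cal D}^{\op{push}}$ to check that the Tate cocycle governing $\Gamma_{\dR}(\overset{\circ}{D}_{\underline x},\cal E_{\cal D}^{\op{push}})$ agrees with the residue boundary map supplied by Mayer--Vietoris on $\omega_{\cal X/\cal S}$, and to verify that these identifications are compatible with the filtrations by $\cal H(\fr m^{(n)})$ and with the canonical section $\widehat{\gamma}$.
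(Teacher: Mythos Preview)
Your proposal is correct and follows essentially the same approach as the paper. Both arguments use the \v Cech/Mayer--Vietoris identification $\op R\Gamma(X,\cal H(-nx))[1]\simeq\op{Cofib}\bigl(\Gamma(\Sigma,\cal H)\to\Gamma(\overset{\circ}{D}_{\underline x},\cal H)/\cal H(\fr m^{(n)})\bigr)$, unfold the push-out definition of $\cal E_{\cal D}^{\op{push}}$ to relate its de Rham sections to those of $\cal E$ and $\cal F$, and then construct a map of triangles whose outer verticals are the trace and the \v Cech isomorphism; the push-out property follows formally. The paper is slightly more concrete at the middle step: it writes out the explicit two-row diagram comparing $\Gamma(\overset{\circ}{D}_{\underline x},\cal E)/\cal E(\fr m^{(n)})$ with $\Gamma_{\dR}(\overset{\circ}{D}_{\underline x},\cal E_{\cal D}^{\op{push}})/\cal E_{\cal D}^{\op{push}}(\fr m^{(n)})$ (with the residue map on the left), checks exactness of the rows via the Snake lemma, and then takes cofibers of maps from $\Gamma(\Sigma,-)$ into each row---whereas you summarize this as ``unfold the push-out definition'' and leave the verification that the Tate cocycle matches the residue boundary as the acknowledged technical point. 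That verification is exactly what the paper's explicit diagram accomplishes, so your outline and the paper's execution coincide.
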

\begin{proof}
Recall the identification:
$$
\cal Q = \op{Cofib}(\Gamma(\Sigma, \cal F)\rightarrow \Gamma(\overset{\circ}{D}_{\underline x}, \cal F)/\cal F(\fr m^{(n)})) \xrightarrow{\sim}  \op R\Gamma(X, \cal F(-nx))[1],
$$
which is also valid when $\cal F$ is replaced by any $\cal O_{\cal X}$-module. It suffices to produce a morphism of triangles from \eqref{eq-underlying-o-modules-identify-abstract} to \eqref{eq-underlying-o-modules-abstract}, whose first and third terms are the trace map, respectively the above isomorphism.

Consider the diagram defining $\cal E_{\cal D}^{\op{push}}$:
$$
\xymatrix@C=1.5em@R=1.5em{
0 \ar[r] & (\omega_{\cal X/\cal S})_{\cal D} \ar[r]\ar[d] & \cal E_{\cal D} \ar[r]\ar[d] & \cal F_{\cal D} \ar[d]^{\rotatebox{90}{$\sim$}} \ar[r] & 0 \\
0 \ar[r] & \omega_{\cal X/\cal S} \ar[r] & \cal E_{\cal D}^{\op{push}} \ar[r] & \cal F_{\cal D} \ar[r] & 0.
}
$$
Using the functors $\Gamma_{\dR}(\overset{\circ}{D}_{\underline x},-)$ and $\cal M\leadsto \cal M(\fr m^{(n)})$, we obtain a diagram:
$$
\xymatrix@C=1.5em@R=1.5em{
	0 \ar[r] & \overset{\circ}{\omega}/\omega(\fr m^{(n)})\ar[d]^{\op{res}} \ar[r] & \Gamma(\overset{\circ}{D}_{\underline x}, \cal E)/\cal E(\fr m^{(n)}) \ar[r]\ar[d] & \Gamma(\overset{\circ}{D}_{\underline x}, \cal F)/\cal F(\fr m^{(n)}) \ar[r]\ar[d]^{\rotatebox{90}{$\sim$}} & 0 \\
	0 \ar[r] & \cal O_{\tilde{\cal S}} \ar[r] & \Gamma_{\dR}(\overset{\circ}{D}_{\underline x}, \cal E_{\cal D}^{\op{push}})/\cal E_{\cal D}^{\op{push}}(\fr m^{(n)}) \ar[r] & \Gamma(\overset{\circ}{D}_{\underline x}, \cal F)/\cal F(\fr m^{(n)}) \ar[r] & 0
}
$$
where the rows are still exact sequences by the Snake lemma. We now take cofibers of the map from the triangle $\Gamma(\Sigma,\omega)\rightarrow\Gamma(\Sigma, \cal E)\rightarrow\Gamma(\Sigma,\cal F)$ to the top row, and the cofibers of the map from $0\rightarrow\Gamma(\Sigma, \cal F)\rightarrow\Gamma(\Sigma,\cal F)$ to the bottom row:
$$
\xymatrix@C=1.5em@R=1.5em{
	\op R\Gamma(X, \omega_{\cal X/\cal S}(-nx))[1] \ar[r]\ar[d] & \op R\Gamma(X, \cal E(-nx))[1] \ar[r]\ar[d] & \op R\Gamma(X, \cal F(-nx))[1] \ar[d]^{\rotatebox{90}{$\sim$}} \\
	\cal O_{\cal S} \ar[r] & \widehat{\cal Q}  \ar[r] & \cal Q
}
$$
This is a morphism between triangles. Finally, we make the observation that the residue morphism from $\overset{\circ}{\omega}/\omega(\fr m^{(n)})$ passes to the trace map from $\op R\Gamma(X, \omega_{\cal X/\cal S}(-nx))[1]$.
\end{proof}

We have now constructed an isomorphism from \eqref{eq-underlying-o-modules-infty} to the push-out of \eqref{eq-underlying-o-modules-identify-infty} along the trace map $\op R\Gamma(X, \omega_{\widetilde{\cal X}/\widetilde{\cal S}}(-nx))[1] \rightarrow \cal O_{\widetilde{\cal S}}$. We omit checking that this map is compatible with the $H^n$-equivariance structure. \qed(Proposition \ref{prop-underlying-o-modules-identify})

\begin{rem}
Combined with \S\ref{sec-identify-det}, we have showed that the Picard algebroid $\op{Diff}^{\le 1}(\cal L_{G,\det})$ has as its underlying triangle of $\cal O_{\op{Bun}_G}$-modules constructed explicitly by the following procedure:
\begin{itemize}
	\item Consider the triangle $\op R\Gamma(X, \omega_{\cal X/\cal S})[1] \rightarrow \op R\Gamma(X, \cal E^{\kappa}(\cal P_G))[1] \rightarrow \op R\Gamma(X,\fr g_{\cal P_G}^*)[1]$;
	\item Obtain from the above triangle a push-out along the trace map $\op R\Gamma(X, \omega_{\cal X/\cal S})[1]\rightarrow\cal O_{\cal S}$:
	$$
	\cal O_{\cal S} \rightarrow \cal E \rightarrow\op R\Gamma(X,\fr g_{\cal P_G}^*)[1]
	$$
	\item The extension associated to $\op{Diff}^{\le 1}(\cal L_{G,\det})$ is the pullback of the above triangle along:
$$
\cal T_{\op{Bun}_G} \xrightarrow{\sim} \op R\Gamma(X, \fr g_{\cal P_G})[1] \xrightarrow{\op{Kil}} \op R\Gamma(X, \fr g^*_{\cal P_G})[1].
$$
where the Killing form $\op{Kil}$ is regarded as a $G$-invariant isomorphism $\fr g\xrightarrow{\sim}\fr g^*$.
\end{itemize}
\end{rem}

\subsection{An alternative description of $\op{LocSys}_G$} Recall that $\op{LocSys}_G$ is defined as the mapping stack $\underline{\op{Maps}}(X_{\dR}, \op BG)$; it is represented by a DG algebraic stack (\cite[\S10]{AG15}). We give an alternative description of $\op{LocSys}_G$ in terms of ``$G$-bundles with connections.'' This latter description is the form in which $\op{LocSys}_G$ will appear at level $\infty$.

\subsubsection{} Let $\op{LocSys}_G'$ denote the prestack over $\op{Bun}_G$ such that for every affine DG scheme $S$, the groupoid $\op{Maps}(S, \op{LocSys}_G')$ classifies:
\begin{itemize}
	\item a $G$-bundle $\cal P_G$ over $S\times X$;
	\item a splitting of the canonical triangle:
	\begin{equation}
	\label{eq-atiyah-sequence}
	\fr g_{\cal P_G} \rightarrow \op{At}(\cal P_G) \rightarrow \cal T_{S\times X/S}.
	\end{equation}
\end{itemize}
It is not hard to see that $\op{LocSys}_G'$ is represented by a DG algebraic stack.

\subsubsection{} Note that a lift of $\cal P_G$ to an $S$-point of $\op{LocSys}_G$ supplies the dotted arrow in the following commutative diagram:
$$
\xymatrix@R=1.5em@C=1.5em{
S\times X \ar[r]^{\cal P_G}\ar[d] & S\times\op BG \ar[d] \\
S\times X_{\dR} \ar[r]\ar@{.>}[ur] & S
}
$$
This arrow gives rise to a splitting of \eqref{eq-atiyah-sequence} because $\cal T_{S\times X/S\times X_{\dR}}$ is isomorphic to $\cal T_{S\times X/S}$. In other words, we have a morphism of stacks over $\op{Bun}_G$:
\begin{equation}
\label{eq-locsys-compare}
\op{LocSys}_G \rightarrow \op{LocSys}_G'.
\end{equation}

\begin{prop}
\label{prop-locsys-compare}
The morphism \eqref{eq-locsys-compare} is an isomorphism.
\end{prop}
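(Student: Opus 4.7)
The plan is to verify that \eqref{eq-locsys-compare} is an equivalence by checking it induces isomorphisms on classical truncations and on relative tangent complexes over $\op{Bun}_G$ at each classical point. Both $\op{LocSys}_G$ (by \cite[\S 10]{AG15}) and $\op{LocSys}_G'$ (via its presentation as a derived fiber in mapping stacks of vector bundle complexes) admit deformation theory over $\op{Bun}_G$, so such a check suffices to conclude.

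At the classical level, an $S$-point of $\op{LocSys}_G^{\op{cl}}$ classifies a classical $G$-bundle $\cal P_G$ on $S\times X$ together with a flat connection, while an $S$-point of $(\op{LocSys}_G')^{\op{cl}}$ classifies $\cal P_G$ together with an $\cal O$-linear splitting of the Atiyah sequence \eqref{eq-atiyah-sequence}, i.e., a not-necessarily-flat connection. Since $X$ is a smooth curve, $\Omega^2_{S\times X/S} = 0$, so the curvature of any connection automatically vanishes, and \eqref{eq-locsys-compare} induces a bijection at the classical level. For tangent complexes, fix a classical point $(\cal P_G, \nabla)$ mapping to $(\cal P_G, s)$. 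The relative tangent of $\op{LocSys}_G$ over $\op{Bun}_G$ is the fiber of the map $\op R\Gamma_{\dR}(X, \fr g_{\cal P_G,\nabla})[1] \to \op R\Gamma(X, \fr g_{\cal P_G})[1]$; since the de Rham complex on the curve is the two-term complex $[\fr g_{\cal P_G} \xrightarrow{\nabla} \fr g_{\cal P_G}\otimes\omega_X]$, this fiber identifies with $\op R\Gamma(X, \fr g_{\cal P_G}\otimes\omega_X)$. The corresponding relative tangent of $\op{LocSys}_G'$ at $(\cal P_G, s)$ is the derived mapping space $\op R\op{Maps}(\cal T_{S\times X/S}, \fr g_{\cal P_G}) = \op R\Gamma(X, \fr g_{\cal P_G}\otimes\omega_X)$, coming from the derived nullhomotopy interpretation of ``splitting.'' Unwinding the definitions confirms that \eqref{eq-locsys-compare} induces the identity under these identifications.

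The main subtlety will be pinning down the DG structure of $\op{LocSys}_G'$ correctly: a ``splitting of the canonical triangle'' must be read as a derived nullhomotopy of the Atiyah class $\alpha : \cal T_{S\times X/S} \to \fr g_{\cal P_G}[1]$, not merely a $1$-categorical splitting of the underlying short exact sequence of vector bundles. This derived interpretation is what produces the $\op H^1(X, -)$ contribution to the tangent complex, matching---via Serre duality on the curve---the obstruction-theoretic contribution from the $\op{LocSys}_G$ side. Once this interpretation is set up, the tangent-and-truncation argument outlined above yields the claimed equivalence.
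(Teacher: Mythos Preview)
Your proposal is correct and follows essentially the same approach as the paper: both arguments reduce the isomorphism to checking classical truncations (where flatness of connections is automatic on a curve) and identifying the relative (co)tangent complexes over $\op{Bun}_G$, with both sides computed as $\op R\Gamma(X,\fr g_{\cal P_G}\otimes\omega_X)$. The paper phrases the second step via cotangent complexes at arbitrary $S$-points (remarking afterward that classical $S$ suffices by \cite[\S III.1, Proposition 8.3.2]{GR16}), while you use tangent complexes at classical points directly; your explicit emphasis on reading ``splitting'' as a derived nullhomotopy of the Atiyah class is exactly the point underlying the paper's computation of $\cal T^*_{\op{LocSys}_G'/\op{Bun}_G}$.
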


\subsubsection{} Recall that any eventually co-connective affine DG scheme $S$ fits into a sequence:
$$
S_0 \rightarrow S_1 \rightarrow \cdots \rightarrow S_n = S
$$
where $S_0$ is a classical affine DG scheme, and each morphism $S_i\rightarrow S_{i+1}$ is a square-zero extension (\cite[\S III.1, Proposition 5.5.3]{GR16}). Furthermore, for any prestack $\cal Y$ admitting deformation theory and a point $y : S\rightarrow\cal Y$, a lift of $y$ along a square-zero extension $S\rightarrow S'$ is governed by maps out of the cotangent complex $\cal T^*_{\cal Y}\big|_y$ (see \cite[\S III, 0.1.5]{GR16}). Hence Proposition \ref{prop-locsys-compare} reduces to the following two lemmas.

\begin{lem}
The morphism \eqref{eq-locsys-compare} is an isomorphism when restricted to classical test (affine) schemes.
\end{lem}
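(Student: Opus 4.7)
The plan is to identify both sides with familiar geometric data on $S\times X$. For a classical affine scheme $S$, an $S$-point of $\op{LocSys}_G=\underline{\op{Maps}}(X_{\dR},\op BG)$ is a $G$-bundle on the prestack $S\times X_{\dR}$. Since $X_{\dR}(T)=X(T^{\op{red}})$, descent along the canonical map $S\times X\to S\times X_{\dR}$ identifies this datum (via Grothendieck's equivalence between crystals and modules with flat connection) with that of a $G$-bundle $\cal P_G$ on $S\times X$ equipped with a flat connection relative to $S$. On the other hand, an $S$-point of $\op{LocSys}_G'$ is a $G$-bundle $\cal P_G$ together with a splitting of the Atiyah triangle $\fr g_{\cal P_G}\to\op{At}(\cal P_G)\to\cal T_{S\times X/S}$, which is precisely a relative connection $\nabla:\cal T_{S\times X/S}\to\op{At}(\cal P_G)$ (not a priori flat). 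Under this dictionary, the morphism \eqref{eq-locsys-compare} is the forgetful map that discards the flatness condition.

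The remaining step is to verify that for classical $S$, every relative connection on $\cal P_G$ is automatically flat, so that the forgetful map is a bijection. This is a dimension count: the curvature of $\nabla$ is a global section of $\fr g_{\cal P_G}\otimes\wedge^2\Omega^1_{(S\times X)/S}$, and since $X$ is a smooth curve over $k$ and $S$ is classical, the relative cotangent $\Omega^1_{(S\times X)/S}$ is a line bundle on $S\times X$; hence $\wedge^2\Omega^1_{(S\times X)/S}=0$, and the curvature vanishes identically. The main conceptual point is the crystalline identification of the left-hand side with $G$-bundles on $S\times X$ carrying a flat relative connection; once this is in place, the argument is formal and the classicality of $S$ is used only to ensure that $X$ contributes the full relative dimension, making the degree-two forms vanish.
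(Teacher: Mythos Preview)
Your proposal is correct and takes exactly the same approach as the paper, whose proof is the single sentence ``This is the classical statement that any connection over a curve is automatically flat.'' One small correction to your commentary: the vanishing of $\wedge^2\Omega^1_{(S\times X)/S}$ holds for \emph{any} $S$, since $\Omega^1_{(S\times X)/S}\cong p_X^*\omega_X$ is always a line bundle; the classicality of $S$ enters rather in making the crystalline identification and the notion of ``splitting of the Atiyah triangle'' the ordinary $1$-categorical ones, so that a bijection of sets of connections suffices.
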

\begin{proof}
This is the classical statement that any connection over a curve is automatically flat.
\end{proof}

\begin{lem}
\label{lem-identify-cotangent-spaces}
The morphism \eqref{eq-locsys-compare} identifies the relative cotangent complexes $\cal T_{\op{LocSys}_G/\op{Bun}_G}^*$ and $\cal T_{\op{LocSys}_G'/\op{Bun}_G}^*$ at any $S$-point.
\end{lem}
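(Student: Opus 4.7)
The plan is to compute both relative tangent complexes, identify them with $\op R\Gamma(S\times X,\fr g_{\cal P_G}\otimes\omega_{X/S})$, and then verify that \eqref{eq-locsys-compare} realizes the identity under these identifications; dualizing gives the statement on cotangents.

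For $\op{LocSys}_G'\to\op{Bun}_G$, the fiber over $\cal P_G$ is by construction the affine DG space of $\cal O_{S\times X}$-linear splittings of the Atiyah triangle \eqref{eq-atiyah-sequence}, which is a (pseudo-)torsor under $\op R\op{Hom}_{\cal O_{S\times X}}(\cal T_{X/S},\fr g_{\cal P_G})$. Hence at any splitting $\nabla$ the relative tangent is directly
$$
\cal T_{\op{LocSys}_G'/\op{Bun}_G}\big|_{(\cal P_G,\nabla)} \xrightarrow{\sim}\op R\op{Hom}_{\cal O_{S\times X}}(\cal T_{X/S},\fr g_{\cal P_G})\xrightarrow{\sim}\op R\Gamma(S\times X,\fr g_{\cal P_G}\otimes\omega_{X/S}),
$$
where I use $\cal T_{X/S}^\vee\cong\omega_{X/S}$ in the last step.

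For $\op{LocSys}_G\to\op{Bun}_G$, presenting an $S$-point as $\sigma:S\times X_{\dR}\to\op BG$ over $S$, the standard formula for the tangent of a mapping stack gives $\cal T_{\op{LocSys}_G/S}\big|_\sigma\xrightarrow{\sim}\op R\Gamma_{\dR}(S\times X,(\fr g_{\cal P_G},\nabla_{\op{ad}}))[1]$, where $\nabla_{\op{ad}}$ is the adjoint connection, while $\cal T_{\op{Bun}_G/S}\big|_{\cal P_G}=\op R\Gamma(S\times X,\fr g_{\cal P_G})[1]$. Presenting the de Rham complex by the two-term resolution $[\fr g_{\cal P_G}\xrightarrow{\nabla_{\op{ad}}}\fr g_{\cal P_G}\otimes\omega_{X/S}]$, the forgetful map to $\op{Bun}_G$ corresponds on tangents to first-term projection, so the fiber sequence
$$
\cal T_{\op{LocSys}_G/\op{Bun}_G}\big|_\sigma\to\cal T_{\op{LocSys}_G/S}\big|_\sigma\to\cal T_{\op{Bun}_G/S}\big|_{\cal P_G}
$$
yields $\cal T_{\op{LocSys}_G/\op{Bun}_G}\big|_\sigma\xrightarrow{\sim}\op R\Gamma(S\times X,\fr g_{\cal P_G}\otimes\omega_{X/S})$.

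It remains to verify that \eqref{eq-locsys-compare} induces the identity between these two descriptions. Unwinding, a tangent vector to $\op{LocSys}_G$ at $\sigma$ fixing $\cal P_G$ is a $k[\varepsilon]$-deformation of $\sigma$ with constant underlying bundle, equivalently an $\op{End}(\cal P_G)$-valued $1$-form representing the difference of the two connections; under \eqref{eq-locsys-compare} it passes to the difference of the corresponding splittings of the Atiyah triangle, which is the same element of $\op R\op{Hom}(\cal T_{X/S},\fr g_{\cal P_G})$. The main obstacle is precisely this naturality check in the DG setting: one must pin down that \eqref{eq-locsys-compare}, defined via the tangent map of the factorization $S\times X\to S\times X_{\dR}\to\op BG$, coincides with the intrinsic identification of relative tangents going through the two-term de Rham presentation. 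Once this bookkeeping is done, the lemma follows, and dualizing gives the cotangent assertion.
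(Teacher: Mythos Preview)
Your approach is correct and close in spirit to the paper's, but the presentation differs in a way worth noting. The paper works directly with the cotangent complex as a functor $\op{QCoh}(S)^{\le 0}\to\infty\text{-}\mathbf{Gpd}$, evaluating both sides on an arbitrary $\cal M$ via the split square-zero extension $\widetilde S$ of $S$ by $\cal M$: for $\op{LocSys}_G$ one gets $\tau^{\le 0}\op{Fib}(\op R\Gamma(S\times X_{\dR},\cal M\otimes\fr g_{\cal P_G}[1])\to\op R\Gamma(S\times X,\cal M\otimes\fr g_{\cal P_G}[1]))\cong\tau^{\le 0}\op R\Gamma(S\times X,\cal M\otimes\fr g_{\cal P_G}\otimes\omega_X)$, and for $\op{LocSys}_G'$ one computes $\op{Maps}_{S//\op{Bun}_G}(\widetilde S,\op{LocSys}_G')$ as null-homotopies of the Atiyah class, which---using that the basepoint $\widetilde S\to S\to\op{LocSys}_G'$ already splits the sequence---becomes loops on $\tau^{\le 0}\op R\Gamma(S\times X,\cal M\otimes\fr g_{\cal P_G}\otimes\omega_X[1])$, hence the same group.

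The practical advantage of the paper's framing is that the ``naturality check'' you flag as the main obstacle is largely absorbed: since the basepoint used in the $\op{LocSys}_G'$ computation is by construction the image of the given $S$-point under \eqref{eq-locsys-compare}, the identification is already relative to that map. Your route via tangent complexes and the two-term de Rham resolution is equally valid, but as you correctly observe, it leaves the compatibility of the two identifications as a separate bookkeeping step; the paper's functor-of-points calculation makes this step essentially tautological.
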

\begin{proof}
Given an $S$-point of $\op{LocSys}_G$ represented by the $G$-bundle $\cal P_G$ over $S\times X_{\dR}$, the cotangent complex $\cal T^*_{\op{LocSys}_G/\op{Bun}_G}\big|_S$, regarded as a functor $\op{QCoh}(S)^{\le 0}\rightarrow\infty\op{-}\mathbf{Gpd}$, is given by:\footnote{We implicitly use the Dold-Kan equivalence between connective complexes of abelian groups and group objects in $\infty\op{-}\mathbf{Gpd}$.}
\begin{align*}
\cal M \leadsto \tau^{\le 0}\op{Fib}(\op R\Gamma(S\times X_{\dR}, \cal M\otimes\fr g_{\cal P_G}[1]) \rightarrow & \op R\Gamma(S\times X, \cal M\otimes\fr g_{\cal P_G}[1])) \\
& \xrightarrow{\sim} \tau^{\le 0}\op R\Gamma(S\times X, \cal M\otimes\fr g_{\cal P_G}\otimes\omega_X).
\end{align*}

On the other hand, the cotangent complex $\cal T^*_{\op{LocSys}_G'/\op{Bun}_G}\big|_S$ sends $\cal M\in\op{QCoh}(S)^{\le 0}$ to the $\infty$-groupoid $\op{Maps}_{S//\op{Bun}_G}(\widetilde S, \op{LocSys}_G')$, where $\widetilde S$ is the split square-zero extension of $S$ by $\cal M$, equipped with the canonical map $\widetilde{\cal P}_G : \widetilde S\rightarrow S\rightarrow\op{Bun}_G$. The $\infty$-groupoid $\op{Maps}_{S//\op{Bun}_G}(\widetilde S, \op{LocSys}_G')$ classifies null-homotopies of the Atiyah sequence of $\widetilde{\cal P}_G$, regarded as an element in
\begin{align*}
\op{Fib}&(\tau^{\le 0}\op R\Gamma(\widetilde S\times X, \fr g_{\widetilde{\cal P}_G}\otimes\omega_X[1]) \rightarrow \tau^{\le 0}\op R\Gamma(S\times X, \fr g_{\cal P_G}\otimes\omega_X[1])) \\
& \xrightarrow{\sim} \tau^{\le 0}\op R\Gamma(S\times X, \cal M\otimes\fr g_{\cal P_G}\otimes\omega_X[1]),
\end{align*}
where the isomorphism uses the facts that $\tau^{\le 0}$ commutes with limits, and that the ideal sheaf of the embedding $\iota : S\hookrightarrow\widetilde S$ agrees with $\iota_*\cal M$. However, the Atiyah sequence of $\widetilde{\cal P}_G$ already admits a splitting by the map $\widetilde S \rightarrow S \rightarrow\op{LocSys}_G'$. Hence $\op{Maps}_{S//\op{Bun}_G}(\widetilde S, \op{LocSys}_G')$ classifies null-homotopies of the zero element in $\tau^{\le 0}\op R\Gamma(S\times X, \cal M\otimes\fr g_{\cal P_G}\otimes\omega_X[1])$, i.e., elements of $\tau^{\le 0}\op R\Gamma(S\times X, \cal M\otimes\fr g_{\cal P_G}\otimes\omega_X)$.
\end{proof}

\noindent
We conclude the proof of Proposition \ref{prop-locsys-compare}.\qed(Proposition \ref{prop-locsys-compare})

\begin{rem}
If one appeals to \cite[\S III.1, Proposition 8.3.2]{GR16}, it would suffice to check Lemma \ref{lem-identify-cotangent-spaces} only for classical affine schemes $S$.
\end{rem}

\subsection{Identification of the fiber at $\infty$} We now specialize to the parameter $(\fr g^{\infty}, 0) : \op{pt}\rightarrow\op{Par}_G$, where $\fr g^{\infty}$ identifies with the subspace $\fr g^*\hookrightarrow\fr g\oplus\fr g^*$. The quasi-twisting $\cal T_G^{(\infty, 0)}$ over $\op{Bun}_G$ is obtained as the quotient of $\widetilde{\cal T}_G^{(\infty,0)}$ (i.e., \eqref{eq-qtw-full-level} at parameter $(\fr g^{\infty}, 0)$) by the pair $(\fr g^{\infty}(\cal O_x), \cal L_x^+G)$ along the $\cal L_x^+G$-torsor $\op{Bun}_{G,\infty x}\rightarrow\op{Bun}_G$.

\begin{prop}
\label{prop-fiber-at-infty}
\begin{itemize}
	\item $\cal T_G^{(\infty, 0)}$ is the inert quasi-twisting associated to the triangle \eqref{eq-underlying-o-modules} (for $n=0$):
	\begin{equation}
	\label{eq-underlying-o-modules-zero}
	\cal O_{\op{Bun}_G} \rightarrow \widehat{\cal Q}^{(\infty,0)}_{\op{desc}} \rightarrow \cal Q^{\infty}_{\op{desc}}
	\end{equation}
	
	\item there is a canonical isomorphism of DG stacks:
	$$
	\mathbb V(\widehat{\cal Q}^{(\infty,0)}_{\op{desc}})_{\lambda = 1} \xrightarrow{\sim} \op{LocSys}_G.
	$$
\end{itemize}
\end{prop}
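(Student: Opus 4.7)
The plan is to exploit the fact that at $\kappa=\infty$, everything collapses to an abelian situation, reducing us to the inert-quasi-twisting machinery of \S\ref{sec-inert-qtw}. We then use Proposition \ref{prop-underlying-o-modules-identify} combined with Proposition \ref{prop-locsys-compare} and Serre duality to identify $\mathbb V(\widehat{\cal Q}^{(\infty,0)}_{\op{desc}})_{\lambda=1}$ with $\op{LocSys}_G'\cong\op{LocSys}_G$.

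First, we observe the crucial degeneration at $\fr g^{\infty}=0\oplus\fr g^*$: the Lie bracket \eqref{eq-structure-lie} vanishes identically because its first component is $[0,0]=0$; the pairing \eqref{eq-structure-form} vanishes because $\varphi'(\xi)=\varphi'(0)=0$; and the projection $\fr g^{\infty}\rightarrow\fr g$ appearing in the anchor map and in the classical action pair $(\fr g^{\infty}(\cal O_x),\cal L_x^+G)$ is zero. These three facts together imply that the Kac--Moody Lie-$*$ algebra $\widehat{\fr g}_{\cal D}^{(\infty,0)}$ is abelian as a Lie-$*$ algebra, that the classical quasi-twisting $\widetilde{\cal T}_G^{(\infty,0)}$ has zero anchor and zero Lie bracket on its underlying Lie algebroid (on Tate modules), and that the map $\widehat\eta:\fr g^{\infty}(\cal O_x)\widehat\otimes\cal O_{\tilde{\cal S}}\rightarrow\widehat{\cal L}^{(\infty,0)}$ fits into the set-up of \S\ref{sec-inert-qtw-geom} with vanishing composition $\fr k\rightarrow\fr h$.

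Next, we descend through the two-step quotient. For the classical step $\mathbf Q_{\op{inj}}^{(\fr g^{\infty}(\fr m_x^n),H^n)}$, the remark following \S\ref{sec-inert-qtw-geom} identifies the result as the inert (on Tate modules) classical quasi-twisting on the quotient $\cal O$-module. For the geometric step $\mathbf Q^{(H_n,H_n^{\flat})}$ over the locally-finite-type scheme $\cal S_n$, we apply Proposition \ref{prop-quotient-abelian-frmmod} (and its quasi-twisting variant leading to \eqref{eq-quotient-abelian-qtw}) to conclude that the output is the inert quasi-twisting on the $\cal O_{\cal S_n}$-module triangle obtained by descent. Taking $n=0$ and invoking Proposition \ref{prop-underlying-o-modules-identify} to identify the resulting triangle yields precisely \eqref{eq-underlying-o-modules-zero}, proving (i).

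For (ii), by Proposition \ref{prop-locsys-compare} it suffices to identify $\mathbb V(\widehat{\cal Q}^{(\infty,0)}_{\op{desc}})_{\lambda=1}$ with $\op{LocSys}_G'$. At $\kappa=\infty$ the projection $\fr g^{\infty}_{\cal P_G}\rightarrow\fr g^*_{\cal P_G}$ is an isomorphism, so $\cal E^{\infty}(\cal P_G)$ in \eqref{eq-atiyah-at-kappa} is nothing but the dual Atiyah bundle $\op{At}(\cal P_G)^*$, and Proposition \ref{prop-underlying-o-modules-identify} realises $\widehat{\cal Q}^{(\infty,0)}_{\op{desc}}$ as the push-out of
\[
\op R\Gamma(X,\omega_{\cal X/\cal S})[1]\rightarrow \op R\Gamma(X,\op{At}(\cal P_G)^*)[1]\rightarrow \op R\Gamma(X,\fr g^*_{\cal P_G})[1]
\]
along the trace. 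A $T$-point of $\mathbb V(\widehat{\cal Q}^{(\infty,0)}_{\op{desc}})_{\lambda=1}$ is by definition a morphism $\widehat{\cal Q}^{(\infty,0)}_{\op{desc}}\rightarrow\cal O_T$ sending $1\mapsto 1$; by the universal property of push-outs this is a morphism $\op R\Gamma(X,\op{At}(\cal P_G)^*)[1]\rightarrow\cal O_T$ whose restriction to $\op R\Gamma(X,\omega_{X\times T/T})[1]$ is the trace map. Applying relative Serre duality (using that $\omega_{X/k}[1]$ is the dualising complex of $X$) identifies the space of such morphisms with $\op R\Gamma(X\times T,\op{At}(\cal P_G)\otimes\omega_{X\times T/T})$, and the trace condition matches precisely the identity section $1\in\op R\Gamma(X\times T,\cal O_X)$ under the natural projection $\op{At}(\cal P_G)\rightarrow\cal T_{X\times T/T}$. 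This is exactly the datum of a splitting of the Atiyah sequence, i.e.~a $T$-point of $\op{LocSys}_G'$.

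The main obstacles will be bookkeeping, rather than conceptual. The two-step quotient requires checking that the classical quotient (which lives in the infinite-type Tate setting) produces something compatible with the finite-type geometric inert-quasi-twisting descent of Proposition \ref{prop-quotient-abelian-frmmod}; here we need to check functoriality of inert quasi-twistings under the two quotients and verify gluing across Harder--Narasimhan truncations, paralleling the argument in \S\ref{sec-construction} that established $\cal T_G^{(\kappa,E)}$ itself. The Serre duality step in part (ii) is standard but must be performed functorially in $T$ (including derived $T$) and compatibly with the trace/$\lambda=1$ conditions, which is most conveniently done by comparing cotangent complexes at a $T$-point as in the proof of Lemma \ref{lem-identify-cotangent-spaces}.
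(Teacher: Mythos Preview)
Your proposal is correct and follows essentially the same approach as the paper's proof: both observe that at $\kappa=\infty$ the Lie bracket, pairing, and anchor all vanish so that the inert-quasi-twisting machinery of \S\ref{sec-inert-qtw} applies to yield part (i), and both identify $\mathbb V(\widehat{\cal Q}^{(\infty,0)}_{\op{desc}})_{\lambda=1}$ with $\op{LocSys}_G'$ via the push-out description from Proposition \ref{prop-underlying-o-modules-identify} together with Serre duality and Proposition \ref{prop-locsys-compare}. One small simplification: for part (i) you do not actually need Proposition \ref{prop-underlying-o-modules-identify}, since the triangle \eqref{eq-underlying-o-modules-zero} is by definition the descent of the cofibers produced by the quotient (see \S\ref{sec-underlying-o-modules}); that proposition is only needed in part (ii) to recognise $\widehat{\cal Q}^{(\infty,0)}_{\op{desc}}$ in terms of the Atiyah bundle.
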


Combined with \eqref{eq-module-cat-abelian-qtw}, we obtain an equivalence of DG categories:
$$
\cal T_G^{(\infty, 0)}\Mod \xrightarrow{\sim} \op{QCoh}(\op{LocSys}_G).
$$
\begin{proof}[Proof of Proposition \ref{prop-fiber-at-infty}]
It is clear from the construction that the classical quasi-twisting $\widetilde{\cal T}_G^{(\infty,0)}$ is given by the central extension of Lie algebroids (with zero Lie bracket and anchor map)
$$
0\rightarrow \cal O_{\op{Bun}_{G,\infty x}} \rightarrow  \widehat{\cal L}^{(\infty,0)} \rightarrow \cal L^{\infty} \rightarrow 0.
$$
Since $\cal T_G^{(\infty, 0)}$ arises from the quotient of $\widetilde{\cal T}_G^{(\infty,0)}$ by $(\fr g^{\infty}(\cal O_x), \cal L_x^+G)$, the paradigm of \S\ref{sec-inert-qtw-geom} applies, and $\cal T_G^{(\infty, 0)}$ is the inert quasi-twisting on the triangle \eqref{eq-underlying-o-modules-zero}.

For the second statement, note that we have a push-out diagram in $\op{QCoh}(\op{Bun}_G)$:
$$
\xymatrix@R=1.5em@C=1.5em{
	\op R\Gamma(X, \cal O_{\op{Bun}_G\times X})^* \ar[r]\ar[d] & \op R\Gamma(X, \op{At}(\cal P_G)\otimes\omega_X)^* \ar[d] \\
	\cal O_{\op{Bun}_G} \ar[r] & \widehat{\cal Q}^{(\infty,0)}_{\op{desc}},
}
$$
by Proposition \ref{prop-underlying-o-modules-identify} and Serre duality. Hence $\mathbb V(\widehat{\cal Q}^{(\infty,0)}_{\op{desc}})_{\lambda=1}$ fits into the commutative diagram:
$$
\xymatrix@R=1.5em@C=1.5em{
	\mathbb V(\op R\Gamma(X, \cal O_{\op{Bun}_G\times X})^*) & \mathbb V(\op R\Gamma(X, \op{At}(\cal P_G)\otimes\omega_X)^*) \ar[l] \\
	\op{Bun}_G \ar[u]^{\{1\}} & \mathbb V(\widehat{\cal Q}_{\op{desc}}^{(\infty,0)})_{\lambda=1}. \ar[l]\ar[u]
}
$$

For any DG scheme $S$ mapping to $\op{Bun}_G$ (represented by the $G$-bundle $\cal P_G$ over $S\times X$), a computation using the projection formula shows:
\begin{itemize}
	\item $\op{Maps}_{\op{Bun}_G}(S, \mathbb V(\op R\Gamma(X, \op{At}(\cal P_G)\otimes\omega_X)^*)) \xrightarrow{\sim} \tau^{\le 0}\op R\Gamma(S\times X, \op{At}(\cal P_G)\otimes\omega_X)$, and
	\item $\op{Maps}_{\op{Bun}_G}(S, \mathbb V(\op R\Gamma(X, \cal O_{\op{Bun}_G\times X})^*))\xrightarrow{\sim} \tau^{\le 0}\op R\Gamma(S\times X, \cal O_{S\times X})$.
\end{itemize}
Hence $\op{Maps}_{\op{Bun}_G}(S, \mathbb V(\widehat{\cal Q}_{\op{desc}}^{(\infty,0)})_{\lambda=1})$ identifies with the $\infty$-groupoid
$$
\tau^{\le 0}\op R\Gamma(S\times X, \op{At}(\cal P_G)\otimes\omega_X) \underset{\tau^{\le 0}\op R\Gamma(S\times X, \cal O_{S\times X})}{\times} \{1\}
$$
i.e., the $\infty$-groupoid of splittings of the Atiyah sequence $\fr g_{\cal P_G}\rightarrow\op{At}(\cal P_G)\rightarrow \cal T_{S\times X/S}$. We obtain an isomorphism $\mathbb V(\widehat{\cal Q}^{(\infty,0)}_{\op{desc}})_{\lambda = 1} \xrightarrow{\sim} \op{LocSys}_G'$ so the result follows from Proposition \ref{prop-locsys-compare}.
\end{proof}

\begin{rem}
An alternative argument (one that avoids using the results of \S\ref{sec-underlying-o-modules}) runs as follows: by a local computation, one identifies the universal envelope of the classical quasi-twisting \eqref{eq-qtw-full-level} with the (topological) ring of functions over $\op{LocSys}_{G,\infty x}(\Sigma)$, the stack classifying $(\cal P_G,\alpha)\in\op{Bun}_{G,\infty x}$ together with a connection over $\cal P_G\big|_{\Sigma}$. One then shows that the closed subscheme $\mathbb V(\widehat{\cal Q}^{(\infty, 0)})_{\lambda =1}$ identifies with $\op{LocSys}_{G,\infty x}$, and \eqref{eq-quotient-abelian-qtw} gives rise to isomorphisms:
$$
\mathbb V(\widehat{\cal Q}_{\op{desc}}^{(\infty,0)})_{\lambda =1}\xrightarrow{\sim} \op{LocSys}_{G,\infty x}/\cal L_x^+G \xrightarrow{\sim} \op{LocSys}_G.
$$
\end{rem}

\subsubsection{}
We comment on the role of \emph{integral} additional parameters at $\infty$, i.e., the ones arising from $Z(G)$-bundles. More precisely, let $E:=\op{At}(\cal P_{Z(G)})^*$ for some $Z(G)$-bundle $\cal P_{Z(G)}$. Then $E$ is an extension of $\fr z_G^*\otimes\cal O_X$ by $\omega_X$, so $(\fr g^{\infty}, E)$ is a well defined $k$-point of $\op{Par}_G$.

\begin{prop}
Let $E=\op{At}(\cal P_{Z(G)})^*$ for a $Z(G)$-bundle $\cal P_{Z(G)}$. Then there is a canonical isomorphism of DG stacks:
\begin{equation}
\label{eq-fiber-at-infty-shift}
\mathbb V(\widehat{\cal Q}_{\op{desc}}^{(\infty, E)})_{\lambda=1} \xrightarrow{\sim} \op{LocSys}_G \underset{\op{Bun}_G}{\times} \op{Bun}_G,
\end{equation}
where the second map is the central shift $-\otimes\cal P_{Z(G)}$.
\end{prop}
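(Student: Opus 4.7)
The plan is to adapt the proof of Proposition \ref{prop-fiber-at-infty} to accommodate the additional parameter $E$. First I would extend Proposition \ref{prop-underlying-o-modules-identify} to arbitrary $E$: exactly the same argument, with $E_{\cal D}$ used in place of $(\omega_{\cal X/\cal S})_{\cal D}$ for the push-out of the relevant induced $\cal D$-module, shows that the underlying triangle of the classical quasi-twisting $\widetilde{\cal T}_G^{(\infty, E)}$ over $\widetilde{\cal S}$ is the push-out along the trace of
\[
\op R\Gamma(X, \omega_{\widetilde{\cal X}/\widetilde{\cal S}})[1] \to \op R\Gamma(X, \cal E^{(\infty, E)}(\widetilde{\cal P}_G))[1] \to \op R\Gamma(X, \fr g^*_{\widetilde{\cal P}_G})[1],
\]
where $\cal E^{(\infty, E)}(\widetilde{\cal P}_G)$ denotes the $\widetilde{\cal P}_G$-twist of the extension of $\fr g^* \otimes \cal O_{\widetilde{\cal X}}$ by $\omega_{\widetilde{\cal X}/\widetilde{\cal S}}$ obtained by Baer-summing the pullback of $E$ along $\fr g^* \twoheadrightarrow \fr z_G^* \otimes \cal O_X$ with the canonical extension underlying $\op{At}(\widetilde{\cal P}_G)^*$.

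The key geometric step is to identify $\cal E^{(\infty, E)}(\widetilde{\cal P}_G)$ with $\op{At}(\widetilde{\cal P}_G \otimes \cal P_{Z(G)})^*$ when $E = \op{At}(\cal P_{Z(G)})^*$, as extensions of $\fr g^*_{\widetilde{\cal P}_G}$ by $\omega_{\widetilde{\cal X}/\widetilde{\cal S}}$. Note that $\fr g^*_{\widetilde{\cal P}_G \otimes \cal P_{Z(G)}} = \fr g^*_{\widetilde{\cal P}_G}$ since the coadjoint action of $Z(G)$ on $\fr g^*$ is trivial. Granting this identification, the push-out computation at the end of Proposition \ref{prop-fiber-at-infty} repeats verbatim with $\op{At}(\widetilde{\cal P}_G \otimes \cal P_{Z(G)})$ in place of $\op{At}(\widetilde{\cal P}_G)$: after descent to $\op{Bun}_G$ and Serre duality, we find that $\mathbb V(\widehat{\cal Q}_{\op{desc}}^{(\infty, E)})_{\lambda = 1}$ classifies, over each $S$-point $\cal P_G$ of $\op{Bun}_G$, splittings of the Atiyah sequence of $\cal P_G \otimes \cal P_{Z(G)}$. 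By Proposition \ref{prop-locsys-compare} (applied to the $G$-bundle $\cal P_G \otimes \cal P_{Z(G)}$), this is precisely the fiber product $\op{LocSys}_G \underset{\op{Bun}_G}{\times} \op{Bun}_G$ formed using the central shift $-\otimes \cal P_{Z(G)}$.

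The hard part will be the identification $\cal E^{(\infty, E)}(\widetilde{\cal P}_G) \cong \op{At}(\widetilde{\cal P}_G \otimes \cal P_{Z(G)})^*$. Morally, both sides encode the same anomaly: the residue correction $\op{res}(g_{\cal U})$ appearing in the $\cal G$-action of \S\ref{sec-g-action-on-kac-moody}, restricted to $\varphi \in \fr z_G^* \hookrightarrow \fr g^*$, equals $\varphi((g_{\cal U}^{-1} dg_{\cal U})_{\fr z}) \in \omega_{\cal U/S}$, which is the \v Cech cocycle for the Atiyah extension of the $G/[G,G]$-reduction of $g_{\cal U}$. Thus Baer-summing the $\widetilde{\cal P}_G$-twist of this ``central Atiyah contribution'' with the extra term $E = \op{At}(\cal P_{Z(G)})^*$ yields, by the additivity of the Atiyah class under the multiplicative structure on $G$-bundles, precisely the Atiyah extension of $\widetilde{\cal P}_G \otimes \cal P_{Z(G)}$. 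Checking that this matching is canonical (not merely at the level of cohomology classes) requires a careful tracking through the constructions of \S\ref{sec-construction} and the signs in \eqref{eq-self-duality-torus}, but introduces no new ideas beyond the $E = 0$ case.
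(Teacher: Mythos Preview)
Your approach is correct and essentially identical to the paper's: both identify the underlying extension at $(\fr g^{\infty},E)$ with $\op{At}(\cal P_{Z(G)}\otimes\tilde{\cal P}_G)^*$ via additivity of the Atiyah class, then reduce to the $E=0$ case. The paper is slightly more economical in that, once the isomorphism $\widehat{\cal Q}_{\op{desc}}^{(\infty,E)}\big|_{\cal P_G}\cong\widehat{\cal Q}_{\op{desc}}^{(\infty,0)}\big|_{\cal P_{Z(G)}\otimes\cal P_G}$ is established, it simply cites Proposition~\ref{prop-fiber-at-infty} rather than rerunning the Serre-duality argument; also, your reference to the sign in \eqref{eq-self-duality-torus} is a red herring, as that formula concerns Langlands duality and plays no role here.
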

\begin{proof}
Note that the $\cal D_{\op{Bun}_{G,\infty x}\times X/\op{Bun}_{G,\infty x}}$-module \eqref{eq-kac-moody-extension-twist} at parameter $(\fr g^{\infty}, E)$ is induced from the following sequence:
$$
0 \rightarrow \omega_{\op{Bun}_{G,\infty x}\times X/\op{Bun}_{G,\infty x}} \rightarrow \op{At}(\cal P_{Z(G)}\otimes\cal P_G)^* \rightarrow \fr g_{\cal P_G}^* \rightarrow 0
$$
via the functor $(-)_{\cal D}$ and pushing out (see \S\ref{sec-underlying-o-modules}). An argument similar to above shows that $\cal T_G^{(\infty, E)}$ is the inert quasi-twisting associated to the triangle in $\op{QCoh}(\op{Bun}_G)$:
$$
\cal O_{\op{Bun}_G} \rightarrow \widehat{\cal Q}^{(\infty, E)}_{\op{desc}} \rightarrow \cal Q^{\infty}_{\op{desc}},
$$
where we have a canonical isomorphism $\widehat{\cal Q}_{\op{desc}}^{(\infty, E)}\big|_{\cal P_G}\xrightarrow{\sim}\widehat{\cal Q}_{\op{desc}}^{(\infty, 0)}\big|_{\cal P_{Z(G)}\otimes\cal P_G}$. Hence the result follows from Proposition \ref{prop-fiber-at-infty}.
\end{proof}

\begin{rem}
A connection on $\cal P_{Z(G)}$ gives rise to a splitting of $E$, hence an isomorphism $\mathbb V(\widehat{\cal Q}_{\op{desc}}^{(\infty, E)})_{\lambda=1} \xrightarrow{\sim} \mathbb V(\widehat{\cal Q}_{\op{desc}}^{(\infty, 0)})$. Geometrically, this corresponds to a lift of the isomorphism $-\otimes\cal P_{Z(G)}:\op{Bun}_G\xrightarrow{\sim}\op{Bun}_G$ to $\op{LocSys}_G$.
\end{rem}

\begin{rem}
Specializing the hypothetical equivalence \eqref{eq-naive-quantum-langlands} to the parameter $(\fr g^{\op{crit}}, 0)$, we obtain the usual, na\"ive statement of the geometric Langlands correspondence:
$$
\op{Diff}(\cal L_{G,\det}^{-\frac{1}{2}})\Mod(\op{Bun}_G) \xrightarrow{\sim} \op{QCoh}(\op{LocSys}_{\check G}).
$$

Specializing to $(\fr g^{\op{crit}}, E)$ where $E=\op{At}(\cal P_{Z(\check G)})^*$, we obtain from \eqref{eq-fiber-at-infty-shift} a hypothetical equivalence:
$$
\op{Diff}(\cal L_{G,\det}^{-\frac{1}{2}} \otimes \cal M)\Mod(\op{Bun}_G) \xrightarrow{\sim} \op{QCoh}(\op{LocSys}_{\check G}\underset{\op{Bun}_{\check G}}{\times}\op{Bun}_{\check G})
$$
where $\cal M$ is the pullback to $\op{Bun}_G$ of the line bundle on $\op{Bun}_{G/[G,G]}$ corresponding to $\cal P_{Z(\check G)}$. This equivalence can be viewed as an expected compatibility of the geometric Langlands duality with central shift. Let us reiterate that when $G$ is not a torus, none of these equivalences are true without a renormalization process.
\end{rem}

\medskip

\end{document}